    \newtheorem{thm}{Theorem}[section]
    \newtheorem{prop}[thm]{Proposition}
    \newtheorem{lem}[thm]{Lemma}
    \newtheorem{cor}[thm]{Corollary}
  \theoremstyle{definition}
    \newtheorem*{que}{Question}
    \newtheorem{defi}[thm]{Definition}
  \theoremstyle{remark}
    \newtheorem{rem}[thm]{Remark}
    \newtheorem{ex}[thm]{Example}
\title{Fibration structure for Gromov h-principle}
\author{Koji Yamazaki}
\begin{document}
\maketitle
\begin{abstract}
The {\it h-principle} is a powerful tool for obtaining solutions to partial differential inequalities and partial differential equations.
Gromov discovered the h-principle for the general partial differential relations to generalize the results of Hirsch and Smale. 
In his book, Gromov generalizes his theorem and discusses the {\it sheaf theoretic h-principle}, in which an object called a {\it flexible sheaf} plays an important role. 
We show that a flexible sheaf can be interpreted as a fibrant object with respect to a model structure.
\end{abstract}
\tableofcontents	
\setcounter{section}{-1}
	\section{Introduction}
The {\it h-principle} is a powerful tool for obtaining solutions to partial differential inequalities and partial differential equations.
Hirsch \cite{hirsch1959immersions} and Smale \cite{smale1959classification1, smale1959classification2} discovered the h-principle for the immersions, and Phillips \cite{phillips1967submersions} discovered the h-principle for the submersion.
Gromov \cite{gromov1971topological} discovered {\it Gromov h-principle}, the h-principle for the general partial differential relations, to generalize the above results.
The proof of {\it Gromov h-principle} is favoured in modern times via {\it Holonomic Approximation Theorem} by Y. M. Eliashberg and N. M. Mishachev \cite{eliashberg38holonomic, eliashberg2002introduction}.
Their proof is rather elementary, as they consider a space of functions to contain the Whitney topology.
Before that, Gromov himself has refined his proofs. \par
In his book, Gromov \cite{gromov2013partial} generalizes his theorem and discusses the {\it sheaf theoretic h-principle}, in which an object called the {\it flexible sheaf} plays an important role. 
This proof is rather abstract.
For this reason, the proof by Eliashberg and Mishachev is more prefered to see the concrete images.
However, the abstract proof by Gromov also has some advantages
(cf. Section 0.5). \par
We, therefore, wish to develop further the abstract proof by Gromov himself.
We first clarify what a flexible sheaf is.
We show that a flexible sheaf can be interpreted as a fibrant object with respect to a model structure.
	\subsection{What's h-principle?}
Consider the following partial differential inequality $R$:
\begin{equation}
f(x, y, \frac{\partial y}{\partial x}, \cdots, \frac{\partial^r y}{\partial x^r}) > 0. \tag{$R$}
\end{equation}
In analysis, we often detect a weak solution and show later that it is a strict solution.
In differential topology, we detect a formal solution and transform it into a strict solution by homotopy.
{\it A formal solution} is a family of continuous maps $s_0(x), s_1(x), \cdots, s_r(x)$ such that 
\[
f(x, s_0(x), s_1(x), \cdots, s_r(x)) > 0.
\]
We say that the partial differential inequality $R$ satisfies the {\it h-principle} or the {\it homotopy principle} if any formal solution can be homotopically transformed into a strict solution.
We say that it satisfies the {\it parametric h-principle} or the {\it parametric homotopy principle} if the inclusion map $\operatorname{Sol}(R) \hookrightarrow \Gamma(R)$ is a weak homotopy equivalence where $\operatorname{Sol}(R)$ is the space of strict solutions of $R$ and $\Gamma(R)$ is the space of formal solutions of $R$.
	\subsection{Sheaf theoretic h-principle}
A {\it quasitopological space} is a generalization of a topological space, whose homotopy groups can be defined 
(cf. Section 1).
{\bf qTop} is the category of quasitopological spaces.
{\bf qTop} has a model structure 
(cf. Section 1.2).\par
Let $B$ be a topological space.
A {\it continuous sheaf} on $B$ is a sheaf on $B$ with value in {\bf qTop}.
${\bf Sh}(B; {\bf qTop})$ is the category of continuous sheaves on $B$.
For example, the spaces of solutions to a partial differential relation give a continuous sheaf.
A continuous sheaf $\mathcal{F}$ has the {\it sheaf of formal sections} $\mathcal{F}^\ast$ and the {\it diagonal map} $\Delta : \mathcal{F} \rightarrow \mathcal{F}^\ast$ 
(cf. Section 2.3).
We say that $\mathcal{F}$ satisfies the {\it (sheaf theoretic) parametric h-principle} if the $\Delta$ is a sectionwise weak equivalence.
The following fundamental theorem is the basis of Gromov theory.
\begin{thm}[\mbox{Gromov \cite[page 76]{gromov2013partial}}] \label{Gromov1}
Any flexible sheaf on a polyhedron satisfies the parametric h-principle.
\end{thm}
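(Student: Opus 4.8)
The plan is to prove the statement by induction over a triangulation of the polyhedron, reducing the global comparison map $\Delta$ to local comparisons over individual simplices and then gluing these via the descent (sheaf) property together with flexibility. Fix a triangulation of the polyhedron $P$ and filter it by its skeleta $P^{(0)} \subseteq P^{(1)} \subseteq \cdots \subseteq P$. Passing from $P^{(k-1)}$ to $P^{(k)}$ attaches each $k$-simplex $\sigma$ along its boundary $\partial\sigma$, so that the descent property of a continuous sheaf presents the value on the union as a fiber product,
\[
\mathcal{F}(P^{(k)}) \;=\; \mathcal{F}(P^{(k-1)}) \times_{\mathcal{F}(\partial\sigma)} \mathcal{F}(\sigma),
\]
and likewise for $\mathcal{F}^{\ast}$. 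I would carry out the induction on subpolyhedra rather than open sets, which suffices to read off the sectionwise comparison asserted by the parametric h-principle.

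The role of flexibility enters in the second step. Flexibility means precisely that the restriction maps of $\mathcal{F}$ along inclusions of subpolyhedra are fibrations in the model structure on \textbf{qTop}; fibrancy of $\mathcal{F}(\sigma) \to \mathcal{F}(\partial\sigma)$ upgrades the strict pullback above to a homotopy pullback, and the same holds for $\mathcal{F}^{\ast}$, which is itself a flexible sheaf. The map $\Delta$ then induces a morphism between the two homotopy-pullback squares. Since homotopy pullbacks are invariant under sectionwise weak equivalence, the inductive step follows at once provided $\Delta$ is already known to be a weak equivalence over $P^{(k-1)}$, over $\partial\sigma$ (a lower-dimensional subcomplex, handled by the induction), and over the single new simplex $\sigma$.

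The base case, that $\Delta : \mathcal{F}(\sigma) \to \mathcal{F}^{\ast}(\sigma)$ is a weak equivalence over a single simplex, is the crux. Here I would exhibit both sides as weakly equivalent to the stalk at the barycenter $b$ of $\sigma$, identifying $\Delta$ with the evaluation $\mathcal{F}(\sigma) \to \mathcal{F}(b) = \mathcal{F}^{\ast}(b)$. For $\mathcal{F}^{\ast}$ this comes from its construction as a sheaf of germ fields: over a contractible simplex a germ field is determined up to homotopy by its value at an interior point. For $\mathcal{F}$ itself one uses the deformation retraction of $\sigma$ onto $b$, together with flexibility, to force the restriction $\mathcal{F}(\sigma) \to \mathcal{F}(b)$ to be not merely a fibration but a weak equivalence.

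The main obstacle I anticipate is exactly this local step: flexibility guarantees only that restriction maps are fibrations, not weak equivalences, so establishing the homotopy invariance of $\mathcal{F}$ along the retraction of a simplex onto its barycenter is the delicate point. I expect to need the parametric, relative form of the lifting property, lifting over $\sigma \times I$ while fixing the lift already constructed, in order to run a continuous ``push to the barycenter'' argument. This is the technical heart of Gromov's theorem, and it is precisely where the reformulation of flexibility as fibrancy in a model structure should carry the load, allowing the gluing and the local contraction to be organized as standard homotopy-pullback and lifting arguments rather than ad hoc constructions.
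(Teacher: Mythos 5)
Your global strategy (induction over skeleta, with flexibility turning the strict Mayer--Vietoris squares into homotopy pullbacks) is the right shape: it is essentially an attempt to reprove Lemma \ref{Gromov3} inline, whereas the paper simply observes that the theorem follows at once by combining Lemma \ref{Gromov2} and Lemma \ref{Gromov3}: the diagonal $\Delta : \mathcal{F} \rightarrow \mathcal{F}^\ast$ is a stalkwise weak equivalence, $\mathcal{F}^\ast$ is flexible, and a stalkwise weak equivalence between flexible sheaves is sectionwise. The problem is your base case, and it is a genuine gap, not a technicality. First, the identification $\mathcal{F}(b) = \mathcal{F}^\ast(b)$ is false as an equality: the stalk of $\mathcal{F}^\ast$ at $b$ consists of germs of \emph{families} of germs (fibrewise holonomic sections), not germs of sections, and there is only the diagonal map $\mathcal{F}_b \rightarrow \mathcal{F}^\ast_b$. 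Showing that this map is a weak equivalence is precisely Lemma \ref{Gromov2}~(1) (proved in the paper as Lemma \ref{LemGromov1}), and it requires strong local contractibility together with the ability to pull formal sections back along the contraction --- it is not a consequence of the construction of $\mathcal{F}^\ast$.

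Second, and more seriously, your proposed argument that $\mathcal{F}(\sigma) \rightarrow \mathcal{F}_b$ is a weak equivalence --- ``use the deformation retraction of $\sigma$ onto $b$, together with flexibility'' --- has no mechanism behind it. An abstract continuous sheaf admits restriction only along inclusions of open sets; its sections cannot be pulled back along the retraction $\sigma \rightarrow \{b\}$ or along the homotopy $\sigma \times [0,1] \rightarrow \sigma$. That functoriality is exactly what $\mathcal{F}^\ast$ has and $\mathcal{F}$ lacks (this is the content of the construction $\phi \mapsto \phi^\ast$ in Section 2.3, used in the proofs of Lemmas \ref{LemGromov1} and \ref{LemGromov2}), and its absence for $\mathcal{F}$ is the entire difficulty of the theorem. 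Flexibility is a right lifting property: it lifts homotopies of sections that already exist over the larger set, but it never produces a section of $\mathcal{F}$ over $\sigma$ from a germ at $b$, so it cannot give surjectivity of $\pi_0 \mathcal{F}(\sigma) \rightarrow \pi_0 \mathcal{F}_b$, let alone a weak equivalence. The statement you want is true, but it is (up to the stalkwise comparison) equivalent to the theorem itself restricted to $\sigma$; taking it as the base case makes the argument circular. This is also why a single fixed triangulation cannot suffice in any proof along these lines: the stalkwise hypothesis only yields information in the limit of iterated subdivision, which is where the actual proof of Lemma \ref{Gromov3} does its work. To repair your proposal you would either have to prove Lemma \ref{Gromov3} (subdivision plus a limiting argument over shrinking simplices, using that restrictions of flexible sheaves are fibrations to pass weak equivalences through the tower), or abandon the direct comparison of $\mathcal{F}(\sigma)$ with a stalk and route everything through $\mathcal{F}^\ast$ as the paper does.
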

This theorem follows immediately from the following two lemmas.
\begin{lem}[\mbox{Gromov \cite[page 76]{gromov2013partial}}] \label{Gromov2}
Let $B$ be a polyhedron, and let $\mathcal{F}$ be a continuous sheaf on $B$.
\begin{description}
\item[(1)] The diagonal morphism $\Delta : \mathcal{F} \rightarrow \mathcal{F}^\ast$ is a stalkwise weak equivalence.
\item[(2)] The sheaf $\mathcal{F}^\ast$ of formal sections is flexible.
\end{description}
\end{lem}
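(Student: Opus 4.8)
The plan is to treat the two assertions separately, in each case first unwinding the definitions of the formal-section sheaf $\mathcal{F}^\ast$, the diagonal $\Delta$, the stalk $\mathcal{F}_x$, and flexibility as recalled in Section 2.3, and then isolating the single homotopy-theoretic input each part requires. The unifying idea is that the passage $\mathcal{F}\rightsquigarrow\mathcal{F}^\ast$ replaces genuine (integrable) sections by germwise data subject to no global compatibility constraint; part (1) says this changes nothing at the level of a single point, while part (2) says it makes the restriction maps ``soft'' enough to lift homotopies.

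For part (1), I would fix a point $x\in B$ and identify the stalks $\mathcal{F}_x=\operatorname*{colim}_{U\ni x}\mathcal{F}(U)$ and $(\mathcal{F}^\ast)_x=\operatorname*{colim}_{U\ni x}\mathcal{F}^\ast(U)$ as quasitopological spaces, and show that $\Delta_x\colon\mathcal{F}_x\to(\mathcal{F}^\ast)_x$ is a weak equivalence. The guiding principle is that a stalk remembers only the behaviour at $x$, and that the formal-section construction adds nothing at a single point: a formal section near $x$ is a continuously varying family of germs, but after shrinking the neighbourhood toward $x$ this family is governed by its value at $x$. I would make this precise by constructing an evaluation map $(\mathcal{F}^\ast)_x\to\mathcal{F}_x$ and a homotopy, implemented by contracting the neighbourhood to $x$, that deforms an arbitrary germ-family to the formal section induced by a single actual section representing its value at $x$, thereby exhibiting the evaluation as a homotopy inverse to $\Delta_x$. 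The work here is to carry out the shrinking homotopy continuously in the quasitopology and compatibly with the colimit, but the absence of any global constraint makes this the more tractable of the two parts.

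For part (2), I would verify flexibility directly, i.e.\ that for a suitable pair of subpolyhedra $K'\subseteq K$ the restriction $\mathcal{F}^\ast(K)\to\mathcal{F}^\ast(K')$ enjoys the homotopy lifting/extension property demanded in Section 2.3. The decisive structural feature is again that a formal section carries no integrability constraint: it is merely a continuous choice of germ at each point, so extending a formal section, or lifting a homotopy of formal sections, is a pure section-extension problem for a ``bundle of stalks'' with no obstruction coming from patching. Given a homotopy of formal sections over $K'$ together with an initial formal section over $K$ restricting to it at time $0$, I would extend the homotopy over all of $K$ by induction over the skeleta (cells) of $K$, using the polyhedral structure together with a collar of $K'$ in $K$ to perform the extension cell by cell; since each step is the extension of a germ-valued map with no competing constraint, it can always be solved.

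The hard part will be part (2): turning the heuristic ``formal sections extend freely'' into an honest verification of the lifting property at the level of quasitopological spaces. I expect the delicate points to be, first, controlling the colimit topologies on the stalks and on the spaces of germ-families so that the cell-by-cell extensions assemble into a single continuous lift, and second, using the polyhedron hypothesis precisely---via skeletal induction and collars---to make the extension uniform rather than merely pointwise. Conceptually this step is exactly the statement that the formal-section functor is a fibrant replacement, which is the thesis the remainder of the paper makes rigorous by identifying flexibility with fibrancy in a model structure.
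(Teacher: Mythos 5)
Your proposal has genuine gaps in both parts. For part (1), the plan to exhibit an honest homotopy inverse to $\Delta_x$ runs into an obstruction that you name (``compatibly with the colimit'') but cannot remove: the contraction of a neighbourhood $U$ induces a deformation of $\mathcal{F}(U)^{\operatorname{Sing^q}(U)}$ onto the image of $\mathcal{F}(U)$, but for $V\subset U$ the chosen contraction of $V$ is unrelated to the restriction of the chosen contraction of $U$, so the stage-wise homotopies do not commute with the restriction maps and hence do not descend to the filtered colimit $\mathcal{F}^\ast_x$. The paper (Lemma \ref{LemGromov1}) sidesteps exactly this point: it uses the HELP characterisation of weak equivalences (Proposition \ref{PropQTopHelp}), so that each lifting datum lives on the finite polyhedra $\partial\Delta^n\subset\Delta^n$ and therefore, by the filtered-colimit trick of Section 1.1.4, factors through a \emph{single} stage $\mathcal{F}(U)\rightarrow\mathcal{F}(U)^{\operatorname{Sing^q}(U)}$, where one contraction of that one $U$ suffices; no compatibility across stages is ever needed. (Also, since objects of ${\bf qTop}$ need not be cofibrant, a weak equivalence of stalks is all one should aim for; a stalkwise homotopy equivalence is a strictly stronger claim that the paper never makes.)

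For part (2) there are two problems. First, flexibility (Definition \ref{DefFlexible}) quantifies over \emph{arbitrary} compact subsets $K\subset L$ of $B$, not subpolyhedra: an arbitrary compact subset of a polyhedron (a Cantor set, say) has no skeleta and no collar, so induction over cells is not even available to state. Second, the heuristic ``formal sections extend freely because there is no constraint'' is not the operative mechanism: an element of $\mathcal{F}^\ast(U)$ is locally a morphism $\operatorname{Sing^q}(U)\rightarrow\mathcal{F}(U)$, a family of sections varying \emph{continuously} in the base point, so there is a constraint, and something must be done to meet it. What the paper actually does (Lemma \ref{LemGromov2}) is exploit precisely this base-parametrized structure: descend the lifting problem from $K\subset L$ to open neighbourhoods $U\subset V$ by the filtered-colimit trick, choose by normality a Urysohn function $\phi$ with $\operatorname{supp}(\phi)\subset U$ and $\phi\equiv 1$ near $K$, and then reparametrize the time variable of the homotopy by $\phi(x)$, using the $\phi^\ast$-construction of Section 2.3 together with the retraction of Lemma \ref{LemRetract}; near $K$ the full homotopy is performed, on $W=V-\operatorname{supp}(\phi)$ time is frozen at its initial value, and the two pieces agree on the overlap, so they glue by the sheaf condition over the open cover $\{U,W\}$ of $V$. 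This cutoff-and-reparametrize step, possible only because formal sections are families indexed by base points, is the real content of part (2) and is absent from your proposal. Note finally that the paper's arguments use no polyhedral structure at all: part (1) needs $B$ strongly locally contractible and part (2) needs $B$ normal Hausdorff, and a polyhedron satisfies both.
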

\begin{lem}[\mbox{Gromov \cite[page 77]{gromov2013partial}}] \label{Gromov3}
Let $B$ be a polyhedron, and let $\mathcal{F}$ and $\mathcal{G}$ be flexble sheaves on $B$.
$f : \mathcal{F} \rightarrow \mathcal{G}$ is a sectionwise weak equivalence if $f$ is a stalkwise weak equivalence.
\end{lem}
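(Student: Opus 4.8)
The plan is to prove the statement by induction on the cell structure of the polyhedron $B$, transferring the local (stalkwise) hypothesis to the global (sectionwise) conclusion through the gluing behaviour forced by flexibility. Throughout I work in the model structure on \textbf{qTop} and use that flexibility is precisely fibrancy in the form: for a flexible sheaf $\mathcal{F}$ and an inclusion of subpolyhedra $P' \hookrightarrow P$, the restriction $\mathcal{F}(P) \to \mathcal{F}(P')$ is a fibration. The sheaf gluing identity $\mathcal{F}(P_1 \cup P_2) = \mathcal{F}(P_1) \times_{\mathcal{F}(P_1 \cap P_2)} \mathcal{F}(P_2)$ then exhibits $\mathcal{F}(P_1 \cup P_2)$ as a homotopy pullback, since the pullback is formed along a fibration between fibrant objects; the same holds for $\mathcal{G}$, so $f$ induces a map of homotopy-pullback squares.

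I would organise a double induction: an outer induction on $\dim B$ and, for fixed dimension $n$, an inner induction on the number of $n$-simplices. When $\dim B = 0$ the value $\mathcal{F}(B)$ is a product of stalks, so the stalkwise hypothesis gives a sectionwise weak equivalence at once. The essential base-case sublemma is that, for a single closed simplex $\bar\sigma$ with interior point $x$, the restriction $\mathcal{F}(\bar\sigma) \to \mathcal{F}_x$ to the stalk is a weak equivalence, and likewise for $\mathcal{G}$. Granting this, two-out-of-three applied to the square relating $f_{\bar\sigma}$, the stalkwise weak equivalence $\mathcal{F}_x \to \mathcal{G}_x$, and the two restrictions shows that $f_{\bar\sigma} : \mathcal{F}(\bar\sigma) \to \mathcal{G}(\bar\sigma)$ is a weak equivalence. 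The sublemma itself follows from flexibility: because $\bar\sigma$ contracts to $x$ and the restrictions along the shrinking family of neighbourhoods of $x$ are fibrations, flexible sheaves are homotopy invariant over a contractible simplex, so $\mathcal{F}(\bar\sigma)$ and $\mathcal{F}_x$ have the same homotopy type.

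For the inductive step I choose a top-dimensional simplex $\bar\sigma$ and write $B = B' \cup \bar\sigma$ with $B' = \overline{B \setminus \mathring{\sigma}}$ and $B' \cap \bar\sigma = \partial\sigma$. The map $f$ gives a morphism between the homotopy-pullback squares computing $\mathcal{F}(B)$ and $\mathcal{G}(B)$. On $\partial\sigma$, of dimension $n-1$, the comparison is a weak equivalence by the outer induction; on $B'$, of dimension $n$ with one fewer top simplex, by the inner induction; on $\bar\sigma$ by the base-case sublemma. The gluing (cube) lemma for homotopy pullbacks — a levelwise weak equivalence of pullback diagrams along fibrations induces a weak equivalence on the pullbacks — then yields that $f_B : \mathcal{F}(B) \to \mathcal{G}(B)$ is a weak equivalence, closing the induction and proving the lemma for $B$ itself.

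The main obstacle I expect is twofold and both parts are model-categorical. First, one must pin down rigorously that flexibility delivers exactly the two properties invoked above: that restrictions to subpolyhedra are fibrations, and that sections over a contractible simplex recover the stalk; this is where the identification \emph{flexible = fibrant} does its real work. Second, the homotopy-pullback bookkeeping — that the gluing squares are genuine homotopy pullbacks and that the cube lemma is available — must be verified inside \textbf{qTop} rather than in ordinary spaces. Because \textbf{qTop} is a category of quasitopological spaces, checking that its model structure supports this pullback calculus (fibrancy of flexible objects, stability of fibrations under pullback, and the cube lemma) is the genuinely delicate point, whereas the inductive combinatorics over the polyhedron is routine once these foundations are in place.
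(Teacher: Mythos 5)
First, a remark on the comparison you asked for: this paper does not actually prove Lemma \ref{Gromov3} --- it is quoted from Gromov's book (the paper only proves generalisations of Lemma \ref{Gromov2}, namely Lemma \ref{LemGromov1} and Lemma \ref{LemGromov2}) --- so your proposal can only be judged on its own merits, and on those merits it has a fatal gap. Your scaffolding is fine: restrictions of flexible sheaves to compact subpolyhedra are fibrations, the gluing identity $\mathcal{F}(P_1 \cup P_2) \cong \mathcal{F}(P_1) \times_{\mathcal{F}(P_1 \cap P_2)} \mathcal{F}(P_2)$ holds for compact subpolyhedra of a normal space, these squares are homotopy pullbacks since every object of {\bf qTop} is fibrant, and the cube lemma is available because {\bf qTop} is right proper. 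The collapse happens exactly at what you call the ``essential base-case sublemma'': it is \emph{false} that for a flexible sheaf $\mathcal{F}$ the restriction $\mathcal{F}(\bar\sigma) \rightarrow \mathcal{F}_x$ to the stalk at an interior point is a weak equivalence. Flexibility is purely a lifting property and carries no homotopy invariance. Concretely, let $p$ be a vertex of $\bar\sigma$, let $Z$ be any quasitopological space that is not weakly contractible, and let $\mathcal{F}$ be the skyscraper sheaf with $\mathcal{F}(U) = Z$ if $p \in U$ and $\mathcal{F}(U) = {\bf 1}$ otherwise. Then every restriction $\mathcal{F}(L) \rightarrow \mathcal{F}(K)$ between compact sets is an identity, or the map $Z \rightarrow {\bf 1}$, or ${\bf 1} \rightarrow {\bf 1}$; all of these are fibrations because every object of {\bf qTop} is fibrant, so $\mathcal{F}$ is flexible. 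Yet $\mathcal{F}(\bar\sigma) = Z$ while $\mathcal{F}_x = {\bf 1}$ for every $x \neq p$. Your justification (``the simplex contracts to $x$ and the restrictions along the shrinking family are fibrations, hence the restriction to the stalk is an equivalence'') is a non sequitur: a tower of fibrations need not consist of weak equivalences. With the sublemma gone, the two-out-of-three step that was to give $f_{\bar\sigma} : \mathcal{F}(\bar\sigma) \rightarrow \mathcal{G}(\bar\sigma)$ has no support, and the induction never starts.

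The deeper point is that the information making $f_{\bar\sigma}$ an equivalence is spread over \emph{all} stalks of $\bar\sigma$ (in the skyscraper example it sits at $p$, not at $x$), so no comparison with a single stalk can work; indeed ``$f_{\bar\sigma}$ is a weak equivalence'' is essentially the lemma itself for the subpolyhedron $\bar\sigma$. The argument that actually closes the gap runs with the quantifiers in the other order: fix one lifting problem $(\partial\Delta^n \rightarrow \mathcal{F}(K),\ \Delta^n \rightarrow \mathcal{G}(K))$, use the HELP characterisation of weak equivalences together with the filtered-colimit trick of Section 1.1.4 to choose a subdivision of $K$ \emph{fine enough for that particular problem}, so that over each small piece the stalkwise hypothesis yields a solution after shrinking, and only then use flexibility to glue these local solutions cell by cell (this gluing stage is where your homotopy-pullback bookkeeping genuinely belongs). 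A fixed triangulation chosen in advance, as in your double induction, cannot do this. Two smaller repairs would also be needed even after the main fix: your induction reaches only finite subpolyhedra, so sections over arbitrary open sets require a further limit argument over a compact exhaustion (an inverse tower of fibrations); and the dimension-$0$ base case should be phrased for finitely many points, where the product of stalkwise equivalences between fibrant objects is indeed a weak equivalence.
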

The above Lemma \ref{Gromov2} holds in more general settings 
(cf. Lemma \ref{LemGromov1} and Lemma \ref{LemGromov2}).
	\subsection{Main results}
We want to understand a flexible sheaf as a fibrant object.
Furthermore, we hope to understand $\Delta : \mathcal{F} \rightarrow \mathcal{F}^\ast$ as a fibrant replacement of $\mathcal{F}$ by Lemma \ref{Gromov2}.
\begin{que}
Is there a model structure on ${\bf Sh}(B; {\bf qTop})$ such that the followings hold?
\begin{itemize}
\item A morphism $f$ in ${\bf Sh}(B; {\bf qTop})$ is a weak equivalence \\
$\Leftrightarrow$ $f$ is a stalkwise weak equivalence.
\item An object $\mathcal{F}$ in ${\bf Sh}(B; {\bf qTop})$ is fibrant \\
$\Leftrightarrow$ $\mathcal{F}$ is flexible.
\end{itemize}
\end{que}
In this paper, we give two answers.
The first result is that ${\bf Sh}(B; {\bf qTop})$ can be embedded in a model category. 
\setcounter{section}{3}
\setcounter{thm}{0}
\begin{thm}
There exists a right proper model category ${\bf PSh}_\ast(\tilde{\mathscr{U}}_B; {\bf qTop})$ and a fully faithful and left exact embedding $\iota : {\bf Sh}(B; {\bf qTop}) \rightarrow {\bf PSh}_\ast(\tilde{\mathscr{U}}_B; {\bf qTop})$ such that the followings hold. \par
Let $f : \mathcal{F} \rightarrow \mathcal{G}$ be a morphism in ${\bf Sh}(B; {\bf qTop})$.
\begin{itemize}
\item $\iota(f)$ is a weak equivalence \\
$\Leftrightarrow$ $f$ is a stalkwise weak equivalence
(i.e. $f_x : \mathcal{F}_x \rightarrow \mathcal{G}_x$ is a weak equivalence for any $x \in B$).
\item $\iota(f)$ is a fibration \\
$\Leftrightarrow$ $f$ is a flexible extension
(cf. Definition \ref{DefFlexible}).
\end{itemize}
In particular, for a continuous sheaf $\mathcal{F}$ on $B$, 
\begin{itemize}
\item $\iota(\mathcal{F})$ is fibrant \\
$\Leftrightarrow$ $\mathcal{F}$ is flexible
(cf. Definition \ref{DefFlexible}).
\end{itemize}
\end{thm}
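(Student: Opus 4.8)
The plan is to realize $\mathbf{PSh}_*(\tilde{\mathscr U}_B;\mathbf{qTop})$ as a cofibrantly generated model category whose fibrations are engineered to coincide with the flexible extensions, and then to read off the statement for sheaves through the embedding $\iota$. I take for granted, from Section 1.2, that $\mathbf{qTop}$ is a right proper, cofibrantly generated model category with generating cofibrations $I_0$ and generating trivial cofibrations $J_0$, and that stalks in $\mathbf{qTop}$ are filtered colimits which preserve finite limits and fibrations and carry weak equivalences to weak equivalences.

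First I would pin down $\tilde{\mathscr U}_B$ and the functor $\iota$, sending a continuous sheaf $\mathcal F$ to the presheaf $U \mapsto \mathcal F(U)$ indexed by the objects of $\tilde{\mathscr U}_B$. Since limits of $\mathbf{qTop}$-valued (pre)sheaves are computed objectwise and the forgetful functor $\mathbf{Sh}\hookrightarrow\mathbf{PSh}$ creates limits, $\iota$ is automatically left exact; full faithfulness follows because a sheaf is determined by its sections on the objects of $\tilde{\mathscr U}_B$, so restriction along $\tilde{\mathscr U}_B$ is full and faithful on $\mathbf{Sh}(B;\mathbf{qTop})$. Next I would build the generating sets: the objectwise generators $I=\{\,i\otimes h : i\in I_0,\ h \text{ representable}\,\}$ and $J=\{\,j\otimes h : j\in J_0\,\}$, and then enlarge $J$ by a set $K$ of maps encoding the extension data of Definition \ref{DefFlexible}, one map per restriction/extension datum, so that a morphism is $(J\cup K)$-injective precisely when it is a flexible extension. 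The class $W$ of weak equivalences I would define outright to be the stalkwise weak equivalences.

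The model structure is then obtained from Kan's recognition theorem applied to $(I,\,J\cup K)$. The hypotheses to verify are: smallness of the domains, which holds since everything is assembled from small objects of $\mathbf{qTop}$; that $W$ satisfies two-out-of-three and is closed under retracts, which is routine because stalks are functorial and $\mathbf{qTop}$ is a model category; that every $(J\cup K)$-cell complex lies in $W$; and the compatibility $\operatorname{inj}(I)=W\cap\operatorname{inj}(J\cup K)$. With the structure in hand, the fibrations are by definition the $(J\cup K)$-injective maps, hence exactly the flexible extensions; specializing to the terminal sheaf $\mathcal G=*$ and using that $\iota$ preserves the terminal object gives that $\iota(\mathcal F)$ is fibrant iff $\mathcal F$ is flexible. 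That $\iota(f)$ is a weak equivalence iff $f$ is stalkwise follows immediately from the definition of $W$ together with the compatibility of $\iota$ with stalks. Right properness is checked on stalks: since stalks are filtered colimits that preserve finite limits and fibrations, the pullback of a stalkwise weak equivalence along a fibration becomes, at each $x\in B$, the pullback in $\mathbf{qTop}$ of a weak equivalence along a fibration, and right properness of $\mathbf{qTop}$ then yields the claim.

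The main obstacle is the simultaneous verification that the $(J\cup K)$-cell complexes are stalkwise weak equivalences and that $(J\cup K)$-injectivity really does coincide with the geometric notion of flexible extension; this is precisely where the generalized Gromov lemmas (Lemma \ref{LemGromov1} and Lemma \ref{LemGromov2}) must be fed into the small object argument. Concretely, one has to show that attaching a cell from $K$ alters no stalk up to weak equivalence---so that localizing at the flexibility maps does not disturb the homotopy type of the stalks---while at the same time forcing exactly the flexibility lifting property and no more. The analogue of Gromov's Lemma \ref{Gromov3}, guaranteeing that stalkwise equivalences between flexible objects are already sectionwise, is what will make the compatibility condition $\operatorname{inj}(I)=W\cap\operatorname{inj}(J\cup K)$ come out correctly; marrying this homotopical input to the combinatorics of the cell attachments is the delicate step, whereas the embedding and right-properness claims are comparatively formal.
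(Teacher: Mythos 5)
Your plan---a single application of the Recognition Theorem (Theorem \ref{ThmRecognition}) to a pair $(I,\,J\cup K)$ with $W$ declared to be the stalkwise weak equivalences---breaks down at exactly the step you yourself flag as ``delicate'', and the paper's proof is organized precisely so as to avoid that step. Two concrete problems. First, because your generating set $J\cup K$ contains the objectwise (projective) generators $J$, the resulting fibrations would be maps that are objectwise fibrations at \emph{every} object of $\tilde{\mathscr{U}}_B$, including the non-compact objects $U\cap K$, and in addition satisfy the flexibility lifting property. But a flexible extension (Definition \ref{DefFlexible}) only constrains the maps $f_{K\subset L}$ for \emph{compact} pairs $K\subset L$ and imposes nothing at the other objects of $\tilde{\mathscr{U}}_B$; so your class of $(J\cup K)$-injective maps is strictly smaller than the class of flexible extensions, and the second bullet of the theorem already fails. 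In the paper, the generating acyclic cofibrations of the relevant structure (Theorem \ref{ThmCompactFlexible}) are \emph{only} the maps $f_{!(K\subset L)}$ with $f\in J_{\bf qTop}$; no objectwise generators are mixed in. Second, the compatibility condition $\operatorname{rlp}(I)=W\cap\operatorname{rlp}(J\cup K)$ is false for your choices. With projective $I$, $\operatorname{rlp}(I)$ is the class of objectwise acyclic fibrations, and such a map need not be a flexible extension: the restriction maps of a presheaf with objectwise contractible values are completely uncontrolled (take a presheaf whose restriction map $\mathcal{F}(L)\rightarrow\mathcal{F}(K)$ is $\Delta^0\overset{0}{\hookrightarrow}\Delta^1$, which is not a fibration in {\bf qTop}). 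Conversely, the inclusion $W\cap\operatorname{rlp}(J\cup K)\subset\operatorname{rlp}(I)$ is a statement of the type of Lemma \ref{Gromov3} (``stalkwise equivalence plus flexibility implies sectionwise''), which is stated by Gromov only for polyhedra, is nowhere proved in this paper, and in any case cannot be invoked for the arbitrary base $B$ of Theorem \ref{Thm1}.

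The missing idea is the mixing argument. The paper first builds the \emph{compact flexible} model structure (Theorem \ref{ThmCompactFlexible}), in which \emph{both} generating sets $I_{cpt}$ and $J_{cpt}$ consist of maps of the form $f_{!(K\subset L)}$ and the weak equivalences are the \emph{compact-wise} ones ($f_K$ a weak equivalence for every compact $K$); with that choice of $W$ the compatibility condition of the Recognition Theorem reduces to an elementary pullback argument (conditions (2.1)--(2.3) in the paper) and needs no geometric input. It then builds the pointwise projective structure (Theorem \ref{ThmPointwiseProjective}), whose weak equivalences are the pointwise ones, and applies Cole's mixing theorem (Theorem \ref{ThmMix}) to combine the fibrations of the first structure with the weak equivalences of the second; right properness comes along from Proposition \ref{PropMixProper}. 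This is what allows stalkwise weak equivalences and flexible-extension fibrations to coexist in one model structure without ever proving a Lemma-\ref{Gromov3}-type statement. Two further corrections: the Gromov-type Lemmas \ref{LemGromov1} and \ref{LemGromov2} play no role in Theorem \ref{Thm1} (they are needed only for the factorization axiom of the ABC prefibration structure, Theorem \ref{Thm2}), so your appeal to them here is misplaced; and $\iota$ cannot be plain restriction $U\mapsto\mathcal{F}(U)$---one needs the germ construction $\iota(\mathcal{F})(A)=\varinjlim_{U\supset A}\mathcal{F}(U)$, so that the value at a singleton $\{x\}$ is the stalk $\mathcal{F}_x$, which is exactly what turns ``pointwise weak equivalence'' into ``stalkwise weak equivalence'' in the first bullet.
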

Unfortunately, ${\bf Sh}(B; {\bf qTop})$ itself cannot be expected to have cofibratons.
Therefore, ${\bf Sh}(B; {\bf qTop})$ itself cannot be expected to have a model structure.
However, ${\bf Sh}(B; {\bf qTop})$ has fibrations and weak equivalences.
An {\it ABC (pre)fibration structure} is known as a categorical structure equipped with fibrations and weak equivalences
(cf. \cite{radulescu2006cofibrations} or Section 4).
The second result is that ${\bf Sh}(B; {\bf qTop})$ has an ABC prefibration structure.
\setcounter{section}{4}
\setcounter{thm}{0}
\begin{thm}
Let $B$ be a strongly locally contractible normal Hausdorff space.
There exists an ABC prefibration structure on ${\bf Sh}(B; {\bf qTop})$ such that the followings hold. \par
Let $f : \mathcal{F} \rightarrow \mathcal{G}$ be a morphism in ${\bf Sh}(B; {\bf qTop})$.
\begin{itemize}
\item $f$ is a weak equivalence \\
$\Leftrightarrow$ $f$ is a stalkwise weak equivalence
(i.e. $f_x : \mathcal{F}_x \rightarrow \mathcal{G}_x$ is a weak equivalence for any $x \in B$).
\item $f$ is a fibration \\
$\Leftrightarrow$ $f$ is a flexible extension
(cf. Definition \ref{DefFlexible}).
\end{itemize}
In particular, for a continuous sheaf $\mathcal{F}$ on $B$, 
\begin{itemize}
\item $\mathcal{F}$ is fibrant \\
$\Leftrightarrow$ $\mathcal{F}$ is flexible
(cf. Definition \ref{DefFlexible}).
\end{itemize}
\end{thm}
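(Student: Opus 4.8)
The plan is to take the two distinguished classes to be exactly the ones named in the statement — declare a morphism of ${\bf Sh}(B;{\bf qTop})$ to be a \emph{weak equivalence} if it is a stalkwise weak equivalence and a \emph{fibration} if it is a flexible extension in the sense of Definition \ref{DefFlexible} — and then to verify the axioms of an ABC prefibration structure for this data. First I would record the limit calculus we need. The forgetful functor ${\bf Sh}(B;{\bf qTop}) \to {\bf PSh}(B;{\bf qTop})$ creates small limits and limits of presheaves are computed sectionwise, so ${\bf Sh}(B;{\bf qTop})$ has a terminal object (the one-point constant sheaf) and all pullbacks, with $(\mathcal{F}\times_{\mathcal{G}}\mathcal{H})(U)=\mathcal{F}(U)\times_{\mathcal{G}(U)}\mathcal{H}(U)$. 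Since the stalk $(-)_x$ is a filtered colimit over neighbourhoods of $x$ and filtered colimits commute with finite limits in ${\bf qTop}$, we get the crucial compatibility $(\mathcal{F}\times_{\mathcal{G}}\mathcal{H})_x \cong \mathcal{F}_x \times_{\mathcal{G}_x}\mathcal{H}_x$, which lets me test everything stalkwise.

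With this in hand the formal axioms are routine. The class of stalkwise weak equivalences satisfies two-out-of-three (indeed two-out-of-six), because on each stalk it reduces to the corresponding property of weak equivalences in ${\bf qTop}$. Flexible extensions contain the isomorphisms and are closed under composition; both facts I would read off directly from the relative lifting (microflexibility / section-extension) formulation of Definition \ref{DefFlexible}, exactly as for any class defined by a right lifting property. Finally, because Definition \ref{DefFlexible} is arranged so that $\mathcal{F}$ is flexible precisely when $\mathcal{F}\to\ast$ is a flexible extension, the fibrant objects for the structure are exactly the flexible sheaves, which gives the last assertion of the theorem.

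The heart of the argument, and the step I expect to be the main obstacle, is base change: I must show that flexible extensions, and acyclic flexible extensions, are stable under pullback along arbitrary morphisms. For the weak-equivalence part this is where right properness enters. A flexible extension is in particular a stalkwise fibration, so by the stalk–pullback compatibility above its pullback is stalkwise the pullback of a fibration; and since pullbacks of weak equivalences along fibrations are again weak equivalences in ${\bf qTop}$ (equivalently, since the ambient model category ${\bf PSh}_\ast(\tilde{\mathscr{U}}_B;{\bf qTop})$ of Section 3 is right proper), the pulled-back map is again a stalkwise weak equivalence. The genuinely hard part is the flexibility half: one must check that the relative flexibility property defining a flexible extension is inherited by the pulled-back map. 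Unwinding Definition \ref{DefFlexible}, this amounts to transporting section-extension data over the admissible test spaces across the pullback square, and it is precisely here that the hypothesis that $B$ is strongly locally contractible, normal, and Hausdorff is needed: it is what guarantees that the local contractions and Tietze-type extensions used to produce flexible liftings persist under restriction and base change.

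Once base change is established the ABC prefibration axioms all hold, and no factorization axiom is required — this is exactly why we obtain a \emph{pre}fibration structure rather than a full fibration structure, since ${\bf Sh}(B;{\bf qTop})$ has no reasonable cofibrations and the factorizations of the ambient model category involve colimits. To see that the structure is the intended one, Lemma \ref{Gromov2} exhibits the diagonal $\Delta:\mathcal{F}\to\mathcal{F}^\ast$ as a stalkwise weak equivalence whose target is flexible, i.e.\ as a weak equivalence into a fibrant object, so every sheaf admits a fibrant replacement; and Lemma \ref{Gromov3} says that a weak equivalence between fibrant objects is already a sectionwise weak equivalence. Together they identify $\Delta$ as a fibrant replacement of $\mathcal{F}$ and recover Theorem \ref{Gromov1}. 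As an alternative to the direct verification, when the embedding of Section 3 is available one may instead transport the prefibration axioms along $\iota$: it is fully faithful and left exact and detects both weak equivalences and fibrations, so the limit-type axioms descend automatically from those of the right proper model category ${\bf PSh}_\ast(\tilde{\mathscr{U}}_B;{\bf qTop})$, the non-preservation of colimits by $\iota$ again accounting for the absence of factorizations.
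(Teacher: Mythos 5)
Your proposal contains a fatal gap: you assert that ``no factorization axiom is required --- this is exactly why we obtain a \emph{pre}fibration structure rather than a full fibration structure,'' but this misreads Definition \ref{DefABC}. Axiom \emph{6.}\ of an ABC prefibration structure demands that \emph{every} morphism $f$ admit a decomposition $f = p \circ e$ with $e$ a weak equivalence and $p$ a fibration, and this axiom is precisely the heart of the theorem; the prefix ``pre'' does not excuse you from it. The paper's proof handles it by forming the \emph{flexible track}: $f$ induces $\tilde{f} : \mathcal{F}^\ast \rightarrow \mathcal{G}^\ast$ on sheaves of formal sections, one takes its mapping track $\tilde{f'} : \mathcal{F}^\ast_f \rightarrow \mathcal{G}^\ast$, proves that $\tilde{f'}$ is a flexible extension (Lemma \ref{LemGromov2}, which is where $B$ normal Hausdorff is used), and then factors $f$ as $\mathcal{F} \rightarrow \mathcal{F}^\ast_f \times_{\mathcal{G}^\ast} \mathcal{G} \rightarrow \mathcal{G}$. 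The second map is a flexible extension because it is a pullback of one; the first is a stalkwise weak equivalence because $\Delta : \mathcal{F} \rightarrow \mathcal{F}^\ast$ is one (Lemma \ref{LemGromov1}, which is where $B$ strongly locally contractible is used), $\mathcal{F}^\ast \rightarrow \mathcal{F}^\ast_f$ is a sectionwise hence stalkwise weak equivalence (Proposition \ref{PropSectionwiseStalkwise}), and $\mathcal{F}^\ast_f \times_{\mathcal{G}^\ast} \mathcal{G} \rightarrow \mathcal{F}^\ast_f$ is a weak equivalence by right properness of the ambient model category ${\bf PSh}_\ast(\tilde{\mathscr{U}}_B; {\bf qTop})$. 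None of this construction appears in your proposal; your closing remark that $\Delta$ exhibits a fibrant replacement only covers the absolute case $\mathcal{G} = {\bf 1}$, not the relative factorization of an arbitrary morphism that axiom \emph{6.}\ requires.

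Relatedly, you have misplaced both the difficulty and the role of the hypotheses on $B$. Base change (axiom \emph{5.}\ of Definition \ref{DefABC}) is \emph{not} where strong local contractibility, normality, or Hausdorffness is consumed: flexible extensions are exactly the morphisms with the right lifting property against the maps $(\Lambda^n_i)_{!L} \cup_{(\Lambda^n_i)_{!K}} \Delta^n_{!K} \rightarrow \Delta^n_{!L}$, equivalently they are the fibrations of the pointwise flexible model structure under the left exact embedding $\iota$ of Theorem \ref{Thm1}, so their stability under pullback is purely formal, and the acyclic case follows since pullbacks of acyclic fibrations are acyclic fibrations in any model category --- no Tietze-type extension argument is needed there. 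The topological hypotheses are consumed entirely by Lemmas \ref{LemGromov1} and \ref{LemGromov2}, i.e.\ by the factorization step you discarded. Your treatment of the remaining axioms (terminal object, composition, two-out-of-three, fibrant objects being flexible sheaves, and the descent of the limit-type axioms along $\iota$ from Theorem \ref{Thm1}) is in line with the paper, so the correct division of labour is: the formal axioms \emph{1.}--\emph{5.}\ come from the embedding, and axiom \emph{6.}\ comes from the flexible track.
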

The proof of the first result is almost purely category-theoretic.
On the other hand, the proof of the second result reflects geometric considerations by Gromov.
Most of the axioms of the ABC prefibration category ${\bf Sh}(B; {\bf qTop})$ follow from Theorem \ref{Thm1}.
However, only the proof of the axiom of the existence of a decomposition needs a generalisation of Lemma \ref{Gromov2}
(cf. Lemma \ref{LemGromov1} and Lemma \ref{LemGromov2}).
\setcounter{section}{0}
\setcounter{thm}{3}
	\subsection{History of h-principles}
In the Cousin problem solution, Oka \cite{10.32917/hmj/1558490525} showed that the solvability of certain complex analytic problems can be attributed to the solvability of topological problems.
This is what is now called {\it Oka principle}, and it can be said that the germ of the philosophy of the h-principle already existed here. \par
For a complex manifold $X$, let $\mathcal{O}_X^\ast$ be the sheaf of non-zero holomorphic functions everywhere, and let $\mathcal{C}_X^\ast$ be the sheaf of non-zero continuous functions everywhere.
{\it Oka principle} can be stated in the following format.
\begin{thm}[Oka \cite{10.32917/hmj/1558490525}; cf. \cite{forstnerivc2011stein}]
Let $X$ be a Stein manifold.
Then, the map $H^1(X; \mathcal{O}_X^\ast) \rightarrow H^1(X; \mathcal{C}_X^\ast)$ induced by the inclusion $\mathcal{O}_X^\ast \subset \mathcal{C}_X^\ast$ is an isomorphism.
\end{thm}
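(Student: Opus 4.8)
The plan is to deduce the isomorphism from Cartan's Theorem B by comparing the long exact cohomology sequences attached to the two exponential sequences of sheaves. Note first that $H^1(X; \mathcal{O}_X^\ast)$ is the group of holomorphic line bundles on $X$ while $H^1(X; \mathcal{C}_X^\ast)$ is the group of topological complex line bundles, and the map in question is ``forget the holomorphic structure''; so the assertion is that every topological line bundle on a Stein manifold carries a unique holomorphic structure. To prove it I would record the exponential short exact sequences
\[
0 \to \mathbb{Z} \to \mathcal{O}_X \xrightarrow{\exp(2\pi i \cdot)} \mathcal{O}_X^\ast \to 0, \qquad 0 \to \mathbb{Z} \to \mathcal{C}_X \xrightarrow{\exp(2\pi i \cdot)} \mathcal{C}_X^\ast \to 0,
\]
together with the morphism between them induced by the identity on $\mathbb{Z}$ and the inclusions $\mathcal{O}_X \hookrightarrow \mathcal{C}_X$ and $\mathcal{O}_X^\ast \hookrightarrow \mathcal{C}_X^\ast$. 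Passing to sheaf cohomology gives a commutative ladder of long exact sequences whose connecting homomorphisms land in $H^\bullet(X; \mathbb{Z})$; the vertical maps on the $\mathbb{Z}$-cohomology are identities because the left column of the ladder is the identity on $\mathbb{Z}$.

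Next I would kill the intermediate terms. Since $X$ is Stein and $\mathcal{O}_X$ is coherent, Cartan's Theorem B gives $H^q(X; \mathcal{O}_X) = 0$ for all $q \geq 1$, and in particular $H^1(X; \mathcal{O}_X) = H^2(X; \mathcal{O}_X) = 0$. On the continuous side, $\mathcal{C}_X$ is a fine sheaf, as it admits continuous partitions of unity subordinate to any open cover of the (paracompact) manifold $X$, so $H^q(X; \mathcal{C}_X) = 0$ for all $q \geq 1$ and in particular $H^1(X; \mathcal{C}_X) = H^2(X; \mathcal{C}_X) = 0$. Feeding these vanishings into the two long exact sequences, the connecting homomorphisms become isomorphisms
\[
H^1(X; \mathcal{O}_X^\ast) \xrightarrow{\cong} H^2(X; \mathbb{Z}), \qquad H^1(X; \mathcal{C}_X^\ast) \xrightarrow{\cong} H^2(X; \mathbb{Z}),
\]
each of which is the first Chern class map.

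Finally, commutativity of the ladder shows that the inclusion-induced map $H^1(X; \mathcal{O}_X^\ast) \to H^1(X; \mathcal{C}_X^\ast)$ intertwines these two isomorphisms with the identity on $H^2(X; \mathbb{Z})$, and is therefore itself an isomorphism; equivalently, one applies the five lemma to the ladder at the term $H^1(X; \mathcal{O}_X^\ast)$, whose two neighbouring vertical maps are isomorphisms of vanishing groups and whose connecting target is the identity on $H^2(X; \mathbb{Z})$. The only genuinely hard input is Cartan's Theorem B: the fineness of $\mathcal{C}_X$ is an elementary partition-of-unity argument, whereas the vanishing of the higher cohomology of coherent analytic sheaves is the deep structural theorem underlying the whole discussion, and it is exactly the point at which the Stein hypothesis on $X$ enters.
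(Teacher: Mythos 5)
Your proof is correct, and it is worth noting at the outset that the paper itself offers no proof of this statement: it is quoted as a historical background theorem, attributed to Oka with a pointer to Forstneri\v{c}'s book. What you wrote is the standard sheaf-cohomological argument, and it is precisely the argument the paper alludes to in the sentence immediately following the theorem, namely that ``because $GL_1(\mathbb{C})$ is abelian, extensive skills in sheaf cohomologies can be used to prove Oka principle.'' Your two exponential sequences, the vanishing $H^q(X;\mathcal{O}_X)=0$ for $q\ge 1$ from Cartan's Theorem B, the vanishing $H^q(X;\mathcal{C}_X)=0$ for $q\ge 1$ from fineness of $\mathcal{C}_X$ on the paracompact manifold $X$, and the resulting identification of both groups with $H^2(X;\mathbb{Z})$ via first Chern classes compatible with the identity on $\mathbb{Z}$-cohomology, together constitute a complete and correct proof; the abelian hypothesis is used implicitly in that $\mathcal{O}_X^\ast$ and $\mathcal{C}_X^\ast$ are sheaves of abelian groups, so their $H^1$ is an honest group and the long exact sequence exists past degree one. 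This is exactly why the argument breaks down for $GL_n(\mathbb{C})$ with $n\ge 2$, where $H^1$ is only a pointed set and no connecting-map comparison is available --- which is the point of Grauert's theorem quoted right after in the paper. One small stylistic remark: your final paragraph invokes both the ``composition of isomorphisms'' argument and the five lemma; either one alone suffices, and the first is cleaner since the relevant segment of the ladder already exhibits the map as $(c_1^{\mathrm{top}})^{-1}\circ c_1^{\mathrm{hol}}$.
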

The sheaf $\mathcal{O}_X^\ast$ (resp. $\mathcal{C}_X^\ast$) is regarded as the sheaf of holomorphic (resp. continuous) functions valued in $\mathbb{C}^\ast (\cong GL_1(\mathbb{C}))$.
The homology group $H^1(X; \mathcal{O}_X^\ast)$ (resp. $H^1(X; \mathcal{C}_X^\ast)$) is regarded as the set of isomorphism classes of holomorphic (resp. continuous) complex line bundles over $X$.
Then, the above theorem can be rephrased as follow.
\begin{cor}
Let $X$ be a Stein manifold.
For any $C^0$-almost complex line bundle, there exists only one holomorphic complex line bundle which is isomorphic to it, up to holomorphic isomorphic.
\end{cor}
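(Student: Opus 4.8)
The plan is to translate the cohomological isomorphism of the preceding Oka theorem into the stated classification of line bundles by means of the standard dictionary between first \v{C}ech cohomology with coefficients in a sheaf of multiplicative units and isomorphism classes of line bundles. Since $X$ is a Stein manifold, it is in particular paracompact, so \v{C}ech cohomology computes sheaf cohomology and the classifying statements below are valid.

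First I would recall the identifications. For the sheaf $\mathcal{O}_X^\ast$ of nowhere-vanishing holomorphic functions, the group $H^1(X; \mathcal{O}_X^\ast)$ is canonically identified with the Picard group $\mathrm{Pic}(X)$, i.e. the set of isomorphism classes of holomorphic complex line bundles on $X$: a class is represented by a \v{C}ech cocycle $\{g_{ij}\}$ with $g_{ij} \in \mathcal{O}_X^\ast(U_i \cap U_j)$, and these are precisely the holomorphic transition functions of a line bundle, cohomologous cocycles corresponding to holomorphically isomorphic bundles. In exactly the same way, $H^1(X; \mathcal{C}_X^\ast)$ is identified with the set of isomorphism classes of $C^0$ (topological) complex line bundles, the transition functions now being nowhere-vanishing continuous functions.

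Next I would observe that the map $H^1(X; \mathcal{O}_X^\ast) \to H^1(X; \mathcal{C}_X^\ast)$ induced by the sheaf inclusion $\mathcal{O}_X^\ast \subset \mathcal{C}_X^\ast$ sends a holomorphic line bundle to its underlying topological complex line bundle, since on cocycles it merely reinterprets a holomorphic transition function as a continuous one. Under this identification the statement that the induced map is an isomorphism splits into its two halves: surjectivity says that every topological complex line bundle arises from some holomorphic line bundle, which is precisely the \emph{existence} of a holomorphic structure on a given $C^0$-almost complex line bundle; injectivity says that two holomorphic line bundles with isomorphic underlying topological bundles are already holomorphically isomorphic, which is precisely the \emph{uniqueness} up to holomorphic isomorphism. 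Combining the two halves yields the corollary.

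The only point requiring care, rather than a genuine obstacle, is the verification that $H^1$ with coefficients in the sheaf of units classifies line bundles; this is standard once paracompactness of $X$ is invoked, and the fact that the structure group $\mathbb{C}^\ast \cong GL_1(\mathbb{C})$ is abelian makes the torsor-theoretic bookkeeping unproblematic. In short, the corollary is a direct reformulation of the theorem, and no analytic input beyond Oka's result is needed.
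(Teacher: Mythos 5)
Your proposal is correct and matches the paper's own reasoning: the paper likewise treats the corollary as a direct rephrasing of Oka's theorem via the identification of $H^1(X; \mathcal{O}_X^\ast)$ and $H^1(X; \mathcal{C}_X^\ast)$ with the sets of isomorphism classes of holomorphic and continuous complex line bundles, with the induced map given by forgetting the holomorphic structure. Your added remarks on surjectivity giving existence, injectivity giving uniqueness, and paracompactness justifying the \v{C}ech identification are exactly the (implicit) content of the paper's argument.
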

Because $GL_1(\mathbb{C})$ is abelian, extensive skills in sheaf cohomologies can be used to prove {\it Oka principle}.
On the other hand, the general $GL_n(\mathbb{C})$ is not abelian.
Grauert showed that the same statement still holds true for $GL_n(\mathbb{C})$ in general.
\begin{thm}[Grauert \cite{Grauert1958}; cf. \cite{forstnerivc2011stein}] \label{Grauert}
Let $X$ be a Stein manifold.
For any $C^0$-almost complex vector bundle, there exists only one holomorphic complex vector bundle which is isomorphic to it, up to holomorphic isomorphic.
\end{thm}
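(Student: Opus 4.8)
The plan is to prove the statement in its standard Oka--Grauert form: that the natural map
\[
H^1(X; \mathcal{O}_X(GL_n)) \longrightarrow H^1(X; \mathcal{C}_X(GL_n))
\]
between the non-abelian cohomology sets classifying holomorphic, respectively continuous, rank-$n$ complex vector bundles is a bijection. Surjectivity of this map is exactly the assertion that every $C^0$-almost complex bundle carries a holomorphic structure, and injectivity is the uniqueness up to holomorphic isomorphism. The key observation is that both halves reduce to a single principle: over a Stein manifold, a continuous section of a holomorphic fibre bundle whose fibre is $GL_n(\mathbb{C})$ (or a homogeneous space thereof) can be deformed to a holomorphic section.

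For injectivity I would, given two holomorphic bundles $E$ and $F$ of the same rank, form the holomorphic fibre bundle $\mathrm{Isom}(E,F) \to X$ whose fibre is $GL_n(\mathbb{C})$ and whose holomorphic (resp. continuous) sections are precisely the holomorphic (resp. continuous) isomorphisms $E \to F$. A topological isomorphism is then a continuous section, and deforming it to a holomorphic section produces a holomorphic isomorphism. For surjectivity I would exploit that, $X$ being Stein, a continuous rank-$n$ bundle is presented by a continuous trivialising cocycle, and one upgrades this cocycle to a holomorphic one by applying the same deformation principle to the clutching data step by step along an exhaustion of $X$.

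The analytic engine is the Oka principle for maps into $GL_n(\mathbb{C})$. Because $GL_n(\mathbb{C})$ is a complex Lie group it admits a dominating spray (built from the exponential map and left translations), hence is an Oka, i.e. elliptic, manifold; this ellipticity over a Stein base is what makes the passage from continuous to holomorphic sections possible. Concretely, I would exhaust $X$ by an increasing sequence of holomorphically convex compact sets and inductively extend the holomorphic section (or structure) across each bump. At each step the linearised problem is solved using Cartan's Theorems A and B (coherence and the vanishing of higher cohomology of coherent sheaves on Stein spaces), the passage from local to global data uses the Oka--Weil approximation theorem, and the multiplicative correction needed to glue two holomorphic pieces along the boundary of a Cartan pair uses Cartan's splitting lemma for $GL_n(\mathbb{C})$-valued holomorphic matrices close to the identity.

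The main obstacle is precisely the non-abelianness of $GL_n(\mathbb{C})$. In the line-bundle case the abelian group $\mathbb{C}^\ast$ lets one argue directly via the exponential sequence and linear sheaf cohomology, whereas here each gluing step yields only a holomorphic approximation that must be multiplicatively corrected by a matrix near the identity, and one has to control the whole iteration so that the infinite sequence of corrections converges to a genuine holomorphic object. Managing the convergence of this non-abelian induction --- Grauert's \emph{bump method} --- is the technical heart of the argument, and it is here that the Stein hypothesis is genuinely used, through Theorems A and B together with the holomorphic convexity of the exhausting compacta.
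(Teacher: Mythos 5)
The paper never proves this statement: Grauert's theorem appears in Section 0.4 (``History of h-principles'') purely as background, quoted with citations to Grauert and to Forstneri\v{c}'s book, so there is no internal proof to compare yours against. Your sketch is, however, a faithful outline of exactly the argument contained in those cited references: the reformulation as bijectivity of the map $H^1(X;\mathcal{O}_X(GL_n)) \rightarrow H^1(X;\mathcal{C}_X(GL_n))$ of non-abelian cohomology sets, injectivity via deforming a continuous section of the holomorphic bundle $\mathrm{Isom}(E,F)$ (with fibre $GL_n(\mathbb{C})$) to a holomorphic one, surjectivity via upgrading a continuous trivialising cocycle, and the analytic core built from the Oka property of $GL_n(\mathbb{C})$ (dominating spray from the exponential map), Cartan's Theorems A and B, Oka--Weil approximation, Cartan's splitting lemma, and Grauert's bump-method induction over a holomorphically convex exhaustion. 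Two caveats. First, what you have written is a proof plan rather than a proof: the step you yourself identify as the technical heart --- controlling the infinite sequence of near-identity multiplicative corrections so that the non-abelian iteration converges --- is only named, and essentially all of the work in Grauert's and Forstneri\v{c}'s treatments lives there. Second, a small imprecision: you write ``Oka, i.e.\ elliptic''; ellipticity (existence of a dominating spray) implies the Oka property, but the converse is not known in general, so the two notions should not be identified --- for $GL_n(\mathbb{C})$ this is harmless since a complex Lie group is genuinely elliptic. It is also worth noting that this classical route through Cartan pairs and convergence estimates is precisely the kind of argument the paper is trying to repackage abstractly: in the paper's language the theorem is recorded as the statement that $\pi_0(\mathcal{O}(X, GL_n(\mathbb{C}))) \rightarrow \pi_0(\mathcal{C}(X, GL_n(\mathbb{C})))$ is a bijection, a special case of the hoped-for stack-theoretic h-principle of Section 0.5.
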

I do not know whether {\it Oka principle} inspired Nash's work, but the idea of attributing the solvability of analytic problems to the solvability of topological problems reappears in the proof of {\it Nash embedding theorems} \cite{nash1954c1}.
This had a direct impact on the later work of Hirsch \cite{hirsch1959immersions} and Smale \cite{smale1959classification1, smale1959classification2}, and the prototype of the h-principle was completed as the h-principle for immersions of manifolds.
This is now known as {\it Hirsch-Smale h-principle}. \par
Let $M$ and $N$ be manifolds.
A {\it formal immersion} $M \rightarrow N$ is a pair $(f, \tilde{f})$ of a continuous map $f : M \rightarrow N$ and a continuous bundle map $\tilde{f} : TM \rightarrow TN$ over $f$ such that each $\tilde{f} : T_xM \rightarrow T_{f(x)}N$ is injective.
Let $\operatorname{Imm}(M, N)$ be the set of immersions $M \looparrowright N$.
Let $\operatorname{Imm^f}(M, N)$ be the set of formal immersions $M \rightarrow N$.
{\it Hirsch-Smale h-principle} is stated in the following format.
\begin{thm}[Hirsch \cite{hirsch1959immersions}, Smale \cite{smale1959classification1,smale1959classification2}]
Let $M$ and $N$ be manifolds.
If we have an inequality $\operatorname{dim}(M) < \operatorname{dim}(N)$, then the inclusion map $\operatorname{Imm}(M, N) \subset \operatorname{Imm^f}(M, N)$ is a weak homotopy equivalence.
\end{thm}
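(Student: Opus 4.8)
The plan is to recast the immersion problem in the sheaf-theoretic language above and then invoke Theorem \ref{Gromov1}. First I would topologize both sides with the (weak or strong) Whitney topology and triangulate $M$, so that the base $B = M$ is a polyhedron, as required by Theorem \ref{Gromov1}. On $B$ I introduce the continuous sheaf $\mathcal{F}$ with $\mathcal{F}(U) = \operatorname{Imm}(U, N)$ for open $U \subseteq M$, together with its sheaf of formal sections $\mathcal{F}^\ast$; unwinding the definitions, a formal section over $U$ is a fibrewise injective bundle map $TU \to TN$, so that $\mathcal{F}(M) = \operatorname{Imm}(M,N)$, $\mathcal{F}^\ast(M) = \operatorname{Imm}^f(M,N)$, and the diagonal $\Delta : \mathcal{F} \to \mathcal{F}^\ast$ induces on global sections exactly the inclusion in the statement. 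The theorem is therefore the assertion that $\mathcal{F}$ satisfies the parametric h-principle.

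By Theorem \ref{Gromov1} it then suffices to prove that $\mathcal{F}$ is flexible. Concretely, Lemma \ref{Gromov2} already supplies that $\Delta$ is a stalkwise weak equivalence and that $\mathcal{F}^\ast$ is flexible; once $\mathcal{F}$ is also known to be flexible, Lemma \ref{Gromov3} promotes the stalkwise equivalence $\Delta$ between the two flexible sheaves $\mathcal{F}$ and $\mathcal{F}^\ast$ to a sectionwise one, which is precisely the parametric h-principle. So the whole problem is reduced to the single property that the immersion sheaf is flexible.

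To establish flexibility I would proceed in two stages. The immersion relation is an open differential relation, and the solution sheaf of an open relation is microflexible: given immersions defined near a compact set together with a homotopy of their formal data, openness lets one lift the homotopy to genuine immersions over a slightly shrunk neighbourhood, continuously in the parameter. Moreover $\mathcal{F}$ is manifestly $\operatorname{Diff}$-invariant, since a diffeomorphism of the source carries immersions to immersions. Gromov's flexibility criterion then turns a microflexible $\operatorname{Diff}$-invariant sheaf over an \emph{open} manifold into a flexible one, which settles the case of open $M$ with no hypothesis on dimensions.

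The hard part will be the closed case, and this is exactly where $\dim M < \dim N$ must be used. The open-manifold criterion provides no room to extend a solution across a top-dimensional cell of the triangulation, so I would argue by induction over the skeleta: supposing an immersion has been built on a neighbourhood of the $(k-1)$-skeleton together with compatible formal data on the $k$-cells, I must extend it to genuine immersions over the $k$-cells. Positive codimension is what makes this extension possible---the obstruction to pushing the new cell into immersed position sits in codimension $\dim N - \dim M \ge 1$ and is removed by a general-position perturbation, equivalently the immersion relation is ample in each principal direction and the extension is furnished by convex integration. Carrying this inductive extension out uniformly in a compact family of parameters, while respecting the data already fixed on lower skeleta, is the technical core inherited from the Smale--Hirsch argument and the main obstacle of the proof; with it in hand, $\mathcal{F}$ is flexible over the closed polyhedron $M$ and Theorem \ref{Gromov1} finishes the proof.
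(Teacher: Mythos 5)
The paper does not actually prove this theorem: it appears in the historical survey (Section 0.5) and is cited to Hirsch and Smale, so there is no internal proof to compare against. Judged on its own terms, your reduction (identify immersions with a continuous sheaf, invoke Theorem \ref{Gromov1} via Lemmas \ref{Gromov2} and \ref{Gromov3}, and reduce everything to flexibility of the immersion sheaf) is the right way to fit the statement into Gromov's framework, but the write-up has genuine gaps at exactly the load-bearing points. First, the identification $\mathcal{F}^\ast(M) = \operatorname{Imm^f}(M,N)$ is not a matter of ``unwinding the definitions'': the paper's $\mathcal{F}^\ast$ is the sheafification of $U \mapsto \mathcal{F}(U)^{\operatorname{Sing^q}(U)}$, and the paper's own Remark after that definition warns that this is really a sheaf of \emph{fibrewise holonomic} sections, not of bundle monomorphisms $TM \to TN$; comparing the two (they are weakly equivalent, not equal) is a separate step you have omitted. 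Second, your open-manifold step misquotes Gromov's criterion: microflexibility plus $\operatorname{Diff}$-invariance does not make the sheaf flexible \emph{over the open manifold $M$ itself}; it makes its restriction to subpolyhedra of positive codimension flexible, and the h-principle for open $M$ is then obtained by compressing $M$ onto such a core --- so Theorem \ref{Gromov1} is not applied with $B = M$ in the way you describe.

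The more serious gap is the closed case, which is where the hypothesis $\dim M < \dim N$ lives. Flexibility of the immersion sheaf over all of $M$ means that every restriction $\operatorname{Imm}(L,N) \to \operatorname{Imm}(K,N)$ (for compact $K \subset L$) is a Serre fibration; this covering-homotopy property \emph{is} the technical heart of the Smale--Hirsch theorem, so reducing the theorem to it and then saying the parametric extension over $k$-cells ``is the technical core inherited from the Smale--Hirsch argument'' is circular rather than a proof. The standard way to close this gap within Gromov's framework is the microextension trick, which the paper explicitly flags in Section 0.5: since $\dim M < \dim N$, replace $M$ by the total space of the normal bundle of a formal solution (an \emph{open} $n$-manifold), apply the open-manifold h-principle there, and restrict back to $M$. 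Your alternative --- skeleton induction powered by ampleness and convex integration --- can indeed prove Smale--Hirsch, but it is a different, self-contained machine; invoking it as the inductive step inside the flexible-sheaf argument both leaves its entire content unproved and makes the sheaf-theoretic reduction redundant.
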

A similar theorem on submersions is given by Phillips.
Let $M$ and $N$ be manifolds.
A {\it formal submersion} $M \rightarrow N$ is a pair $(f, \tilde{f})$ of a continuous map $f : M \rightarrow N$ and a continuous bundle map $\tilde{f} : TM \rightarrow TN$ over $f$ such that each $\tilde{f} : T_xM \rightarrow T_{f(x)}N$ is surjective.
Let $\operatorname{Sub}(M, N)$ be the set of submersions $M \rightarrow N$.
Let $\operatorname{Sub^f}(M, N)$ be the set of formal submersions $M \rightarrow N$.
{\it Phillips h-principle} is stated in the following format.
\begin{thm}[Phillips \cite{phillips1967submersions}]
Let $M$ and $N$ be manifolds.
If $M$ is an open manifold, then the inclusion map $\operatorname{Sub}(M, N) \subset \operatorname{Sub^f}(M, N)$ is a weak homotopy equivalence.
\end{thm}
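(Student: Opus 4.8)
The plan is to recognize $\operatorname{Sub}(M,N)$ as the solution space of an open, $\operatorname{Diff}$-invariant first-order differential relation and then to invoke the $h$-principle for such relations on open manifolds. Concretely, let $R \subset J^1(M,N)$ be the set of those $1$-jets whose underlying linear map $T_xM \to T_yN$ is surjective. Then $\operatorname{Sub}(M,N) = \operatorname{Sol}(R)$ and $\operatorname{Sub^f}(M,N) = \Gamma(R)$, the latter because a formal submersion is exactly a continuous section of $J^1(M,N)$ landing in $R$. Two structural features of $R$ drive the argument: surjectivity is an open condition, so $R$ is an open subset of $J^1(M,N)$; and $R$ is invariant under the pseudogroup of local diffeomorphisms of $M$, since precomposition with a local diffeomorphism preserves surjectivity of the differential. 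I would prove the \emph{parametric relative} statement --- for every compact polyhedron $P$ of parameters and every family of formal submersions over $P$ that is already holonomic over a neighborhood of a fixed closed subset, the family can be deformed, rel that part, to a family of genuine submersions --- since this simultaneously yields surjectivity and injectivity on all homotopy groups, hence the asserted weak homotopy equivalence.

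The geometric input specific to open manifolds is compression onto a core. Since $M$ is open, it carries a handle decomposition with handles of index at most $\dim M - 1$; equivalently $M$ deformation retracts onto a polyhedron $K$ of positive codimension, and there is an isotopy of open embeddings $g_t : M \hookrightarrow M$ with $g_0 = \operatorname{id}$ whose time-one map carries $M$ diffeomorphically into an arbitrarily small neighborhood $\operatorname{Op}(K)$. I would use this together with the $\operatorname{Diff}$-invariance of $R$ to reduce the problem to a neighborhood of $K$: because $g_1$ is an open embedding, precomposition $g_1^\ast$ sends a genuine submersion on $\operatorname{Op}(K)$ to a genuine submersion on all of $M$, while the compressing isotopy $g_t$ furnishes a homotopy from a formal solution $F$ to its compressed form $g_1^\ast F$. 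Thus it suffices to produce, from the given formal data, genuine solutions on a neighborhood of the positive-codimension polyhedron $K$.

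That last step is exactly the province of the parametric relative Holonomic Approximation Theorem of Eliashberg--Mishachev. Applied to the family of formal sections of $J^1(M,N)$ near $K$, it yields, after a $C^0$-small diffeotopy of $M$, holonomic sections --- honest $1$-jets of maps --- defined near the slightly moved copy of $K$ and $C^0$-close to the formal sections, together with a homotopy of formal sections realizing the approximation. Here openness of $R$ is decisive: because the approximating holonomic section is $C^0$-close to a section valued in the open set $R$, it too is valued in $R$, so it genuinely solves the relation and defines a submersion near $K$. Combining this with the compression of the previous paragraph produces the required deformation of the original family into $\operatorname{Sol}(R)$, and performing the whole construction rel the closed set where the family is already holonomic gives the relative statement.

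The main obstacle is organizing these two devices coherently in the parametric and relative setting. Holonomic Approximation only moves data in a thin neighborhood of a positive-codimension subset, and only after an auxiliary diffeotopy of $M$; one must therefore check that the compression isotopy $g_t$, the approximating diffeotopy, and the homotopy of formal sections can all be chosen continuously in the parameter $p \in P$ and made stationary over the prescribed closed subset, so that the resulting deformation of families is well defined and respects the boundary conditions. The openness of $R$ and the positive codimension of the core are precisely the hypotheses that make this bookkeeping succeed, and the only genuinely hard analytic content is absorbed into the Holonomic Approximation Theorem itself.
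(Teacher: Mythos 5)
The paper does not actually prove this theorem: it appears in the historical survey (Section 0.4) as a quoted background result, credited to Phillips \cite{phillips1967submersions}, with no proof supplied. So there is no internal argument to compare against, and your proposal has to be judged on its own terms. On those terms it is sound, and it is the standard modern derivation rather than Phillips' original one: you realize $\operatorname{Sub}(M,N)$ as $\operatorname{Sol}(R)$ for the open, $\operatorname{Diff}_M$-invariant relation $R \subset J^1(M,N)$ of surjective $1$-jets, compress the open manifold $M$ by an isotopy of open embeddings into a neighborhood of a positive-codimension core $K$, and solve the relation near $K$ by the parametric relative Holonomic Approximation Theorem, with openness of $R$ guaranteeing that the $C^0$-approximating holonomic sections still lie in $R$. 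This is precisely the Gromov/Eliashberg--Mishachev route that the paper itself mentions in its introduction as the now-preferred proof of the general Gromov h-principle, of which Phillips' theorem is the special case $R = \{\text{surjective } 1\text{-jets}\}$; Phillips' 1967 argument, by contrast, was a Smale--Hirsch style induction over handles with covering-homotopy arguments, not phrased through jet-space relations. Your sketch correctly identifies the two genuinely load-bearing points (invariance of $R$ under precomposition, needed to convert the compression isotopy into a homotopy of formal solutions, and openness of $R$, needed to upgrade approximation to solution) and honestly flags the parametric/relative bookkeeping; the only caveat is that the entire analytic difficulty is delegated to the Holonomic Approximation Theorem, which is legitimate as a citation but means your text is a proof scheme rather than a self-contained proof.
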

Let $B$ be a smooth manifold.
For each open subset $U, V (\subset B)$, we define
\[
\operatorname{Diff}_B(U, V) := \{ \mbox{A diffeomorphism } U \rightarrow V \}.
\]
$\operatorname{Diff}_B$ is a groupoid called a {\it pseudogroup}. \par
Let $p : E \rightarrow B$ be a smooth fibre bundle.
Let $J^rp : J^rE \rightarrow B$ be the $r$-jet bundle of $p$.
A {\it partial differential relation} is a subbundle of the $r$-jet bundle $J^rp$.
A partial differential relation $R$ is {\it open} if the subbundle $R$ is an open subset of $J^rp$.
For each open subset $U, V (\subset B)$, we define
\[
\operatorname{Diff}_{E/B}(U, V) := \{ \mbox{A diffeomorphism } \hat{\phi} : U \rightarrow V \, | \, p \circ \hat{\phi} = \phi \circ p \, (^\exists \phi : p(U) \rightarrow p(V))  \}.
\]
$\operatorname{Diff}_{E/B}$ is a groupoid.
The map $\hat{\phi} \mapsto \phi$ defines a groupoid morphism $\operatorname{Diff}_{E/B} \rightarrow \operatorname{Diff}_B$.
A fibre bundle $E \rightarrow B$ is {\it natural} if the groupoid morphism $\operatorname{Diff}_{E/B} \rightarrow \operatorname{Diff}_B$ has a section $\operatorname{Diff}_B \rightarrow \operatorname{Diff}_{E/B}$. \par
A partial differential relation $R$ on a natural fibre bundle $E \rightarrow B$ is {\it $\operatorname{Diff}_B$-invariant} if a section $\operatorname{Diff}_B \rightarrow \operatorname{Diff}_{E/B}$ as above preserves $R$.
{\it Gromov h-principle} is stated in the following format.
\begin{thm}[Gromov \cite{gromov1971topological}; cf. \cite{eliashberg2002introduction, gromov2013partial}]
Let $p : E \rightarrow B$ be a smooth natural fibre bundle.
Let $R$ be an open, $\operatorname{Diff}_B$-invariant partial differential relation on $p$.
If $M$ is an open manifold, $R$ satisfies the parametric h-principle.
\end{thm}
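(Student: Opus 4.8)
The plan is to deduce this classical statement from the sheaf-theoretic machinery recorded in Theorem \ref{Gromov1}, so that the entire argument reduces to a single flexibility assertion about the solution sheaf. Write $M = B$ for the open base manifold and let $\mathcal{F} = \operatorname{Sol}(R)$ be the continuous sheaf on $B$ assigning to each open $U \subset B$ the space of sections $s$ of $p$ over $U$ whose $r$-jet $j^r s$ lands in $R$, topologized so that compact families of solutions are its plots. First I would record the identification of the sheaf of formal sections $\mathcal{F}^\ast$ with the sheaf $U \mapsto \Gamma(R|_U)$ of continuous sections of $R \to B$, under which $\Delta : \mathcal{F} \rightarrow \mathcal{F}^\ast$ becomes $s \mapsto j^r s$ and the global section spaces become $\mathcal{F}(B) = \operatorname{Sol}(R)$ and $\mathcal{F}^\ast(B) = \Gamma(R)$. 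With this dictionary, the parametric h-principle for $R$ --- that $\operatorname{Sol}(R) \hookrightarrow \Gamma(R)$ is a weak homotopy equivalence --- is exactly the assertion that $\Delta$ is a sectionwise weak equivalence. Since $B$ is a smooth manifold it is triangulable, hence a polyhedron, so Theorem \ref{Gromov1} applies and the whole theorem reduces to the single claim that $\mathcal{F} = \operatorname{Sol}(R)$ is \emph{flexible}.

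Next I would establish microflexibility of $\mathcal{F}$, which is where openness of $R$ enters. Given a compact pair together with a family of solutions defined near a compact set and a short-time extension of it at the formal level, openness of $R$ in $J^r p$ means a genuine solution may be perturbed by a small amount in the $C^r$-sense without leaving $R$; a standard cut-off and interpolation argument over a slightly enlarged neighbourhood then produces the required short-time lift of the extension. I expect this step to be essentially routine once the topology on the section spaces is pinned down, and it yields that $\mathcal{F}$ is microflexible over every chart of $B$.

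The crux is to \emph{sharpen} microflexibility to genuine flexibility using the two remaining hypotheses: $\operatorname{Diff}_B$-invariance and openness of the manifold $B$. The key geometric input is that an open manifold carries a proper Morse exhaustion function without local maxima, hence it deformation retracts, through an isotopy of diffeomorphisms (a \emph{diffeotopy}), onto an arbitrarily thin neighbourhood of a subpolyhedron $K \subset B$ of positive codimension --- the spine. Because $p$ is natural and $R$ is $\operatorname{Diff}_B$-invariant, this diffeotopy lifts to act on solutions, so that compressing $B$ toward $K$ carries solutions to solutions. The point of the compression is that over a sufficiently thin neighbourhood of the lower-dimensional $K$ the \emph{short-time} extensions guaranteed by microflexibility already suffice to build a \emph{global} extension: one runs an induction over the cells of $K$, at each step using the compressing diffeotopy to shrink the geometric size of the obstruction below the microflexibility threshold and then applying the micro-extension. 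Assembling these local extensions along the spine and pulling them back through the diffeotopy yields the full extension property defining flexibility. This sharpening --- converting micro to macro via invariance on an open manifold --- is the genuinely geometric heart of Gromov's theorem, and it is the step I expect to be the main obstacle; the openness of the base is used essentially here, which is precisely why the closed case fails.

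Finally, with $\mathcal{F} = \operatorname{Sol}(R)$ shown flexible, Theorem \ref{Gromov1} --- equivalently Lemma \ref{Gromov2} together with Lemma \ref{Gromov3}, since $\mathcal{F}^\ast$ is automatically flexible and $\Delta$ is automatically a stalkwise weak equivalence --- gives immediately that $\Delta : \mathcal{F} \rightarrow \mathcal{F}^\ast$ is a sectionwise weak equivalence, which by the dictionary of the first paragraph is the parametric h-principle for $R$.
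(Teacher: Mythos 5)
Your overall strategy (reduce the classical statement to the sheaf-theoretic machinery of Theorem \ref{Gromov1}) is the intended role of the paper's framework --- note the paper itself never proves this theorem, it only cites it --- but your reduction is built around a claim that is false, so the proof cannot be completed as planned. You assert that the whole theorem reduces to showing that $\mathcal{F} = \operatorname{Sol}(R)$ is flexible as a sheaf on $M$, and that microflexibility can be upgraded to flexibility on all of $M$ via the spine compression. Flexibility in the sense of Definition \ref{DefFlexible} requires $\mathcal{F}(L) \rightarrow \mathcal{F}(K)$ to be a Serre fibration for \emph{every} compact pair $K \subset L \subset M$, including codimension-zero pairs, and this fails in general even under all the hypotheses of the theorem. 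Concretely, take $M = N = \mathbb{R}^2$ and $R$ the (open, $\operatorname{Diff}_B$-invariant) equidimensional immersion relation, with $K = S^1$ and $L = D^2$. By Whitney--Graustein there is a regular homotopy $c_t$ from the round circle to a curve of turning number $1$ that bounds no immersed disk (Milnor/Blank); thickening $c_t$ gives a path in $\mathcal{F}(K)$ starting at the restriction of the standard embedding, and by the filtered-colimit trick (Proposition \ref{PropFilter}) any lift of this path to $\mathcal{F}(L)$ would terminate in an immersion of a neighbourhood of $D^2$ whose boundary curve is $c_1$ --- impossible. So $\operatorname{Sol}(R)$ is not flexible, yet the h-principle holds for it: flexibility of the solution sheaf on $M$ is sufficient but not necessary, and it is strictly stronger than what Gromov's argument establishes. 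What the compression argument actually proves is that the \emph{restriction} of the sheaf to a subpolyhedron $K_0 \subset M$ of \emph{positive codimension} (the spine) is flexible, i.e.\ only compact pairs contained in $K_0$ are allowed; one then applies Theorem \ref{Gromov1} on the polyhedron $K_0$, not on $M$, and finally transfers the resulting weak equivalence from $\operatorname{Op}(K_0)$ to $M$ using the compressing diffeotopy together with $\operatorname{Diff}_B$-invariance. In other words, the compression is used to \emph{transfer the h-principle}, not to upgrade micro- to macro-flexibility on all of $M$; your proposal collapses these two distinct steps into one, and the collapsed statement is false.

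A secondary gap: your dictionary identifying $\mathcal{F}^\ast(U)$ with $\Gamma(R|_U)$ and $\mathcal{F}^\ast(B)$ with $\Gamma(R)$ is not the paper's definition. Here $\mathcal{F}^\ast$ is the sheafification of $U \mapsto \mathcal{F}(U)^{\operatorname{Sing^q}(U)}$, i.e.\ the sheaf of \emph{fibrewise holonomic} sections, and the paper's remark in Section 2.3 warns of exactly this discrepancy. To conclude the classical parametric h-principle ($\operatorname{Sol}(R) \hookrightarrow \Gamma(R)$ is a weak homotopy equivalence) from the statement that $\Delta : \mathcal{F} \rightarrow \mathcal{F}^\ast$ is a sectionwise weak equivalence, you need an additional comparison between $\mathcal{F}^\ast(B)$ and the space of continuous sections of $R \rightarrow B$ (roughly, the map sending a parametrized family $\{s_b\}$ to $b \mapsto j^r s_b(b)$, shown to be a weak equivalence using openness of $R$). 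This comparison is true but not automatic, and your proof needs it stated and proved.
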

	\subsection{Advantages of the sheaf theoretic h-principle}
{\it Hirsch-Smale h-principle} holds even if the domain manifold $M$ is not open.
To show this, it is necessary to discuss the {\it microextension trick}.
One advantage of the {\it sheaf theoretic h-principle} is that we can state the argument for the {\it microextension trick} as the {\it microextension theorem}.
Another advantage is that it has the potential to be applied to geometries (e.g. metric spaces) other than smooth manifolds.
In this section, another advantage, which is different from the two above, is discussed in more detail.
The advantage is that it has the potential for more generalizations, such as including {\it Haefliger h-principle} \cite{AST_1984__116__70_0} and {\it Oka-Grauert principle} \cite{10.32917/hmj/1558490525, Grauert1958}.
\vspace{10pt}\\
Any integrable distribution can be realised locally as the kernel of a submersion.
On the basis of this observation, Phillips \cite{phillips1968foliations, phillips1969foliations} was thinking of applying {\it Phillips h-principle} to foliations.
Haefliger \cite{AST_1984__116__70_0} took this a step further and gave a classification of homotopy classes of integrable distributions. \par
Haefliger's idea is that any integrable distribution can be expressed as a kernel of submersion into a ``universal $q$-dimensional manifold" $\Gamma^q$.
Here, the object $\Gamma^q$ is the \'{e}tale groupoid corresponding to the pseudogroup $\operatorname{Diff}_{\mathbb{R}^q}$ consisting of local diffeomorphisms on $\mathbb{R}^q$.
{\it Haefliger h-principle} is stated in the following format.
\begin{thm}[Haefliger \cite{AST_1984__116__70_0}]
Let $M$ be a smooth manifold.
If $M$ is an open manifold, there exists a bijection between the following.
\begin{enumerate}
\item The integrable homotopy classes of integrable distribution on $M$ whose corank is $q$.
\item The integrable homotopy classes of the continuous bundle map $TM \rightarrow N\Gamma^q$ such that it is surjective on each fibre, where $N\Gamma^q$ is the classifying space of the tangent bundle $T\Gamma^q$ of $\Gamma^q$.
\end{enumerate}
\end{thm}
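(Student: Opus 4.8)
The plan is to realise a corank-$q$ integrable distribution as a solution of an \emph{open}, $\operatorname{Diff}_M$-invariant partial differential relation and then to invoke the sheaf theoretic Gromov h-principle. The subtlety is that involutivity of a corank-$q$ distribution is a \emph{closed} (differential-equation) condition, to which the open-relation h-principle does not apply directly. Haefliger's device circumvents this: integrability is exactly the obstruction to writing a distribution locally as the kernel of a submersion, and such local submersions glue along the pseudogroup $\operatorname{Diff}_{\mathbb{R}^q}$. I would therefore encode a corank-$q$ foliation on an open set $U \subset M$ as a ``submersion into the \'{e}tale groupoid $\Gamma^q$'', i.e.\ a $\operatorname{Diff}_{\mathbb{R}^q}$-cocycle of local submersions $U \to \mathbb{R}^q$ whose differentials are fibrewise surjective. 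The associated first-order relation $R$, whose sections are these nondegenerate cocycles, is now manifestly \emph{open} (fibrewise surjectivity of a differential is an open condition) and $\operatorname{Diff}_M$-invariant (diffeomorphisms of $M$ carry corank-$q$ foliations to corank-$q$ foliations), while the passage through $\Gamma^q$ makes $R$ well defined independently of the transverse chart.

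Next I would repackage this as a continuous sheaf. Assigning to each open $U \subset M$ the quasitopological space $\mathcal{F}(U) = \operatorname{Sol}(R|_U)$ of such nondegenerate cocycles yields a continuous sheaf $\mathcal{F}$ on $M$, and its sheaf of formal sections $\mathcal{F}^\ast$ assigns the fibrewise-surjective bundle maps $TU \to T\Gamma^q$ (equivalently, after classifying, the maps into $N\Gamma^q$ of statement (2)). By Lemma~\ref{Gromov2} the diagonal $\Delta : \mathcal{F} \to \mathcal{F}^\ast$ is a stalkwise weak equivalence and $\mathcal{F}^\ast$ is flexible; the remaining geometric input is the flexibility of $\mathcal{F}$ itself. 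Here one uses that an open manifold deformation retracts onto a lower-dimensional spine: a foliation defined on a neighbourhood of the spine can be pushed to all of $M$, which supplies the homotopy-extension behaviour needed to verify that $\mathcal{F}$ is flexible in the sense of Definition~\ref{DefFlexible}.

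With $\mathcal{F}$ flexible, Theorem~\ref{Gromov1} (equivalently, Lemma~\ref{Gromov3} applied to the stalkwise equivalence $\Delta$ of Lemma~\ref{Gromov2}) shows that $\Delta$ is a sectionwise weak equivalence. In particular $\operatorname{Sol}(R) = \mathcal{F}(M) \hookrightarrow \mathcal{F}^\ast(M)$ is a weak homotopy equivalence, and passing to $\pi_0$ produces a bijection between foliations up to the homotopy recorded by the solution sheaf and fibrewise-surjective bundle maps $TM \to N\Gamma^q$ up to the corresponding formal homotopy. Identifying $\pi_0 \operatorname{Sol}(R)$ with integrable homotopy classes of corank-$q$ distributions and $\pi_0 \mathcal{F}^\ast(M)$ with integrable homotopy classes of the bundle maps in (2) then yields the asserted bijection.

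The main obstacle is not the application of the h-principle but the bookkeeping required to make the target $\Gamma^q$ legitimate. Since $\Gamma^q$ is an \'{e}tale groupoid rather than an honest manifold, I must give a precise meaning to ``submersion into $\Gamma^q$'' and to the attendant jet bundle, and then check that the resulting relation is genuinely open and $\operatorname{Diff}_M$-invariant so that the hypotheses of the Gromov h-principle are met. Equally delicate is matching the homotopy theories: one must verify that the sectionwise weak equivalence coming from Theorem~\ref{Gromov1} descends on $\pi_0$ to \emph{integrable} homotopy (concordance of foliations) rather than to some coarser equivalence, and it is precisely the flexibility of $\mathcal{F}$, established through the microextension on the open manifold $M$, that does this work.
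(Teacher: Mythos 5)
First, a point of comparison: the paper does not prove this statement at all. It is quoted from Haefliger in Section 0.5 as historical background, with a citation in place of a proof, so there is no argument in the paper to match yours against. Judged on its own terms, your proposal follows the historically correct route (Haefliger cocycles of local submersions into $\Gamma^q$, the solution sheaf $\mathcal{F}$, its sheaf of formal sections $\mathcal{F}^\ast$, Gromov's sheaf-theoretic machinery), but it has a genuine gap at its central step: the claim that the solution sheaf $\mathcal{F}$ is flexible.

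Flexibility in the sense of Definition \ref{DefFlexible} demands that $\mathcal{F}(L) \rightarrow \mathcal{F}(K)$ be a Serre fibration for \emph{every} compact pair $K \subset L$ in $M$. The sheaf of genuine solutions of an open differential relation is essentially never flexible; it is only \emph{microflexible} (a homotopy of germs near $K$ lifts for a short time $[0, \varepsilon]$, not for all time), and this distinction is the entire reason Gromov's theory for open manifolds exists. Your spine argument cannot close this gap: the retraction of $M$ onto a positive-codimension spine bears no relation to an arbitrarily chosen compact pair $K \subset L$ (the compressing isotopy moves $K$ and $L$), so it does not produce the required lifting for such pairs. What Gromov actually proves is that a microflexible, $\operatorname{Diff}_M$-invariant sheaf becomes flexible when \emph{restricted to} a polyhedron of positive codimension; the openness of $M$ is then used to compress $M$ into a neighbourhood of such a spine and transport sections by invariance, so that the h-principle for $\mathcal{F}$ over $M$ is deduced without ever showing that $\mathcal{F}$ itself is flexible. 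Note also that Theorem \ref{Gromov1} makes no openness assumption on the base: if the foliation sheaf were flexible, the same argument would classify foliations on \emph{closed} manifolds by their formal data, which is false. So, as written, you invoke Theorem \ref{Gromov1} and Lemma \ref{Gromov3} under a hypothesis that fails, and the ingredients needed to repair it (microflexibility, Gromov's flexibility theorem over positive-codimension polyhedra, the sharply moving diffeotopies) appear nowhere in this paper, whose results concern only flexible sheaves and the formal sheaf $\mathcal{F}^\ast$. A secondary gap, which you acknowledge but underestimate as bookkeeping, is the identification of $\pi_0$ of $\mathcal{F}^\ast(M)$, defined here as a sheaf of fibrewise holonomic sections, with homotopy classes of fibrewise-surjective bundle maps $TM \rightarrow N\Gamma^q$; that identification is Haefliger's classifying-space theorem for $\Gamma^q$-structures and is itself a substantial piece of the proof.
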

{\it Haefliger h-principle} means that ``$\pi_0(\operatorname{Sub}(M, \Gamma^q)) \rightarrow \pi_0(\operatorname{Sub^f}(M, \Gamma^q))$ is bijection".
\vspace{10pt}\\
Let $X$ and $Y$ be complex manifolds.
Let $\mathcal{O}(X, Y)$ be the set of holomorphic maps $X \rightarrow Y$.
Let $\mathcal{C}(X, Y)$ be the set of continuous maps $X \rightarrow Y$.
The following properties are important in the Oka-Grauert Theory.
\begin{thm}[\mbox{cf. \cite[Corollary 5.5.6]{forstnerivc2011stein}}]
Let $X$ be a Stein manifold, and let $Y$ be an Oka manifold
(e.g. complex Lie group).
Then, $\mathcal{O}(X, Y) \subset \mathcal{C}(X, Y)$ is a weak homotopy equivalence.
\end{thm}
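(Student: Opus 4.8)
The plan is to prove that the inclusion induces an isomorphism $\pi_k(\mathcal{O}(X,Y)) \to \pi_k(\mathcal{C}(X,Y))$ for every $k \geq 0$ and every base point, which is exactly the assertion of weak homotopy equivalence. First I would translate the homotopy-group statement into a statement about families of maps. An element of $\pi_k$ of a mapping space is represented by a map out of the $k$-sphere into that space, which by exponential adjunction is a continuous map $X \times S^k \to Y$ that is holomorphic in the first variable whenever we land in $\mathcal{O}(X,Y)$. Thus surjectivity on $\pi_k$ amounts to deforming any continuous family $\{f_p : X \to Y\}_{p \in S^k}$ to one that is holomorphic in $X$ for every $p$, and injectivity amounts to the relative version over the pair $(D^{k+1}, S^k)$: a family that is holomorphic over the boundary sphere and null-homotopic through continuous families must already be null-homotopic through holomorphic families. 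Both are special cases of the parametric Oka principle (POP) for the pair of parameter spaces $(P, P_0)$, with $P$ a sphere or a disk.

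The engine is the hypothesis that $Y$ is Oka. I would take the Convex Approximation Property (CAP) as the working definition — every holomorphic map from a neighborhood of a compact convex set $K \subset \mathbb{C}^n$ into $Y$ is uniformly approximable on $K$ by entire maps $\mathbb{C}^n \to Y$ — and then invoke the fundamental theorem of Oka theory that CAP implies all forms of the Oka principle, including POP (cf.\ \cite{forstnerivc2011stein}). To prove $\mathrm{CAP} \Rightarrow \mathrm{POP}$ directly, I would exhaust $X$ by the sublevel sets of a strictly plurisubharmonic Morse exhaustion function $\rho$ and argue by induction on the levels, carrying the parameter $p \in P$ continuously and keeping the given family fixed over $P_0$. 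At a noncritical level the sublevel set grows by a convex bump; here one approximates on the bump using CAP and then glues the new piece to the old holomorphic family across a Cartan pair, using a Cartan-type splitting of holomorphic maps close to the identity so that the two pieces match and assemble into a single holomorphic family over the enlarged sublevel set. At a critical level one attaches a handle, and after a local holomorphic change of coordinates this again reduces to extending the family across a convex model where CAP applies.

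The hard part is precisely this inductive construction, and within it the gluing step. Guaranteeing that the approximations produced by CAP can be patched together — and that the patching can be performed continuously in the parameter $p$ and relatively over $P_0$ — requires the splitting lemma for holomorphic maps on Cartan pairs together with uniform control of the estimates as $p$ varies; the passage through the critical levels, where the topology of the sublevel set jumps, is the most delicate point. Once POP is established in this form, both the surjectivity and the injectivity of $\pi_k$ follow by specializing $(P, P_0)$ to $(S^k, \varnothing)$ and to $(D^{k+1}, S^k)$ respectively, completing the proof that $\mathcal{O}(X,Y) \hookrightarrow \mathcal{C}(X,Y)$ is a weak homotopy equivalence.
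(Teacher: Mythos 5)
The paper itself gives no proof of this theorem: it appears purely as background in Section 0.5, stated with a citation to Forstneri\v{c}'s book \cite{forstnerivc2011stein}, and that cited source is precisely the argument you have sketched. Your reduction of the weak homotopy equivalence to the parametric Oka principle for the parameter pairs $(S^k,\varnothing)$ and $(D^{k+1},S^k)$, with CAP as the working definition of an Oka manifold and the induction over sublevel sets of a strictly plurisubharmonic exhaustion (Cartan-pair gluing at noncritical levels, handle attachment at critical ones), is the standard proof in that reference, so your outline agrees with the approach the paper defers to.
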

A complex vector bundle on $X$ corresponds one-to-one to a $GL_n(\mathbb{C})$-structure on $X$.
In fact, a $G$-structure on $X$ is a {\it generalized map} $X \rightarrow G$, where $G$ is any Lie groupoid.
We consider a Lie group to be a Lie groupoid whose object space is a singleton.
{\it Oka-Grauert principle} (Theorem \ref{Grauert}) means that ``$\pi_0(\mathcal{O}(X, GL_n(\mathbb{C}))) \rightarrow \pi_0(\mathcal{C}(X, GL_n(\mathbb{C})))$ is bijection".
\vspace{10pt}\\
In order to unify {\it Haefliger h-principle} and {\it Oka-Grauert principle} into {\it Gromov h-principle}, we need to develop the {\it stack theoretic h-principle}.
To approach this dream, first, need to develop the {\it sheaf theoretic h-principle}.
\setcounter{section}{0}
\setcounter{thm}{0}
	\section{Quasitopological spaces}
Some different topologies can be admitted into the space of maps, for example, the {\it Whitney topology}.
However, if we admit the quotient topology into the stalk, the topologies will be collapsed.
Here we want to deal with the germs while keeping the local information.
The solution to this problem is a {\it quasitopological space}.
The discussion in Section 1.1.4 solves this problem. \par
The idea of a quasitopological space is credited to Spanier and Whithead \cite{spanier1962theory}.
Spanier \cite{spanier1963quasi} has written a paper on quasitopological spaces.
A quasitopological space dealt with in this paper is not exactly the same as the one they defined, but it is almost similar. \par
In this paper, a quasitopological space is a sheaf on a certain small site.
(For more information on sites and sheaves, see \cite{kashiwara18001categories}.)
The category of quasitopological spaces has a model structure.
	\subsection{Quasitopological spaces}
We will define a quasitopological space and check its properties in this subsection.
The discussion in Section 1.1.4 is often used when we have a stalkwise discussion on continuous sheaves.
The discussion in Section 1.1.5 implies that finite polyhedra (e.g. simplexes, horns, their boundaries, etc.) can be treated in the same way as topological spaces.
	\subsubsection{Prequasitopological spaces}
Let {\bf Sing$\Delta$} be a small category defined as follows.
\begin{itemize}
\item An object of {\bf Sing$\Delta$} is a closed subset of a simplex $\Delta^n$.
\item A morphism of {\bf Sing$\Delta$} is a continuous map.
\end{itemize}
\begin{defi}
A {\it prequasitopological space} is a presheaf on the small category {\bf Sing$\Delta$}.
A morphism between prequasitopological spaces is a morphism between presheaves.
{\bf pqTop} is the category of prequasitopological spaces.
\end{defi}
The category {\bf pqTop} is the category ${\bf PSh}({\bf Sing\Delta})$ of presheaves.
This category is complete and cocomplete and Cartesian closed.
For a prequasitopological space $X$ and a closed subset $S$ of a simplex, denote by $X[S]$ the set to which $S$ is mapped by a functor $X : {\bf Sing\Delta}^{op} \rightarrow {\bf Set}$, where {\bf Set} is the category of sets and maps.
\begin{ex}
For a topological space $X$, we define a prequasitopological space $\operatorname{Sing^q}(X)$ as the following:
\[
\operatorname{Sing^q}(X)[S] := \{ \mbox{continuous map } S \rightarrow X \}.
\]
The construction $X \mapsto \operatorname{Sing^q}(X)$ defines a functor $\operatorname{Sing^q} : {\bf Top} \rightarrow {\bf pqTop}$.
In particular, if $X$ is a closed subset of a simplex, we obtain an equality $\operatorname{Sing^q}(X) = {\bf Sing\Delta}(-, X)$.
If $X = \Delta^n$, and there is no confusion, then $\operatorname{Sing^q}(\Delta^n)$ is denoted as $\Delta^n$.
\end{ex}
\begin{rem}
Each element $s \in X[S]$ corresponds one-to-one with a morphism $s' : \operatorname{Sing^q}(S) \rightarrow X$ by {\it Yoneda's Lemma}.
We often equate $s$ with $s'$, and use the same symbol.
\end{rem}
	\subsubsection{Quasitopological spaces}
The small category {\bf Sing$\Delta$} is a small site by defining a covering of an object $S$ as a finite closed covering of the topological space $S$.
\begin{defi} \label{DefQTop}
A {\it quasitopological space} is a sheaf on the small site {\bf Sing$\Delta$}.
A morphism between quasitopological spaces is a morphism between sheaves.
{\bf qTop} is the category of quasitopological spaces.
\end{defi}
The category {\bf qTop} is the category ${\bf Sh}({\bf Sing\Delta})$ of sheaves.
This category is complete and cocomplete and Cartesian closed.
\begin{ex}
For a topological space $X$, the prequasitopological space $\operatorname{Sing^q}(X)$ is a quasitopological space.
\end{ex}
	\subsubsection{Geometric realizations}
We define a functor $G_2 : {\bf Top} \rightarrow {\bf qTop}$ as $G_2(X) := \operatorname{Sing^q}(X)$.
The functor $G_2$ is the left Kan extension of Yoneda embedding $y : {\bf Sing\Delta} \rightarrow {\bf qTop}$ along the inclusion functor ${\bf Sing\Delta} \hookrightarrow {\bf Top}$.
	\[\xymatrix{
		{\bf qTop}
	\\
		{\bf Sing\Delta} \ar[u]^-{y} \ar@{^(->}[rr]
		&& {\bf Top} \ar[ull]_-{G_2}.
	}\]
Let $F_2 : {\bf qTop} \rightarrow {\bf Top}$ be the left Kan extension of the inclusion functor ${\bf Sing\Delta} \hookrightarrow {\bf Top}$ along $y$.
It is classically known that there exists an adjunction $F_2 \dashv G_2$.
The functor $F_2$ is called the {\it geometric realizations}.
Specifically, $F_2$ is defined as $F_2 (X) := \int^S {\bf qTop}(S, X) \bullet S$
(cf. \cite[Chapter 10, Section 4]{mac2013categories}),
where the integral symbol $\int$ is the coend and ``$(-) \bullet (-)$" is the copower object.
	\subsubsection{Filtered colimit trick}
A limit on the category {\bf qTop} can be calculated objectwise.
(i.e. For any small diagram $X_\lambda$ in {\bf qTop} and any object $S$ in {\bf Sing$\Delta$}, we have an isomorphism $(\displaystyle\lim_{\longleftarrow} X_\lambda)[S] \cong \displaystyle\lim_{\longleftarrow} (X_\lambda [S])$.)
A colimit cannot be calculated in the same way in general.
However, because the covering on the site {\bf Sing$\Delta$} is finite, a filtered colimit can be calculated objectwise.
\begin{prop} \label{PropFilter}
For any closed subset $S$ of a simplex, the evaluation functor $\operatorname{ev}_S : {\bf qTop} \rightarrow {\bf Set} ; X \mapsto X[S]$ preserves any filtered colimit.
\end{prop}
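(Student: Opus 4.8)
The plan is to reduce the statement to the classical fact that filtered colimits commute with finite limits in $\mathbf{Set}$, exploiting crucially that every covering in $\mathbf{Sing\Delta}$ is finite. The argument proceeds by comparing colimits in $\mathbf{qTop} = \mathbf{Sh}(\mathbf{Sing\Delta})$ with colimits in the ambient presheaf category $\mathbf{pqTop} = \mathbf{PSh}(\mathbf{Sing\Delta})$, where colimits are computed objectwise.

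First I would form, for a given filtered diagram $\{X_\lambda\}$ in $\mathbf{qTop}$, the colimit $P := \varinjlim_\lambda X_\lambda$ taken in $\mathbf{pqTop}$; since presheaf colimits are objectwise, $P[S] = \varinjlim_\lambda X_\lambda[S]$ for every object $S$. The whole proposition then reduces to showing that this presheaf $P$ is \emph{already} a sheaf. Once that is established, sheafification leaves $P$ unchanged (the sheafification $a$ satisfies $a \circ i \cong \operatorname{id}$ on sheaves, where $i$ is the inclusion), so $P$ is simultaneously the colimit in $\mathbf{qTop}$, and the objectwise formula for $P$ is exactly the assertion that $\operatorname{ev}_S$ preserves the filtered colimit.

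The core step is verifying the sheaf condition for $P$. Fix a finite closed covering $\{S_i \hookrightarrow S\}_{i \in I}$. Since each $S_i$ is closed in $S$ and $S$ is closed in a simplex, each $S_i$ (and each intersection $S_i \cap S_j$) is again a closed subset of that simplex, hence an object of the site serving as the fiber product $S_i \times_S S_j$. As the topology is generated by such finite covering families, being a sheaf is equivalent to each value being the equalizer
\[
X[S] \xrightarrow{\ \sim\ } \operatorname{eq}\!\left( \prod_{i \in I} X[S_i] \rightrightarrows \prod_{i,j \in I} X[S_i \times_S S_j] \right).
\]
Each $X_\lambda$ satisfies this. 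Because $I$ is finite, both products are finite products, so the right-hand side is a finite limit in $\mathbf{Set}$. Taking the filtered colimit over $\lambda$ and using that filtered colimits commute with finite limits — and therefore with these finite products and with the equalizer — I would move $\varinjlim_\lambda$ inside to obtain
\[
\varinjlim_\lambda X_\lambda[S] \cong \operatorname{eq}\!\left( \prod_{i} \varinjlim_\lambda X_\lambda[S_i] \rightrightarrows \prod_{i,j} \varinjlim_\lambda X_\lambda[S_i \times_S S_j] \right).
\]
Rewriting each term through the objectwise formula for $P$ turns this into precisely the equalizer diagram exhibiting $P$ as a sheaf for $\{S_i \hookrightarrow S\}$.

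The single essential hypothesis — and the point I would flag as the crux — is the finiteness of the coverings: it is exactly this that renders the sheaf condition a finite limit, so that the interchange with filtered colimits is legitimate. For an infinite coverage the argument would collapse, since filtered colimits do not commute with infinite limits. Everything else (objectwise presheaf colimits, the triviality of sheafification on sheaves, and the commutation of filtered colimits with finite limits in $\mathbf{Set}$) is standard, so no delicate estimates are required.
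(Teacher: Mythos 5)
Your proposal is correct and follows essentially the same route as the paper's own proof: form the objectwise (presheaf) colimit, use the finiteness of coverings to write the sheaf condition as a finite limit, commute the filtered colimit past it to conclude that the presheaf colimit is already a sheaf, and hence coincides with the colimit in ${\bf qTop}$. The paper's argument is identical in structure (its auxiliary presheaf $Y$ is your $P$), so there is nothing to add.
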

\begin{proof}
\setcounter{equation}{0}
Take any closed subset $S$ of a simplex and any filtered diagram $X_\lambda$ ($\lambda \in \Lambda$) in {\bf qTop}.
We will show that the following canonical map is an isomorphism:
\begin{equation}
\displaystyle\lim_{\substack{\longrightarrow \\ \lambda}} (X_\lambda [S]) \rightarrow (\displaystyle\lim_{\substack{\longrightarrow \\ \lambda}} X_\lambda) [S].
\end{equation}
For any index $\lambda \in \Lambda$ and any finite closed covering $\{ S_i \}_i$ of $S$, the following diagram is an equalizer:
\[
X_\lambda [S] \rightarrow \prod_i X_\lambda [S_i] \rightrightarrows \prod_{i, j} X_\lambda [S_i \cap S_j].
\]
Filtered colimits commute with finite limits in {\bf Set}
(cf. \cite[Chapter 9, Section 2]{mac2013categories}).
Then, the following diagram is an equalizer:
\begin{equation}
\displaystyle\lim_{\substack{\longrightarrow \\ \lambda}} (X_\lambda [S]) \rightarrow \prod_i \displaystyle\lim_{\substack{\longrightarrow \\ \lambda}} (X_\lambda [S_i]) \rightrightarrows \prod_{i, j} \displaystyle\lim_{\substack{\longrightarrow \\ \lambda}} (X_\lambda [S_i \cap S_j]).
\end{equation}
We define a prequasitopological space $Y$ as the following:
\[
Y[S] := \displaystyle\lim_{\substack{\longrightarrow \\ \lambda}} (X_\lambda [S]).
\]
$Y$ is a quasitopological space because the diagram (2) is an equalizer.
The colimit of the diagram $X_\lambda$ is the sheafification of $Y$.
However, $Y$ is a sheaf, therefore the colimit of the diagram $X_\lambda$ is $Y$.
This means that the canonical map (1) is an isomorphism.
Then, for any closed subset $S$ of a simplex, the evaluation functor $\operatorname{ev}_S$ preserves any filtered colimit.
\end{proof}
The Hom-functor ${\bf qTop}(\operatorname{Sing^q}(S), -) : {\bf qTop} \rightarrow {\bf Set}$ is naturally isomorphic to the evaluation functor $\operatorname{ev}_S$ by {\it Yoneda's Lemma}.
Proposition \ref{PropFilter} means that, for any filtered diagram $X_\lambda$ ($\lambda \in \Lambda$) in {\bf qTop}, we have the following isomorphism:
\[
{\bf qTop}(\operatorname{Sing^q}(S), \displaystyle\lim_{\substack{\longrightarrow \\ \lambda}} X_\lambda) \cong \displaystyle\lim_{\substack{\longrightarrow \\ \lambda}} {\bf qTop}(\operatorname{Sing^q}(S), X_\lambda).
\]
We obtain the following proposition.
\begin{prop} \label{PropFilterLift}
Let $S$ be a closed subset of a simplex, and let $X_\lambda$ ($\lambda \in \Lambda$) be a filtered diagram in {\bf qTop}.
For any morphism $f : \operatorname{Sing^q}(S) \rightarrow \displaystyle\lim_{\longrightarrow} X_\lambda$, we have the followings.
\begin{itemize}
\item There exist an index $\lambda$ and a morphism $\tilde{f} : \operatorname{Sing^q}(S) \rightarrow X_\lambda$ such that the following diagram is commutative:
	\[\xymatrix{
		& X_\lambda \ar[d]
	\\
		\operatorname{Sing^q}(S) \ar[r]^-{f} \ar[ur]^-{\tilde{f}}
		& \displaystyle\lim_{\substack{\longrightarrow \\ \lambda}} X_\lambda.
	}\]
\item If there exist an index $\lambda'$ and a morphism $\tilde{f'}$, similar to the index $\lambda$ and the morphism $\tilde{f}$ above, then there exist two arrows $\lambda \rightarrow \mu \leftarrow \lambda'$ between indexes such that the following diagram is commutative:
	\[\xymatrix{
		\operatorname{Sing^q}(S) \ar[r]^-{\tilde{f'}} \ar[d]_-{\tilde{f}}
		& X_{\lambda'} \ar[d]
	\\
		X_\lambda \ar[r]
		& X_\mu.
	}\]
\end{itemize}
\end{prop}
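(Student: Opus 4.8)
The plan is to deduce both assertions from the isomorphism
\[
{\bf qTop}(\operatorname{Sing^q}(S), \displaystyle\lim_{\substack{\longrightarrow \\ \lambda}} X_\lambda) \cong \displaystyle\lim_{\substack{\longrightarrow \\ \lambda}} {\bf qTop}(\operatorname{Sing^q}(S), X_\lambda)
\]
established just above (via Yoneda's Lemma and Proposition \ref{PropFilter}), combined with the standard explicit description of a filtered colimit in ${\bf Set}$. The crucial point to record first is that this isomorphism is induced by the canonical cocone maps ${\bf qTop}(\operatorname{Sing^q}(S), X_\lambda) \rightarrow {\bf qTop}(\operatorname{Sing^q}(S), \displaystyle\lim_{\longrightarrow} X_\lambda)$, i.e.\ by post-composition with the structure morphisms $X_\lambda \rightarrow \displaystyle\lim_{\longrightarrow} X_\lambda$. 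This compatibility is exactly what allows statements about the filtered colimit of the Hom-sets to be translated back into factorizations of $f$.

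For the first bullet I would recall that for a filtered diagram of sets $A_\lambda$, every element of $\displaystyle\lim_{\longrightarrow} A_\lambda$ is the image of some $a \in A_\lambda$ under the canonical map $A_\lambda \rightarrow \displaystyle\lim_{\longrightarrow} A_\lambda$. Applying this with $A_\lambda = {\bf qTop}(\operatorname{Sing^q}(S), X_\lambda)$ to the element corresponding to $f$ produces an index $\lambda$ and a morphism $\tilde{f} : \operatorname{Sing^q}(S) \rightarrow X_\lambda$ whose image in the colimit is $f$. Because the isomorphism is given by post-composition with $X_\lambda \rightarrow \displaystyle\lim_{\longrightarrow} X_\lambda$, the statement that $\tilde{f}$ maps to $f$ is precisely the commutativity of the first triangle.

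For the second bullet I would invoke the criterion for when two representatives define the same element of a filtered colimit in ${\bf Set}$: if $a \in A_\lambda$ and $a' \in A_{\lambda'}$ have the same image in $\displaystyle\lim_{\longrightarrow} A_\lambda$, then there exist arrows $\lambda \rightarrow \mu \leftarrow \lambda'$ in $\Lambda$ such that $a$ and $a'$ have the same image in $A_\mu$ (this is where the filteredness of $\Lambda$ enters). By hypothesis $\tilde{f}$ and $\tilde{f'}$ both represent $f$, hence define the same element of $\displaystyle\lim_{\longrightarrow} {\bf qTop}(\operatorname{Sing^q}(S), X_\lambda)$; the criterion then yields arrows $\lambda \rightarrow \mu \leftarrow \lambda'$ for which the images of $\tilde{f}$ and $\tilde{f'}$ in ${\bf qTop}(\operatorname{Sing^q}(S), X_\mu)$ coincide, which is exactly the commutativity of the second square.

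As for the main obstacle: essentially all of the genuine content has already been absorbed into Proposition \ref{PropFilter}, which ensures that the evaluation functor $\operatorname{ev}_S$, equivalently the Hom-functor out of $\operatorname{Sing^q}(S)$, commutes with filtered colimits. Once that isomorphism is available, what remains is a purely formal unwinding of the construction of filtered colimits in ${\bf Set}$, and I expect no real difficulty. The only point deserving minor care is the verification flagged in the first paragraph, namely that the natural isomorphism is compatible with the canonical cocone maps, so that \emph{being the image of} and \emph{having a common image} on the set side correspond exactly to the two displayed diagrams; this is immediate, since the isomorphism is realized by post-composition with the structure morphisms of the colimit.
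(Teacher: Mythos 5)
Your proof is correct and follows exactly the paper's approach: the paper likewise deduces the proposition from the isomorphism ${\bf qTop}(\operatorname{Sing^q}(S), \varinjlim X_\lambda) \cong \varinjlim {\bf qTop}(\operatorname{Sing^q}(S), X_\lambda)$ (Yoneda plus Proposition \ref{PropFilter}), dismissing the rest as ``a rephrasing of the definition of a filtered colimit in {\bf Set}.'' You merely spell out the unwinding — including the compatibility with the cocone maps — that the paper's one-line proof leaves implicit.
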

\begin{proof}
The statement is a rephrasing of the definition of a filtered colimit in {\bf Set}.
\end{proof}
We consider the following diagram:
	\[\xymatrix{
		\operatorname{Sing^q}(S) \ar[r] \ar[d]
		& \displaystyle\lim_{\substack{\longrightarrow \\ \lambda}} X_\lambda \ar[d]
	\\
		\operatorname{Sing^q}(T) \ar[r]
		& \displaystyle\lim_{\substack{\longrightarrow \\ \lambda}} Y_\lambda,
	}\]
where $S$ and $T$ are closed subsets of simplexes, and $X_\lambda$ and $Y_\lambda$ ($\lambda \in \Lambda$) are filtered diagrams in {\bf qTop}.
Then, there exists a following commutative diagram:
	\[\xymatrix{
		\operatorname{Sing^q}(S) \ar[r] \ar[d]
		& X_\lambda \ar[d]
	\\
		\operatorname{Sing^q}(T) \ar[r]
		& Y_\lambda.
	}\]
By the discussion in the later section, Section 1.1.5, we may accept $S$ and $T$ as finite polyhedra.
	\subsubsection{Patching Lemma}
The adjunction $F_2 \dashv G_2 : {\bf qTop} \rightarrow {\bf Top}$ is not an equivalence.
However, if $X$ is a finite polyhedron in {\bf Top} (resp. {\bf qTop}), ${\bf Top}(X, Y) \rightarrow {\bf qTop}(\operatorname{Sing^q}(X), \operatorname{Sing^q}(Y))$ (resp. ${\bf qTop}(X, Y) \rightarrow {\bf Top}(F_2(X), F_2(Y))$) is a bijection.
As such, if $X$ is a finite polyhedron, then the quasitopological space $\operatorname{Sing^q}(X)$ behaves in the almost same way as the topological space $X$.
These can be seen from the following proposition.
\begin{lem} \label{LemPatching}
Let $X$ be a topological space, and let $\{ X_i \}_i$ be a finite closed covering of $X$.
Then the following diagram is a coequalizer.
\[
\coprod_{i, j} \operatorname{Sing^q}(X_i) \cap \operatorname{Sing^q}(X_j) \rightrightarrows \coprod_i \operatorname{Sing^q}(X_i) \rightarrow \operatorname{Sing^q}(X)
\]
\end{lem}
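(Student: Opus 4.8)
The plan is to compute the coequalizer directly from the fact that $\mathbf{qTop}$ is a category of sheaves, so that a colimit there is obtained by taking the colimit of presheaves and then sheafifying. Since the sheafification functor $a : {\bf pqTop} \to {\bf qTop}$ is left adjoint to the inclusion, it preserves all colimits; hence the coequalizer in ${\bf qTop}$ is $a(Q)$, where $Q$ denotes the coequalizer of the same diagram formed in ${\bf pqTop}$. Colimits of presheaves are computed objectwise, so for each object $S$ of {\bf Sing$\Delta$} the set $Q[S]$ is the coequalizer in {\bf Set} of
\[
\coprod_{i,j} \operatorname{Sing^q}(X_i \cap X_j)[S] \rightrightarrows \coprod_i \operatorname{Sing^q}(X_i)[S].
\]
Before this I would record the identification $\operatorname{Sing^q}(X_i) \cap \operatorname{Sing^q}(X_j) = \operatorname{Sing^q}(X_i \cap X_j)$ as subobjects of $\operatorname{Sing^q}(X)$, which holds because a map $S \to X$ factors through both $X_i$ and $X_j$ exactly when it factors through $X_i \cap X_j$.

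Next I would describe $Q[S]$ concretely: it is the set of pairs $(i, f)$ with $f : S \to X_i$ continuous, modulo the relation generated by identifying the two images in $\coprod_i \operatorname{Sing^q}(X_i)[S]$ of each $h : S \to X_i \cap X_j$. Composition with the inclusions $X_i \hookrightarrow X$ yields a canonical map $Q \to \operatorname{Sing^q}(X)$, and I would check it is objectwise injective: if $(i, f)$ and $(j, g)$ have the same image $k : S \to X$, then $k$ factors through both $X_i$ and $X_j$, hence through $X_i \cap X_j$, and the resulting $h : S \to X_i \cap X_j$ witnesses $(i,f) \sim (j,g)$. Thus $Q$ is a subpresheaf of the sheaf $\operatorname{Sing^q}(X)$, and since sheafification is left exact, $aQ$ is identified with the subsheaf of $\operatorname{Sing^q}(X)$ consisting of those sections that lie locally in $Q$.

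The crux is local surjectivity: I must show that every $f \in \operatorname{Sing^q}(X)[S]$, i.e. every continuous $f : S \to X$, lies locally in $Q$. For this I set $S_i := f^{-1}(X_i)$. Because $\{X_i\}_i$ is a \emph{finite} closed covering of $X$ and $f$ is continuous, $\{S_i\}_i$ is a finite closed covering of $S$, hence a covering of $S$ in the site {\bf Sing$\Delta$}; and each restriction $f|_{S_i}$ factors through $X_i$, so it is the image of the class $(i, f|_{S_i}) \in Q[S_i]$. Therefore $f$ is locally in $Q$, which gives $aQ = \operatorname{Sing^q}(X)$. Combining with the first paragraph, the coequalizer in ${\bf qTop}$ is $\operatorname{Sing^q}(X)$ together with the comparison map, as claimed.

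I expect the single load-bearing step to be this local surjectivity, and in particular its reliance on finiteness: the preimages $f^{-1}(X_i)$ constitute a covering in {\bf Sing$\Delta$} precisely because coverings there are \emph{finite} closed coverings, so the argument would fail for an infinite cover. The remaining ingredients—the objectwise computation of presheaf colimits, the injectivity of the comparison map, and the description of the sheafification of a subpresheaf—are routine once the site-theoretic bookkeeping is set up.
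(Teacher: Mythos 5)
Your proposal is correct, but it proves the lemma by a genuinely different route than the paper. The paper verifies the universal property of the coequalizer directly against an arbitrary sheaf $Y$: given a cone $f : \coprod_i \operatorname{Sing^q}(X_i) \rightarrow Y$ coequalizing the two maps, it takes $s \in \operatorname{Sing^q}(X)[S]$, pulls back the covering to $S_i := s^{-1}(X_i)$, sets $t_i := f(s|_{S_i}) \in Y[S_i]$, checks agreement on overlaps, and uses the sheaf condition of the \emph{target} $Y$ to glue the $t_i$ into the unique element $t \in Y[S]$ defining $f'(s)$. You instead compute the colimit itself: form the presheaf coequalizer $Q$, show the comparison map $Q \rightarrow \operatorname{Sing^q}(X)$ is an objectwise injection, and then show its sheafification exhausts $\operatorname{Sing^q}(X)$ by local surjectivity. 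The two arguments share the single load-bearing geometric step, exactly as you predicted: the pulled-back family $\{ s^{-1}(X_i) \}_i$ is a \emph{finite} closed covering of $S$ and hence a covering in the site ${\bf Sing\Delta}$; in the paper this feeds the gluing in $Y$, in your proof it feeds local surjectivity of $Q \hookrightarrow \operatorname{Sing^q}(X)$. What your route buys is an explicit identification of the presheaf coequalizer as a subpresheaf of $\operatorname{Sing^q}(X)$ (in particular, that no nontrivial quotienting occurs), at the cost of invoking the machinery of sheafification --- its colimit-preservation, left exactness, and the description of the sheafification of a subpresheaf of a sheaf as the locally-belonging sections. The paper's route is more elementary and self-contained: it needs only the sheaf condition of the target and never mentions sheafification, which is in keeping with the fact that this lemma is proved before the adjunction $F_1 \dashv G_1$ is exploited systematically. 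Both proofs are complete; yours would be preferable if one later wanted the stronger statement that the presheaf-level comparison map is monic, the paper's if one wants minimal prerequisites.
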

\begin{proof}
\setcounter{equation}{0}
We will show that there exists $f'$ in the following diagram.
\begin{equation}
\vcenter{\xymatrix{
		\displaystyle\coprod_{i, j} \operatorname{Sing^q}(X_i) \cap \operatorname{Sing^q}(X_j) \ar@<2pt>[r] \ar@<-2pt>[r]
		& \displaystyle\coprod_i \operatorname{Sing^q}(X_i) \ar[r] \ar[dr]_-{f}
		& \operatorname{Sing^q}(X) \ar@{.>}[d]^-{f'}
	\\
		&& Y
	}}
\end{equation}
Take any $s \in \operatorname{Sing^q}(X)[S] (= {\bf Top}(S, X))$.
Let $S_i := s^{-1}(X_i)$.
Then $\{ S_i \}_i$ is a finite closed covering of $S$, and $s |_{S_i} \in \operatorname{Sing^q}(X_i)[S_i] (= {\bf Top}(S_i, X_i))$.
Let $t_i := f(s |_{S_i}) \in Y[S_i]$.
Then, we have the following equality:
\[
\begin{array}{rcl}
t_i |_{S_i \cap S_j}
& = & f(s |_{S_i}) |_{S_i \cap S_j} \\
& = & f(s |_{S_i \cap S_j}) \\
& = & f(s |_{S_j}) |_{S_i \cap S_j} \\
& = & t_j |_{S_i \cap S_j}.
\end{array}
\]
Then, the family of elements $(t_i)_i \in \prod Y[S_i]$ lifts a unique element $t \in Y[S]$.
If we define $f'(s) := t$, the above diagram (1) is commutative.
Conversely, if there exists $f'$ such that the above diagram (1) is commutative, this coinside with the above definition by uniqueness of $t$.
\end{proof}
\begin{prop} \label{PropFiniteFullyFaithful}
\mbox{}
\begin{enumerate}
\item If $X$ is a finite polyhedron in {\bf Top}, ${\bf Top}(X, Y) \rightarrow {\bf qTop}(\operatorname{Sing^q}(X), \operatorname{Sing^q}(Y))$ is a bijection.
\item If $X$ is a finite polyhedron in {\bf qTop}, ${\bf qTop}(X, Y) \rightarrow {\bf Top}(F_2(X), F_2(Y))$ is a bijection.
\end{enumerate}
\end{prop}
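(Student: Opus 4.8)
The plan is to reduce both statements to the case where $X$ is representable and then compute directly. Fix a triangulation of the finite polyhedron, so that $X$ is covered by finitely many closed simplices $\{\sigma_i\}_i$, each of which — together with every overlap $\sigma_i \cap \sigma_j$ — is an object of ${\bf Sing\Delta}$. The pasting lemma in ${\bf Top}$ exhibits ${\bf Top}(X, Y)$ as the equalizer of $\prod_i {\bf Top}(\sigma_i, Y) \rightrightarrows \prod_{i,j} {\bf Top}(\sigma_i \cap \sigma_j, Y)$, while Lemma \ref{LemPatching} applied to this finite closed cover exhibits $\operatorname{Sing^q}(X)$ as the coequalizer of $\coprod_{i,j} \operatorname{Sing^q}(\sigma_i \cap \sigma_j) \rightrightarrows \coprod_i \operatorname{Sing^q}(\sigma_i)$, so that ${\bf qTop}(\operatorname{Sing^q}(X), -)$ likewise becomes an equalizer indexed by the same data. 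Because the comparison maps in the proposition are natural, they respect these two presentations, and it suffices to prove each statement after replacing $X$ by a single representable $\operatorname{Sing^q}(\sigma)$ (and, on overlaps, by $\operatorname{Sing^q}(\sigma_i \cap \sigma_j)$).

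For (1) this finishes the proof at once. By Yoneda's Lemma ${\bf qTop}(\operatorname{Sing^q}(\sigma), \operatorname{Sing^q}(Y)) = \operatorname{Sing^q}(Y)[\sigma] = {\bf Top}(\sigma, Y)$, and under this identification the comparison map sends $h$ to $\operatorname{Sing^q}(h)$, which corresponds to $h$ itself; so it is the identity, hence a bijection. Reassembling the equalizers gives the claim for an arbitrary finite polyhedron.

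For (2) I first record that $F_2 \operatorname{Sing^q}(S) \cong S$ for every object $S$ of ${\bf Sing\Delta}$: since $F_2$ is the left Kan extension of the inclusion $\iota : {\bf Sing\Delta} \hookrightarrow {\bf Top}$ along the fully faithful Yoneda embedding $y$, the canonical transformation $\iota \Rightarrow F_2 \circ y$ is an isomorphism. As $F_2$ preserves colimits and $\operatorname{Sing^q}(P)$ is the patching colimit above, this extends to $F_2 \operatorname{Sing^q}(P) \cong P$ for every finite polyhedron $P$; in particular $F_2(\operatorname{Sing^q}(\sigma)) = \sigma$. After the reduction, and using Yoneda once more, statement (2) becomes the assertion that for every simplex $\sigma$ and every $Y$ in ${\bf qTop}$ the map
\[
Y[\sigma] = {\bf qTop}(\operatorname{Sing^q}(\sigma), Y) \longrightarrow {\bf Top}(\sigma, F_2 Y), \qquad s \longmapsto F_2(s),
\]
is a bijection; equivalently, the unit $\eta_Y : Y \to \operatorname{Sing^q}(F_2 Y)$ is a bijection on $\sigma$-points.

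This last claim is the heart of the matter and the step I expect to be the main obstacle, since for general $Y$ the adjunction $F_2 \dashv G_2$ is very far from an equivalence (so $\eta_Y$ is not an isomorphism on arbitrary closed subsets). I would prove it by finite approximation. Writing $Y$ as the filtered colimit of its subsheaves $Y_\alpha$ generated by finitely many local sections, and using that this colimit is computed objectwise (Proposition \ref{PropFilter}), one gets $Y[\sigma] = \varinjlim_\alpha Y_\alpha[\sigma]$; on the other side $F_2 Y = \varinjlim_\alpha F_2 Y_\alpha$ in ${\bf Top}$ because $F_2$ is a left adjoint. Since $\sigma$ is compact, the goal is to show that every map $\sigma \to F_2 Y$ factors through a finite stage $F_2 Y_\alpha$ — the topological counterpart of the lifting packaged in Proposition \ref{PropFilterLift} — which reduces the target to the finite stages as well. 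For a finitely generated $Y_\alpha$ the realization $F_2 Y_\alpha$ is an explicit finite gluing of closed subsets of simplices, and the required bijection $Y_\alpha[\sigma] \cong {\bf Top}(\sigma, F_2 Y_\alpha)$ is then extracted from the finite-closed-cover sheaf condition together with Lemma \ref{LemPatching}, exactly as in part (1). The delicate points are controlling the topology of the colimit $F_2 Y$ so that compact probes are genuinely captured at a finite stage, and checking that the local lifts agree on overlaps; this is where the sheaf condition for finite closed covers, rather than any purely formal argument, does the real work, reflecting that the rich site ${\bf Sing\Delta}$ lets $Y$ recover all continuous maps out of finite polyhedra.
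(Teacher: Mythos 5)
Your part (1) is correct and is essentially the paper's own proof: cover $X$ by finitely many simplices, use Yoneda's Lemma on the pieces and their intersections, and compare the two equalizer presentations coming from the pasting lemma in ${\bf Top}$ and from Lemma \ref{LemPatching} in ${\bf qTop}$. Your auxiliary observation that $F_2 \operatorname{Sing^q}(P) \cong P$ for every finite polyhedron $P$ (via $F_2 \circ y \cong \iota$ for the pointwise left Kan extension, plus preservation of the patching coequalizer) is also correct, and is a point the paper leaves implicit.

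The problem is in part (2), and the step you flagged as ``the heart of the matter'' is not merely delicate: the statement you reduce to is false for a general quasitopological space $Y$. Indeed, the underlying set of $F_2 Y = \int^S Y[S] \bullet S$ is canonically $Y[\Delta^0]$: every generating relation $(\phi^\ast u, y) \sim (u, \phi(y))$ of the coend preserves the point value, since $y^\ast(\phi^\ast u) = (\phi(y))^\ast u$, so point evaluation descends to a bijection from the coend onto $Y[\Delta^0]$. Under this identification the map $Y[\sigma] \rightarrow {\bf Top}(\sigma, F_2 Y)$, $s \mapsto F_2(s)$, is evaluation at points, $s \mapsto (x \mapsto x^\ast s)$. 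Its injectivity therefore demands that sections over $\sigma$ be determined by their values at points, and sheaves on ${\bf Sing\Delta}$ need not be ``concrete'' in this sense. For a counterexample, let $Y[S] := {\bf Top}(S, \mathbb{R})^{\mathbb{N}} / \approx$, where $(f_n)_n \approx (g_n)_n$ if and only if $f_n = g_n$ for all sufficiently large $n$, with componentwise precomposition as restriction. This is a sheaf precisely because coverings in ${\bf Sing\Delta}$ are finite (one takes the maximum of finitely many thresholds in the separation and gluing arguments). Over $\Delta^1 = [0,1]$ take $g_n$ a bump function of height $1$ supported in the interval from $\frac{1}{n+1}$ to $\frac{1}{n}$: then $[(g_n)_n] \neq [(0)_n]$ in $Y[\Delta^1]$, yet for every point $x$ both sections restrict to $[(0)_n]$ in $Y[\Delta^0]$, so both induce the same constant map $\Delta^1 \rightarrow F_2 Y$. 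Hence $Y[\Delta^1] \rightarrow {\bf Top}(\Delta^1, F_2 Y)$ is not injective, and no filtered-colimit or compactness argument can close your gap. (Your surjectivity step also has an independent defect: a map from a compactum into a filtered colimit in ${\bf Top}$ need not factor through any stage unless the stages include as closed subspaces of $T_1$ spaces, which is not available here; but the injectivity failure is already decisive.)

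The repair is to restrict $Y$, which is what the paper implicitly does. Statement (2) is used only through Corollary \ref{CorFiniteEquivalence}, i.e.\ with $Y$ itself a finite polyhedron in ${\bf qTop}$ (any $Y \cong \operatorname{Sing^q}(Q)$ would do). In that case (2) follows from (1) together with your identification $F_2\operatorname{Sing^q}(P) \cong P$: writing $X = \operatorname{Sing^q}(P)$ and $Y = \operatorname{Sing^q}(Q)$, naturality and the triangle identities show that the composite of the part-(1) bijection ${\bf Top}(P, Q) \rightarrow {\bf qTop}(X, Y)$ with $F_2 : {\bf qTop}(X, Y) \rightarrow {\bf Top}(F_2 X, F_2 Y) \cong {\bf Top}(P, Q)$ is the identity, so the second map is a bijection as well. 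The paper's one-line assertion that the proof of (2) is ``similar'' to that of (1) is subject to the same caveat: its column maps ${\bf qTop}(\operatorname{Sing^q}(\sigma_i), Y) \rightarrow {\bf Top}(\sigma_i, F_2 Y)$ are bijections only for such $Y$; so statement (2) should be read with $Y$ a finite polyhedron, not an arbitrary object of ${\bf qTop}$.
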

\begin{proof}
We will show the claim {\it 1.}.
The proof of the claim {\it 2.} is similar. \par
If $X$ is a finite polyhedron, there exists a finite closed covering $\{ X_i \}_i$ of $X$ such that each $X_i$ is homeomorphic to a simplex.
Let $X'$, $Y'$ and $X'_i$ be the quasitopological spaces $\operatorname{Sing^q}(X)$, $\operatorname{Sing^q}(Y)$ and $\operatorname{Sing^q}(X_i)$ respectively.
Then, ${\bf Top}(X_i, Y) \rightarrow {\bf qTop}(X'_i, Y')$ and ${\bf Top}(X_i \cap X_j, Y) \rightarrow {\bf qTop}(X'_i \cap X'_j, Y')$ are bijections.
We consider the following diagram:
	\[\xymatrix{
		{\bf Top}(X, Y) \ar[r] \ar[d]
		& \displaystyle\prod_i {\bf Top}(X_i, Y) \ar@<2pt>[r] \ar@<-2pt>[r] \ar[d]
		& \displaystyle\prod_{i, j} {\bf Top}(X_i \cap X_j, Y) \ar[d]
	\\
		{\bf qTop}(X', Y') \ar[r]
		& \displaystyle\prod_i {\bf qTop}(X'_i, Y') \ar@<2pt>[r] \ar@<-2pt>[r]
		& \displaystyle\prod_{i, j} {\bf qTop}(X'_i \cap X'_j, Y').
	}\]
The upper row is an equalizer.
The bottom row is an equalizer by Lemma \ref{LemPatching}.
The right two columns are isomorphisms.
Then, the left column is an isomorphism.
\end{proof}
${\bf Top^f}$ is the category of finite polyhedra in {\bf Top}. \par
${\bf qTop^f}$ is the category of finite polyhedra in {\bf qTop}.
\begin{cor} \label{CorFiniteEquivalence}
The adjunction $F_2 \dashv G_2$ indeces a categorical equivalence ${\bf qTop^f} \simeq {\bf Top^f}$.
\end{cor}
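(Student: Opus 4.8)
The plan is to promote Proposition \ref{PropFiniteFullyFaithful} from two full-faithfulness statements to an adjoint equivalence, by checking that the adjunction $F_2 \dashv G_2$ restricts to the polyhedral subcategories and that its unit and counit become isomorphisms there. Write $\eta : \operatorname{id}_{\bf qTop} \to G_2 F_2$ and $\epsilon : F_2 G_2 \to \operatorname{id}_{\bf Top}$ for the unit and counit. First I would verify that the functors restrict: $G_2 = \operatorname{Sing^q}$ sends ${\bf Top^f}$ into ${\bf qTop^f}$ and $F_2$ sends ${\bf qTop^f}$ into ${\bf Top^f}$. For $G_2$, Lemma \ref{LemPatching} exhibits $\operatorname{Sing^q}(X)$ as the coequalizer of the $\operatorname{Sing^q}(X_i)$ along a finite closed covering by simplices, so $\operatorname{Sing^q}$ of a finite polyhedron is assembled from the representables $\Delta^n = \operatorname{Sing^q}(\Delta^n)$ in exactly the polyhedral pattern, hence lies in ${\bf qTop^f}$. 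Being a left adjoint, $F_2$ preserves these finite colimits, and $F_2(\operatorname{Sing^q}(\Delta^n)) \cong \Delta^n$ by the counit computation below, so $F_2$ carries a finite polyhedron in ${\bf qTop}$ to the corresponding finite polyhedron in ${\bf Top}$.

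Next I would show the counit is an isomorphism on ${\bf Top^f}$. Fix $X \in {\bf Top^f}$ and any $Y \in {\bf Top}$. The composite of the map $G_2 : {\bf Top}(X, Y) \to {\bf qTop}(G_2 X, G_2 Y)$ with the adjunction bijection ${\bf qTop}(G_2 X, G_2 Y) \cong {\bf Top}(F_2 G_2 X, Y)$ sends $g$ to $\epsilon_Y \circ F_2 G_2(g) = g \circ \epsilon_X$, using naturality of $\epsilon$; that is, the composite is precomposition $(-) \circ \epsilon_X$. The first map is bijective by Proposition \ref{PropFiniteFullyFaithful} (1) and the second by the adjunction, so $(-) \circ \epsilon_X$ is bijective for every $Y$, and Yoneda forces $\epsilon_X$ to be an isomorphism.

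Finally I would deduce that the unit is an isomorphism on ${\bf qTop^f}$. For $A \in {\bf qTop^f}$ the first step gives $F_2 A \in {\bf Top^f}$, so $\epsilon_{F_2 A}$ is an isomorphism by the previous paragraph; the triangle identity $\epsilon_{F_2 A} \circ F_2(\eta_A) = \operatorname{id}_{F_2 A}$ then makes $F_2(\eta_A)$ an isomorphism. Since both $A$ and $G_2 F_2 A = \operatorname{Sing^q}(F_2 A)$ lie in ${\bf qTop^f}$, on which $F_2$ is fully faithful by Proposition \ref{PropFiniteFullyFaithful} (2), and fully faithful functors reflect isomorphisms, $\eta_A$ is itself an isomorphism. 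With $\eta$ and $\epsilon$ now natural isomorphisms on the restricted subcategories, the adjunction $F_2 \dashv G_2$ restricts to an adjoint equivalence ${\bf qTop^f} \simeq {\bf Top^f}$.

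I expect the main obstacle to be the bookkeeping of the first step rather than the essentially formal unit/counit arguments: one must pin down the precise meaning of ``finite polyhedron in ${\bf qTop}$'' and confirm that $G_2$ and $F_2$ genuinely land in the polyhedral subcategories, and in particular that \emph{both} $A$ and $G_2 F_2 A$ are objects of ${\bf qTop^f}$, since this is exactly what is needed for Proposition \ref{PropFiniteFullyFaithful} (2) to apply in the reflection-of-isomorphisms step.
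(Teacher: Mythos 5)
Your proposal is correct and takes essentially the same route as the paper: the paper's entire proof of Corollary \ref{CorFiniteEquivalence} is that it is ``obvious by Proposition \ref{PropFiniteFullyFaithful}'', and your unit/counit argument (counit invertible on ${\bf Top^f}$ via full faithfulness of $\operatorname{Sing^q}$ plus Yoneda, unit invertible on ${\bf qTop^f}$ via the triangle identity and the fact that the fully faithful $F_2$ reflects isomorphisms) is exactly the standard deduction the paper leaves implicit. Your closing caveat about pinning down ``finite polyhedron in ${\bf qTop}$'' and checking that both $A$ and $G_2F_2A$ lie in ${\bf qTop^f}$ is a genuine point of care that the paper glosses over entirely.
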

\begin{proof}
It is obvious by Proposition \ref{PropFiniteFullyFaithful}.
\end{proof}
	\subsection{Some model structures}
In this subsection, we will show that the category {\bf qTop} of quasitopological spaces has a model structure.
Furthermore, we will review Homotopy Theory in {\bf qTop}.
\vspace{10pt}\\
Consider the following adjunctions sequence:
	\[\xymatrix{
		{\bf sSet} \ar@<-10pt>[r]_{F_0} \ar@{}[r] | {\top}
		& {\bf pqTop} \ar@<-10pt>[l]_{G_0} \ar@<-10pt>[r]_{F_1} \ar@{}[r] | {\top}
		& {\bf qTop} \ar@<-10pt>[l]_{G_1} \ar@<-10pt>[r]_{F_2} \ar@{}[r] | {\top}
		& {\bf Top} \ar@<-10pt>[l]_{G_2}.
	}\]
The above categories and functors are defined as follows.
\begin{itemize}
\item {\bf sSet} is the category of simplicial sets.
\item {\bf pqTop} is the category of prequasitopological spaces (cf. Section 1.1.1).
\item {\bf qTop} is the category of quasitopological spaces (cf. Section 1.1.2).
\item {\bf Top} is the category of topological spaces and continuous maps.
\item $G_0$ is the functor ${\bf Set}^{{\bf Sing\Delta}^{op}} \rightarrow {\bf Set}^{{\bf \Delta}^{op}}$ induced by the inclusion functor ${\bf \Delta} \hookrightarrow {\bf Sing\Delta}$, where ${\bf \Delta}$ is the simplicial category.
\item $G_1$ is the forgetful functor.
\item $G_2$ is defined as $G_2(X) := \operatorname{Sing^q}(X)$ (cf. Section 1.1.3).
\item $F_1$ is the sheafification.
\item $F_0$ and $F_2$ are defined in the same way as the geometric realizations.
Specifically, these are defined as follows.
	\begin{itemize}
	\item $F_0(X) := \int^n {\bf sSet}(\Delta^{n-1}, X) \bullet \Delta^{n-1}$.
	\item $F_2(X) := \int^S {\bf qTop}(S, X) \bullet S$ (cf. Section 1.1.3).
	\end{itemize}
Each is the left Kan extension of the appropriate diagram.
\end{itemize}\par
We will show that {\bf pqTop} (resp. {\bf qTop}) has a model structure by {\it Transfer Theorem}
(cf. Theorem \ref{ThmTransfer}).
Furthermore, all the above adjunctions are Quillen equivalences.
	\subsubsection{A model structure for prequasitopological spaces}
We will show that the category {\bf pqTop} has a model structure in this subsubsection.
We will prepare for this.
\vspace{10pt}\\
Let $I_{\bf sSet}$, $J_{\bf sSet}$ and $W_{\bf sSet}$ be the sets of some morphisms in {\bf sSet} defined as the followings:
\[
\begin{array}{rcl}
I_{\bf sSet}
& := & \{ \partial \Delta^n \hookrightarrow \Delta^n \, | \, n \}, \\
J_{\bf sSet}
& := & \{ \Lambda^{n-1}_i \hookrightarrow \Delta^n \, | \, n, i \}.
\end{array}
\]
It is classically known that the category {\bf sSet} has a cofibrantly generated model structure such that the class of cofibrations is generated by $I_{\bf sSet}$ and the class of acyclic cofibrations is generated by $J_{\bf sSet}$
(cf. Definition \ref{DefCofibrantlyGenerated}).
Let $W_{\bf sSet}$ be the class of weak equivalences in {\bf sSet}.
\begin{lem} \label{LemMonoPreserve}
The functor $F_0 : {\bf sSet} \rightarrow {\bf pqTop}$ preserves any monomorphism.
\end{lem}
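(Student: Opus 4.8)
The plan is to exploit that $F_0$, being a left adjoint (a left Kan extension), preserves all colimits, and that its target ${\bf pqTop} = {\bf PSh}({\bf Sing\Delta})$ is a presheaf topos, in which monomorphisms are exactly the objectwise (i.e.\ for every object $S$) injections and are therefore stable under coproducts, pushouts, and sequential composition. First I would reduce a general monomorphism $f : X \rightarrow Y$ in {\bf sSet} to a subcomplex inclusion: $f$ factors as the isomorphism $X \cong f(X)$ followed by the inclusion of the subcomplex $f(X) \subseteq Y$, and isomorphisms are trivially preserved. Then I would invoke the skeletal filtration of a subcomplex inclusion $A \subseteq B$: writing $B^{(-1)} = A$ and $B^{(k)} = A \cup \operatorname{sk}_k B$, each stage $B^{(k-1)} \rightarrow B^{(k)}$ is a pushout of the coproduct $\coprod \partial\Delta^k \hookrightarrow \coprod \Delta^k$ indexed by the nondegenerate $k$-simplices of $B$ not lying in $A$, and $B = \operatorname{colim}_k B^{(k)}$. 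Since $F_0$ preserves these colimits, $F_0(A) \rightarrow F_0(B)$ becomes a sequential composite of pushouts of coproducts of the maps $F_0(\partial\Delta^k \hookrightarrow \Delta^k)$, so by the stability properties above the whole statement collapses to the single claim that $F_0(\partial\Delta^n \hookrightarrow \Delta^n)$ is a monomorphism.

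The heart of the argument is therefore this boundary computation, and it is where the presheaf (as opposed to sheaf) nature of {\bf pqTop} must be handled with care. On representables $F_0$ returns the value of the defining Kan extension, so $F_0(\Delta^n) = \operatorname{Sing^q}(\Delta^n)$ and $F_0(\Delta^n)[S] = {\bf Top}(S, \Delta^n)$. Writing $\partial\Delta^n$ as the union of its facets $\partial_i$, i.e.\ as the coequalizer $\coprod_{i,j}(\partial_i \cap \partial_j) \rightrightarrows \coprod_i \partial_i$ of subobjects of $\Delta^n$, and using that $F_0$ preserves this colimit while colimits in {\bf pqTop} are computed objectwise, I would describe $F_0(\partial\Delta^n)[S]$ concretely as the set of pairs $(i, \sigma)$ with $\sigma : S \rightarrow F_i$ a continuous map into the $i$-th closed facet $F_i \subseteq \Delta^n$, modulo the relation identifying two such pairs when they arise from a common map into an intersection $F_i \cap F_j$. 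It is crucial to note that this is genuinely smaller than $\operatorname{Sing^q}(\partial\Delta^n)[S] = {\bf Top}(S, \partial\Delta^n)$, since a map $S \rightarrow \partial\Delta^n$ need not factor through any single facet; indeed $\operatorname{Sing^q}(\partial\Delta^n)$ is the sheafification $F_1 F_0(\partial\Delta^n)$, not $F_0(\partial\Delta^n)$ itself.

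To finish the boundary step I would check that the canonical map $F_0(\partial\Delta^n)[S] \rightarrow {\bf Top}(S, \Delta^n)$, sending $(i, \sigma)$ to the composite $S \xrightarrow{\sigma} F_i \hookrightarrow \Delta^n$, is injective. If $(i, \sigma)$ and $(j, \tau)$ have the same image $g : S \rightarrow \Delta^n$, then $g(S) \subseteq F_i \cap F_j$, so $g$ factors through the intersection and the two pairs are already identified by the coequalizer relation; well-definedness on classes is immediate. This makes $F_0(\partial\Delta^n \hookrightarrow \Delta^n)$ an objectwise injection, hence a monomorphism, and the reduction above then delivers the lemma. I expect the main obstacle to be precisely this boundary computation: one must resist identifying $F_0(\partial\Delta^n)$ with $\operatorname{Sing^q}(\partial\Delta^n)$ and instead argue directly with the objectwise coequalizer, which is the one point where injectivity could plausibly fail and, after the factorization-through-the-intersection observation, does not.
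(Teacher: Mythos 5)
Your proposal is correct and follows essentially the same route as the paper's proof: reduce a monomorphism of simplicial sets to transfinite composites of pushouts of (coproducts of) the boundary inclusions, and use that monomorphisms in the presheaf category ${\bf pqTop}$ are objectwise injections, stable under these colimit operations, so everything collapses to the generating maps $F_0(\partial\Delta^n \hookrightarrow \Delta^n)$. The one genuine addition is your explicit objectwise coequalizer computation showing that $F_0(\partial\Delta^n) \rightarrow F_0(\Delta^n)$ is monic --- a step the paper's proof asserts without argument --- together with the correct caution that $F_0(\partial\Delta^n)$ is not $\operatorname{Sing^q}(\partial\Delta^n)$ but only has it as its sheafification $F_1F_0(\partial\Delta^n)$.
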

\begin{proof}
Any monomorphism between simplicial sets is a cofibration.
$F_0$ preserves any push-out, any transfinite composition and any retract.
Then, we obtain the inclusion relation $F_0(\operatorname{Cof}(I_{\bf sSet})) \subset \operatorname{Cof}(F_0I_{\bf sSet})$.
Any morphism $\partial \Delta^n \rightarrow \Delta^n$ in the set $F_0I_{\bf sSet}$ is a monomorphism in {\bf pqTop}.
Any push-out, any transfinite composition and any retract preserve monomorphisms in {\bf pqTop} by the objectwise calculation.
Then, any morphism belonging to $\operatorname{Cof}(F_0I_{\bf sSet})$ is a monomorphism.
Therefore, the functor $F_0$ preserves any monomorphism.
\end{proof}
\begin{lem} \label{LemPrequasiCounit}
Let $\epsilon : F_0G_0 \rightarrow \operatorname{Id}_{\bf pqTop}$ be the counit of the adjunction $F_0 \dashv G_0 : {\bf sSet} \rightarrow {\bf pqTop}$.
For any prequasitopological space $X$, the morphism $\epsilon_X$ has the right lifting property with respect to $F_0I_{\bf sSet}$.
\end{lem}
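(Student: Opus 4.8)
The plan is to transpose the lifting problem across the adjunction $F_0 \dashv G_0$. Because $F_0$ is a left adjoint, for any monomorphism $i$ of simplicial sets a square with $F_0(i)$ on the left and $\epsilon_X$ on the right transposes, under the hom-isomorphism ${\bf pqTop}(F_0(-),-) \cong {\bf sSet}(-, G_0(-))$ together with naturality of the unit $\eta$ and counit $\epsilon$, into a square with $i$ on the left and $G_0(\epsilon_X)$ on the right; moreover a lift in one square corresponds bijectively to a lift in the other. Applying this to each $i \in I_{\bf sSet}$, the statement that $\epsilon_X$ has the right lifting property with respect to $F_0 I_{\bf sSet}$ is equivalent to the statement that $G_0(\epsilon_X) : G_0 F_0 G_0 X \to G_0 X$ has the right lifting property with respect to $I_{\bf sSet}$, i.e. that $G_0(\epsilon_X)$ is a trivial fibration of simplicial sets. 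So the whole lemma reduces to this single assertion about simplicial sets.

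Next I would make $G_0(\epsilon_X)$ explicit. One triangle identity gives $G_0(\epsilon_X) \circ \eta_{G_0 X} = \operatorname{id}$, so $\eta_{G_0 X}$ is a section and $G_0(\epsilon_X)$ is in particular a split epimorphism. To see the full structure, recall that ${\bf pqTop}$ is a presheaf category, so colimits — hence the coend defining $F_0 G_0 X$ — are computed objectwise; evaluating at $\Delta^n$ and using $\operatorname{Sing^q}(\Delta^k)[\Delta^n] = {\bf Sing\Delta}(\Delta^n, \Delta^k)$ yields $(G_0 F_0 G_0 X)_n \cong \int^{[k]} X[\Delta^k] \times {\bf Sing\Delta}(\Delta^n, \Delta^k)$, with $G_0(\epsilon_X)_n$ the ``evaluate the reparametrisation'' map $[\xi, \phi] \mapsto \phi^\ast \xi$ and with $\eta_{G_0 X}$ picking out the classes $[\sigma, \operatorname{id}_{\Delta^n}]$. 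The only difference with the co-Yoneda presentation of $(G_0 X)_n$ is that ${\bf Sing\Delta}(\Delta^n, \Delta^k)$ contains \emph{all} continuous maps, not merely the affine ones; this extra room is what will be exploited.

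The heart of the proof is then the filling problem for $G_0(\epsilon_X)$: given an $n$-simplex $\sigma \in X[\Delta^n]$ together with a compatible family of $(n-1)$-simplices of $G_0 F_0 G_0 X$ realising a map $\partial\Delta^n \to G_0 F_0 G_0 X$ lying over the boundary of $\sigma$, one must produce an $n$-simplex lying over $\sigma$ with exactly the prescribed faces. The naive candidate $[\sigma, \operatorname{id}_{\Delta^n}]$ lies over $\sigma$ but has the wrong faces, so the real work is to bend it to match the prescribed boundary reparametrisations without disturbing its image. Here I would use the two features that separate ${\bf Sing\Delta}$ from the simplicial category $\mathbf{\Delta}$: every continuous map between closed subsets of simplices is a morphism, and every simplex is convex, hence continuously contractible. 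Concretely, one realises the filler as a class $[\xi, \phi]$ over a higher simplex $\Delta^k$, where $\phi : \Delta^n \to \Delta^k$ is an affine map whose restriction to each facet carries the corresponding boundary datum, and where $\xi \in X[\Delta^k]$ is assembled from $\sigma$ and the boundary simplices and arranged to be constant along $\phi$ in the directions transverse to the carried faces; convexity supplies the straight-line interpolations that make $\phi$ continuous and force $\phi^\ast \xi = \sigma$, while the coend relations coming from the affine face and degeneracy maps identify the faces of $[\xi, \phi]$ with the prescribed ones.

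I expect this last construction to be the main obstacle: matching an arbitrary continuous boundary reparametrisation while keeping the image equal to $\sigma$ is precisely what forces one to leave the affine world, and it is the step that genuinely uses the geometry of ${\bf Sing\Delta}$ rather than formal category theory — indeed the analogous statement for the topological realisation--singular adjunction is false, since there $\operatorname{Sing}(\epsilon)$ need not even be a fibration. The delicate point in writing it out is checking that the interpolated data assemble into a single well-defined class in the coend, compatibly with the simplicial relations; Lemma \ref{LemPatching} and the objectwise computation of colimits in Proposition \ref{PropFilter} are the tools I would use to keep this bookkeeping under control.
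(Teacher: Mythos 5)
Your formal reductions are correct: since $F_0$ is a left adjoint, $F_0(i) \pitchfork \epsilon_X$ if and only if $i \pitchfork G_0(\epsilon_X)$, so the lemma is indeed equivalent to $G_0(\epsilon_X)$ being an acyclic fibration of simplicial sets, and your objectwise description of $(G_0F_0G_0X)_n$ as $\int^{[k]} X[\Delta^k]\times{\bf Sing\Delta}(\Delta^n,\Delta^k)$, with the unit picking out the classes $[\sigma,\operatorname{id}]$, is also correct. You have, moreover, located the genuine crux: the tautological class $[\sigma,\operatorname{id}_{\Delta^n}]$ lies over $\sigma$ but there is no reason for its faces to equal the prescribed boundary classes. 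This diagnosis is in fact sharper than the paper's own argument, which takes exactly this candidate $\gamma=[\beta,\operatorname{id}]$ and then asserts $\gamma|_{\partial\Delta^n}=\alpha$; that assertion presupposes that each $\alpha\circ\sigma_i$ is already the tautological class $[\beta\circ\sigma_i,\operatorname{id}]$ of its image under $\epsilon_X$, which is precisely the assumption you decline to make.

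The gap is your third step, and it cannot be closed at this level of generality. Your filler $[\xi,\phi]$ requires an element $\xi\in X[\Delta^k]$ ``assembled from $\sigma$ and the boundary simplices'', but $X$ is an arbitrary presheaf on ${\bf Sing\Delta}$, and presheaves do not glue; Lemma \ref{LemPatching} is of no help, since it concerns $\operatorname{Sing^q}$ of a topological space and its proof uses the sheaf condition of the \emph{target}, whereas you need to glue sections of $X$ itself. In fact the statement of the lemma fails in ${\bf pqTop}$. Let $X$ be the pushout $\operatorname{Sing^q}(\Delta^1)\amalg_{\operatorname{Sing^q}(\Delta^0)}\operatorname{Sing^q}(\Delta^1)$, where $\Delta^0$ maps to a vertex $e_0$ of the first copy (labelled $A$) and to the midpoint $m$ of the second copy (labelled $B$); colimits of presheaves are objectwise, so a section of $X$ over $S$ is a continuous map $S\to\Delta^1$ labelled $A$ or $B$, subject to the single identification $(\mathrm{const}_{e_0},A)=(\mathrm{const}_{m},B)$. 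Put $\sigma:=(\operatorname{id},A)\in X[\Delta^1]$ and consider the square against $F_0(\partial\Delta^1\hookrightarrow\Delta^1)$ whose bottom is $\sigma$ and whose top is given by $a_0:=[(\operatorname{id},B),\,m]$ and $a_1:=[X[e_1]\sigma,\operatorname{id}_{\Delta^0}]$ in $(F_0G_0X)[\Delta^0]$; it commutes because $\epsilon_X(a_0)=X[m](\operatorname{id},B)=(\mathrm{const}_m,B)=(\mathrm{const}_{e_0},A)=X[e_0]\sigma$. A lift would be a class $[\xi,\phi]$ with $\xi\in X[\Delta^N]$, $\phi:\Delta^1\to\Delta^N$, $X[\phi]\xi=\sigma$ and $[\xi,\phi\circ e_0]=a_0$. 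Since the only identification in $X$ involves constant maps and $\sigma$ is not constant, $X[\phi]\xi=\sigma$ forces $\xi=(g,A)$ with $g\circ\phi=\operatorname{id}$. Now $(F_0G_0X)[\Delta^0]$ is, as a set, the geometric realization $|G_0X|$, and $G_0X=\operatorname{Sing}(\Delta^1)_A\cup_{\mathrm{pt}}\operatorname{Sing}(\Delta^1)_B$ is two copies of $\operatorname{Sing}(\Delta^1)$ glued at one $0$-simplex, so $|G_0X|$ is two copies of $|\operatorname{Sing}(\Delta^1)|$ meeting in a single $0$-cell. The class $[(g,A),\phi(e_0)]$ lies in the $A$-copy, while $a_0$ is an interior point of the $1$-cell of the $B$-copy indexed by the nondegenerate singular simplex $\operatorname{id}_{\Delta^1}$; by uniqueness of nondegenerate representatives of points of a realization these are distinct, so no lift exists. (The coend relations are indexed by ${\bf \Delta}$, i.e.\ by affine maps only, which is why no zigzag can bridge the two copies.) So neither your construction nor the paper's can be completed as stated: what both actually need is the sheaf condition on $X$, i.e.\ the statement should be restricted to $X$ in the image of $G_1$ (equivalently, handled at the level of ${\bf qTop}$, which is all that Lemma \ref{LemQuasiCounit} requires); there the gluing you invoke is available and your ``bending'' strategy has a chance of succeeding.
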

\begin{proof}
For any square in the following diagram, we will show that there is a morphism $\gamma$ below.
	\[\xymatrix{
		\partial \Delta^n \ar[r]^-{\alpha} \ar@{^(->}[d]
		& F_0(G_0(X)) \ar[d]^-{\epsilon_X}
	\\
		\Delta^n \ar[r]_-{\beta} \ar@{.>}[ur]^-{\gamma}
		& X.
}\]
By definition, we obtain the following equality:
\[
\begin{array}{rcl}
F_0(G_0(X))
& = & \int^k {\bf sSet}(\Delta^{k-1}, G_0(X)) \bullet \Delta^{k-1} \\
& = & \int^k G_0(X)[\Delta^{k-1}] \bullet \Delta^{k-1} \\
& = & \int^k X[\Delta^{k-1}] \bullet \Delta^{k-1}.
\end{array}
\]
Then, we have the quotient morphism $\coprod_k X[\Delta^k] \bullet \Delta^k \rightarrow F_0(G_0(X))$.
Let $\gamma$ be the composition of the sequence $\Delta^n \overset{\beta}{\rightarrow} X[\Delta^n] \bullet \Delta^n \rightarrow \coprod_k X[\Delta^k] \bullet \Delta^k \rightarrow F_0(G_0(X))$.
Obviously, we have the equality $\epsilon_X \circ \gamma = \beta$.
For any embedding $\sigma_i : \Delta^{n-1} \hookrightarrow \Delta^n$, the following diagram is commutative:
	\[\xymatrix{
		\Delta^{n-1} \ar[r]^-{\alpha \circ \sigma_i} \ar@{^(->}[d]_-{\sigma_i}
		& F_0(G_0(X)) \ar[d]^-{\epsilon_X}
	\\
		\Delta^n \ar[r]_-{\beta}
		& X.
}\]
Then, the following diagram is commutative:
	\[\xymatrix{
		\Delta^{n-1} \ar[r]^-{\alpha \circ \sigma_i} \ar@{^(->}[d]_-{\sigma_i}
		& X[\Delta^{n-1}] \bullet \Delta^{n-1} \ar[r]^-{id \bullet \sigma_i}
		& X[\Delta^{n-1}] \bullet \Delta^n
	\\
		\Delta^n \ar[r]_-{\beta}
		& X[\Delta^n] \bullet \Delta^n \ar[ur]_-{X[\sigma_i] \bullet id}.
}\]
This means that the following diagram is commutative:
	\[\xymatrix{
		\Delta^{n-1} \ar@{^(->}[r]^-{\sigma_i} \ar@{^(->}[d]_-{\sigma_i}
		& \partial \Delta^n \ar[d]^-{\alpha}
	\\
		\Delta^n \ar[r]_-{\gamma}
		& F_0(G_0(X)).
}\]
We obtain the equality $\gamma |_{\partial \Delta^n} = \alpha$.
Then, the morphism $\epsilon_X$ has the right lifting property with respect to $F_0I_{\bf sSet}$.
\end{proof}
\begin{lem} \label{LemPrequasiUnit}
Let $\eta : \operatorname{Id}_{\bf sSet} \rightarrow G_0F_0$ be the unit of the adjunction $F_0 \dashv G_0 : {\bf sSet} \rightarrow {\bf pqTop}$.
\begin{description}
\item[(1)] $\eta_{\Delta^n} : \Delta^n \rightarrow G_0(F_0(\Delta^n))$ is a weak equivalence.
\item[(2)] $\eta_{\Lambda^n_i} : \Lambda^n_i \rightarrow G_0(F_0(\Lambda^n_i))$ is a weak equivalence.
\end{description}
\end{lem}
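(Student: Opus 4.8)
The plan is to compute $G_0F_0$ on these two objects explicitly and then invoke the elementary fact that any map between nonempty weakly contractible simplicial sets is a weak equivalence (both sides having trivial $\pi_0$ and trivial higher homotopy groups, so that the map induces isomorphisms on all of them). The computation rests on one observation: because $F_0$ is the left Kan extension of the functor $[k]\mapsto\operatorname{Sing^q}(\Delta^k)$ along the fully faithful Yoneda embedding $\Delta\hookrightarrow{\bf sSet}$, it restricts to that functor on representables, so $F_0(\Delta^n)\cong\operatorname{Sing^q}(\Delta^n)$. Hence $(G_0F_0\Delta^n)[m]=\operatorname{Sing^q}(\Delta^n)[\Delta^m]={\bf Top}(|\Delta^m|,|\Delta^n|)$, i.e. $G_0F_0\Delta^n$ is the ordinary singular complex $\operatorname{Sing}(|\Delta^n|)$, and $\eta_{\Delta^n}$ is the classical unit $\Delta^n\to\operatorname{Sing}(|\Delta^n|)$.

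For \textbf{(1)} it then suffices to note that both $\Delta^n$ and $\operatorname{Sing}(|\Delta^n|)$ are weakly contractible: the former because $|\Delta^n|$ is convex, the latter because the singular complex of a contractible space is a contractible Kan complex. By the reduction principle above, $\eta_{\Delta^n}$ is a weak equivalence.

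For \textbf{(2)} I would first identify $G_0F_0\Lambda^n_i$ explicitly. Since $F_0$ preserves monomorphisms (Lemma \ref{LemMonoPreserve}), the horn inclusion gives a monomorphism $F_0\Lambda^n_i\hookrightarrow F_0\Delta^n=\operatorname{Sing^q}(\Delta^n)$, and evaluating the defining coend objectwise (colimits in the presheaf category ${\bf pqTop}$ are computed objectwise) exhibits $G_0F_0\Lambda^n_i$ as the subcomplex of $\operatorname{Sing}(|\Delta^n|)$ consisting of those singular simplices $|\Delta^m|\to|\Delta^n|$ that factor through a single face $|\partial_j\Delta^n|$ with $j\ne i$; that is, $G_0F_0\Lambda^n_i=\bigcup_{j\ne i}\operatorname{Sing}(|\partial_j\Delta^n|)$. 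Granting that this union is weakly contractible, $\eta_{\Lambda^n_i}$ is a weak equivalence exactly as in (1), since $\Lambda^n_i$ is itself weakly contractible.

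The weak contractibility of this union is the crux, and the main obstacle is a tempting false shortcut: one cannot simply assert $G_0F_0\Lambda^n_i=\operatorname{Sing}(|\Lambda^n_i|)$ and quote contractibility of $|\Lambda^n_i|$, because a singular simplex sweeping across two faces of the horn factors through no single face, so the union above is a \emph{proper} subcomplex of $\operatorname{Sing}(|\Lambda^n_i|)$. Instead I would induct on $n$. The subcomplexes $\operatorname{Sing}(|\partial_j\Delta^n|)$ are closed under intersection, the intersection over $j\in S$ being $\operatorname{Sing}$ of the corresponding codimension-$|S|$ face, so the union can be assembled one face at a time as an iterated pushout along subcomplex inclusions; as monomorphisms of simplicial sets are cofibrations, each such pushout is a homotopy pushout, and a homotopy pushout of weakly contractible simplicial sets along a weakly contractible one is weakly contractible. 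At each stage the gluing locus is the union of the $\operatorname{Sing}$'s of a nonempty proper family of facets of the $(n-1)$-simplex $\partial_j\Delta^n$ — it always omits at least the facet opposite the vertex $i$ — which is weakly contractible by the inductive hypothesis (heuristically it is a cone with apex the vertex $i$, but the inductive statement is what is actually needed, precisely because the simplicial union is strictly smaller than $\operatorname{Sing}$ of that cone). This yields (2) and hence the lemma.
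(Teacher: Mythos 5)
Your proposal is correct, but it takes a genuinely different route from the paper's. You compute $G_0F_0$ explicitly: $F_0(\Delta^n)\cong\operatorname{Sing^q}(\Delta^n)$ because the left Kan extension along the fully faithful Yoneda embedding restricts to the original functor, so $G_0F_0\Delta^n=\operatorname{Sing}(|\Delta^n|)$; and, since colimits in the presheaf category ${\bf pqTop}$ are computed objectwise, $G_0F_0\Lambda^n_i$ is exactly the union $\bigcup_{j\ne i}\operatorname{Sing}(|\partial_j\Delta^n|)$, a \emph{proper} subcomplex of $\operatorname{Sing}(|\Lambda^n_i|)$ --- you correctly identify this discrepancy as the crux. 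You then prove such unions weakly contractible by a gluing induction on dimension (pushouts along monomorphisms are homotopy pushouts, and homotopy pushouts of weakly contractible complexes over a weakly contractible one are weakly contractible) and finish by noting that any map of nonempty weakly contractible simplicial sets is a weak equivalence. The paper, by contrast, never identifies $G_0F_0\Lambda^n_i$: it filters $\Lambda^n_i$ by the subcomplexes $\Gamma^k$ generated by the $k$-faces containing the vertex $i$, observes that $\Gamma^{k-1}\rightarrow\Gamma^k$ is a finite relative $\{\Lambda^k_j\rightarrow\Delta^k\}$-cell complex, and then runs a purely formal argument: $F_0$ and $G_0$ preserve colimits and monomorphisms, the maps $G_0F_0\Lambda^k_j\rightarrow G_0F_0\Delta^k$ are acyclic cofibrations by the outer induction on $n$ together with two-out-of-three, and acyclic cofibrations are stable under pushout and composition. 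Your version buys an explicit description of the object; the paper's version never needs to know what $G_0F_0$ of a subcomplex looks like and stays entirely inside model-category bookkeeping. One point you should make explicit: since your gluing loci are arbitrary nonempty proper families of facets (not horns), the statement you induct on must be the strengthened one --- for every $m<n$ and every nonempty proper set $T$ of facets of $\Delta^m$, the union $\bigcup_{j\in T}\operatorname{Sing}(|\partial_j\Delta^m|)$ is weakly contractible --- of which the horn case is a special instance; you gesture at this (``the inductive statement is what is actually needed'') but never state it, and the induction closes up only once it is stated in this generality.
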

\begin{proof}
The claim {\bf (1)} is obvious by the equality $G_0(F_0(\Delta^n)) = \operatorname{Sing}(\Delta^n)$. \par
We will show the claim  {\bf (2)} by induction on $n$.
If $n = 0$, the morphism $\eta_{\Lambda^0_0} : {\bf 1} \rightarrow {\bf 1}$ is an isomorphism.
This is a weak equivalence. \par
Assume that for any $k$ such that $k < n$, the claim that $n$ is replaced by $k$ holds.
Let $\Gamma^k$ be the simplicial set generated by $k$-surfaces of $\Lambda^n_i$ containing the $i$-th vertex.
We consider the following diagram:
	\[\xymatrix{
		\Gamma^0 \ar@{^(->}[r] \ar@{=}[d]
		& \Gamma^1 \ar@{^(->}[r] \ar@{^(->}[d]
		& \cdots \ar@{^(->}[r]
		& \Gamma^k \ar@{^(->}[r] \ar@{^(->}[d]
		& \cdots \ar@{^(->}[r]
		& \Gamma^{n-1} = \Lambda^n_i \ar@{^(->}[d]^-{\eta_{\Lambda^n_i}}
	\\
		G_0(F_0(\Gamma^0) \ar@{^(->}[r]
		& G_0(F_0(\Gamma^1) \ar@{^(->}[r]
		& \cdots \ar@{^(->}[r]
		& G_0(F_0(\Gamma^k) \ar@{^(->}[r]
		& \cdots \ar@{^(->}[r]
		& G_0(F_0(\Gamma^{n-1}) = G_0(F_0(\Lambda^n_i).
}\]
We will show that $\Gamma^k \rightarrow G_0(F_0(\Gamma^k)$ is a weak equivalence by induction on $k$. \par
If $k = 0$, this is obvious because of the equality $\Gamma^0 = {\bf 1} = G_0(F_0(\Gamma^0)$. \par
Assume that for any $l$ such that $l < k$, $\Gamma^l \rightarrow G_0(F_0(\Gamma^l)$ is a weak equivalence.
$\Gamma^{k-1} \rightarrow G_0(F_0(\Gamma^{k-1})$ is a weak equivalence.
$\Gamma^{k-1} \rightarrow \Gamma^k$ is a finite relative $\{ \Lambda^k_j \rightarrow \Delta^k \, | \, j \}$-cell complex.
i.e. There exists a finite sequence:
\[
\Gamma^{k-1} = X_0 \rightarrow X_1 \rightarrow \cdots \rightarrow X_N = \Gamma^k,
\]
where each component $X_{s-1} \rightarrow X_s$ is a push-out of $\Lambda^k_j \rightarrow \Delta^k$.
$F_0$ preserves any colimit because it is a left adjoint functor.
$G_0$ preserves any colimit by the objectwise calculation.
$G_0(F_0(\Gamma^{k-1}) \rightarrow G_0(F_0(\Gamma^k)$ is a finite relative $\{ G_0(F_0(\Lambda^k_j) \rightarrow G_0(F_0(\Delta^k) \, | \, j \}$-cell complex.
If we show that $G_0(F_0(\Lambda^k_j) \rightarrow G_0(F_0(\Delta^k)$ is an acyclic cofibration, We obtain that $G_0(F_0(\Gamma^{k-1}) \rightarrow G_0(F_0(\Gamma^k)$ is an acyclic cofibration.
We will show that $G_0(F_0(\Lambda^k_j) \rightarrow G_0(F_0(\Delta^k)$ is an acyclic cofibration. \par
$\Lambda^k_j \rightarrow G_0(F_0(\Lambda^k_j))$ is a weak equivalence by the assumption of induction.
$G_0(F_0(\Lambda^k_j) \rightarrow G_0(F_0(\Delta^k)$ is a weak equivalence by the following diagram:
	\[\xymatrix{
		\Lambda^k_j \ar[r]^-{\simeq} \ar[d]_-{\simeq}
		& \Delta^k \ar[d]^-{\simeq}
	\\
		G_0(F_0(\Lambda^k_j)) \ar[r]
		& G_0(F_0(\Delta^k)).
}\]
$F_0$ preserves any monomorphism by Lemma \ref{LemMonoPreserve}.
$G_0$ preserves any monomorphism by the objectwise calculation.
$G_0(F_0(\Lambda^k_j) \rightarrow G_0(F_0(\Delta^k)$ is a cofibration.
Then, this is an acyclic cofibration. \par
Therefore, any relative $\{ G_0(F_0(\Lambda^k_j) \rightarrow G_0(F_0(\Delta^k) \, | \, j \}$-cell complex is an acyclic cofibration.
Then, $G_0(F_0(\Gamma^{k-1}) \rightarrow G_0(F_0(\Gamma^k)$ is a weak equivalence because it is an acyclic cofibration.
$\Gamma^k \rightarrow G_0(F_0(\Gamma^k)$ is a weak equivalence by the following diagram:
	\[\xymatrix{
		\Gamma^{k-1} \ar[r]^-{\simeq} \ar[d]_-{\simeq}
		& \Gamma^k \ar[d]
	\\
		G_0(F_0(\Gamma^{k-1})) \ar[r]_-{\simeq}
		& G_0(F_0(\Gamma^k)).
}\]
This completes the induction on $k$.
Therefore, for any $k (\le n)$, $\Gamma^k \rightarrow G_0(F_0(\Gamma^k)$ is a weak equivalence.
In particular, by considering the case $k = n$, $\eta_{\Lambda^n_i} : \Lambda^n_i \rightarrow G_0(F_0(\Lambda^n_i))$ is a weak equivalence.
This completes the induction on $n$.
Therefore, for any $n$, $\eta_{\Lambda^n_i} : \Lambda^n_i \rightarrow G_0(F_0(\Lambda^n_i))$ is a weak equivalence.
\end{proof}
Let $I_{\bf pqTop}$, $J_{\bf pqTop}$ and $W_{\bf pqTop}$ be sets as the followings:
\[
\begin{array}{rcl}
I_{\bf pqTop}
& := & F_0I_{\bf sSet}, \\
J_{\bf pqTop}
& := & F_0J_{\bf sSet}, \\
W_{\bf pqTop}
& := & G_0^{-1}W_{\bf sSet},
\end{array}
\]
Now we are ready to prove that {\bf pqTop} has a model structure.
\begin{thm} \label{ThmModePqtop}
{\bf pqTop} has a cofibrantly generated model structure satisfying the followings:
\begin{itemize}
\item $I_{\bf pqTop}$ generates cofibrations,
\item $J_{\bf pqTop}$ generates acyclic cofibrations, and
\item $W_{\bf pqTop}$ is the class of weak equivalences.
\end{itemize}
Furthermore, $F_0 \dashv G_0 : {\bf sSet} \rightarrow {\bf pqTop}$ is a Quillen equivalence.
\end{thm}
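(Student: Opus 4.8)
The plan is to obtain the model structure by invoking the \emph{Transfer Theorem} (Theorem \ref{ThmTransfer}) along the adjunction $F_0 \dashv G_0 : {\bf sSet} \rightarrow {\bf pqTop}$, transferring the standard cofibrantly generated model structure on ${\bf sSet}$. Since ${\bf pqTop} = {\bf PSh}({\bf Sing}\Delta)$ is a presheaf category, it is complete, cocomplete and locally presentable; in particular every object is small, so the small object argument applies to both $I_{\bf pqTop} = F_0 I_{\bf sSet}$ and $J_{\bf pqTop} = F_0 J_{\bf sSet}$. Declaring $W_{\bf pqTop} = G_0^{-1} W_{\bf sSet}$ to be the weak equivalences and the maps sent by $G_0$ to fibrations to be the fibrations, the only hypothesis of the Transfer Theorem that is not immediate is the \emph{acyclicity condition}: every relative $J_{\bf pqTop}$-cell complex must lie in $W_{\bf pqTop}$.

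First I would record the key structural fact that $G_0$, being restriction of presheaves along the inclusion ${\bf \Delta} \hookrightarrow {\bf Sing}\Delta$, preserves all colimits (they are computed objectwise). Consequently, to verify acyclicity it suffices to check it on the generators: I claim each $G_0 F_0(\Lambda^{n-1}_i \hookrightarrow \Delta^n)$ is an \emph{acyclic cofibration} in ${\bf sSet}$. It is a monomorphism because $F_0$ preserves monomorphisms (Lemma \ref{LemMonoPreserve}) and $G_0$ does so objectwise, hence a cofibration; and it is a weak equivalence by applying $2$-out-of-$3$ to the naturality square of the unit $\eta$ along the trivial cofibration $\Lambda^{n-1}_i \hookrightarrow \Delta^n$, whose vertical legs $\eta_{\Lambda^{n-1}_i}$ and $\eta_{\Delta^n}$ are weak equivalences by Lemma \ref{LemPrequasiUnit}. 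Since acyclic cofibrations in ${\bf sSet}$ are stable under pushout and transfinite composition, and $G_0$ preserves these colimits, $G_0$ carries every relative $J_{\bf pqTop}$-cell complex to an acyclic cofibration, in particular to a weak equivalence. This establishes the acyclicity condition, so the Transfer Theorem yields the desired cofibrantly generated model structure and makes $F_0 \dashv G_0$ a Quillen adjunction.

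It remains to upgrade this to a Quillen equivalence. Because weak equivalences in ${\bf pqTop}$ are \emph{defined} as $G_0^{-1} W_{\bf sSet}$, the functor $G_0$ detects and reflects all weak equivalences; using that fibrant replacements are weak equivalences together with $2$-out-of-$3$, the standard criterion reduces the claim to showing that the unit $\eta_A : A \rightarrow G_0 F_0 A$ is a weak equivalence for every simplicial set $A$ (all objects of ${\bf sSet}$ being cofibrant). I would prove this by induction over the skeletal filtration $\varnothing = \operatorname{sk}_{-1} A \hookrightarrow \operatorname{sk}_0 A \hookrightarrow \cdots$, in which $\operatorname{sk}_{n-1} A \hookrightarrow \operatorname{sk}_n A$ is a pushout of a coproduct of boundary inclusions $\partial\Delta^n \hookrightarrow \Delta^n$. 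The representable cases $\Delta^n$ (and the horns $\Lambda^n_i$) are exactly Lemma \ref{LemPrequasiUnit}, and the boundaries $\partial\Delta^n$ are covered by the induction on dimension. For the inductive step, naturality of $\eta$ produces a map of pushout squares; since both $\operatorname{Id}$ and $G_0 F_0$ preserve colimits and send the attaching monomorphisms to cofibrations, the gluing lemma (left properness of ${\bf sSet}$) shows $\eta_{\operatorname{sk}_n A}$ is a weak equivalence, and passage to the filtered colimit over skeleta gives $\eta_A$.

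The main obstacle is the acyclicity condition, but it is precisely tamed by the fact that $G_0$ preserves all colimits, which is special to restriction functors between presheaf categories; this is what collapses the cell-complex verification to the single computation on generators already performed inside Lemma \ref{LemPrequasiUnit}. The subtler point to handle with care is the skeletal induction for the unit: one cannot argue directly that $\eta_A$ is a weak equivalence by writing $A$ as a colimit of representables, since colimits of weak equivalences need not be weak equivalences. The argument must instead be routed through \emph{acyclic cofibrations} and the gluing lemma, which is exactly where the monomorphism-preservation of Lemma \ref{LemMonoPreserve} and left properness of ${\bf sSet}$ are indispensable.
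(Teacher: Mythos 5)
Your proposal is correct, and its two halves relate to the paper's proof differently. For the existence of the transferred model structure you follow essentially the same route as the paper: Theorem \ref{ThmTransfer} applied along $F_0 \dashv G_0$, with the acyclicity condition verified exactly as in the paper (monomorphism-preservation from Lemma \ref{LemMonoPreserve} plus objectwise preservation by $G_0$, weak equivalence via Lemma \ref{LemPrequasiUnit} and 2-out-of-3, then colimit-preservation of $G_0$ to pass from generators to all relative $J_{\bf pqTop}$-cell complexes); the only deviation is cosmetic, as you justify the small object argument by local presentability of the presheaf category where the paper uses the explicit smallness statements (Example \ref{ExSmallPresheaf} and Proposition \ref{PropSmallColimit}). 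For the Quillen equivalence, however, your argument is genuinely different. The paper factors the classical adjunction $|-| \dashv \operatorname{Sing}$ through ${\bf pqTop}$, deduces that the derived unit of $F_0 \dashv G_0$ is (split) invertible from the classical Quillen equivalence ${\bf sSet} \simeq {\bf Top}$, and obtains invertibility of the derived counit from Lemma \ref{LemPrequasiCounit}, which exhibits $\epsilon_X$ as an element of $\operatorname{rlp}(F_0 I_{\bf sSet})$, i.e.\ an acyclic fibration. You instead verify Hovey's criterion entirely inside ${\bf sSet}$: $G_0$ reflects all weak equivalences by the very definition of $W_{\bf pqTop}$, so the claim reduces to $\eta_A$ being a weak equivalence for \emph{every} simplicial set $A$, which you establish by skeletal induction using the gluing lemma (left properness of ${\bf sSet}$, with $G_0F_0$ sending the attaching monomorphisms to cofibrations via Lemma \ref{LemMonoPreserve}) and closure of weak equivalences in ${\bf sSet}$ under filtered colimits. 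The trade-off: the paper's route is shorter and reuses Lemma \ref{LemPrequasiCounit}, but leans on the comparison with ${\bf Top}$ and on the somewhat delicate ``split monomorphism'' step for the derived unit; your route needs neither Lemma \ref{LemPrequasiCounit} nor any mention of ${\bf Top}$, at the cost of upgrading the unit statement of Lemma \ref{LemPrequasiUnit} from simplices and horns to arbitrary simplicial sets --- and you correctly identify that this upgrade must be routed through acyclic cofibrations and the gluing lemma rather than through naive colimits of weak equivalences.
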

\begin{proof}
In order to apply Theorem \ref{ThmTransfer}, we show the followings:
\begin{description}
\item[(1)] $I_{\bf pqTop}$ and $J_{\bf pqTop}$ permit the small object argument.
\item[(2)]  We have the inclusion relation $\operatorname{cell}(J_{\bf pqTop}) \subset W_{\bf pqTop}$.
\end{description}\par
We will show the condition {\bf (1)}.
Any simplex $\Delta^n$ is small in {\bf pqTop} by Example \ref{ExSmallPresheaf}.
$\partial \Delta^n$ and $\Lambda^n_i$ are finite polyhedra.
These are small in {\bf pqTop} by Proposition \ref{PropSmallColimit}. \par
We will show the condition {\bf (2)}.
$F_0$ preserves any monomorphism by Lemma \ref{LemMonoPreserve}.
$G_0$ preserves any monomorphism by the objectwise calculation.
Then, $G_0(F_0(\Lambda^n_i \hookrightarrow \Delta^n))$ is a monomorphism.
This means that it is a cofibration in {\bf sSet}.
Furthermore, it is a weak equivalence by Lemma \ref{LemPrequasiUnit}.
We obtain the inclusion relation $G_0(J_{\bf pqTop}) \subset \operatorname{Cof}_{\bf sSet} \cap W_{\bf sSet}$ because $G_0(F_0(\Lambda^n_i \hookrightarrow \Delta^n))$ is an acyclic cofibration.
$G_0$ preserves any colimit by the objectwise calculation.
We obtain the sequence of the inclusion relations $G_0(\operatorname{cell}(J_{\bf pqTop})) = \operatorname{cell}(G_0(J_{\bf pqTop})) \subset \operatorname{Cof}_{\bf sSet} \cap W_{\bf sSet} \subset W_{\bf sSet}$.
Then, we obtain the inclusion relation $\operatorname{cell}(J_{\bf pqTop}) \subset G_0^{-1}(W_{\bf sSet}) = W_{\bf pqTop}$. \par
Therefpre, {\bf pqTop} has a cofibrantly generated model structure by Theorem \ref{ThmTransfer}. \par
We will show that the Quillen adjunction $F_0 \dashv G_0$ is a Quillen equivalence.
We consider the following diagram:
	\[\xymatrix{
		& {\bf pqTop} \ar[dr]^-{F_2 F_1 \dashv G_1 G_2}
	\\
		{\bf sSet} \ar[ur]^{F_0 \dashv G_0} \ar[rr]_{|-| \dashv \operatorname{Sing}}^{\simeq}
		&& {\bf Top},
	}\]
where $|-|$ is the {\it geometric realization} and $\operatorname{Sing}(-)$ is the {\it singular simplicial complex}.
It is classically known that $|-| \dashv \operatorname{Sing}$ is a Quillen equivalence.
Then, the Quillen adjunction $F_0 \dashv G_0$ is a ``split monomorphism".
Strictly speaking, the unit of the derived adjunction $\mathbb{L}F_0 \dashv \mathbb{R}G_0$ is a natural isomorphism.
The counit of the adjunction $F_0 \dashv G_0$ is a weak equivalence by Lemma \ref{LemPrequasiCounit}.
All objects in {\bf sSet} are cofibrant.
Then, the counit of the derived adjunction $\mathbb{L}F_0 \dashv \mathbb{R}G_0$ is a natural isomorphism.
$\mathbb{L}F_0 \dashv \mathbb{R}G_0$ is a categorical equivalence.
Therefore, $F_0 \dashv G_0$ is a Quillen equivalence.
\end{proof}
\begin{rem}
In fact, any acyclic cofibratoin in {\bf sSet} is a relative $J_{\bf sSet}$-cell complex.
However, I think, this is not well known.
Using this fact, the proof of Theorem \ref{ThmModePqtop} can be simplified slightly by Corollary \ref{CorTransfer}, which we will not explain here.
\end{rem}
	\subsubsection{A model structure for quasitopological spaces}
We will show that the category {\bf qTop} has a model structure in this subsubsection.
We will prepare for this.
\begin{lem} \label{LemSheafificationWeq}
Let $\eta$ be the unit of adjunction $F_1 \dashv G_1$.
For any prequasitopological space $X$, $\eta_X : X \rightarrow G_1(F_1(X))$ is a weak equivalence in {\bf pqTop}.
\end{lem}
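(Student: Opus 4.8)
The plan is to reduce the statement to a purely simplicial one and then exploit the finiteness of the coverings. Since the class of weak equivalences in {\bf pqTop} is by definition $W_{\bf pqTop} = G_0^{-1}(W_{\bf sSet})$, it suffices to prove that $G_0(\eta_X) : G_0(X) \to G_0(G_1(F_1(X)))$ is a weak equivalence of simplicial sets. Because $G_0$ is the restriction along ${\bf \Delta} \hookrightarrow {\bf Sing\Delta}$, it is the evaluation $[n] \mapsto X[\Delta^n]$; thus the lemma amounts to saying that sheafification preserves the homotopy type of the simplicial set $[n] \mapsto X[\Delta^n]$.

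Next I would realize the sheafification as a filtered colimit. The Grothendieck topology on ${\bf Sing\Delta}$ is generated by finite closed coverings, so for each object the poset of coverings ordered by refinement is filtered. Writing $F_1 = L \circ L$ for Grothendieck's plus construction $L$, with $\eta$ factoring through the two plus-units, one has $L(X)[S] \cong \operatorname{colim}_{\mathcal U} \operatorname{eq}(\prod_i X[U_i] \rightrightarrows \prod_{i,j} X[U_i \cap U_j])$, a filtered colimit of equalizers. Since $G_0$ is an evaluation it commutes with these colimits, filtered colimits of simplicial sets preserve weak equivalences, and filtered colimits commute with the finite limits defining the equalizers (the fact already invoked in Proposition \ref{PropFilter}). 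Hence it is enough to treat one application of $L$ and, inside it, to compare $X$ with the \v{C}ech object along a cofinal system of coverings of the simplices; two applications together with 2-out-of-3 then yield the claim for $\eta_X$.

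The geometric heart is a descent statement for finite closed coverings of the simplices. I would fix a cofinal family of such coverings that is natural in $[n]$ — for instance the coverings by the closed cells of iterated subdivisions — and, for each member, use the Patching Lemma (Lemma \ref{LemPatching}) to identify the descent data for the representable pieces $\operatorname{Sing^q}(U_i)$, together with the contractibility of the simplices, to see that the associated \v{C}ech nerve is weakly contractible. This makes the comparison map a weak equivalence level by level, and passing to the filtered colimit over the cofinal family assembles these into a weak equivalence of simplicial sets.

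The main obstacle is precisely this last, homotopical, descent step: the cofinal covering system must be organized coherently across the simplicial operators (faces and degeneracies), yet subdivision is not strictly functorial for all of them, so care is needed to make the level-wise comparisons assemble into an honest map of simplicial sets and to control the weak contractibility of the relevant \v{C}ech nerves uniformly. It is the finiteness of the coverings — which turns sheafification into a filtered colimit and lets it commute with the finite limits in the \v{C}ech complex — that keeps this manageable.
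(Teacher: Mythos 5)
Your reductions are fine as far as they go: weak equivalences in {\bf pqTop} are by definition created by $G_0$, sheafification is the plus construction applied twice, the plus construction is a filtered colimit of equalizers computed objectwise, and filtered colimits in {\bf sSet} commute with finite limits and preserve weak equivalences (cf.\ Proposition \ref{PropFilter}). The genuine gap is the step you call the ``geometric heart''. First, Lemma \ref{LemPatching} cannot play the role you assign to it: it is a coequalizer in ${\bf qTop}$, and its proof uses the sheaf condition of the target $Y$; it therefore identifies \v{C}ech descent data only when mapping into a \emph{sheaf}, whereas the object $X$ in Lemma \ref{LemSheafificationWeq} is an arbitrary presheaf --- exactly the case where the \v{C}ech equalizer $H^0(\mathcal{U},X)=\operatorname{eq}\bigl(\prod_i X[U_i] \rightrightarrows \prod_{i,j} X[U_i\cap U_j]\bigr)$ differs from $X[\Delta^n]$. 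Second, and more seriously, this equalizer is $\operatorname{Hom}_{\bf pqTop}(C_{\mathcal{U}},X)$, where $C_{\mathcal{U}}\subsetneq \operatorname{Sing^q}(\Delta^n)$ is the sub-presheaf generated by the covering pieces; weak contractibility of the \v{C}ech nerve makes the inclusion $C_{\mathcal{U}}\hookrightarrow \operatorname{Sing^q}(\Delta^n)$ a local weak equivalence, but mapping out of a weak equivalence into a target that satisfies no fibrancy/descent condition is not homotopy invariant, so nerve contractibility simply does not imply that the comparison of simplicial sets is a weak equivalence. That implication \emph{is} the content of the lemma, not a tool for proving it; and general sheaf theory warns that it cannot be routine, since the colimit of \v{C}ech approximations over all coverings is in general insufficient to compute the homotopy type of a sheafification (this is precisely why hypercovers are needed). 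The subdivision issue you flag (non-functoriality with respect to degeneracies) is a second, unresolved technical gap, but it is downstream of this one.

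For comparison, the paper's proof sidesteps descent entirely by a soft argument. By Theorem \ref{ThmModePqtop}, $F_0\dashv G_0$ is a Quillen equivalence, and $|-|\dashv \operatorname{Sing}$ is classically one; hence the intermediate adjunction $F_2F_1\dashv G_1G_2:{\bf pqTop}\rightarrow{\bf Top}$ is a Quillen equivalence. Since every topological space is fibrant, the unit $Y\rightarrow G_1(G_2(F_2(F_1(Y))))$ is a weak equivalence in {\bf pqTop}; applying this to $X$ and to $G_1(F_1(X))$, and using that the counit of $F_1\dashv G_1$ is an isomorphism (so $F_1(G_1(F_1(X)))\cong F_1(X)$), the naturality square exhibits $\eta_X$ as a weak equivalence by two-out-of-three. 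In other words, all the homotopical difficulty is absorbed into the already-established Quillen equivalences with ${\bf Top}$; if you want a direct \v{C}ech-style proof, you would first have to supply the missing descent theorem for arbitrary presheaves on ${\bf Sing\Delta}$, which is a substantial result in its own right.
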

\begin{proof}
$F_0 \dashv G_0$ and $|-| \dashv \operatorname{Sing}$ are Quillen equivalences.
Then, $F_2 F_1 \dashv G_1 G_2$ is a Quillen equivalence.
All objects in {\bf Top} are fibrant.
Then, the unit map of the adjunction $F_2 F_1 \dashv G_1 G_2$ is a weak equivalence.
We have an isomorphism $F_1(G_1(F_1(X))) \cong F_1(X)$ because the counit of the adjunction $F_1 \dashv G_1$ is a natural isomorphism.
We consider the following diagram:
	\[\xymatrix{
		X \ar[d]_-{\simeq} \ar[r]^-{\eta_X}
		& G_1(F_1(X)) \ar[d]^-{\simeq}
	\\
		G_1(G_2(F_2(F_1(X)))) \ar[r]^-{\cong}
		& G_1(G_2(F_2(F_1(G_1(F_1(X)))))).
	}\]
Then, $\eta_X$ is a weak equivalence.
\end{proof}
\begin{lem} \label{LemQuasiCounit}
Let $\epsilon$ be the counit of adjunction $F_1F_0 \dashv G_0G_1 : {\bf sSet} \rightarrow {\bf qTop}$.
For any quasitopological space $X$,  $G_1(\epsilon_X) : G_1(F_1(F_0(G_0(G_1(X))))) \rightarrow G_1(X)$ is a weak equivalence in {\bf pqTop}.
\end{lem}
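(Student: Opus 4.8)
The plan is to split the composite counit $\epsilon$ into the counits of the two constituent adjunctions and then check, one factor at a time, that $G_1$ sends it to a weak equivalence. Write $\epsilon^0 : F_0 G_0 \to \operatorname{Id}_{\bf pqTop}$ for the counit of $F_0 \dashv G_0$ (the morphism studied in Lemma \ref{LemPrequasiCounit}) and $\epsilon^1 : F_1 G_1 \to \operatorname{Id}_{\bf qTop}$ for the counit of $F_1 \dashv G_1$. By the standard formula for the counit of a composite adjunction, for any quasitopological space $X$ one has
\[
\epsilon_X = \epsilon^1_X \circ F_1(\epsilon^0_{G_1 X}) : F_1 F_0 G_0 G_1 (X) \to X,
\]
and applying $G_1$ gives the factorization
\[
G_1(\epsilon_X) = G_1(\epsilon^1_X) \circ G_1 F_1(\epsilon^0_{G_1 X}).
\]
Thus it suffices to show that each of the two factors is a weak equivalence in {\bf pqTop}.

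For the first factor I would use that $X$ is a sheaf: the counit $\epsilon^1_X$ of the sheafification adjunction is then an isomorphism (this is exactly the observation already invoked in the proof of Lemma \ref{LemSheafificationWeq}), so $G_1(\epsilon^1_X)$ is an isomorphism and in particular a weak equivalence. For the second factor I would first note that $\epsilon^0_{G_1 X}$ is itself a weak equivalence in {\bf pqTop}. Indeed, by Lemma \ref{LemPrequasiCounit} the morphism $\epsilon^0_Y$ has the right lifting property with respect to $F_0 I_{\bf sSet} = I_{\bf pqTop}$ for every prequasitopological space $Y$; since $I_{\bf pqTop}$ generates the cofibrations, such a morphism is an acyclic fibration, hence a weak equivalence. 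It then remains to verify that the endofunctor $G_1 F_1$ of {\bf pqTop} preserves weak equivalences, and here Lemma \ref{LemSheafificationWeq} does the work: writing $\eta^1 : \operatorname{Id}_{\bf pqTop} \to G_1 F_1$ for the unit of $F_1 \dashv G_1$, the naturality square of $\eta^1$ along a weak equivalence $w$, combined with the fact that $\eta^1_Y$ is a weak equivalence for every $Y$, forces $G_1 F_1(w)$ to be a weak equivalence by the two-out-of-three property. Applying this with $w = \epsilon^0_{G_1 X}$ shows the second factor is a weak equivalence, and composing the two factors concludes the argument.

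The only genuinely non-formal input is the preservation of weak equivalences by $G_1 F_1$, which is precisely where Lemma \ref{LemSheafificationWeq} is used; everything else is triangle-identity bookkeeping for composite adjunctions together with the lifting property of $\epsilon^0$ already recorded in Lemma \ref{LemPrequasiCounit}. I therefore expect no substantive obstacle, and anticipate that the main care required is simply keeping the three distinct units and counits apart and getting the direction of the composite-counit formula correct.
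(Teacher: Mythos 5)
Your proof is correct, and it rests on the same two inputs as the paper's proof --- Lemma \ref{LemPrequasiCounit} and Lemma \ref{LemSheafificationWeq} --- but it is organized around a different decomposition. The paper never factors $\epsilon_X$: writing $\epsilon^0$, $\epsilon^1$, $\eta^1$ as you do, it observes that $G_1(\epsilon_X) \circ \eta^1_{F_0(G_0(G_1(X)))} = \epsilon^0_{G_1(X)}$ (this is the adjunction identity saying that $\epsilon_X$ is the $F_1 \dashv G_1$-adjunct of $\epsilon^0_{G_1(X)}$), notes that $\epsilon^0_{G_1(X)}$ is a weak equivalence by Lemma \ref{LemPrequasiCounit} and that $\eta^1_{F_0(G_0(G_1(X)))}$ is one by Lemma \ref{LemSheafificationWeq}, and concludes by two-out-of-three in a single stroke. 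You instead factor $G_1(\epsilon_X) = G_1(\epsilon^1_X) \circ G_1(F_1(\epsilon^0_{G_1(X)}))$, dispose of the first factor using the fact that the sheafification counit is an isomorphism --- an ingredient the paper's proof of this lemma does not need --- and handle the second factor by showing that $G_1F_1$ preserves weak equivalences, which you prove by exactly the paper's mechanism (naturality of $\eta^1$, Lemma \ref{LemSheafificationWeq}, two-out-of-three). So your route is a mild elaboration of the paper's: it costs the extra counit-isomorphism input and one more layer of composition, but it buys a reusable general statement ($G_1F_1$ preserves weak equivalences), and as a bonus you spell out explicitly why Lemma \ref{LemPrequasiCounit} yields a weak equivalence (the right lifting property against the generating set $I_{\bf pqTop}$ makes $\epsilon^0_Y$ an acyclic fibration), a step the paper's proof leaves implicit. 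Both arguments are complete; yours is simply the longer path around the same triangle.
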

\begin{proof}
We consider the following diagram:
	\[\xymatrix{
		F_0(G_0(G_1(X))) \ar[d]_-{\alpha} \ar[r]^-{\beta}
		& G_1(F_1(F_0(G_0(G_1(X))))) \ar[d]^-{G_1(\epsilon_X)}
	\\
		G_1(X) \ar@{=}[r]
		& G_1(X).
	}\]
$\alpha$ is a weak equivalence by Lemma \ref{LemPrequasiCounit}.
$\beta$ is a weak equivalence by Lemma \ref{LemSheafificationWeq}.
Then, $G_1(\epsilon_X)$ is a weak equivalence.
\end{proof}
Let $I_{\bf qTop}$, $J_{\bf qTop}$ and $W_{\bf qTop}$ be sets as the followings:
\[
\begin{array}{rcl}
I_{\bf qTop}
& := & F_1I_{\bf pqTop}, \\
J_{\bf qTop}
& := & F_1J_{\bf pqTop}, \\
W_{\bf qTop}
& := & G_1^{-1}W_{\bf pqTop},
\end{array}
\]
Now we are ready to prove that {\bf qTop} has a model structure.
\begin{thm}
{\bf qTop} has a cofibrantly generated model structure satisfying the followings:
\begin{itemize}
\item $I_{\bf qTop}$ generates cofibrations,
\item $J_{\bf qTop}$ generates acyclic cofibrations, and
\item $W_{\bf qTop}$ is the class of weak equivalences.
\end{itemize}
Furthermore, $F_1 \dashv G_1 : {\bf pqTop} \rightarrow {\bf qTop}$ is a Quillen equivalence.
\end{thm}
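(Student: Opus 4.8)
The plan is to build the model structure by the Transfer Theorem (Theorem~\ref{ThmTransfer}) along the adjunction $F_1 \dashv G_1$, mirroring the proof of Theorem~\ref{ThmModePqtop}. Concretely I must verify the two transfer hypotheses: (1) the sets $I_{\bf qTop}$ and $J_{\bf qTop}$ permit the small object argument, and (2) $\operatorname{cell}(J_{\bf qTop}) \subset W_{\bf qTop}$. Once these hold, the Transfer Theorem supplies a cofibrantly generated model structure whose weak equivalences are $W_{\bf qTop} = G_1^{-1}(W_{\bf pqTop})$, whose cofibrations are generated by $I_{\bf qTop} = F_1 I_{\bf pqTop}$, and whose acyclic cofibrations are generated by $J_{\bf qTop} = F_1 J_{\bf pqTop}$, which is exactly the asserted structure.

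For condition (1), the domains of the maps in $I_{\bf qTop}$ and $J_{\bf qTop}$ are the quasitopological spaces $F_1(F_0(\partial\Delta^n))$ and $F_1(F_0(\Lambda^n_i))$, which are sheafified realizations of finite polyhedra. By the filtered colimit trick (Proposition~\ref{PropFilter}) the evaluation functors $\operatorname{ev}_S$ preserve filtered colimits, so $\operatorname{Sing^q}(S)$ is small in {\bf qTop} for every closed subset $S$ of a simplex; since a finite polyhedron is a finite colimit of such objects and smallness is stable under finite colimits, every domain above is small. Hence both sets permit the small object argument.

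Condition (2) is the crux, and the main obstacle, because $G_1$ is the forgetful functor from sheaves to presheaves: it preserves limits and filtered colimits but \emph{not} arbitrary pushouts, so the direct objectwise computation used in Theorem~\ref{ThmModePqtop} (where $G_0$ preserves all colimits) is unavailable. I will instead analyze a relative $J_{\bf qTop}$-cell complex $A = A_0 \to A_1 \to \cdots \to B = \operatorname{colim}_i A_i$ stagewise and show that each $G_1(A_i \to A_{i+1})$ is an acyclic cofibration in {\bf pqTop}. For the cofibration part, each $j \in J_{\bf pqTop} = F_0 J_{\bf sSet}$ is a monomorphism by Lemma~\ref{LemMonoPreserve}, and since $F_1$ is left exact each $F_1(j)$ is a monomorphism in the topos {\bf qTop}; monomorphisms are stable under pushout and transfinite composition in a topos, so $A_i \to A_{i+1}$ is a monomorphism, and as $G_1$ and $G_0$ preserve monomorphisms, $G_0 G_1(A_i \to A_{i+1})$ is a cofibration in {\bf sSet}, i.e.\ $G_1(A_i \to A_{i+1})$ is a cofibration in {\bf pqTop}. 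For the weak equivalence part, since $A_i$ is a sheaf the attaching map $\coprod F_1(\Lambda) \to A_i \cong F_1(G_1 A_i)$ transposes under $F_1 \dashv G_1$ to a map $\coprod \Lambda \to G_1 A_i$; forming the pushout $P_{i+1}$ of $\coprod j$ along it in {\bf pqTop} and using that $F_1$ preserves colimits yields $F_1(P_{i+1}) \cong A_{i+1}$, while $G_1 A_i \to P_{i+1}$ is a relative $J_{\bf pqTop}$-cell complex, hence an acyclic cofibration by the established inclusion $\operatorname{cell}(J_{\bf pqTop}) \subset W_{\bf pqTop}$. Comparing through the naturality square of the unit $\eta$ of $F_1 \dashv G_1$, whose components $\eta_{G_1 A_i}$ (an isomorphism, as $A_i$ is a sheaf) and $\eta_{P_{i+1}}$ (a weak equivalence by Lemma~\ref{LemSheafificationWeq}) are understood, two-out-of-three identifies $G_1(A_i \to A_{i+1})$ with an acyclic cofibration. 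Finally, because $G_1$ preserves the filtered colimit $B = \operatorname{colim}_i A_i$ by Proposition~\ref{PropFilter}, the map $G_1(A) \to G_1(B)$ is the transfinite composite of these acyclic cofibrations in {\bf pqTop}, hence itself a weak equivalence; thus $\operatorname{cell}(J_{\bf qTop}) \subset W_{\bf qTop}$ and the Transfer Theorem applies.

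For the Quillen equivalence, the counit of $F_1 \dashv G_1$ is a natural isomorphism, since sheafifying the underlying presheaf of a sheaf returns that sheaf; hence the derived counit is a weak equivalence by Ken Brown's lemma. For the derived unit, the unit $\eta_X : X \to G_1 F_1 X$ is a weak equivalence in {\bf pqTop} by Lemma~\ref{LemSheafificationWeq}, and because $W_{\bf qTop}$ is by definition $G_1^{-1}(W_{\bf pqTop})$ the functor $G_1$ preserves every weak equivalence; composing $\eta_X$ with $G_1$ applied to a fibrant replacement of $F_1 X$ therefore again gives a weak equivalence. As both derived unit and derived counit are weak equivalences, $F_1 \dashv G_1 : {\bf pqTop} \to {\bf qTop}$ is a Quillen equivalence.
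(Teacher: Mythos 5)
Your skeleton is the same as the paper's: transfer the structure from {\bf pqTop} along $F_1 \dashv G_1$ via Theorem~\ref{ThmTransfer}, checking smallness and $\operatorname{cell}(J_{\bf qTop}) \subset W_{\bf qTop}$, then upgrade to a Quillen equivalence; your condition (1) matches the paper essentially verbatim. Where you diverge is in the execution of condition (2) and of the equivalence. The paper identifies each $G_1(f)$, $f \in J_{\bf qTop}$, with the pair inclusion $\Delta^{n-1} \hookrightarrow \Delta^{n-1} \times [0,1]$ via Corollary~\ref{CorFiniteEquivalence}, hence with an acyclic cofibration of {\bf pqTop}, and then compares cell complexes wholesale through the inclusion $G_1(\operatorname{cell}(J_{\bf qTop})) \subset G_1(F_1(\operatorname{cell}(G_1(J_{\bf qTop})))) \subset W_{\bf pqTop}$. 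You instead transpose the attaching maps along $F_1 \dashv G_1$, rebuild each pushout in {\bf pqTop} from the original cells $j \in J_{\bf pqTop}$, and compare stage by stage through the naturality square of the unit and two-out-of-three. Your stagewise comparison is sound and is in fact more careful than the paper's one-line inclusion, which quietly elides exactly the difficulty you flag: at later stages the attaching maps land in a sheafification, so $G_1$ of a {\bf qTop} cell complex is not literally $G_1F_1$ of a {\bf pqTop} cell complex. Likewise, your direct verification of the Quillen equivalence (counit an isomorphism, unit a weak equivalence, $G_1$ creating weak equivalences) replaces the paper's detour through ${\bf Top}$ and two-out-of-three for Quillen equivalences; both routes work.

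There is, however, one incorrect inference. From ``$G_0G_1(A_i \to A_{i+1})$ is a monomorphism, hence a cofibration in {\bf sSet}'' you conclude ``i.e.\ $G_1(A_i \to A_{i+1})$ is a cofibration in {\bf pqTop}.'' In the transferred structure of Theorem~\ref{ThmModePqtop} the cofibrations are $\operatorname{Cof}(I_{\bf pqTop})$; only fibrations and weak equivalences are detected by the right adjoint $G_0$, and a morphism whose underlying simplicial map is a monomorphism need not lie in $\operatorname{Cof}(I_{\bf pqTop})$. So you have not shown that your stage maps are acyclic cofibrations in {\bf pqTop}, and your final step---a transfinite composite of these maps is a weak equivalence---is unsupported as written (a transfinite composite of bare weak equivalences need not be one, which is the entire point of this step). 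The repair uses only ingredients you already have: descend to {\bf sSet}. Each $G_0G_1(A_i \to A_{i+1})$ is a monomorphism (your topos argument) and a weak equivalence (since $G_1(A_i \to A_{i+1}) \in W_{\bf pqTop} = G_0^{-1}(W_{\bf sSet})$), hence an acyclic cofibration in {\bf sSet}, where cofibrations \emph{are} exactly the monomorphisms. Acyclic cofibrations in {\bf sSet} are closed under transfinite composition, and since $G_1$ preserves filtered colimits (Proposition~\ref{PropFilter}) while $G_0$ preserves all colimits, $G_0G_1(A \to B)$ is the transfinite composite of these maps, hence a weak equivalence in {\bf sSet}; thus $A \to B \in W_{\bf qTop}$. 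With this substitution your proof of condition (2) is complete.

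A smaller point: your appeal to Ken Brown's lemma for the derived counit is misplaced, since the cofibrant replacement $QG_1X \to G_1X$ is a weak equivalence whose target $G_1X$ need not be cofibrant. The correct argument is the same trick you use elsewhere: in the naturality square of the unit at $QG_1X \to G_1X$, the vertical maps are a weak equivalence (Lemma~\ref{LemSheafificationWeq}) and an isomorphism respectively, so two-out-of-three gives $G_1F_1(QG_1X) \to G_1F_1(G_1X)$ in $W_{\bf pqTop}$, i.e.\ $F_1(QG_1X) \to F_1G_1X \cong X$ in $W_{\bf qTop}$.
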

\begin{proof}
In order to apply Theorem \ref{ThmTransfer}, we show the followings:
\begin{description}
\item[(1)] $I_{\bf qTop}$ and $J_{\bf qTop}$ permit the small object argument.
\item[(2)]  We have the inclusion relation $\operatorname{cell}(J_{\bf qTop}) \subset W_{\bf qTop}$.
\end{description}\par
We will show the condition {\bf (1)}.
Any simplex $\Delta^n$ is small in {\bf pqTop} by Example \ref{ExSmallSheaf}.
$\partial \Delta^n$ and $\Lambda^n_i$ are finite polyhedra.
These are small in {\bf qTop} by Proposition \ref{PropSmallColimit}. \par
We will show the condition {\bf (2)}.
Any element $G_1(f) \in G_1(J_{\bf qTop})$ is isomorphic to $\Delta^{n-1} \overset{0}{\hookrightarrow} \Delta^{n-1} \times [0, 1]$ by Corollary \ref{CorFiniteEquivalence}.
This is isomorphic to a acyclic cofibration $F_0(\Delta^{n-1} \hookrightarrow \Delta^n)$ in {\bf pqTop}.
Then, we obtain the inclusion relation $\operatorname{cell}(G_1(J_{\bf qTop})) \subset W_{\bf pqTop}$.
We have the sequence of the inclusion relation $G_1(\operatorname{cell}(J_{\bf qTop})) \subset G_1(F_1(\operatorname{cell}(G_1(J_{\bf qTop})))) \subset W_{\bf pqTop}$ by Lemma \ref{LemSheafificationWeq}.
Then, we obtain the inclusion relation $\operatorname{cell}(J_{\bf qTop}) \subset W_{\bf qTop}$. \par
Therefpre, {\bf pqTop} has a cofibrantly generated model structure by Theorem \ref{ThmTransfer}. \par
We will show that the Quillen adjunction $F_1 \dashv G_1$ is a Quillen equivalence.
We consider the following diagram:
	\[\xymatrix{
		& {\bf qTop} \ar[dr]^-{F_2 \dashv G_2}
	\\
		{\bf sSet} \ar[ur]^{F_1F_0 \dashv G_0G_1} \ar[rr]_{|-| \dashv \operatorname{Sing}}^{\simeq}
		&& {\bf Top}.
	}\]
It is classically known that $|-| \dashv \operatorname{Sing}$ is a Quillen equivalence.
Then, the Quillen adjunction $F_1F_0 \dashv G_0G_1$ is a ``split monomorphism".
Strictly speaking, the unit of the derived adjunction $\mathbb{L}(F_1F_0) \dashv \mathbb{R}(G_0G_1)$ is a natural isomorphism.
The counit of the adjunction $F_1F_0 \dashv G_0G_1$ is a weak equivalence in {\bf pqTop} by Lemma \ref{LemQuasiCounit}.
This is a weak equivalence in {\bf qTop} by Definition of $W_{\bf qTop}$.
All objects in {\bf sSet} are cofibrant.
Then, the counit of the derived adjunction $\mathbb{L}(F_1F_0) \dashv \mathbb{R}(G_0G_1)$ is a natural isomorphism.
$\mathbb{L}(F_1F_0) \dashv \mathbb{R}(G_0G_1)$ is a categorical equivalence.
Therefore, $F_1F_0 \dashv G_0G_1$ is a Quillen equivalence.
$F_0 \dashv G_0$ is a Quillen equivalence by Theorem \ref{ThmModePqtop}.
Then, $F_1 \dashv G_1$ is a Quillen equivalence.
\end{proof}
	\subsubsection{Homotopy Theory in quasitopological spaces}
In this subsubsection, we will discuss Homotopy Theory in {\bf qTop}. \par
Any element $f \in J_{\bf qTop}$ is isomorphic to $\Delta^{n-1} \overset{0}{\hookrightarrow} \Delta^{n-1} \times [0, 1]$ by Corollary \ref{CorFiniteEquivalence}.
We call a fibration in {\bf qTop} a {\it Serre fibration}.
The model category {\bf qTop} has the following properties.
\begin{prop} \label{PropQTopModel}
\mbox{}
\begin{itemize}
\item Any quasitopological space $X$ is fibrant in {\bf qTop}.
\item {\bf qTop} is a right proper model category.
\item {\bf qTop} is a Cartesian closed model category.
\end{itemize}
\end{prop}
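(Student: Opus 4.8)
The goal is to establish three properties of the model category $\mathbf{qTop}$: that every object is fibrant, that it is right proper, and that it is Cartesian closed. My plan is to leverage the cofibrantly generated transferred model structure along the adjunction $F_1F_0 \dashv G_0G_1$ to $\mathbf{sSet}$, together with the filtered colimit trick and the finite-polyhedron equivalence $\mathbf{qTop^f} \simeq \mathbf{Top^f}$ already established.

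\textit{Every object is fibrant.} Since the model structure is cofibrantly generated with acyclic cofibrations generated by $J_{\mathbf{qTop}}$, an object $X$ is fibrant if and only if the unique map $X \to \ast$ has the right lifting property with respect to $J_{\mathbf{qTop}}$. First I would use the observation already recorded in Section 1.2.3 that every $f \in J_{\mathbf{qTop}}$ is isomorphic to the inclusion $\Delta^{n-1} \overset{0}{\hookrightarrow} \Delta^{n-1} \times [0,1]$. Thus it suffices to produce, for every commutative square with left edge this inclusion and right edge $X \to \ast$, a diagonal lift; equivalently, every morphism $\operatorname{Sing^q}(\Delta^{n-1}) \to X$ extends along $\operatorname{Sing^q}(\Delta^{n-1}) \hookrightarrow \operatorname{Sing^q}(\Delta^{n-1} \times [0,1])$. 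By Yoneda this amounts to extending an element of $X[\Delta^{n-1}]$ to an element of $X[\Delta^{n-1} \times [0,1]]$ along the restriction $X[\Delta^{n-1} \times [0,1]] \to X[\Delta^{n-1}]$ induced by the inclusion at level $0$. The latter map is split surjective because the projection $\Delta^{n-1} \times [0,1] \to \Delta^{n-1}$ is a retraction in $\mathbf{Sing\Delta}$, so the induced map of sets is a retraction and hence surjective. This gives the required lift for any $X$, so every quasitopological space is fibrant.

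\textit{Right properness and Cartesian closedness.} For right properness I must show that weak equivalences are stable under pullback along fibrations. The cleanest route is to transport the question to $\mathbf{Top}$ via the Quillen equivalence $F_2 \dashv G_2$: since $G_1 G_2 = G_2$ detects weak equivalences (by definition $W_{\mathbf{qTop}} = (G_0 G_1)^{-1} W_{\mathbf{sSet}}$, and the composite adjunction to $\mathbf{sSet}$ is a Quillen equivalence as proven above), and since $G_2$ preserves fibrations and pullbacks are computed objectwise by the limit-objectwise principle of Section 1.1.4, a pullback square in $\mathbf{qTop}$ maps to a corresponding square in $\mathbf{Top}$; right properness of $\mathbf{Top}$ then yields the result after checking that $G_2$ reflects the relevant weak equivalences on the pullback corner. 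For Cartesian closedness of the model structure (the pushout--product axiom), I would verify the pushout--product condition on the generating (acyclic) cofibrations, reducing via the finite-polyhedron equivalence $\mathbf{qTop^f} \simeq \mathbf{Top^f}$ of Corollary \ref{CorFiniteEquivalence} to the analogous and classically known pushout--product statement for the generating cofibrations $\partial\Delta^n \hookrightarrow \Delta^n$ and acyclic cofibrations $\Delta^{n-1} \hookrightarrow \Delta^{n-1}\times[0,1]$ of topological spaces; the Cartesian closed structure on the underlying category $\mathbf{qTop}$ is already available since it is a sheaf category.

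The main obstacle I anticipate is right properness: unlike the fibrancy statement, which is essentially formal once the generating acyclic cofibrations are identified, right properness requires controlling how stalkwise or objectwise weak equivalences interact with pullbacks, and the transfer to $\mathbf{Top}$ must be justified carefully because $F_2 \dashv G_2$ is a Quillen equivalence but not an equivalence of categories. I expect the key technical point to be that $G_2$ both preserves pullbacks along fibrations and reflects weak equivalences between fibrant objects—which here means all objects, since we have just shown everything is fibrant—so that right properness can be pulled back from the known right properness of the standard model structure on $\mathbf{Top}$.
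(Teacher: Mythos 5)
Your treatment of fibrancy is correct and is exactly the paper's argument: every generating acyclic cofibration in $J_{\bf qTop}$ is isomorphic to $\Delta^{n-1} \overset{0}{\hookrightarrow} \Delta^{n-1}\times[0,1]$ by Corollary \ref{CorFiniteEquivalence}, and this inclusion admits a retraction (the projection), so the required lift against $X \rightarrow {\bf 1}$ is obtained by composing with it. Your treatment of Cartesian closedness also follows the paper's route: one checks the pushout--product condition only on generating (acyclic) cofibrations, which Proposition \ref{PropCofibrantlyCartesian} permits, and the finite-polyhedron equivalence of Corollary \ref{CorFiniteEquivalence} identifies the relevant pushout--products with $\partial\Delta^{n+m} \rightarrow \Delta^{n+m}$ and $\Lambda^{n+m}_i \rightarrow \Delta^{n+m}$.

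The genuine gap is in right properness. Your argument transports a pullback square from {\bf qTop} to {\bf Top} ``via $G_2$'', but $G_2 = \operatorname{Sing^q}$ goes in the direction ${\bf Top} \rightarrow {\bf qTop}$; the functor out of {\bf qTop} in that adjunction is the left adjoint $F_2$, which, being left Quillen, preserves cofibrations and colimits but has no reason to preserve fibrations or pullbacks. Likewise the phrase ``$G_1G_2 = G_2$ detects weak equivalences'' does not typecheck: $G_1G_2$ is a functor ${\bf Top} \rightarrow {\bf pqTop}$, whereas weak equivalences of {\bf qTop} are by definition detected by $G_0G_1 : {\bf qTop} \rightarrow {\bf sSet}$. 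So the transfer argument cannot be carried out as written. It could be repaired by replacing $G_2$ with $G_0G_1$ --- a right adjoint (hence pullback-preserving), right Quillen (hence fibration-preserving), and weak-equivalence-detecting by the definition of $W_{\bf qTop}$ --- and then invoking right properness of {\bf sSet}; but that last input is itself a nontrivial classical theorem. The paper's route is shorter and self-contained, and you already had everything needed: once every object of {\bf qTop} is fibrant, right properness is automatic by Proposition \ref{PropFibrantProper} (a model category in which all objects are fibrant is right proper). You proved the first bullet and then did not use it --- that is precisely the step the paper exploits.
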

\begin{proof}
We will show that any quasitopological space $X$ is fibrant in {\bf qTop}.
Any element $f \in J_{\bf qTop}$ is isomorphic to $\Delta^{n-1} \overset{0}{\hookrightarrow} \Delta^{n-1} \times [0, 1]$ by Corollary \ref{CorFiniteEquivalence}.
This is a retract.
Then, the morphism $X \rightarrow {\bf 1}$ has the right lifting property with respect to $\Delta^{n-1} \overset{0}{\hookrightarrow} \Delta^{n-1} \times [0, 1]$.
This means that $X$ is fibrant in {\bf qTop}.
Furthermore, {\bf qTop} is a right proper model category by Proposition \ref{PropFibrantProper}. \par
We will show that {\bf qTop} is a Cartesian closed model category.
Any morphism $(\partial \Delta^n \times \Delta^m) \coprod_{(\partial \Delta^n \times \partial \Delta^m)} (\Delta^n \times \partial \Delta^m) \rightarrow \Delta^n \times \Delta^m$ is isomorphic to $\partial \Delta^{n+m} \rightarrow \Delta^{n+m}$ by Corollary \ref{CorFiniteEquivalence}.
This is a cofibration.
Any morphism $(\Lambda^n_i \times \Delta^m) \coprod_{(\Lambda^n_i \times \partial \Delta^m)} (\Delta^n \times \partial \Delta^m) \rightarrow \Delta^n \times \Delta^m$ is isomorphic to $\Lambda^{n+m}_i \rightarrow \Delta^{n+m}$ by Corollary \ref{CorFiniteEquivalence}.
This is an acyclic cofibration.
Therefore, {\bf qTop} is a Cartesian closed model category by Proposition \ref{PropCofibrantlyCartesian}.
\end{proof}
For any quasitopological space $X$, the simplicial set $G_0G_1(X)$ is a Kan complex because a right Quillen functor preserves fibrant objects.
We can define the {\it homotopy groups} of $X$ as the homotopy groups of the Kan complex $G_0G_1(X)$.
A morphism in {\bf qTop} is a weak equivalence if and only if all the maps induced between the homotopy groups are isomorphisms.
We call a weak equivalence in {\bf qTop} a {\it weak homotopy equivalence}.
\vspace{10pt}\\
Recall some other concepts and properties in Homotopy Theory in {\bf qTop}, as an analogue of that in {\bf Top}. \par
For any quasitopological space $X$, the exponential object $X^{\Delta^1}$ is a {\it path object} of $X$.
\begin{prop} \label{PropPath}
Let $X$ be a quasitopological space.
\begin{itemize}
\item The morphism $X^{\Delta^1} \rightarrow X^{\partial \Delta^1} \cong X \times X$ induced by $\partial \Delta^1 \rightarrow \Delta^1$ is a fibration.
\item The morphism $X^{\Delta^1} \overset{i}{\rightarrow} X^{\Delta^0} \cong X$($i = 0, 1$) induced by $\Delta^0 \overset{i}{\rightarrow} \Delta^1$ is an acyclic fibration.
\end{itemize}
\end{prop}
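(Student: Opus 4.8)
The plan is to deduce both statements from the fact that {\bf qTop} is a Cartesian closed model category (Proposition \ref{PropQTopModel}), together with the observation that every object of {\bf qTop} is fibrant. In a Cartesian closed model category the pushout-product axiom holds, and transposing it along the adjunction $(- \times A) \dashv (-)^A$ yields the equivalent \emph{pullback-hom} form: for a cofibration $i : A \rightarrow B$ and a fibration $p : X \rightarrow Y$, the induced morphism
\[
\langle i, p \rangle : X^B \rightarrow X^A \times_{Y^A} Y^B
\]
is a fibration, and it is acyclic whenever $i$ is an acyclic cofibration or $p$ is an acyclic fibration. This pullback-hom statement is exactly where Cartesian closedness is used, and it is what Proposition \ref{PropCofibrantlyCartesian} makes available once checked on generators.

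First I would specialise to $Y = {\bf 1}$, the terminal object, and $p : X \rightarrow {\bf 1}$. Since every quasitopological space is fibrant (Proposition \ref{PropQTopModel}), this $p$ is a fibration. Using ${\bf 1}^A \cong {\bf 1} \cong {\bf 1}^B$, the codomain of the pullback-hom collapses to $X^A \times_{\bf 1} {\bf 1} \cong X^A$, so $\langle i, p \rangle$ reduces to the map $X^B \rightarrow X^A$ induced by $i$. Thus, for \emph{any} cofibration $i : A \rightarrow B$, the induced map $X^B \rightarrow X^A$ is a fibration, and it is acyclic as soon as $i$ is an acyclic cofibration.

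Next I would apply this twice. For the first bullet, take $i$ to be the generating cofibration $\partial\Delta^1 \hookrightarrow \Delta^1$; then $X^{\Delta^1} \rightarrow X^{\partial\Delta^1}$ is a fibration, and since $\partial\Delta^1 \cong \Delta^0 \sqcup \Delta^0$ is a two-point set, Cartesian closedness gives $X^{\partial\Delta^1} \cong X^{\Delta^0} \times X^{\Delta^0} \cong X \times X$, which finishes this part. For the second bullet, take $i$ to be $\Delta^0 \overset{i}{\hookrightarrow} \Delta^1$ for $i = 0, 1$. By the identification recorded just before Proposition \ref{PropQTopModel} (every element of $J_{\bf qTop}$ is isomorphic to $\Delta^{n-1} \overset{0}{\hookrightarrow} \Delta^{n-1} \times [0,1]$), the case $n = 1$ exhibits $\Delta^0 \overset{0}{\hookrightarrow} \Delta^1$ as an acyclic cofibration, and the symmetric argument handles the endpoint $i = 1$. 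Hence $X^{\Delta^1} \rightarrow X^{\Delta^0} \cong X$ is an acyclic fibration.

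The only genuinely delicate point is the passage from the pushout-product axiom, which Proposition \ref{PropCofibrantlyCartesian} verifies only on the generating (acyclic) cofibrations, to the pullback-hom statement for an \emph{arbitrary} cofibration $i$ and the specific fibration $p : X \rightarrow {\bf 1}$; this is the standard adjunction-plus-retract-closure argument in a cofibrantly generated monoidal model category. Everything else — identifying the power objects ${\bf 1}^A \cong {\bf 1}$ and $X^{\partial\Delta^1} \cong X \times X$, and recognising the two relevant inclusions as a cofibration and an acyclic cofibration respectively — is routine.
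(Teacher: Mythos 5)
Your proposal is correct and is essentially the paper's own proof: the paper likewise takes $g : X \rightarrow \mathbf{1}$ (a fibration since every object of {\bf qTop} is fibrant), notes that $\partial\Delta^1 \rightarrow \Delta^1$ is a cofibration and $\Delta^0 \overset{i}{\rightarrow} \Delta^1$ an acyclic cofibration, and concludes from Proposition \ref{PropQTopModel} via the morphism $g^{\# f}$ of Definition \ref{DefModelCartesian} that $X^{\Delta^1} \rightarrow X^{\partial\Delta^1}$ is a fibration and $X^{\Delta^1} \rightarrow X^{\Delta^0}$ is an acyclic fibration. The one ``delicate point'' you flag is already absorbed by the paper's infrastructure: the pullback-hom form is itself one of the equivalent defining conditions in Definition \ref{DefModelCartesian}, and Proposition \ref{PropCofibrantlyCartesian} (invoked in the proof of Proposition \ref{PropQTopModel}) upgrades the generator-level check to that full statement, so no separate adjunction-plus-retract-closure argument needs to be supplied.
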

\begin{proof}
Let $f$ be the morphism $\partial \Delta^1 \rightarrow \Delta^1$, and let $g$ be the morphism $X \rightarrow {\bf 1}$.
$f$ is a cofibration.
$g$ is a fibration by Proposition \ref{PropQTopModel}.
Then the morphism $g^{\# f} : X^{\Delta^1} \rightarrow X^{\partial \Delta^1}$ is a fibration because {\bf qTop} is a Cartesian closed model category by Proposition \ref{PropQTopModel}
(cf. Definition \ref{DefModelCartesian}). \par
Let $f'$ be the morphism $\Delta^0 \overset{i}{\rightarrow} \Delta^1$.
$f'$ is an acyclic cofibration.
$g^{\# f'} : X^{\Delta^1} \overset{i}{\rightarrow} X^{\Delta^0}$ is an acyclic fibration because {\bf qTop} is a Cartesian closed model category by Proposition \ref{PropQTopModel}
(cf. Definition \ref{DefModelCartesian}).
\end{proof}
\begin{prop} \label{PropRetract}
Any acyclic cofibration in {\bf qTop} is a strong deformation retract.
\end{prop}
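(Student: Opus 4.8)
The plan is to exhibit, for an acyclic cofibration $j : A \to X$, both a retraction $r : X \to A$ with $r \circ j = \mathrm{id}_A$ and a homotopy from $\mathrm{id}_X$ to $j \circ r$ that is stationary on $A$, the latter phrased through the path object $X^{\Delta^1}$. Everything will be assembled from two lifting problems, using that all objects are fibrant and that $(d_0,d_1) : X^{\Delta^1} \to X \times X$ is a fibration.

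First I would produce the retraction. Since every object of {\bf qTop} is fibrant by Proposition \ref{PropQTopModel}, the terminal map $A \to {\bf 1}$ is a fibration. Because $j$ is an acyclic cofibration it has the left lifting property against this fibration, so the square with top edge $\mathrm{id}_A : A \to A$, left edge $j$, right edge $A \to {\bf 1}$, and bottom edge $X \to {\bf 1}$ admits a lift $r : X \to A$, which by construction satisfies $r \circ j = \mathrm{id}_A$.

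Next I would build the stationary homotopy. Write $d_0, d_1 : X^{\Delta^1} \to X$ for the evaluation maps induced by $\Delta^0 \rightrightarrows \Delta^1$ and $s : X \to X^{\Delta^1}$ for the degeneracy map induced by $\Delta^1 \to \Delta^0$; the simplicial identities give $d_i \circ s = \mathrm{id}_X$, so $(d_0, d_1) \circ s$ is the diagonal $X \to X \times X$. By Proposition \ref{PropPath} the map $(d_0, d_1) : X^{\Delta^1} \to X \times X$ is a fibration. I would then consider the square whose top edge is $s \circ j : A \to X^{\Delta^1}$, left edge $j$, right edge $(d_0, d_1)$, and bottom edge $(\mathrm{id}_X, j \circ r) : X \to X \times X$. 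Commutativity follows from $(d_0,d_1) \circ s \circ j = (j, j)$ and $(\mathrm{id}_X, j \circ r) \circ j = (j, j \circ r \circ j) = (j, j)$, using $r \circ j = \mathrm{id}_A$. Since $j$ is an acyclic cofibration and $(d_0, d_1)$ is a fibration, a lift $H : X \to X^{\Delta^1}$ exists.

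Finally I would read off the conclusion: the relation $(d_0, d_1) \circ H = (\mathrm{id}_X, j \circ r)$ says $H$ is a path from $\mathrm{id}_X$ to $j \circ r$, while $H \circ j = s \circ j$ says this path is constant on the image of $A$, i.e.\ the homotopy is stationary on $A$; transposing $H$ across the Cartesian closed structure of {\bf qTop} yields the usual homotopy $X \times \Delta^1 \to X$ witnessing $j$ as a strong deformation retract. I expect the only delicate point to be the bookkeeping that the lower square commutes — combining the identities $d_i \circ s = \mathrm{id}_X$ with $r \circ j = \mathrm{id}_A$ so that both composites collapse to $(j, j)$ — after which the two lifts are immediate from the model-category axioms and Proposition \ref{PropPath}.
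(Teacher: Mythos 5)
Your proposal is correct and follows essentially the same argument as the paper: first lift $\mathrm{id}_A$ against $A \to \mathbf{1}$ (using fibrancy of all objects, Proposition \ref{PropQTopModel}) to get the retraction, then lift against the path-space fibration $X^{\Delta^1} \to X \times X$ of Proposition \ref{PropPath} to get the homotopy rel $A$ from $\mathrm{id}_X$ to $j \circ r$. The only difference is that you spell out the commutativity check of the second square (collapsing both composites to $(j,j)$ via $d_i \circ s = \mathrm{id}_X$ and $r \circ j = \mathrm{id}_A$), which the paper leaves implicit.
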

\begin{proof}
Take any acyclic cofibration $f : X \rightarrow Y$ in {\bf qTop}.
$f$ has a retraction $g : Y \rightarrow X$ because $X$ is fibrant
(cf. Proposition \ref{PropQTopModel}).
	\[\xymatrix{
		X \ar[d]_-{f} \ar[r]
		& X \ar[d]
	\\
		Y \ar[r] \ar@{.>}[ru]^-{g}
		& {\bf 1}.
}\]
There exists a homotopy $H$ from $f \circ g$ to $id_Y$ up to $X$ by Proposition \ref{PropPath}.
	\[\xymatrix{
		X \ar[d]_-{f} \ar[r]
		& Y^{\Delta^1} \ar[d]
	\\
		Y \ar[r]_-{id_Y \times (f \circ g)} \ar@{.>}[ru]^-{H}
		& Y \times Y.
}\]
This means that $f$ is a strong deformation retract.
\end{proof}
	\paragraph{The mapping track}
Any morphism $f$ in {\bf qTop} has a decomposition $f = f' \circ e_f$ such that the morphism $e$ is a weak equivalence and the morphism $p$ is a fibration.
The existence of such a decomposition is required by the axiom of a model category.
One of such decompositions, called the {\it mapping track}, can also be constructed in a specific way. \par
Let $f : X \rightarrow Y$ be a morphism in {\bf qTop}.
We define a quasitopological space $X_f$ as the following pull-back diagram:
	\[\xymatrix{
		X_f \ar[d] \ar[r]
		& Y^{[0, 1]} \ar[d]^-{0}
	\\
		X \ar[r]_-{f}
		& Y.
}\]
We call the quasitopological space $X_f$ the {\it mapping track} of $f$.
We define two morphisms $e_f : X \rightarrow X_f$ and $f' : X_f \rightarrow Y$ as the following diagram:
	\[\xymatrix{
		&& Y \ar[d]
	\\
		X \ar[r]^-{e_f} \ar@/^20pt/[rru]^-{f} \ar@{=}[rd]
		& X_f \ar[r] \ar[d]
		& Y^{[0, 1]} \ar[d]^-{0}
		& X_f \ar[r] \ar[rd]_-{f'}
		& Y^{[0, 1]} \ar[d]^-{1}
	\\
		& X \ar[r]_-{f}
		& Y
		&& Y.
	}\]
Then, we have the equality $f = f' \circ e_f$.
\begin{lem} \label{LemRetract}
Let $\Sigma := ([0, 1] \times \{ 1 \}) \cup (\{ 0 \} \times [0, 1])$.
The inclusion map $\Sigma \hookrightarrow [0, 1]^2$ has a strong deformation retraction $R_u : [0, 1]^2 \rightarrow [0, 1]^2$ satisfying the followings:
\begin{itemize}
\item $R_0 = id$ and $Im(R_1) \subset \Sigma$, 
\item $R_u |_\Sigma = id$, and
\item $R_u(s,0) \in [0, 1] \times \{ 0 \}$.
\end{itemize}
\end{lem}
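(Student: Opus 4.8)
The plan is to reduce the construction of the homotopy $R_u$ to the construction of a single continuous retraction $r\colon [0,1]^2\to\Sigma$ and then pass to the straight-line homotopy toward it. Concretely, I will produce a continuous map $r$ satisfying $r|_\Sigma=\mathrm{id}$ and $r(s,0)=(0,0)$ for all $s$, and then set $R_u(s,t):=(1-u)\,(s,t)+u\,r(s,t)$. Since $[0,1]^2$ is convex this segment stays in the square, so $R_u$ is a self-map of $[0,1]^2$; moreover $R_0=\mathrm{id}$ and $R_1=r$ has image in $\Sigma$, for $p\in\Sigma$ we get $R_u(p)=(1-u)p+up=p$ (so $R_u|_\Sigma=\mathrm{id}$), and $R_u(s,0)=((1-u)s,0)\in[0,1]\times\{0\}$. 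Thus all three bullet conditions follow immediately once $r$ is in hand, and the whole problem collapses to building $r$.

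To build $r$, I would parametrise $\Sigma$ by the arc $\sigma\colon[0,2]\to\Sigma$ with $\sigma(\rho)=(0,\rho)$ for $\rho\le 1$ and $\sigma(\rho)=(\rho-1,1)$ for $\rho\ge 1$ (so the parameter runs from $(0,0)$ up the left edge and then along the top edge to $(1,1)$), and then define $r(s,t):=\sigma\big(t(1+s)\big)$, that is
\[
r(s,t)=\begin{cases}(0,\ t(1+s)) & \text{if } t(1+s)\le 1,\\ (t(1+s)-1,\ 1) & \text{if } t(1+s)\ge 1.\end{cases}
\]
The role of the scalar weight $t(1+s)$ is that it takes values in $[0,2]$, equals $t$ on the left edge ($s=0$), equals $1+s$ on the top edge ($t=1$), and vanishes identically on the bottom edge ($t=0$). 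These are precisely the boundary values that force $r|_\Sigma=\mathrm{id}$ and $r(s,0)=(0,0)$. Continuity of $\rho(s,t)=t(1+s)$ is clear and $\sigma$ is continuous (the two cases agree, both giving $(0,1)$, at $\rho=1$), so $r=\sigma\circ\rho$ is continuous and lands in $\Sigma$ by construction; feeding it into the straight-line homotopy above then finishes the argument.

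I expect the only genuine obstacle to be exactly the continuity of $r$. The naive candidates both fail: radial projection from the corner $(1,0)$ sends the entire bottom edge to $(0,0)$ but flings the whole right edge to $(1,1)$, so it is discontinuous at $(1,0)$; and nearest-point projection onto the $L$-shaped $\Sigma$ jumps along the anti-diagonal $s+t=1$, where the foot of the perpendicular switches between the left and top edges. The device that removes the difficulty is to \emph{unroll} the complement of $\Sigma$ monotonically along the arc $\sigma$ using the single weight $t(1+s)$, which collapses both the bottom edge and the troublesome corner $(1,0)$ continuously to $(0,0)$ while pinning all of $\Sigma$. Everything else is a routine verification of the three conditions, as sketched above.
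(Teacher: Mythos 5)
Your proof is correct, but it takes a genuinely different route from the paper's. The paper constructs the deformation directly as a concatenation of two explicit homotopies: first $R^1_u(s,t) = ((1-(1-t)u)s,\, t)$, which compresses the square horizontally onto the triangle $\{x \le t\}$ while fixing the top and left edges and keeping the bottom edge inside itself, and then $R^2_u(s,t) = (s,t) + \tfrac{(1-|s+t-1|)(1-|s-t|)}{2}\,u\,(-1,1)$, which pushes that triangle diagonally onto $\Sigma$; the three bullet conditions are then read off from the concatenation. You instead reduce everything to producing a single retraction $r = \sigma \circ \rho$ (with $\rho(s,t) = t(1+s)$ and $\sigma$ the arc-length-style parametrisation of $\Sigma$) and let convexity of $[0,1]^2$ do the rest via the straight-line homotopy $R_u = (1-u)\,\mathrm{id} + u\,r$. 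Your verification is complete: $r$ is continuous (the two branches agree at $\rho = 1$), fixes $\Sigma$ pointwise (on $s=0$ the weight is $t$, on $t=1$ it is $1+s$), sends the bottom edge to $(0,0)$, and hence $R_u|_\Sigma = \mathrm{id}$, $\operatorname{Im}(R_1) \subset \Sigma$, and $R_u(s,0) = ((1-u)s, 0) \in [0,1]\times\{0\}$, which is exactly the third bullet (and still yields, as the paper later needs in Lemma \ref{LemGromov2}, a strong deformation retraction $r_u(s) = (1-u)s$ of $[0,1]$ onto $\{0\}$). What each approach buys: yours is more modular and the three conditions follow formally from two properties of $r$ plus convexity, with no piecewise-in-$u$ continuity check; the paper's two-stage construction is less economical but exhibits the intermediate geometry of the deformation explicitly (it is the version illustrated by the figures), keeping each stage given by one closed formula.
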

\begin{proof}
We define two homotopies $R^1_u$ and $R^2_u$ as the following:
\[
\begin{array}{l}
R^1_u(s,t) := ((1-(1-t)u)s, t), \\
R^2_u(s,t) := (s, t) + \frac{(1-|s+t-1|)(1-|s-t|)}{2}u(-1, 1).
\end{array}
\]
These are the deformations illustrated in the following figure.
\vspace{5pt}
\[
\begin{array}{ccc}
\includegraphics[width=5cm]{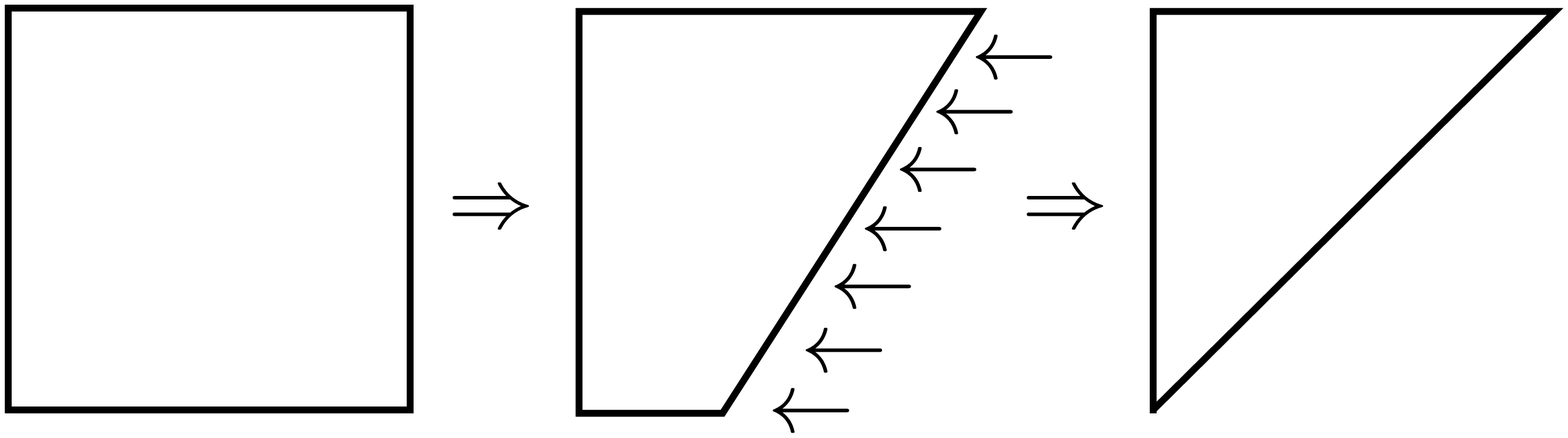}
&& \includegraphics[width=5cm]{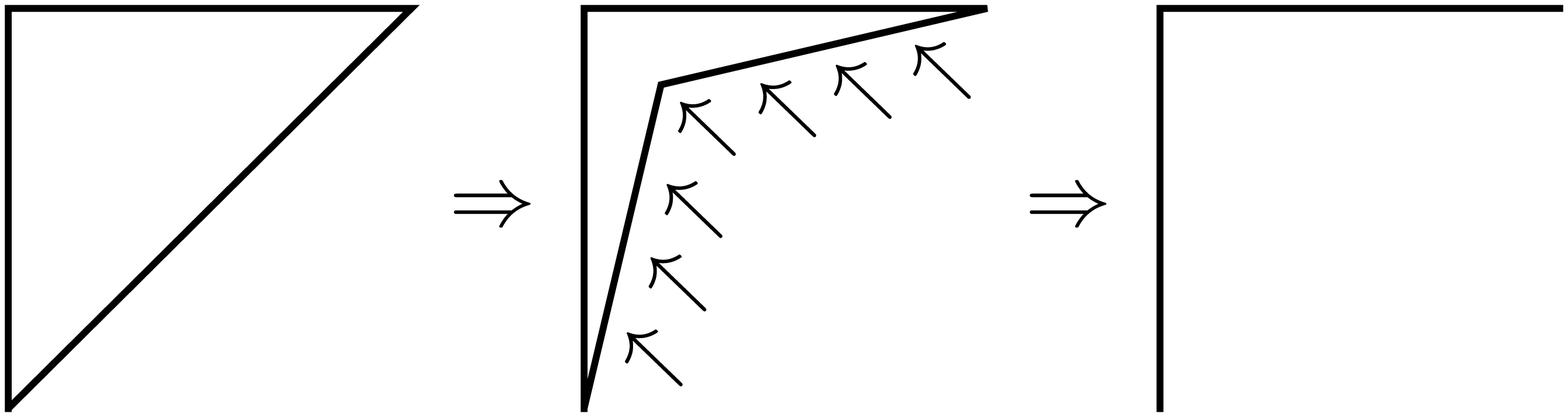} \\
R^1_u
&& R^2_u
\end{array}
\]
We define a homotopy $R_u$ as the following:
\[
R_u(s,t) := \left\{
\begin{array}{ll}
R^1_{2u}(s, t) & (\mbox{if } 0 \le u \le \frac{1}{2})\\
R^2_{2u-1}(R^1_1(s, t)) & (\mbox{if } \frac{1}{2} \le u \le 1).
\end{array}
\right.
\]
The above $R_u$ is the homotopy we want.
\end{proof}
\begin{prop}
\mbox{}
\begin{itemize}
\item The above morphism $e_f$ is a weak equivalence.
\item The above morphism $f'$ is a fibration.
\end{itemize}
\end{prop}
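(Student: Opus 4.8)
The plan is to treat the two assertions separately, since they rest on different ingredients. For the first bullet I will exploit that $e_f$ is a section of a projection already known to be an acyclic fibration, and conclude by the two-out-of-three axiom. For the second I will check the right lifting property of $f'$ against the generating acyclic cofibrations directly, and the geometric content of the required filler is exactly Lemma \ref{LemRetract}.

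First, consider the left leg $\pi : X_f \to X$ of the defining pull-back square. By construction $X_f$ is the pull-back of the evaluation morphism $Y^{[0,1]} \overset{0}{\to} Y$ along $f$. Since $[0,1]$ is a finite polyhedron we have $Y^{[0,1]} \cong Y^{\Delta^1}$ by Corollary \ref{CorFiniteEquivalence}, so $Y^{[0,1]} \overset{0}{\to} Y$ is an acyclic fibration by Proposition \ref{PropPath}. Acyclic fibrations are stable under pull-back, hence $\pi$ is an acyclic fibration, and in particular a weak equivalence. On the other hand the equality edge in the diagram defining $e_f$ expresses $\pi \circ e_f = \mathrm{id}_X$. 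As $\mathrm{id}_X$ and $\pi$ are weak equivalences, two-out-of-three forces $e_f$ to be a weak equivalence.

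For the second bullet, $f'$ is a fibration iff it has the right lifting property against every acyclic cofibration, and by cofibrant generation it suffices to test against $J_{\bf qTop}$; by Corollary \ref{CorFiniteEquivalence} each such map is isomorphic to $\Delta^{n-1} \overset{0}{\hookrightarrow} \Delta^{n-1} \times [0,1]$. So I fix a commutative square with top edge $a : \Delta^{n-1} \to X_f$ and bottom edge $b : \Delta^{n-1} \times [0,1]_s \to Y$ and seek a filler $L : \Delta^{n-1} \times [0,1]_s \to X_f$. Unravelling $a$ through the pull-back and the exponential adjunction yields $\alpha : \Delta^{n-1} \to X$ together with $H : \Delta^{n-1} \times [0,1]_t \to Y$ with $H(-,0) = f \circ \alpha$, and commutativity of the square says precisely $H(-,1) = b(-,0)$. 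Writing $(s,t)$ for coordinates on $[0,1]^2$, the data $H$ and $b$ are prescribed on the edges $\{0\} \times [0,1]_t$ and $[0,1]_s \times \{1\}$, which agree at the corner $(0,1)$; hence, via the Patching Lemma \ref{LemPatching} applied to the finite closed cover of $\Delta^{n-1} \times \Sigma$, they glue to a single morphism $\Phi : \Delta^{n-1} \times \Sigma \to Y$, with $\Sigma$ as in Lemma \ref{LemRetract}. I then define the filler by retracting the square onto $\Sigma$: set $K := \Phi \circ (\mathrm{id} \times R_1)$, where $R_1 : [0,1]^2 \to \Sigma$ is the end of the deformation of Lemma \ref{LemRetract}, and let $L$ correspond through the pull-back and the adjunction to the pair $(\alpha \circ \mathrm{pr}, K)$. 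The three properties of $R_u$ give the three conditions a filler must satisfy: from $R_1|_\Sigma = \mathrm{id}$ we get $K(-,0,t) = H(-,t)$ and $K(-,s,1) = b(-,s)$, so $L$ restricts to $a$ and post-composes to $b$; and since $R_1(s,0) \in [0,1] \times \{0\}$ together with $\mathrm{Im}(R_1) \subset \Sigma$ force $R_1(s,0) = (0,0)$, the bottom edge satisfies $K(-,s,0) = H(-,0) = f \circ \alpha$, which is the pull-back constraint for the constant $X$-component $\alpha$. Thus $L$ is a well-defined diagonal and $f'$ is a fibration.

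I expect the second bullet to be the only genuine obstacle. The delicate points are the bookkeeping that identifies the prescribed boundary data with $\Sigma = ([0,1]\times\{1\}) \cup (\{0\}\times[0,1])$ — this is exactly what makes Lemma \ref{LemRetract} the right tool — and the verification that the retracted map still meets the pull-back constraint on the bottom edge, which is precisely why the deformation in Lemma \ref{LemRetract} is required to carry $[0,1]\times\{0\}$ into itself. The remaining assembly into a morphism of quasitopological spaces is routine via the Patching Lemma, since every space involved is a finite polyhedron.
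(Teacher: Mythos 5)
Your proof is correct and follows essentially the same route as the paper: the first bullet via the projection $X_f \rightarrow X$ being an acyclic fibration (pull-back of $Y^{[0,1]} \overset{0}{\rightarrow} Y$) together with two-out-of-three applied to $\pi \circ e_f = id_X$, and the second bullet by testing $f'$ against $J_{\bf qTop}$, gluing the prescribed data over $\Delta^{n-1} \times \Sigma$, and precomposing with the retraction $R_1$ of Lemma \ref{LemRetract} while keeping the $X$-component constant along the new interval direction. Your explicit checks of the corner compatibility and of the pull-back constraint (via $R_1(s,0) = (0,0)$) simply spell out what the paper leaves implicit.
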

\begin{proof}
\setcounter{equation}{0}
$X_f \rightarrow X$ is an acyclic fibration because it is the pull-back of an acyclic fibration $Y^{[0, 1]} \overset{0}{\rightarrow} Y$
(cf. Proposition \ref{PropPath}).
In particular, $X_f \rightarrow X$ is a weak equivalence.
$e_f$ is a weak equivalence because the weak equivalence $X_f \rightarrow X$ is a left inverse of $e_f$. \par
In order to show that $f'$ is a fibration, we consider the following diagram:
\begin{equation}
\vcenter{\xymatrix{
		\Delta^{n-1} \ar[d] \ar[r]^-{\alpha}
		& X_f \ar[d]^-{f'}
	\\
		\Delta^{n-1} \times [0, 1] \ar[r]^-{\beta}
		& Y.
}}
\end{equation}
We define a morphism $\gamma_1$ as the following composition:
\[
\gamma_1 : \Delta^{n-1} \times [0, 1] \overset{pr_1}{\rightarrow} \Delta^{n-1} \overset{\alpha}{\rightarrow} X_f \rightarrow X
\]
Take the subset $\Sigma \hookrightarrow [0, 1]^2$ and the strong deformation retraction $R_u : [0, 1]^2 \rightarrow [0, 1]^2$ in Lemma \ref{LemRetract}.
Let $\alpha' : \Delta^{n-1} \times [0, 1] \rightarrow Y$ be the adjunct of the composite morphism $\Delta^{n-1} \overset{\alpha}{\rightarrow} X_f \rightarrow Y^{[0, 1]}$.
We define a morphism $\gamma'_2 : \Delta^{n-1} \times \Sigma \rightarrow Y$ as the following:
\[
\begin{array}{l}
\gamma'_2 |_{\Delta^{n-1} \times ([0, 1] \times \{ 1 \})} : \Delta^{n-1} \times ([0, 1] \times \{ 1 \}) \cong \Delta^{n-1} \times [0, 1] \overset{\beta}{\rightarrow} Y, \\
\gamma'_2 |_{\Delta^{n-1} \times (\{ 0 \} \times [0, 1])} : \Delta^{n-1} \times (\{ 0 \} \times [0, 1]) \cong \Delta^{n-1} \times [0, 1] \overset{\alpha'}{\rightarrow} Y.
\end{array}
\]
Let $\gamma_2 : \Delta^{n-1} \times [0, 1] \rightarrow Y^{[0, 1]}$ be the adjunct of the composite morphism $\gamma'_2 \circ R_1$.
We obtain a morphism $\gamma (= \gamma_1 \times \gamma_2) : \Delta^{n-1} \times [0, 1] \rightarrow X_f$.
The morphism $\gamma$ is a lift of the diagram (1).
Therefore, the morphism $f'$ has the right lifting property with respect to $J_{\bf qTop}$.
This means that $f'$ is a fibration.
\end{proof}
	\paragraph{HELP Lemma}
We will characterise a weak equivalence by HELP ({\it Homotopy Extension Lifting Property}).
\begin{prop}[HELP Lemma; cf. \cite{vogt2010help}] \label{PropQTopHelp}
For any morphism $f : X \rightarrow Y$ in {\bf qTop}, the following conditions are equivalent.
\begin{description}
\item[(1)] $f$ is a weak equivalence.
\item[(2)] $f$ has the {\it right Homotopy Extension Lifting Property} with respact to $I_{\bf qTop}$.
i.e. For a following commutative square, there exists a morphism $\gamma$ in the following diagram such that the upper triangle in the diagram is commutative and that the lower triangle in the diagram is commutative up to homotopy fixing $\partial \Delta^n$:
	\[\xymatrix{
		\partial \Delta^n \ar[r] \ar[d]
		& \cdot \ar[d]^-{f}
	\\
		\Delta^n \ar[r] \ar@{.>}[ur]^-{\gamma}
		& \cdot.
}\]
\end{description}
\end{prop}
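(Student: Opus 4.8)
The plan is to prove the two implications $(1)\Rightarrow(2)$ and $(2)\Rightarrow(1)$ separately, deriving the first from the mapping track and the second from a computation with homotopy groups.

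For $(1)\Rightarrow(2)$, I would use the mapping track factorization $f = f' \circ e_f$ constructed above, where $e_f : X \to X_f$ is a weak equivalence admitting the acyclic fibration $r : X_f \to X$ as a left inverse (so $r \circ e_f = \mathrm{id}_X$ and $f' \circ e_f = f$), and $f' : X_f \to Y$ is a fibration. Since $f$ and $e_f$ are weak equivalences, two-out-of-three makes $f'$ an acyclic fibration, so $f'$ has the strict right lifting property against the cofibration $\partial\Delta^n \hookrightarrow \Delta^n$. Given a HELP square with top $a : \partial\Delta^n \to X$ and bottom $b : \Delta^n \to Y$, I would lift the induced square with top $e_f \circ a$ and bottom $b$ across $f'$ to obtain $\ell : \Delta^n \to X_f$ with $\ell|_{\partial\Delta^n} = e_f \circ a$ and $f' \circ \ell = b$, and then set $\gamma := r \circ \ell$. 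The strict equality $\gamma|_{\partial\Delta^n} = a$ is immediate from $r \circ e_f = \mathrm{id}_X$. For the lower triangle, I would observe that the two composites $f \circ r$ and $f'$ from $X_f$ to $Y$ are the evaluations at $0$ and $1$ of the path coordinate of the track, hence are canonically homotopic by sliding along that path; pulling this homotopy back along $\ell$ yields $f\gamma \simeq b$, and the homotopy is constant over $\partial\Delta^n$ because $\ell$ lands in the constant-path locus $e_f(X)$ there. This is exactly the required homotopy fixing $\partial\Delta^n$.

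For $(2)\Rightarrow(1)$, I would show that $f$ induces isomorphisms on all homotopy groups, which are by definition those of the Kan complex $G_0 G_1(X)$ whose $n$-simplices are ${\bf qTop}(\Delta^n, X)$. For surjectivity of $\pi_n(f)$ at a basepoint, given a class represented by $b : \Delta^n \to Y$ with $b(\partial\Delta^n)$ at the basepoint, I would apply HELP with constant top map to produce $\gamma : \Delta^n \to X$ with $f\gamma \simeq b$ rel $\partial\Delta^n$, so $f_*[\gamma] = [b]$; the case $n = 0$ with empty boundary gives surjectivity on $\pi_0$. For injectivity, given $\gamma_0,\gamma_1 : \Delta^n \to X$ agreeing with the basepoint on $\partial\Delta^n$ and a homotopy $f\gamma_0 \simeq f\gamma_1$ rel $\partial\Delta^n$, I would assemble $\gamma_0,\gamma_1$ and the basepoint into a map on $(\Delta^n\times\{0,1\})\cup(\partial\Delta^n\times[0,1])$ and extend the given homotopy to $\Delta^n\times[0,1]$. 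Since the pair $(\Delta^n\times[0,1],\,(\Delta^n\times\{0,1\})\cup(\partial\Delta^n\times[0,1]))$ is PL-isomorphic to $(\Delta^{n+1},\partial\Delta^{n+1})$, HELP at level $n+1$ supplies a strict filler $\Delta^n\times[0,1]\to X$, which is precisely a homotopy $\gamma_0\simeq\gamma_1$ rel $\partial\Delta^n$; note that only the strict part of HELP is needed here.

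The step demanding the most care, and the one I expect to be the main obstacle, is the dictionary between the topological data appearing in HELP and the simplicial homotopy groups of $G_0 G_1(X)$. I would justify it with the finite-polyhedron machinery: by Proposition \ref{PropFiniteFullyFaithful} and Corollary \ref{CorFiniteEquivalence}, maps out of finite polyhedra (simplices, their boundaries, the prisms $\Delta^n\times[0,1]$, and spheres) together with homotopies between them agree with the corresponding data in ${\bf Top}$; by the Patching Lemma \ref{LemPatching}, a morphism $\operatorname{Sing^q}(\partial\Delta^n)\to X$ is the same as a simplicial map $\partial\Delta^n \to G_0 G_1(X)$; and since $G_0 G_1(X)$ is a Kan complex, its simplicial homotopy relation rel $\partial\Delta^n$ coincides with topological homotopy through $\operatorname{Sing^q}(\Delta^1)=\operatorname{Sing^q}([0,1])$. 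Once this identification is set up carefully enough that the classical homotopy-group arguments apply verbatim, the two implications above close the equivalence.
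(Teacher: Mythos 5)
Your proof is correct, and while your $(1)\Rightarrow(2)$ is essentially the paper's argument, your $(2)\Rightarrow(1)$ takes a genuinely different route. In the forward direction the paper factors $f = p_f\circ i_f$ using the abstract model-structure factorization together with Proposition \ref{PropRetract} (acyclic cofibrations are strong deformation retracts), while you use the mapping track $f = f'\circ e_f$; the skeleton is identical (lift strictly against an acyclic fibration, retract back, control the homotopy), and your version is if anything more concrete, since the retraction is $\mathrm{pr}_1$ and the controlling homotopy is literally the path coordinate $\mathrm{pr}_2 : X_f \rightarrow Y^{[0,1]}$, which is constant on the locus $e_f(X)$ exactly as you need. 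In the converse direction the paper never mentions homotopy groups: it shows the mapping-track fibration $f' : X_f \rightarrow Y$ has the strict right lifting property against $I_{\bf qTop}$ by assembling the HELP data $(\gamma',H)$ into the lift $\gamma'\times H$, and concludes by two-out-of-three; you instead run the classical Vogt-style computation on the simplicial homotopy groups of the Kan complexes $G_0G_1(X)$ and $G_0G_1(Y)$ (surjectivity from HELP with constant top map, injectivity from HELP applied to the prism pair $(\Delta^n\times[0,1],\,(\Delta^n\times\{0,1\})\cup(\partial\Delta^n\times[0,1]))\cong(\Delta^{n+1},\partial\Delta^{n+1})$, where indeed only the strict upper triangle is used), then invoke the paper's characterization of weak equivalences by homotopy groups. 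As to what each buys: the paper's argument is shorter and stays entirely inside {\bf qTop}, but as written it is delicate --- the square with top $\mathrm{pr}_1\circ\alpha$ and bottom $\beta$ to which it applies HELP does not commute strictly, because on $X_f$ one has $f\circ\mathrm{pr}_1 = \mathrm{ev}_0\circ\mathrm{pr}_2$ whereas $f' = \mathrm{ev}_1\circ\mathrm{pr}_2$, and the lift $\gamma'\times H$ agrees with $\alpha$ on $\partial\Delta^n$ only when $\mathrm{pr}_2\circ\alpha$ consists of constant paths; repairing this requires first deforming $\alpha$ into the constant-path locus $e_f(X)$, using that $f'$ is a fibration and $\partial\Delta^n\hookrightarrow\Delta^n$ a cofibration. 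Your homotopy-group route sidesteps that issue entirely; its cost is exactly the dictionary you flag, between morphisms out of finite polyhedra and simplicial data in $G_0G_1(X)$ --- including the identification $F_1F_0(\Delta^n\times\Delta^1)\cong\operatorname{Sing^q}(\Delta^n\times[0,1])$ needed to compare topological and simplicial homotopies rel boundary --- and Proposition \ref{PropFiniteFullyFaithful}, Corollary \ref{CorFiniteEquivalence}, Lemma \ref{LemPatching} and the Kan property of $G_0G_1(X)$ do suffice to establish it.
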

\begin{proof}
\mbox{}\\
{\bf (1)} $\Rightarrow$ {\bf (2)}\\
Take a decomposition $f = p_f \circ i_f$ of $f$ such that $p_f$ is a fibration and that $i_f$ is an acyclic cofibration.
If $f$ is a weak equivalence, $p_f$ is acyclic.
Any morphism $\partial \Delta^n \hookrightarrow \Delta^n$ has the left lifting property with respect to $p_f$.
$i_f$ has a strong deformation retraction by Proposition \ref{PropRetract}.
Then, we obtain the condition {\bf (2)}. \\
{\bf (2)} $\Rightarrow$ {\bf (1)}\\
In order to show that $f$ is a weak equivalence, we will show that $f' : X_f \rightarrow Y$ is an acyclic fibration.
We consider the following diagram:
	\[\xymatrix{
		\partial \Delta^n \ar[d] \ar[r]^-{\alpha}
		& X_f \ar[rd]^-{f'} \ar[r]^-{pr_1}
		& X \ar[d]^-{f}
	\\
		\Delta^n \ar[rr]^-{\beta}
		&& Y
}\]
By the condition {\bf (2)}, there exist a morphism $\gamma' : \Delta^n \rightarrow X$ and a homotopy $H : \Delta^n \rightarrow Y^{[0, 1]}$ such that $H$ fixes $\partial \Delta^n$ and that the following diagram is commutative:
	\[\xymatrix{
		& \Delta^n \ar[ld]_-{f \circ \gamma'} \ar[rd]^-{\beta} \ar[d]^-{H}
	\\
		Y
		& Y^{[0, 1]} \ar[l]^-{0} \ar[r]_-{1}
		& Y
}\]
Let $\gamma := \gamma' \times H : \Delta^n \rightarrow X_f$.
$\gamma$ is a lift of the diagram $(\alpha, \beta)$.
Therefore, the morphism $f'$ has the right lifting property with respect to $I_{\bf qTop}$.
This means that $f'$ is an acyclic fibration.
Then, $f$ is a weak equivalence.
\end{proof}
	\section{Continuous sheaves}
In this section, we discuss continuous sheaves.
The sheaf of solutions of a partial differential relation is one of the most typical examples of a continuous sheaf.
A {\it flexible sheaf} is an important class of continuous sheaves.
Every continuous sheaf has a flexible sheaf callled the ``sheaf of formal sections''
(cf. Lemma \ref{LemGromov2}).
	\subsection{Continuous sheaves}
First, we define a continuous sheaf.
\begin{defi}
Let $B$ be a topological space. \par
A {\it continuous presheaf} on $B$ is a presheaf with value in {\bf qTop}, where {\bf qTop} is the category of quasitopological spaces
(cf. Definition \ref{DefQTop}).
${\bf PSh}(B; {\bf qTop})$ is the category of continuous presheaves on $B$. \par
A {\it continuous sheaf} on $B$ is a sheaf with value in {\bf qTop}.
${\bf Sh}(B; {\bf qTop})$ is the category of continuous sheaves on $B$.
\end{defi}
For any compact subset $K$ of $B$, we define a quasitopological space $\mathcal{F}(K)$ as 
\[
\mathcal{F}(K) := \displaystyle\lim_{\substack{\longrightarrow \\ U \supset K}} \mathcal{F}(U).
\]
\vspace{10pt}\\
Let $B_{top}$ be the site of open subsets of $B$.
Let $B_{top}^{tri}$ be the category, forgetting the Grothendieck topology of $B_{top}$.
$B_{top}^{tri}$ can be regarded as a site with the trivial topology.
Then, we obtain the following isomorphisms:
\[
\begin{array}{lclcl}
{\bf PSh}(B; {\bf qTop})
& \cong & {\bf Sh}(B_{top}^{tri} \times {\bf Sing\Delta}; {\bf Set})
& \cong & {\bf Sh}({\bf Sing\Delta}; {\bf PSh}(B; {\bf Set})), \\
{\bf Sh}(B; {\bf qTop})
& \cong & {\bf Sh}(B_{top} \times {\bf Sing\Delta}; {\bf Set})
& \cong & {\bf Sh}({\bf Sing\Delta}; {\bf Sh}(B; {\bf Set})).
\end{array}
\]
The category ${\bf Sh}(B_{top}^{tri} \times {\bf Sing\Delta}; {\bf Set})$ is embeded in the category ${\bf PSh}(B_{top} \times {\bf Sing\Delta}; {\bf Set})$ of presheaves.
Then, the embedding functor ${\bf Sh}(B_{top} \times {\bf Sing\Delta}; {\bf Set}) \rightarrow {\bf Sh}(B_{top}^{tri} \times {\bf Sing\Delta}; {\bf Set})$ has a left adjoint functor.
To repeat the same meaning, the forgetful functor ${\bf Sh}(B; {\bf qTop}) \rightarrow {\bf PSh}(B; {\bf qTop})$ has a left adjoint functor.
This left adjoint functor is called the {\it sheafification}. \par
Let $\mathcal{F}$ be a continuous (pre)sheaf.
For any open subset $U$ of $B$, $\mathcal{F}(U)$ is a quasitopological space.
For any closed subset $S$ of a simplex, $\mathcal{F}(U)[S]$ is a set.
We define a (pre)sheaf $\mathcal{F}[S]$ as $\mathcal{F}[S](U) := \mathcal{F}(U)[S]$.
\begin{prop} \label{PropStalk}
Let $\mathcal{F}$ be a continuous (pre)sheaf.
For any point $x \in B$ and any closed subset $S$ of a simplex, we have an isomorphism $\mathcal{F}_x[S] \cong \mathcal{F}[S]_x$ of stalks.
\end{prop}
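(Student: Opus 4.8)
The plan is to recognize both sides as one and the same filtered colimit, and then to invoke Proposition \ref{PropFilter}. First I would recall that the stalk of a continuous (pre)sheaf $\mathcal{F}$ at a point $x$ is the colimit
\[
\mathcal{F}_x = \lim_{\substack{\longrightarrow \\ U \ni x}} \mathcal{F}(U)
\]
taken in {\bf qTop} over the poset of open neighborhoods $U$ of $x$, with the restriction morphisms as transition maps. This index category is filtered: it is nonempty, and for any two neighborhoods $U_1, U_2$ the intersection $U_1 \cap U_2$ is again an open neighborhood of $x$ and receives restriction maps from both $U_1$ and $U_2$. Hence $\mathcal{F}_x$ is genuinely a filtered colimit in {\bf qTop}.

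Next I would unwind the right-hand side. By the definition of the (pre)sheaf of sets $\mathcal{F}[S]$, for which $\mathcal{F}[S](U) := \mathcal{F}(U)[S]$, its stalk at $x$ is
\[
\mathcal{F}[S]_x = \lim_{\substack{\longrightarrow \\ U \ni x}} \mathcal{F}[S](U) = \lim_{\substack{\longrightarrow \\ U \ni x}} \mathcal{F}(U)[S],
\]
a filtered colimit in {\bf Set} over exactly the same index category.

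The key step is then to apply Proposition \ref{PropFilter}, which asserts that the evaluation functor $\operatorname{ev}_S : {\bf qTop} \rightarrow {\bf Set}$ preserves every filtered colimit. Applied to the filtered diagram $U \mapsto \mathcal{F}(U)$, it furnishes the canonical isomorphism
\[
\mathcal{F}_x[S] = \operatorname{ev}_S\Bigl( \lim_{\substack{\longrightarrow \\ U \ni x}} \mathcal{F}(U) \Bigr) \cong \lim_{\substack{\longrightarrow \\ U \ni x}} \operatorname{ev}_S(\mathcal{F}(U)) = \lim_{\substack{\longrightarrow \\ U \ni x}} \mathcal{F}(U)[S] = \mathcal{F}[S]_x,
\]
which is precisely the claimed isomorphism of stalks, and one checks that it is the canonical comparison map.

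The only point requiring any care---and it is minor---is that for a continuous \emph{sheaf} the stalk $\mathcal{F}_x$ is computed by the very same filtered colimit as for its underlying presheaf, so that Proposition \ref{PropFilter} applies verbatim in both the sheaf and presheaf cases; this is the standard fact that the stalk functor factors through the forgetful functor to presheaves. I expect no genuine obstacle here, since essentially all of the content is carried by Proposition \ref{PropFilter}, and the whole argument reduces to matching up the two descriptions of the colimit.
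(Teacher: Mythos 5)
Your proposal is correct and is exactly the paper's argument: the paper's entire proof reads ``It is obvious by Proposition \ref{PropFilter}.'', and your write-up simply makes explicit the identification of both sides as the same filtered colimit over the open neighbourhoods of $x$, together with the fact that $\operatorname{ev}_S$ preserves filtered colimits. Your closing remark that the stalk of a sheaf coincides with the stalk of its underlying presheaf is a legitimate (and correctly handled) point of care that the paper leaves implicit.
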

\begin{proof}
It is obvious by Proposition \ref{PropFilter}.
\end{proof}
\begin{prop} \label{PropShefification}
Let $\mathcal{F}$ be a continuous presheaf, and let $\widehat{\mathcal{F}}$ be the sheafification of $\mathcal{F}$.
For any closed subset $S$ of a simplex, the sheaf $\widehat{\mathcal{F}}[S]$ is the sheafification of the presheaf $\mathcal{F}[S]$.
\end{prop}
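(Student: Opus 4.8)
The plan is to show that evaluation at $S$ commutes with sheafification, by exhibiting the sheafification as a construction assembled entirely from filtered colimits and finite limits, both of which the functor $\operatorname{ev}_S = (-)[S]$ preserves. Since the continuous presheaf $\mathcal{F}$ is already a sheaf in the ${\bf Sing\Delta}$-direction (it is {\bf qTop}-valued), passing to $\widehat{\mathcal{F}}$ only sheafifies with respect to the open covers of $B$. Concretely, I would use the iterated plus construction $\widehat{\mathcal{F}} = \mathcal{F}^{++}$, where
\[
\mathcal{F}^+(U) := \lim_{\substack{\longrightarrow \\ \{U_i\}}} \operatorname{eq}\left( \prod_i \mathcal{F}(U_i) \rightrightarrows \prod_{i,j} \mathcal{F}(U_i \cap U_j) \right),
\]
the colimit ranging over the system of open covers $\{U_i\}$ of $U$ ordered by refinement, and the products and the equalizer $\operatorname{eq}$ being taken in {\bf qTop}.

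The key step is the identity $(\mathcal{F}^+)[S] \cong (\mathcal{F}[S])^+$, the right-hand side being the ordinary set-valued plus construction applied to the presheaf $\mathcal{F}[S]$ on $B$. This follows by applying $\operatorname{ev}_S$ to the displayed formula: limits in {\bf qTop} are computed objectwise, so $\operatorname{ev}_S$ carries the products and the equalizer to the corresponding products and equalizer of sets; and the refinement-ordered system of covers is filtered, so by Proposition \ref{PropFilter} the functor $\operatorname{ev}_S$ commutes with the colimit. Chaining these two facts, $\operatorname{ev}_S$ sends the whole expression defining $\mathcal{F}^+(U)$ to the one defining $(\mathcal{F}[S])^+(U)$, naturally in $U$. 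Iterating once more yields
\[
\widehat{\mathcal{F}}[S] = (\mathcal{F}^{++})[S] \cong \bigl((\mathcal{F}^+)[S]\bigr)^+ \cong \bigl((\mathcal{F}[S])^+\bigr)^+ = \widehat{\mathcal{F}[S]},
\]
which is exactly the assertion.

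The main obstacle is justifying that the {\bf qTop}-valued sheafification is genuinely computed by this iterated plus construction: one must know that $\mathcal{F}^+$ is separated, that $\mathcal{F}^{++}$ is a sheaf, and that it suffices to sheafify in the $B_{top}$-direction alone. I would settle this by passing through the isomorphisms ${\bf PSh}(B; {\bf qTop}) \cong {\bf Sh}(B_{top}^{tri} \times {\bf Sing\Delta}; {\bf Set})$ and ${\bf Sh}(B; {\bf qTop}) \cong {\bf Sh}(B_{top} \times {\bf Sing\Delta}; {\bf Set})$, and working with set-valued presheaves on the product site, where the classical two-step plus construction applies verbatim. There the only point requiring care is that sheafifying in the $B_{top}$-direction preserves the property of being a ${\bf Sing\Delta}$-sheaf; this is precisely where the commutation of filtered colimits with finite limits in {\bf Set}, already invoked for Proposition \ref{PropFilter}, is needed, since it lets one interchange the filtered colimit over covers with the finite limits encoding the ${\bf Sing\Delta}$-sheaf condition.
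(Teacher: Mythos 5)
Your proposal is correct, but it takes a genuinely different route from the paper. The paper's own proof is a three-line stalk argument: the canonical map $\mathcal{F} \rightarrow \widehat{\mathcal{F}}$ induces isomorphisms on stalks, Proposition \ref{PropStalk} (itself a consequence of the filtered colimit trick, Proposition \ref{PropFilter}) converts this into stalkwise isomorphisms $\mathcal{F}[S]_x \cong \widehat{\mathcal{F}}[S]_x$, and since $\widehat{\mathcal{F}}[S]$ is a set-valued sheaf, the standard criterion (a stalkwise isomorphism from a presheaf to a sheaf exhibits that sheaf as the sheafification) finishes the proof. You instead make the sheafification explicit via the iterated plus construction and check that $\operatorname{ev}_S$ commutes with every ingredient: the equalizers and products because limits in {\bf qTop} are objectwise, and the refinement colimit because it is filtered (Proposition \ref{PropFilter} again). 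Both arguments thus rest on the same mechanism --- the finiteness of covers in ${\bf Sing\Delta}$ making filtered colimits objectwise --- but they package it differently. The paper's version is shorter and avoids developing any plus-construction machinery, at the price of invoking facts special to sites with enough points (stalks detect sheafification on a topological space). Your version is more laborious --- the burden you correctly identify, namely that the {\bf qTop}-valued sheafification is computed by sheafifying in the $B_{top}$-direction alone and that this preserves the ${\bf Sing\Delta}$-sheaf condition, is real work, though your resolution via filtered colimits commuting with finite limits in {\bf Set} is the right one --- but it buys a "pointless" argument that would survive on base sites without enough points, and it exhibits the isomorphism $\widehat{\mathcal{F}}[S] \cong \widehat{\mathcal{F}[S]}$ by an explicit formula rather than abstractly. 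For the purposes of this paper, where $B$ is always a topological space, the stalk argument is the more economical choice.
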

\begin{proof}
For any point $x \in B$, we have an isomorphism $\mathcal{F}_x \cong \widehat{\mathcal{F}}_x$ of stalks.
This means that, for any closed subset $S$ of a simplex, we have an isomorphism $\mathcal{F}[S]_x \cong \widehat{\mathcal{F}}[S]_x$ of stalks, by Proposition \ref{PropStalk}.
Then, the sheaf $\widehat{\mathcal{F}}[S]$ is the sheafification of the presheaf $\mathcal{F}[S]$.
\end{proof}
\begin{prop} \label{PropSectionwiseStalkwise}
Let $f : \mathcal{F} \rightarrow \mathcal{G}$ be a morphism between continuous presheaves.
If $f$ is a sectionwise weak equivalence (i.e. $f_U : \mathcal{F}(U) \rightarrow \mathcal{G}(U)$ is a weak equivalence for any open set $U$), then $f$ is a stalkwise weak equivalence (i.e. $f_x : \mathcal{F}_x \rightarrow \mathcal{G}_x$ is a weak equivalence for any point $x$),
\end{prop}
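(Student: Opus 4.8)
The plan is to exploit the fact that a stalk is a filtered colimit of sections and to reduce the statement to the corresponding preservation property for weak equivalences of simplicial sets. For a point $x \in B$ the stalk is the filtered colimit $\mathcal{F}_x = \varinjlim_{U \ni x} \mathcal{F}(U)$ taken over the directed system of open neighbourhoods of $x$, and under this description $f_x$ is precisely the induced morphism $\varinjlim_{U} f_U$. Thus the proposition follows once I show that a filtered colimit of weak equivalences in {\bf qTop} is again a weak equivalence: applying this to the filtered system $\{ f_U : \mathcal{F}(U) \to \mathcal{G}(U) \}_{U \ni x}$, each member of which is a weak equivalence by hypothesis, yields that $f_x$ is one.

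To detect weak equivalences I would pass through the functor $G_0G_1 : {\bf qTop} \to {\bf sSet}$, using that $W_{\bf qTop} = (G_0G_1)^{-1}W_{\bf sSet}$ (which is exactly the content of the definitions $W_{\bf qTop} = G_1^{-1}W_{\bf pqTop}$ and $W_{\bf pqTop} = G_0^{-1}W_{\bf sSet}$), together with the fact that $G_0G_1(X)$ is always a Kan complex. The essential observation is that $G_0G_1$ preserves filtered colimits. Indeed, by Proposition \ref{PropFilter} a filtered colimit in {\bf qTop} is computed objectwise, so it agrees with the colimit formed in {\bf pqTop}; hence the forgetful functor $G_1$ preserves filtered colimits. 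The functor $G_0$ is restriction along the inclusion ${\bf \Delta} \hookrightarrow {\bf Sing\Delta}$ and is therefore computed objectwise, so it preserves all colimits. Consequently $G_0G_1(f_x)$ is identified with the filtered colimit $\varinjlim_{U} G_0G_1(f_U)$, a filtered colimit of weak equivalences between Kan complexes.

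It then remains to invoke the classical fact that a filtered colimit of weak equivalences between Kan complexes is a weak equivalence: a filtered colimit of Kan complexes is again a Kan complex (every horn $\Lambda^n_i$ is a finite simplicial set, so any horn and any chosen filler factor through a single stage), and each homotopy group $\pi_n$ commutes with the filtered colimit for the same finiteness reason, so that an isomorphism on homotopy groups at every stage produces an isomorphism in the colimit. Applying this to $G_0G_1(f_x)$ shows it lies in $W_{\bf sSet}$, whence $f_x \in W_{\bf qTop}$, as desired. The one step that genuinely requires the special geometry of our site, rather than being purely formal, is the claim that $G_1$ preserves filtered colimits; this rests entirely on Proposition \ref{PropFilter}, that is, on the coverings of {\bf Sing$\Delta$} being \emph{finite}, and it would fail for a general Grothendieck topology. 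I therefore expect this to be the only delicate point, the remaining steps being standard simplicial homotopy theory.
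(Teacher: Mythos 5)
Your proof is correct, but it takes a genuinely different route from the paper's. The paper never passes through ${\bf sSet}$: it characterises weak equivalences in ${\bf qTop}$ via the HELP Lemma (Proposition \ref{PropQTopHelp}), then uses the filtered-colimit trick of Section 1.1.4 (Proposition \ref{PropFilterLift}) to descend a given square $\partial\Delta^n \rightarrow \mathcal{F}_x$, $\Delta^n \rightarrow \mathcal{G}_x$ to a square over $f_U : \mathcal{F}(U) \rightarrow \mathcal{G}(U)$ for a small enough neighbourhood $U$ of $x$; since $f_U$ is a weak equivalence it satisfies HELP, and the resulting lift and homotopy push forward to the stalk, whence $f_x$ satisfies HELP and is a weak equivalence. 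You instead isolate the general statement that weak equivalences in ${\bf qTop}$ are closed under filtered colimits, proved by transporting along $G_0G_1$ --- which preserves filtered colimits by Proposition \ref{PropFilter} for $G_1$ and the objectwise computation for $G_0$ --- and then invoking the classical closure of weak equivalences between Kan complexes under filtered colimits in ${\bf sSet}$; your identification $W_{\bf qTop} = (G_0G_1)^{-1}W_{\bf sSet}$ and the Kan-ness of $G_0G_1(X)$ are both correct by the paper's definitions and Proposition \ref{PropQTopModel}. Both arguments hinge on the same geometric input, namely the finiteness of coverings in ${\bf Sing\Delta}$ (Proposition \ref{PropFilter}), which you rightly flag as the one delicate point. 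What your route buys is a shorter proof here and a reusable formal fact (closure of $W_{\bf qTop}$ under filtered colimits) that could serve elsewhere; what the paper's route buys is a template that still works when the section-level maps are \emph{not} themselves weak equivalences but only admit lifting data after shrinking $U$ --- exactly the situation in Lemma \ref{LemGromov1}, where the same HELP-plus-descent pattern is applied to $\Delta : \mathcal{F} \rightarrow \mathcal{F}^\ast$ even though the maps $\mathcal{F}(U) \rightarrow \mathcal{F}(U)^{\operatorname{Sing^q}(U)}$ are not weak equivalences, so your colimit argument would not directly apply there.
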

\begin{proof}
Suppose that $f$ is a sectionwise weak equivalence.
We will show that $f_x$ has the {\it right Homotopy Extension Lifting Property} with respect to $I_{\bf qTop}$
(cf.  Proposition \ref{PropQTopHelp}).
i.e. We will show that, for a following commutative square, there exists a morphism $\gamma$ in the following diagram such that the upper triangle in the diagram is commutative and that the lower triangle in the diagram is commutative up to homotopy fixing $\partial \Delta^n$:
	\[\xymatrix{
		\partial \Delta^n \ar[r] \ar[d]
		& \mathcal{F}_x \ar[d]^-{f_x}
	\\
		\Delta^n \ar[r] \ar@{.>}[ur]^-{\gamma}
		& \mathcal{G}_x.
}\]
By the discussion in Section 1.1.4, there exists a small enough open neighbourhood $U$ of $x$ such that the following diagram is commutative:
	\[\xymatrix{
		\partial \Delta^n \ar[r] \ar[d]
		& \mathcal{F}(U) \ar[d]^-{f_U}
	\\
		\Delta^n \ar[r]
		& \mathcal{G}(U).
}\]
$f_U$ has the {\it right Homotopy Extension Lifting Property} with respect to $I_{\bf qTop}$ by Proposition \ref{PropQTopHelp}.
Then, $f_x$ also has the {\it right Homotopy Extension Lifting Property} with respect to $I_{\bf qTop}$.
$f_x$ is a weak equivalence by Proposition \ref{PropQTopHelp}.
Therefore, $f$ is a stalkwise weak equivalence.
\end{proof}
	\subsection{Flexible sheaves}
Before defining a flexible sheaf, we prepare a notation.
\begin{defi} \label{DefCompactPairMorphism}
Let $B$ be a topological space.
Let $f : \mathcal{F} \rightarrow \mathcal{G}$ be a morphism between continuous sheaves on $B$.
For compact subsets $K$ and $L$ of $B$ with $K \subset L$, denote the following morphism $\mathcal{F}(L) \rightarrow \mathcal{F}(K) \times_{\mathcal{G}(K)} \mathcal{G}(L)$ by $f_{K \subset L}$:
	\[\xymatrix{
		\mathcal{F}(L) \ar@/_15pt/[ddr] \ar@/^15pt/[drr]^-{f_L} \ar[dr]^{f_{K \subset L}}
	\\
		& \mathcal{F}(K) \times_{\mathcal{G}(K)} \mathcal{G}(L) \ar[d] \ar[r]
		& \mathcal{G}(L) \ar[d]
	\\
		& \mathcal{F}(K) \ar[r]^-{f_K}
		& \mathcal{G}(K).
	}\]
\end{defi}
\begin{defi} \label{DefFlexible}
Let $B$ be a topological space.
A morphism $f : \mathcal{F} \rightarrow \mathcal{G}$ between continuous sheaves on $B$ is a {\it flexible extension} if, for any compact subsets $K$ and $L$ of $B$ with $K \subset L$, the induced morphism $f_{K \subset L}$ is a fibration.
A continuous sheaf $\mathcal{F}$ is {\it flexible} if the unique morphism $\mathcal{F} \rightarrow {\bf 1}$ is a flexible extension, where {\bf 1} is the terminal object.
\end{defi}
A continuous sheaf $\mathcal{F}$ is a flexible sheaf if and only if, for any compact subsets $K$ and $L$ of $B$ with $K \subset L$, the restriction morphism $\mathcal{F}(L) \rightarrow \mathcal{F}(K)$ is a fibration.
	\subsection{The sheaf of formal sections}
Recall that a {\it partial differential relation} is a subbundle of a jet bundle and that a {\it formal solution} of a partial differential relation $R$ is a continuous section of the fibre bundle $R$
(cf. \cite{eliashberg2002introduction, gromov2013partial}).
In this subsection, we define the {\it sheaf of formal sections} of a continuous sheaf.
\begin{defi}
For a continuous sheaf $\mathcal{F}$ on a topological space$B$, we define a continuous presheaf $\mathcal{F}^\square$ on $B$ as the following:
\[
\mathcal{F}^\square(U) := \mathcal{F}(U)^{\operatorname{Sing^q}(U)}.
\]
The {\it sheaf of formal sections} of $\mathcal{F}$ is the sheafification $\mathcal{F}^\ast$ of $\mathcal{F}^\square$. \par
The morphisms $\mathcal{F}(U) \rightarrow \mathcal{F}(U)^{\operatorname{Sing^q}(U)}$ induced the canonnical projections $\mathcal{F}(U) \times \operatorname{Sing^q}(U) \rightarrow \mathcal{F}(U)$ define a morphism $\mathcal{F} \rightarrow \mathcal{F}^\square$.
The {\it diagonal morphism} is the induced morphism $\Delta : \mathcal{F} \rightarrow \mathcal{F}^\ast$ by the above morphism $\mathcal{F} \rightarrow \mathcal{F}^\square$.
\end{defi}
\begin{rem}
Compared with a formal solution of a partial differential relation, the above definition may seem strange.
In fact, it is more appropriate to call the above $\mathcal{F}^\ast$ a {\it sheaf of fibrewise holonomic sections} (cf. \cite[page 24]{eliashberg2002introduction}).
\end{rem}
We can prove the claim {\bf (1)} of Lemma \ref{Gromov2}.
First, we define the properties that $B$ must satisfy.
\begin{defi}
A topological space $B$ is {\it strongly locally contractible} if any point $x \in B$ has an open neighbourhood basis $\mathcal{N}$ such that, for any open neighbourhood $U \in \mathcal{N}$, the inclusion map $\{ x \} \hookrightarrow U$ is a strong deformation retract.
\end{defi}
\begin{lem}[Gromov \cite{gromov2013partial}] \label{LemGromov1}
Let $B$ be a strongly locally contractible space.
For any continuous sheaf $\mathcal{F}$ on $B$, the diagonal morphism $\Delta : \mathcal{F} \rightarrow \mathcal{F}^\ast$ is a stakwise weak equivalence.
\end{lem}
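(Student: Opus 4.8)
The plan is to reduce the statement to a sectionwise claim over a contractible neighbourhood basis and then transport it to the stalk using the filtered-colimit machinery of Section 1.1.4. First I would note that $\mathcal{F}^\ast$ is the sheafification of $\mathcal{F}^\square$ and that sheafification induces an isomorphism on stalks (as used in the proof of Proposition \ref{PropShefification}); hence $\mathcal{F}^\ast_x \cong \mathcal{F}^\square_x$, and under this identification $\Delta_x$ is the stalk of the canonical morphism $\mathcal{F} \rightarrow \mathcal{F}^\square$. So it suffices to show that $\mathcal{F}_x \rightarrow \mathcal{F}^\square_x$ is a weak equivalence in {\bf qTop}.

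The core is the sectionwise claim: if the inclusion $\{x\} \hookrightarrow U$ is a strong deformation retract, then $\Delta_U : \mathcal{F}(U) \rightarrow \mathcal{F}(U)^{\operatorname{Sing^q}(U)}$ is a homotopy equivalence. Let $p : \operatorname{Sing^q}(U) \rightarrow {\bf 1}$ be the projection and $i_x : {\bf 1} \rightarrow \operatorname{Sing^q}(U)$ the inclusion of $x$. Applying the contravariant internal-hom functor $\mathcal{F}(U)^{(-)}$ gives $\Delta_U = \mathcal{F}(U)^p$ and a map $r_U := \mathcal{F}(U)^{i_x}$ with $r_U \circ \Delta_U = \mathcal{F}(U)^{p \circ i_x} = \operatorname{id}$, since $p \circ i_x = \operatorname{id}_{\bf 1}$. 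For the other composite, the strong deformation retraction is a continuous map $H : U \times [0,1] \rightarrow U$ from $\operatorname{id}_U$ to the constant map at $x$. Since $\operatorname{Sing^q} = G_2$ is a right adjoint it preserves products, and $\operatorname{Sing^q}([0,1]) \cong \Delta^1$, so $H$ yields a homotopy $K : \operatorname{Sing^q}(U) \times \Delta^1 \rightarrow \operatorname{Sing^q}(U)$ with $K_0 = \operatorname{id}$ and $K_1 = i_x \circ p$. Applying $\mathcal{F}(U)^{(-)}$ to $K$ and using Cartesian closedness of {\bf qTop} (Proposition \ref{PropQTopModel}) gives a morphism $\mathcal{F}(U)^{\operatorname{Sing^q}(U)} \rightarrow \mathcal{F}(U)^{\operatorname{Sing^q}(U) \times \Delta^1} \cong (\mathcal{F}(U)^{\operatorname{Sing^q}(U)})^{\Delta^1}$, i.e.\ a homotopy from $\mathcal{F}(U)^{K_0} = \operatorname{id}$ to $\mathcal{F}(U)^{K_1} = \Delta_U \circ r_U$. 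Thus $\Delta_U$ is a homotopy equivalence, hence a weak equivalence.

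Finally I would pass to the stalk. As $B$ is strongly locally contractible, $x$ has a neighbourhood basis $\mathcal{N}$ of such $U$, so $\mathcal{F}_x \cong \lim_{\longrightarrow} \mathcal{F}(U)$ and $\mathcal{F}^\square_x \cong \lim_{\longrightarrow} \mathcal{F}(U)^{\operatorname{Sing^q}(U)}$ are filtered colimits over $U \in \mathcal{N}$, and $\mathcal{F}_x \rightarrow \mathcal{F}^\square_x$ is the colimit of the weak equivalences $\Delta_U$. To conclude it is a weak equivalence I would verify the right HELP of Proposition \ref{PropQTopHelp}: given a test square against $\partial \Delta^n \hookrightarrow \Delta^n$, Proposition \ref{PropFilterLift} lifts both maps to a finite stage $U \in \mathcal{N}$ and, after passing to a smaller $V \in \mathcal{N}$, makes the square commute there; HELP for the weak equivalence $\Delta_V$ then supplies a lift together with a homotopy fixing $\partial \Delta^n$, which I push forward along $\mathcal{F}(V) \rightarrow \mathcal{F}_x$. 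This is exactly the template used in the proof of Proposition \ref{PropSectionwiseStalkwise}.

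I expect the main obstacle to be the middle step: faithfully turning the topological strong deformation retraction into a genuine {\bf qTop}-homotopy on the mapping object $\mathcal{F}(U)^{\operatorname{Sing^q}(U)}$, that is, exponentiating $H$ correctly through Cartesian closedness and checking both endpoints. The reduction through stalks and the filtered-colimit/HELP bookkeeping are routine given the tools already assembled in Section 1.
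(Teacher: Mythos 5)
Your proof is correct and follows essentially the same route as the paper's: the paper likewise identifies $\mathcal{F}^\ast_x \cong \mathcal{F}^\square_x$, exponentiates the strong deformation retraction of $U$ onto $\{x\}$ to a strong deformation retraction of $\mathcal{F}(U)^{\operatorname{Sing^q}(U)}$ onto the image of $\mathcal{F}(U)$ (your $\mathcal{F}(U)^K$ is the paper's $H_u$), and passes to the stalk via the Section 1.1.4 filtered-colimit trick together with the HELP criterion of Proposition \ref{PropQTopHelp}. The only difference is organizational: the paper inlines your sectionwise claim and colimit step into a single HELP verification for $\Delta_x$, taking the lift $\gamma := H_1 \circ \beta$ directly, which also avoids needing the (true, but unstated in the paper) auxiliary fact that a homotopy equivalence in {\bf qTop} is a weak equivalence.
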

\begin{proof}
We will show that $\Delta_x$ has the {\it right Homotopy Extension Lifting Property} with respect to $I_{\bf qTop}$
(cf.  Proposition \ref{PropQTopHelp}).
i.e. We will show that, for a following commutative square, there exists a morphism $\gamma$ in the following diagram such that the upper triangle in the diagram is commutative and that the lower triangle in the diagram is commutative up to homotopy fixing $\partial \Delta^n$:
	\[\xymatrix{
		\partial \Delta^n \ar[r] \ar[d]
		& \mathcal{F}_x \ar[d]^-{\Delta_x}
	\\
		\Delta^n \ar[r] \ar@{.>}[ur]^-{\gamma}
		& \mathcal{F}^\ast_x \cong \mathcal{F}^\square_x.
}\]
By the discussion in Section 1.1.4, there exists a small enough open neighbourhood $U$ of $x$ such that the following diagram is commutative:
	\[\xymatrix{
		\partial \Delta^n \ar[r]^-{\alpha} \ar[d]
		& \mathcal{F}(U) \ar[d]
	\\
		\Delta^n \ar[r]_-{\beta}
		& \mathcal{F}(U)^U.
}\]
We can assume that the inclusion map $\{ x \} \hookrightarrow U$ has a strong deformation retraction $p_u : U \rightarrow U$ because $B$ is strongly locally contractible.
$p_u$ induces a strong deformation retraction $H_u : \mathcal{F}(U)^U \rightarrow \mathcal{F}(U)^U$ of the morphism $\mathcal{F}(U) \rightarrow \mathcal{F}(U)^U$ in the above diagram.
Let $\gamma := H_1 \circ \beta : \Delta^n \rightarrow \mathcal{F}(U)$.
Then, the upper triangle is commutative, and the lower triangle is commutative up to homotopy $H_u \circ \beta$.
The homotopy $H_u \circ \beta$ fixes $\partial \Delta^n$.
Therefore, $\Delta_x$ has the {\it right Homotopy Extension Lifting Property} with respect to $I_{\bf qTop}$.
$\Delta_x$ is a weak equivalence by Proposition \ref{PropQTopHelp}.
\end{proof}
Let $B$ be a topological space, and let $\mathcal{F}$ be a continuous sheaf on $B$.
Let $\phi : S \times B \rightarrow T$ be a continuous map, where $S$ and $T$ are closed subsets of simplexes.
We will define a morphism $\phi^\ast : \mathcal{F}^\ast[T] \rightarrow \mathcal{F}^\ast[S]$ between sheaves of sets.
For any open subset $U$ of $B$, we define a map $\phi^\dag_U : \mathcal{F}^\square[T](U) \rightarrow \mathcal{F}^\square[S](U)$ as the following diagram:
	\[\xymatrix{
		\mathcal{F}^\square[T](U) \ar[r]^-{\phi^\dag_U}
		& \mathcal{F}^\square[S](U)
	\\
		{\bf qTop}(\operatorname{Sing^q}(T), \mathcal{F}(U)^{\operatorname{Sing^q}(U)}) \ar[u]_-{\cong}^-{\mbox{Yoneda iso.}}
		& {\bf qTop}(\operatorname{Sing^q}(S), \mathcal{F}(U)^{\operatorname{Sing^q}(U)}), \ar[u]^-{\cong}_-{\mbox{Yoneda iso.}}
	\\
		{\bf qTop}(\operatorname{Sing^q}(T \times U), \mathcal{F}(U)) \ar[u]_-{\cong}^-{\mbox{adjunction}} \ar[r]^-{\operatorname{Sing^q}(\hat{\phi})^\ast}
		& {\bf qTop}(\operatorname{Sing^q}(S \times U), \mathcal{F}(U)), \ar[u]^-{\cong}_-{\mbox{adjunction}}
	}\]
where $\hat{\phi} : S \times U \rightarrow T \times U$ is a continuous map defined as $\hat{\phi}(p, x) := (\phi(p, x), x)$.
These maps $\phi^\dag_U$ define a morphism $\phi^\dag : \mathcal{F}^\square[T] \rightarrow \mathcal{F}^\square[S]$ between presheaves of sets.
This morphism $\phi^\dag$ induces a morphism $\phi^\ast : \mathcal{F}^\ast[T] \rightarrow \mathcal{F}^\ast[S]$ between sheaves of sets
(cf. Proposition \ref{PropShefification}). \par
For any open subset $U$ of $B$, the map $\phi^\ast_U : \mathcal{F}^\ast[T](U) \rightarrow \mathcal{F}^\ast[S](U)$ induces the map ${\bf qTop}(\operatorname{Sing^q}(T), \mathcal{F}^\ast(U)) \rightarrow {\bf qTop}(\operatorname{Sing^q}(S), \mathcal{F}^\ast(U))$ as the following diagram:
	\[\xymatrix{
		\mathcal{F}^\ast[T](U) \ar[r]^-{\phi^\ast_U}
		& \mathcal{F}^\ast[S](U)
	\\
		{\bf qTop}(\operatorname{Sing^q}(T), \mathcal{F}^\ast(U)) \ar[u]_-{\cong}^-{\mbox{Yoneda iso.}} \ar[r]
		& {\bf qTop}(\operatorname{Sing^q}(S), \mathcal{F}^\ast(U)). \ar[u]^-{\cong}_-{\mbox{Yoneda iso.}}
	}\]
We denote this map by the same symbol, $\phi^\ast_U$.
	\section{Result 1 : Embedding to a model category}
The goal of this section is to prove the following theorem.
\begin{thm} \label{Thm1}
There exists a right proper model category ${\bf PSh}_\ast(\tilde{\mathscr{U}}_B; {\bf qTop})$ and a fully faithful and left exact embedding $\iota : {\bf Sh}(B; {\bf qTop}) \rightarrow {\bf PSh}_\ast(\tilde{\mathscr{U}}_B; {\bf qTop})$ such that the followings hold. \par
Let $f : \mathcal{F} \rightarrow \mathcal{G}$ be a morphism in ${\bf Sh}(B; {\bf qTop})$.
\begin{itemize}
\item $\iota(f)$ is a weak equivalence \\
$\Leftrightarrow$ $f$ is a stalkwise weak equivalence
(i.e. $f_x : \mathcal{F}_x \rightarrow \mathcal{G}_x$ is a weak equivalence for any $x \in B$).
\item $\iota(f)$ is a fibration \\
$\Leftrightarrow$ $f$ is a flexible extension
(cf. Definition \ref{DefFlexible}).
\end{itemize}
In particular, for a continuous sheaf $\mathcal{F}$ on $B$, 
\begin{itemize}
\item $\iota(\mathcal{F})$ is fibrant \\
$\Leftrightarrow$ $\mathcal{F}$ is flexible
(cf. Definition \ref{DefFlexible}).
\end{itemize}
\end{thm}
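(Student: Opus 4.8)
The plan is to build the model category $\mathbf{PSh}_\ast(\tilde{\mathscr{U}}_B; \mathbf{qTop})$ as a category of presheaves on a suitable indexing category $\tilde{\mathscr{U}}_B$ whose objects encode pairs of compact sets $K \subset L$, equipped with an \emph{injective}-type model structure transferred from the objectwise model structure on $\mathbf{qTop}$ (which we have from Proposition~\ref{PropQTopModel}). The key design choice is that the indexing category and the notion of fibration should be engineered so that fibrations of presheaves restrict exactly to the matching-object conditions $f_{K \subset L}$ being fibrations in $\mathbf{qTop}$, in the spirit of a Reedy or a projective/injective structure. I would first define $\iota$ by sending a continuous sheaf $\mathcal{F}$ to the presheaf whose value records all the quasitopological spaces $\mathcal{F}(K)$ for compact $K$ (via $\mathcal{F}(K) = \varinjlim_{U \supset K}\mathcal{F}(U)$) together with the restriction maps; full faithfulness and left exactness of $\iota$ should then follow from the sheaf condition together with the filtered-colimit computation of $\mathcal{F}(K)$ in Proposition~\ref{PropFilter}, since limits in both categories are computed objectwise.

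\textbf{Key steps in order.} First I would fix the site/indexing category $\tilde{\mathscr{U}}_B$ precisely and verify that $\iota$ is well-defined, fully faithful, and preserves finite limits; this is essentially bookkeeping once the right diagram shape is chosen. Second, I would establish the model structure on $\mathbf{PSh}_\ast(\tilde{\mathscr{U}}_B; \mathbf{qTop})$: because $\mathbf{qTop}$ is cofibrantly generated and every object is fibrant, I expect to transfer a cofibrantly generated (projective-type) model structure to the presheaf category, with weak equivalences and fibrations defined by a generating family indexed over the morphisms $K \subset L$. Right properness of the target should be inherited from right properness of $\mathbf{qTop}$ (Proposition~\ref{PropQTopModel}) via the objectwise description of the relevant pullbacks. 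Third, I would identify the weak equivalences of $\iota(f)$ with stalkwise weak equivalences of $f$: the crucial input is Proposition~\ref{PropSectionwiseStalkwise} together with the HELP characterization (Proposition~\ref{PropQTopHelp}), using the filtered-colimit lifting of Proposition~\ref{PropFilterLift} to pass between the compact-set values $\mathcal{F}(K)$ and the stalks $\mathcal{F}_x$. Fourth, I would match fibrations: by the chosen matching-object description of fibrations in the transferred structure, $\iota(f)$ is a fibration exactly when each induced map to the matching object is a fibration in $\mathbf{qTop}$, and I would show this matching object is precisely $\mathcal{F}(K)\times_{\mathcal{G}(K)}\mathcal{G}(L)$, so that the condition becomes $f_{K\subset L}$ being a fibration, i.e.\ $f$ is a flexible extension in the sense of Definition~\ref{DefFlexible}. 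The fibrant case follows by taking $\mathcal{G} = \mathbf{1}$.

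\textbf{Main obstacle.} The hard part will be choosing the indexing category $\tilde{\mathscr{U}}_B$ and the associated matching-object construction so that the single objectwise condition ``$f_{K\subset L}$ is a fibration for all $K \subset L$'' is \emph{exactly} the fibration condition of a genuine model structure — neither too weak nor too strong. Getting the matching object to come out as the fibre product $\mathcal{F}(K)\times_{\mathcal{G}(K)}\mathcal{G}(L)$, rather than some larger limit over a more complicated poset of nested compacts, is the delicate point; this likely forces $\tilde{\mathscr{U}}_B$ to have morphisms only between pairs $K\subset L$ and their single-set degeneracies, so that latching/matching reduces to the two-set case. I would also need care in verifying that the transferred acyclic cofibrations generate the correct weak equivalences, since $\mathbf{qTop}$-weak equivalences are detected homotopically rather than set-theoretically; here the smallness results (Propositions~\ref{PropFilter} and~\ref{PropFilterLift}) guaranteeing the small object argument applies to compact-set values are essential, and the compatibility of filtered colimits with the sheaf condition is what makes $\mathcal{F}(K)$ behave well. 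Once the matching object is correctly identified, the equivalences of conditions should follow formally from the general theory of transferred model structures together with the homotopy-theoretic lemmas already established in Sections~1 and~2.
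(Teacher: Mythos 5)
Your construction of $\iota$ and the identification of fibrations via generating maps indexed by pairs $K \subset L$ is close to the paper's route (the paper uses presheaves $X_{!K}$ and explicit generating sets $I_{cpt}, J_{cpt}$ rather than Reedy matching objects, and its $\tilde{\mathscr{U}}_B$ consists of the sets $U \cap K$ with an augmentation condition $\mathcal{F}(\emptyset) = \mathbf{1}$, which is what makes $f_{\emptyset \subset K} \cong f_K$ in Proposition \ref{PropAugmented}). But there is a genuine gap at the heart of your plan: you want a \emph{single} transferred (projective/Reedy-type) model structure whose fibrations are the flexible extensions \emph{and} whose weak equivalences are the stalkwise ones. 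No such transfer exists, and this is not a technical nuisance but the main difficulty of the theorem. In any structure transferred along an adjunction $F \dashv G$, the weak equivalences are $G^{-1}(W)$, i.e.\ detected objectwise on whatever values the right adjoint records. If $G$ records all compact-set values $\mathcal{F}(K)$ (as it must, for the lifting problems against your generating family to produce the conditions ``$f_{K \subset L}$ is a fibration''), then the resulting weak equivalences are those $f$ with $f_K$ a weak equivalence for \emph{every} compact $K$ --- the class $W_{cpt}$ of Theorem \ref{ThmCompactFlexible} --- which is strictly stronger than stalkwise weak equivalence. The h-principle itself shows this gap is real: $\Delta : \mathcal{F} \rightarrow \mathcal{F}^\ast$ is always a stalkwise weak equivalence (Lemma \ref{LemGromov1}), but it is a sectionwise one only in the presence of flexibility (Lemma \ref{Gromov3}). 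If instead $G$ records only the stalks, the fibrations come out as pointwise fibrations, not flexible extensions. So a single transfer gives you either the right fibrations with too strong a class of weak equivalences, or the right weak equivalences with the wrong fibrations --- never both; and localizing at the stalkwise equivalences would shrink the class of fibrations, again destroying the desired description.

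The paper resolves this by constructing \emph{both} structures and then combining them: the compact flexible model structure (Theorem \ref{ThmCompactFlexible}, via the Recognition Theorem, with weak equivalences $W_{cpt}$ and fibrations the flexible extensions) and the pointwise projective model structure (Theorem \ref{ThmPointwiseProjective}, transferred from $\prod_{x \in B}\mathbf{qTop}$, with stalkwise weak equivalences and pointwise fibrations), which are then mixed using Cole's theorem (Theorem \ref{ThmMix}): since $W_{cpt} \subset W_{pt}$ and every flexible extension is a pointwise fibration (via Proposition \ref{PropAugmented}), there is a mixed model structure whose weak equivalences are $W_{pt}$ and whose fibrations are exactly the flexible extensions (Theorem \ref{ThmPointwiseFlexible}). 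Right properness also comes from the mixing (Proposition \ref{PropMixProper}, applied to the pointwise projective structure, where every object is fibrant because every quasitopological space is fibrant), not from an objectwise pullback argument inside your single structure. Your proposal contains the raw ingredients for both constituent structures, but without the mixing step --- or some argument replacing it --- the central claim that one transferred structure realizes the stated combination of weak equivalences and fibrations fails.
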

\begin{proof}
This follows immediately from Proposition \ref{PropEmbedding} and Theorem \ref{ThmPointwiseFlexible}.
\end{proof}
As a first step, we want to characterise flexible extensions by the right lifting property.
First, suppose that, for any compact subset $K$ of $B$, the inverse functor $i^\ast : {\bf Sh}(K; {\bf qTop}) \rightarrow {\bf Sh}(B; {\bf qTop})$ had a left adjoint functor $i_! : {\bf Sh}(B; {\bf qTop}) \rightarrow {\bf Sh}(K; {\bf qTop})$.
The unique map $\mu : K \rightarrow {\bf 1}$ induces an adjunction $\mu^\ast \dashv \mu_\ast : {\bf qTop} \rightarrow {\bf Sh}(K; {\bf qTop})$.
We obtain an adjunction $i_!\mu^\ast \dashv \mu_\ast i^\ast : {\bf qTop} \rightarrow {\bf Sh}(B; {\bf qTop})$.
For a quasitopological space $X$, we define a continuous sheaf $X_{!K}$ as $i_!\mu^\ast(X)$.
For a continuous sheaf $\mathcal{F}$, we have the equality $\mu_\ast i^\ast(\mathcal{F}) = \mathcal{F}(K)$.
Then, for any morphism $f : \mathcal{F} \rightarrow \mathcal{G}$ between continuous sheaves and any compact subsets $K$ and $L$ of $B$ with $K \subset L$, the following conditions are equivalent.
\begin{itemize}
\item The following square has a lift:
	\[\xymatrix{
		\Lambda^n_i \ar[r] \ar[d]
		& \mathcal{F}(L) \ar[d]^-{f_{K \subset L}}
	\\
		\Delta^n \ar[r]
		& \mathcal{F}(K) \times_{\mathcal{G}(K)} \mathcal{G}(L).
	}\]
\item The following square has a lift:
	\[\xymatrix{
		(\Lambda^n_i)_{!L} \cup_{(\Lambda^n_i)_{!K}} \Delta^n_{!K} \ar[r] \ar[d]
		& \mathcal{F} \ar[d]^-{f}
	\\
		\Delta^n_{!L} \ar[r]
		& \mathcal{G}.
	}\]
\end{itemize}
Then, a morphism $f$ between continuous sheaves is a flexible extension if and only if it has the reight lifting property with respect to $(\Lambda^n_i)_{!L} \cup_{(\Lambda^n_i)_{!K}} \Delta^n_{!K} \rightarrow \Delta^n_{!L}$.
We have characterized flexible extensions by the right lifting property in this situation. \par
However, the supposition of the existence of a left adjoint functor $i_!$ is too strong.
To avoid this problem, we define a category $\tilde{\mathscr{U}}_B$ (cf. Section 3.1) by extending the site associated with the topological space $B$, and consider the category of presheaves on $\tilde{\mathscr{U}}_B$.
Furthermore, for a technical reason (cf. Proposition \ref{PropAugmented}), we require the condition $\mathcal{F}(\emptyset) = {\bf 1}$ on a presheaf.
We call such a presheaf an {\it augmented presheaf}
(cf. Definition \ref{DefAugmented}).
${\bf PSh}_\ast(\tilde{\mathscr{U}}_B; {\bf qTop})$ is the category of augmented presheaves on $\tilde{\mathscr{U}}_B$.
\vspace{10pt}\\
The next step is to define a model structure $\mathcal{M}$ on ${\bf PSh}_\ast(\tilde{\mathscr{U}}_B; {\bf qTop})$, satisfying the following two conditions.
\begin{description}
\item[(W)] $f$ is a weak equivalence \\
$\Leftrightarrow$ $f$ is a stalkwise weak equivalence.
\item[(F)] $f$ is a fibration \\
$\Leftrightarrow$ $f$ is a flexible extension.
\end{description}
To construct $\mathcal{M}$, we define two model structures, $\mathcal{M}_1$ and $\mathcal{M}_2$.
$\mathcal{M}_1$ satisfies the above condition {\bf (F)}.
$\mathcal{M}_2$ satisfies the above condition {\bf (W)}.
These are mixed to make $\mathcal{M}$.
We call the model structure $\mathcal{M}_1$ the {\it compact flexible model structure}
(cf. Section 3.2).
We call the model structure $\mathcal{M}_2$ the {\it pointwise projective model structure}
(cf. Section 3.3).
We call the model structure $\mathcal{M}$ the {\it pointwise flexible model structure}
(cf. Section 3.4).
	\subsection{The base category}
We define a set $\tilde{\mathscr{U}}_B$ as the following:
\[
\tilde{\mathscr{U}}_B := \{ U \cap K \, | \, \mbox{$U (\subset B)$ is open, and $K (\subset B)$ is compact.} \}.
\]
$\tilde{\mathscr{U}}_B$ is an ordered set.
Then, $\tilde{\mathscr{U}}_B$ is a small category. \par
We define an augmented presheaf.
\begin{defi} \label{DefAugmented}
A presheaf $\mathcal{F}$ is {\it augmented} if $\mathcal{F}(\emptyset) = {\bf 1}$, where $\emptyset$ is the initial object and {\bf 1} is the terminal object.
\end{defi}
Grothendieck's original definition of presheaf is a contravariant functor from the set of {\it non-empty} open sets to a category
(cf. \cite{grothendieck1957quelques}).
A presheaf $\mathcal{F}$ in Grothendieck's sense is regarded as an augmented presheaf by defining $\mathcal{F}(\emptyset) := {\bf 1}$.
(Compare the definition of the {\it augmented simplicial set}.) \par
The advantage of a presheaf being augmented is the following property.
\begin{prop} \label{PropAugmented}
Let $f : \mathcal{F} \rightarrow \mathcal{G}$ be a morphism between continuous sheaves on $B$.
If $\mathcal{F}$ and $\mathcal{G}$ are augmented, then $f_{\emptyset \subset K}$ is isomorphic to $f_K : \mathcal{F}(K) \rightarrow \mathcal{G}(K)$ for any compact subset $K$ of $B$.
\end{prop}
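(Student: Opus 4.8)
The plan is to unwind the definition of $f_{\emptyset \subset K}$ and then exploit the fact that forming a fibre product over a terminal object along a map out of a terminal object changes nothing. By Definition \ref{DefCompactPairMorphism} applied to the pair $\emptyset \subset K$, the morphism $f_{\emptyset \subset K}$ is the canonical comparison map
\[
\mathcal{F}(K) \longrightarrow \mathcal{F}(\emptyset) \times_{\mathcal{G}(\emptyset)} \mathcal{G}(K),
\]
characterised by the property that its composite with the projection to $\mathcal{G}(K)$ equals $f_K$, and its composite with the projection to $\mathcal{F}(\emptyset)$ equals the restriction $\mathcal{F}(K) \rightarrow \mathcal{F}(\emptyset)$.

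First I would invoke the augmentation hypothesis. Since $\mathcal{F}$ and $\mathcal{G}$ are augmented, we have $\mathcal{F}(\emptyset) = {\bf 1}$ and $\mathcal{G}(\emptyset) = {\bf 1}$, so the fibre product above becomes ${\bf 1} \times_{{\bf 1}} \mathcal{G}(K)$. Next I would record the elementary categorical fact that, in any category with finite limits, the pullback of the cospan ${\bf 1} \rightarrow {\bf 1} \leftarrow X$ is the product ${\bf 1} \times X$, and that the projection ${\bf 1} \times_{{\bf 1}} X \rightarrow X$ is an isomorphism because ${\bf 1}$ is terminal. This applies in {\bf qTop} since that category is complete, so the relevant limits exist and are computed as usual. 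Taking $X = \mathcal{G}(K)$ yields a canonical isomorphism ${\bf 1} \times_{{\bf 1}} \mathcal{G}(K) \cong \mathcal{G}(K)$ realised precisely by the projection to $\mathcal{G}(K)$.

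Finally, I would check that this isomorphism of codomains intertwines $f_{\emptyset \subset K}$ with $f_K$. Because the isomorphism is realised by the projection to $\mathcal{G}(K)$, and because the defining property of $f_{\emptyset \subset K}$ says that its composite with this projection is exactly $f_K$, the morphism $f_{\emptyset \subset K}$ is carried onto $f_K$. As the two morphisms share the domain $\mathcal{F}(K)$ and the identification of their codomains commutes with them, $f_{\emptyset \subset K}$ and $f_K$ are isomorphic in the arrow category (with the identity on the domain).

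I expect no serious obstacle: the statement is purely formal, resting only on the universal property of the pullback together with the terminality of ${\bf 1}$ supplied by augmentation. The single point deserving a moment's care is to phrase the conclusion as an isomorphism of \emph{morphisms} — objects of the arrow category — rather than merely an isomorphism of their codomains, but this follows immediately from the compatibility just described.
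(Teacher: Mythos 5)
Your proof is correct and is exactly the argument the paper leaves implicit: the paper's proof is just ``It is obvious by definition of $f_{\emptyset \subset K}$,'' and your unwinding — augmentation gives $\mathcal{F}(\emptyset) = \mathcal{G}(\emptyset) = {\bf 1}$, the pullback ${\bf 1} \times_{{\bf 1}} \mathcal{G}(K) \cong \mathcal{G}(K)$ via projection, and the defining property $\mathrm{pr}_{\mathcal{G}(K)} \circ f_{\emptyset \subset K} = f_K$ gives the isomorphism in the arrow category with identity on the domain — is precisely what that claim of obviousness rests on.
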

\begin{proof}
It is obvious by definition of $f_{\emptyset \subset K}$
(cf. Definition \ref{DefCompactPairMorphism}).
\end{proof}
${\bf PSh}_\ast(\tilde{\mathscr{U}}_B; {\bf qTop})$ is the category of augmented presheaves with value in {\bf qTop} on $\tilde{\mathscr{U}}_B$.
We define a functor $\iota : {\bf Sh}(B; {\bf qTop}) \rightarrow {\bf PSh}_\ast(\tilde{\mathscr{U}}_B; {\bf qTop})$ as the following:
\[
\iota(\mathcal{F})(A) := \displaystyle\lim_{\substack{\longrightarrow \\ U \supset A}} \mathcal{F}(U).
\]
\begin{prop} \label{PropEmbedding}
The above functor $\iota$ is fully faithful and left exact.
\end{prop}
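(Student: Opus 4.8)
The plan is to establish left exactness and full faithfulness separately, in each case reducing the {\bf qTop}-valued assertion to one about ordinary sheaves of sets by evaluating at closed subsets $S$ of simplexes. The mechanism is that the colimit defining $\iota$ at any object $A \in \tilde{\mathscr{U}}_B$ is filtered, since the open sets $U \supseteq A$ are directed under reverse inclusion; hence by Proposition \ref{PropFilter} it is computed $[S]$-wise, $(\iota\mathcal{F}(A))[S] \cong \varinjlim_{U \supseteq A}\mathcal{F}[S](U)$, and together with Proposition \ref{PropStalk} this lets me pass freely between $\iota$ and the associated sheaves of sets $\mathcal{F}[S]$ on $B$. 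I also record the two computations of stalk-type objects I will lean on: every singleton $\{x\}$ lies in $\tilde{\mathscr{U}}_B$ and the neighbourhoods of $x$ are cofinal in $\{U \supseteq \{x\}\}$, so $\iota\mathcal{F}(\{x\}) = \mathcal{F}_x$; and $\emptyset$ is terminal in $\{U \supseteq \emptyset\}$, so $\iota\mathcal{F}(\emptyset) = \mathcal{F}(\emptyset) = {\bf 1}$ by the sheaf condition on the empty cover, whence $\iota\mathcal{F}$ is genuinely augmented.

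For left exactness I note that finite limits are computed sectionwise in ${\bf Sh}(B; {\bf qTop})$ and objectwise in ${\bf PSh}_\ast(\tilde{\mathscr{U}}_B; {\bf qTop})$, the latter because augmented presheaves are closed under finite limits (at $\emptyset$ one only ever forms a limit of copies of ${\bf 1}$). It therefore suffices to fix $A$ and show $\mathcal{F} \mapsto \iota\mathcal{F}(A)$ preserves finite limits. This functor is the filtered colimit over $U \supseteq A$ of the evaluation functors $\mathcal{F} \mapsto \mathcal{F}(U)$, each of which preserves all limits; evaluating at $S$ and invoking Proposition \ref{PropFilter}, the claim becomes the commutation of filtered colimits with finite limits in {\bf Set}, which is standard.

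Faithfulness is immediate from the stalk computation above: $\iota(f)_{\{x\}} = f_x$, and since each $\mathcal{G}[S]$ is a separated sheaf of sets a morphism is determined by its germs, so $f$ is recovered from its stalks. For fullness, given $g : \iota\mathcal{F} \to \iota\mathcal{G}$, the components $g_{\{x\}} : \mathcal{F}_x \to \mathcal{G}_x$ are the intended stalks of the sought morphism $f$; for a section $s \in \mathcal{F}(U)[S]$ I define $f_U(s) \in \mathcal{G}(U)[S]$ to be the section (unique by separatedness) whose germ at each $x \in U$ is $g_{\{x\}}(s_x)$. To see that such a section exists I use the compact objects of $\tilde{\mathscr{U}}_B$: for a compact $K \subseteq U$ the element $g_K(s|_K) \in \mathcal{G}(K)[S] = \varinjlim_{W \supseteq K}\mathcal{G}[S](W)$ is represented by an honest section on an open neighbourhood $W$ of $K$, and naturality of $g$ along the inclusions $\{y\} \subseteq K$ forces that representative to have germ $g_{\{y\}}(s_y)$ at every $y \in K$. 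This exhibits the prescribed germ family as locally sectional, produces $f_U(s)$, and the relations $\iota(f) = g$ and the naturality of $f$ then follow by reading both sides off on stalks and on the defining colimits, using Proposition \ref{PropShefification} to handle the passage through the sheafification implicit in $\mathcal{F}^\ast$-type constructions.

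The step I expect to be the crux is precisely this gluing inside the fullness argument: converting the pointwise germ family $(g_{\{x\}}(s_x))_{x \in U}$ into an actual section over a neighbourhood. This is the one place where the topology of $B$ and the enlargement of the site to include compact sets do genuine work rather than pure category theory, and it is the structural reason $\tilde{\mathscr{U}}_B$ is built from intersections $U \cap K$; everything else reduces, via Propositions \ref{PropFilter}, \ref{PropStalk}, and \ref{PropShefification}, to the commutation of filtered colimits with finite limits and to separatedness of sheaves of sets.
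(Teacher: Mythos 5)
Your left-exactness and faithfulness arguments are sound and run close to the paper's: left exactness is exactly the commutation of filtered colimits with finite limits computed objectwise (Proposition \ref{PropFilter}), and faithfulness follows from $\iota(f)_{\{x\}}=f_x$ together with separatedness of the sheaves of sets $\mathcal{G}[S]$. The genuine gap is in fullness, and it sits precisely at the step you yourself flag as the crux. From naturality of $g$ along the inclusions $\{y\}\subseteq K$ you correctly conclude that a representative $t\in\mathcal{G}[S](W)$ of $g_K(s|_K)$, defined on some open $W\supseteq K$, has germ $g_{\{y\}}(s_y)$ at every point $y$ \emph{of $K$}. But ``locally sectional'' demands an open set $V$ and a section over $V$ whose germs are prescribed at every point \emph{of $V$}; your argument controls the germs of $t$ only on $K$ and says nothing about its germs on $W\setminus K$. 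The inference goes through only if $K$ can be chosen to be a neighbourhood of the point in question, i.e.\ only if $B$ is locally compact, in which case $t|_{\operatorname{int}K}$ is the required local section. The proposition, however, is stated for an arbitrary topological space $B$: for a space such as $B=\mathbb{Q}$ every compact subset has empty interior, so your construction never produces a single open set on which the germ family is sectional, and the gluing step cannot start. As written, the fullness proof is therefore invalid without an unstated local-compactness hypothesis.

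The paper's own proof never passes through stalks or gluing. It factors $\iota=\iota_2\circ\iota_1$, where $\iota_1$ is the (fully faithful, left exact) inclusion of sheaves into augmented presheaves on the opens of $B$, and $\iota_2$ is the colimit extension $\iota_2(\mathcal{F})(A)=\varinjlim_{U\supseteq A}\mathcal{F}(U)$ to the enlarged site; it then exhibits a right adjoint $\nu$ of $\iota_2$, given by restricting back to open sets, and observes that the unit of $\iota_2\dashv\nu$ is a natural isomorphism, because $U$ is terminal among the opens containing $U$, so that $\varinjlim_{V\supseteq U}\mathcal{F}(V)=\mathcal{F}(U)$. A left adjoint with invertible unit is fully faithful, and concretely the preimage of $g:\iota\mathcal{F}\rightarrow\iota\mathcal{G}$ is obtained by evaluating $g$ at opens, $f_U:=g_U$, with no reconstruction from germs at all; left exactness is then the same filtered-colimit argument you give. (The paper is admittedly loose about whether every open set is an object of the enlarged site --- literally this holds only for opens contained in a compact --- but that evaluation-at-opens mechanism is what replaces your gluing, and it is insensitive to the local structure of $B$. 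Your stalkwise route is exactly the place where the point-set topology of $B$ intervenes, and it does so fatally in the non-locally-compact case.)
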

\begin{proof}
Let $\iota_1 : {\bf Sh}(B; {\bf qTop}) \rightarrow {\bf PSh}_\ast(B; {\bf qTop})$ be the forgetful functor, and let $\iota_2 : {\bf PSh}_\ast(B; {\bf qTop}) \rightarrow {\bf PSh}_\ast(B_{cpt}; {\bf qTop})$ be a functor defined as the following:
\[
\iota_2(\mathcal{F})(A) := \displaystyle\lim_{\substack{\longrightarrow \\ U \supset A}} \mathcal{F}(U).
\]
We have the equality $\iota = \iota_2 \circ \iota_1$.
$\iota_1$ is fully faithful and left exact.
We will show that $\iota_2$ is fully faithful and left exact.
We define a functor $\nu : {\bf PSh}_\ast(B_{cpt}; {\bf qTop}) \rightarrow {\bf PSh}_\ast(B; {\bf qTop})$ as the following:
\[
\nu(\mathcal{F})(U) := \mathcal{F}(U).
\]
Then, there exists an adjunction $\iota_2 \dashv \nu$.
The unit of $\iota_2 \dashv \nu$ is a natural isomorphism.
Then, $\iota_2$ is fully faithful.
Any finite limit and any filtered colimit are commutative in {\bf qTop} by the objectwise calculation
(cf. Proposition \ref{PropFilter}).
Then, $\iota_2$ is left exact.
Therefore, $\iota (= \iota_2 \circ \iota_1)$ is fully faithful and left exact.
\end{proof}
	\subsection{The compact flexible model structure}
In this subsection, we will prove the following theorem.
\begin{thm}[the compact flexible model structure] \label{ThmCompactFlexible}
The category ${\bf PSh}_\ast(\tilde{\mathscr{U}}_B; {\bf qTop})$ has a model structure satisfying the followings:
\begin{itemize}
\item $f$ is a weak equivalence \\
$\Leftrightarrow$ $f_K : \mathcal{F}(K) \rightarrow \mathcal{G}(K)$ is a weak equivalence for any compact subset $K$ of $B$.
\item $f$ is a fibration \\
$\Leftrightarrow$ $f$ is a flexible extension
(cf. Definition \ref{DefFlexible}).
\end{itemize}
\end{thm}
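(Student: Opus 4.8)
The plan is to realize this as a cofibrantly generated model structure and invoke the standard recognition theorem for cofibrantly generated model categories (the criterion behind Theorem \ref{ThmTransfer}). I would declare $W$ to be the class of \emph{compactwise weak equivalences} ($f\in W$ iff $f_K\colon\mathcal{F}(K)\to\mathcal{G}(K)$ is a weak equivalence in $\mathbf{qTop}$ for every compact $K\subset B$); the two-out-of-three property and closure under retracts are inherited objectwise from $\mathbf{qTop}$. For the generating sets I would take the pushout-products of the generators of $\mathbf{qTop}$ with the representable inclusions indexed by compact pairs $K\subset L$: writing $(X)_{!K}$ for the left adjoint of $\mathrm{ev}_K$ (the augmentation of the free presheaf $X\cdot\tilde{\mathscr{U}}_B(-,K)$), set
\[
I := \{\, (\partial\Delta^n)_{!L}\cup_{(\partial\Delta^n)_{!K}}(\Delta^n)_{!K}\to(\Delta^n)_{!L} \mid n,\ K\subset L \,\}
\]
and
\[
J := \{\, (\Lambda^n_i)_{!L}\cup_{(\Lambda^n_i)_{!K}}(\Delta^n)_{!K}\to(\Delta^n)_{!L} \mid n,\ i,\ K\subset L \,\},
\]
precisely the maps foreshadowed before Section 3.1.

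The first key step is to identify the lifting properties. Because $\mathbf{PSh}_\ast(\tilde{\mathscr{U}}_B;\mathbf{qTop})$ is tensored and cotensored over the Cartesian closed category $\mathbf{qTop}$ (Proposition \ref{PropQTopModel}), the two-variable adjunction shows that $f$ has the right lifting property against the pushout-product of a map $u$ of $\mathbf{qTop}$ with $(-)_{!K}\to(-)_{!L}$ if and only if the corresponding pullback-hom lifts against $u$; and by the defining adjunction $(-)_{!K}\dashv\mathrm{ev}_K$ (enriched Yoneda, $\mathbf{qTop}(X,\mathcal{F}(K))\cong\mathbf{PSh}_\ast((X)_{!K},\mathcal{F})$) this pullback-hom is exactly $f_{K\subset L}\colon\mathcal{F}(L)\to\mathcal{F}(K)\times_{\mathcal{G}(K)}\mathcal{G}(L)$. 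Letting $u$ range over $J_{\mathbf{qTop}}$ yields: $f\in J\text{-}\mathrm{inj}$ iff each $f_{K\subset L}$ is a fibration, i.e.\ $f$ is a flexible extension. Letting $u$ range over $I_{\mathbf{qTop}}$ yields: $f\in I\text{-}\mathrm{inj}$ iff each $f_{K\subset L}$ is an acyclic fibration. This pins down the intended fibrations and acyclic fibrations.

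Next I would verify the two compatibility conditions. For $I\text{-}\mathrm{inj}=W\cap J\text{-}\mathrm{inj}$: if every $f_{K\subset L}$ is an acyclic fibration then, taking $K=\emptyset$ and using Proposition \ref{PropAugmented} ($f_{\emptyset\subset L}\cong f_L$), each $f_L$ is an acyclic fibration, so $f\in W$; conversely if $f$ is a flexible extension in $W$, then $f_K\cong f_{\emptyset\subset K}$ is a fibration and a weak equivalence, hence an acyclic fibration, so its base change $\mathcal{F}(K)\times_{\mathcal{G}(K)}\mathcal{G}(L)\to\mathcal{G}(L)$ is an acyclic fibration, and two-out-of-three applied to the factorization of $f_L$ through it forces $f_{K\subset L}\in W$, whence acyclic. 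For the acyclicity condition $J\text{-}\mathrm{cell}\subset W$: each $\mathrm{ev}_K$ with $K\neq\emptyset$ preserves colimits (colimits of augmented presheaves differ from pointwise ones only at $\emptyset$), so it suffices to evaluate a generator of $J$ at $K$. Since $\tilde{\mathscr{U}}_B(K,-)$ is either a point or empty, each generator evaluates either to an isomorphism (when $K\subset K'$, or $K\not\subset L'$) or to the map $\Lambda^n_i\hookrightarrow\Delta^n$ of $J_{\mathbf{qTop}}$ (when $K\subset L'$ but $K\not\subset K'$); hence any relative $J$-cell complex evaluates at each $K$ to a transfinite composite of pushouts of acyclic cofibrations of $\mathbf{qTop}$, so to an acyclic cofibration, and thus lies in $W$.

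Finally, smallness of the domains holds since each $(X)_{!K}$ with $X$ a finite polyhedron is small (Proposition \ref{PropSmallColimit}, together with preservation of filtered colimits by evaluation, Proposition \ref{PropFilter}), and the domains of $I$ and $J$ are finite colimits of such objects. The recognition theorem then produces the cofibrantly generated model structure with weak equivalences $W$, fibrations $J\text{-}\mathrm{inj}$ (the flexible extensions), and acyclic fibrations $I\text{-}\mathrm{inj}$, which is the assertion. I expect the only genuinely delicate point to be the acyclicity $J\text{-}\mathrm{cell}\subset W$; everything there rests on the observation that, over the poset $\tilde{\mathscr{U}}_B$, evaluating the generators at a fixed compact set collapses them to the generators of $\mathbf{qTop}$, so the homotopical content is imported wholesale from the model structure on $\mathbf{qTop}$.
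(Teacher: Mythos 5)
Your proposal is correct and is essentially the paper's own proof: the paper likewise applies the Recognition Theorem (Theorem \ref{ThmRecognition}) to the same generating sets $I_{cpt}$, $J_{cpt}$ and the same class $W_{cpt}$, identifies $\operatorname{rlp}(J_{cpt})$ with the flexible extensions and $\operatorname{rlp}(I_{cpt})$ with the maps all of whose $f_{K \subset L}$ are acyclic fibrations, proves $\operatorname{rlp}(I_{cpt}) = W_{cpt} \cap \operatorname{rlp}(J_{cpt})$ via Proposition \ref{PropAugmented}, pullback stability and two-out-of-three, and checks smallness through the adjunction $X \mapsto X_{!K}$. The only inessential differences are that the paper verifies $\operatorname{Cof}(J_{cpt}) \subset W_{cpt}$ using the decomposition $X_{!K} = X_{!B} \times {\bf 1}_{!K}$ and the Cartesian closedness of ${\bf qTop}$ rather than your direct sectionwise evaluation of the generators, and that it proves $\operatorname{Cof}(J_{cpt}) \subset \operatorname{Cof}(I_{cpt})$ by an explicit cell decomposition, an inclusion you leave implicit but which follows formally from $\operatorname{rlp}(I_{cpt}) \subset \operatorname{rlp}(J_{cpt})$ together with the small object argument.
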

To prove the above theorem, we apply {\it Recognition Theorem} (cf. Theorem \ref{ThmRecognition}) of a cofibrantly generated model structure. \par
First, we prepare a notation.
For any quasitopological space $X$ and any compact subset $K$ of $B$, we define an object $X_{!K}$ in ${\bf PSh}_\ast(\tilde{\mathscr{U}}_B; {\bf qTop})$ as the following:
\[
X_{!K}(A) := \left\{
\begin{array}{ll}
X & (\mbox{if }\emptyset \neq A \subset K) \\
\emptyset & (\mbox{if }A \not\subset K) \\
{\bf 1} & (\mbox{if }A = \emptyset).
\end{array}
\right.
\]
For any morphism $f : X \rightarrow Y$ in {\bf qTop} and any compact subsets $K$ and $L$ of $B$ with $K \subset L$, denote the following morphism $X_{!L} \cup_{X_{!K}} Y_{!K} \rightarrow Y_{!L}$ by $f_{!(K \subset L)}$:
	\[\xymatrix{
		&& Y_{!L}
	\\
		X_{!L} \ar[r] \ar@/^15pt/[urr]^-{f_{!L}}
		& X_{!L} \cup_{X_{!K}} Y_{!K} \ar[ur]^-{f_{!(K \subset L)}}
	\\
		X_{!K} \ar[r]^-{f_{!K}'} \ar[u]
		& Y_{!K}. \ar[u] \ar@/_15pt/[uur]
	}\]
We define two index sets $\mathfrak{K}$ and $\mathfrak{K}^{(2)}$ as the followings:
\[
\begin{array}{lcl}
\mathfrak{K} & := & \{ K \subset B : \mbox{compact} \}, \\
\mathfrak{K}^{(2)} & := & \{ (K, L) \, | \, K, L \in \mathfrak{K}, K \subset L \}.
\end{array}
\]
We define two sets $I_{cpt}$ and $J_{cpt}$ and a class $W_{cpt}$ of some morphisms in ${\bf PSh}_\ast(\tilde{\mathscr{U}}_B; {\bf qTop})$ as the followings:
\[
\begin{array}{lcl}
I_{cpt} & := & \{ f_{!(K \subset L)} \, | \, f \in I_{\bf qTop}, (K, L) \in \mathfrak{K}^{(2)} \}, \\
J_{cpt} & := & \{ f_{!(K \subset L)} \, | \, f \in J_{\bf qTop}, (K, L) \in \mathfrak{K}^{(2)} \}, \\
W_{cpt} & := & \{ f : \mathcal{F} \rightarrow \mathcal{G} \, | \, f_K : \mathcal{F}(K) \rightarrow \mathcal{G}(K) \mbox{ is a weak equivalence for }^\forall K \in \mathfrak{K} \}.
\end{array}
\]
Then, a morphism $f$ in ${\bf PSh}_\ast(\tilde{\mathscr{U}}_B; {\bf qTop})$ is a flexible extension if and only if $f$ has the right lifting property with respect to $J_{cpt}$
(cf. the preamble to Section 3).
\begin{proof}[Proof of Theorem \ref{ThmCompactFlexible}]
In order to apply Theorem \ref{ThmRecognition}, we show the followings:
\begin{description}
\item[(1)] We have the inclusion relation $\operatorname{Cof}(J_{cpt}) \subset W_{cpt} \cap \operatorname{Cof}(I_{cpt})$.
i.e. We have the followings:
	\begin{description}
	\item[(1.1)] $\operatorname{Cof}(J_{cpt}) \subset \operatorname{Cof}(I_{cpt})$, and
	\item[(1.2)] $\operatorname{Cof}(J_{cpt}) \subset W_{cpt}$.
	\end{description}
\item[(2)] We have the equality $\operatorname{rlp}(I_{cpt}) = \operatorname{rlp}(J_{cpt}) \cap W_{cpt}$.
i.e. We have the followings:
	\begin{description}
	\item[(2.1)] $\operatorname{rlp}(I_{cpt}) \subset \operatorname{rlp}(J_{cpt})$,
	\item[(2.2)] $\operatorname{rlp}(I_{cpt}) \subset W_{cpt}$, and
	\item[(2.3)] $\operatorname{rlp}(I_{cpt}) \supset \operatorname{rlp}(J_{cpt}) \cap W_{cpt}$.
	\end{description}
\item[(3)] If two of the three morphisms $f$, $g$ and $g \circ f$ are contained in $W_{cpt}$, then the third is also contained in $W_{cpt}$.
\item[(4)] $I_{cpt}$ and $J_{cpt}$ permit the small object argument.
\end{description}
We will prove these in order.
\vspace{2pt}\\
\underline{Proof of the condition {\bf (1.1)}}
\vspace{2pt}\\
We have the inclusion relation $J_{cpt} \subset \operatorname{cell}(I_{cpt})$ by the decomposition of $f_{!(K\subset L)} \in J_{cpt}$represented by the following diagram:
	\[\xymatrix{
		(\Lambda^n_i)_{!L} \cup_{(\Lambda^n_i)_{!K}} \Delta^n_{!K} \ar[r] \ar@/^15pt/[rrr]^-{f_{!(K\subset L)}}
		& \partial \Delta^n_{!L} \cup_{\partial \Delta^n_{!K}} \Delta^n_{!K} \ar[rr]_-{h_{!(K\subset L)}}
		&& \Delta^n_{!L}
	\\
		\partial \Delta^{n-1}_{!L} \cup_{\partial \Delta^{n-1}_{!K}} \Delta^{n-1}_{!K} \ar[u] \ar[r]^-{g_{!(K\subset L)}}
		& \Delta^{n-1}_{!L}, \ar[u]
	}\]
where $g_{!(K \subset L)}$ and $h_{!(K\subset L)}$ belong to $I_{cpt}$ and the left square is a push-out.
Then, we obtain the inclusion relation $\operatorname{cell}(J_{cpt}) \subset \operatorname{cell}(I_{cpt})$.
Then, we obtain the inclusion relation $\operatorname{Cof}(J_{cpt}) \subset \operatorname{Cof}(I_{cpt})$.
\vspace{2pt}\\
\underline{Proof of the condition {\bf (1.2)}}
\vspace{2pt}\\
For any quasitopological space $X$ and any compact subset $K$ of $B$, we have the equality $X_{!K} = X_{!B} \times {\bf 1}_{!K}$.
For any morphism $X \rightarrow Y$ belonging to $J_{\bf qTop}$, $X_{!B} \rightarrow Y_{!B}$ is a sectionwise acyclic cofibration.
(i.e. For any object $A$ in $\tilde{\mathscr{U}}_B$, the morphism $X_{!B}(A) \rightarrow Y_{!B}(A)$ in {\bf qTop} is an acyclic cofibration.)
For any compact subsets $K$ and $L$ of $B$ with $K \subset L$, the morphism ${\bf 1}_{!K} \rightarrow {\bf 1}_{!L}$ is a sectionwise cofibration.
Then, the morphism $X_{!L} \coprod_{X_{!K}} Y_{!K} \rightarrow Y_{!L}$ is a sectionwise acyclic cofibration because {\bf qTop} is a Cartesian closed model category.
This means that any morphism $f_{!(K \subset L)}$ belonging to $J_{cpt}$ is a sectionwise acyclic cofibration.
Any push-out, any transfinite composition and any retract preserve a sectionwise acyclic cofibration by the sectionwise calculation.
Then, any morphism $f$ belonging to $\operatorname{Cof}(J_{cpt})$ is a sectionwise acyclic cofibration.
In particular, the morphism $f_K$ is a weak equivalence for any $K \in \mathfrak{K}$.
Therefore, we have the inclusion relation $\operatorname{Cof}(J_{cpt}) \subset W_{cpt}$.
\vspace{2pt}\\
\underline{Proof of the condition {\bf (2.1)}}
\vspace{2pt}\\
We have the followings:
\[
\begin{array}{rl}
& f \in \operatorname{rlp}(I_{cpt}) \\
\Leftrightarrow & \mbox{$f_{K \subset L}$ is an acyclic fibration for $^\forall (K, L) \in \mathfrak{K}^{(2)}$} \\
\Rightarrow & \mbox{$f_{K \subset L}$ is a fibration for $^\forall (K, L) \in \mathfrak{K}^{(2)}$} \\
\Leftrightarrow & f \in \operatorname{rlp}(J_{cpt}).
\end{array}
\]
We obtain the inclusion relation $\operatorname{rlp}(I_{cpt}) \subset \operatorname{rlp}(J_{cpt})$.
\vspace{2pt}\\
\underline{Proof of the condition {\bf (2.2)}}
\vspace{2pt}\\
Take any element $f \in \operatorname{rlp}(I_{cpt})$.
The induced morphism $f_{\emptyset \subset K}$ is an acyclic fibration for any compact subset $K$ of $B$.
In particular, this is a weak equivalence.
Then, $f_K$ is a weak equivalence by Proposition \ref{PropAugmented}.
Therefore, we obtain the inclusion relation $\operatorname{rlp}(I_{cpt}) \subset W_{cpt}$.
\vspace{2pt}\\
\underline{Proof of the condition {\bf (2.3)}}
\vspace{2pt}\\
Take any element $f \in \operatorname{rlp}(J_{cpt}) \cap W_{cpt}$.
The induced morphism $f_{K \subset L}$ is a fibration because $f$ belongs to $\operatorname{rlp}(J_{cpt})$.
In particular, $f_{\emptyset \subset K} : \mathcal{F}(K) \rightarrow \mathcal{G}(K)$ is a fibration.
Then, $f_K$ is a fibration by Proposition \ref{PropAugmented}.
Furthermore, $f_K$ is an acyclic fibration because $f$ belongs to $W_{cpt}$.
We consider the following diagram:
	\[\xymatrix{
		\mathcal{F}(L) \ar@/^20pt/[rrd]^-{f_L}_-{\simeq} \ar@/_20pt/[rdd] \ar[rd]^-{f_{K \subset L}}
	\\
		& \mathcal{F}(K) \times_{\mathcal{G}(K)} \mathcal{G}(L) \ar[r]^-{\simeq} \ar[d]
		& \mathcal{G}(L) \ar[d]
	\\
		& \mathcal{F}(K) \ar[r]_-{f_K}^-{\simeq}
		& \mathcal{G}(K).
	}\]
The above $\mathcal{F}(K) \times_{\mathcal{G}(K)} \mathcal{G}(L) \rightarrow \mathcal{G}(L)$ is an acyclic fibration because a pull-back of an acyclic fibration is an acyclic fibration.
In particular, this is a weak equivalence.
Then, $f_{K \subset L}$ is a weak equivalence.
Recall that $f_{K \subset L}$ is a fibration.
Then, $f_{K \subset L}$ is an acyclic fibration.
This means that $f$ belongs to $\operatorname{rlp}(I_{cpt})$.
Therefore, we obtain the inclusion relation $\operatorname{rlp}(I_{cpt}) \subset W_{cpt}$.
\vspace{2pt}\\
\underline{Proof of the condition {\bf (3)}}
\vspace{2pt}\\
This is obvious because the same property holds for weak equivalences in {\bf qTop}.
\vspace{2pt}\\
\underline{Proof of the condition {\bf (4)}}
\vspace{2pt}\\
We consider the following adjunction:
	\[\xymatrix{
		{\bf qTop} \ar@<-10pt>[rr]_-{X \mapsto X_{!K}} \ar@{}[rr] | -{\top}
		&& {\bf PSh}_\ast(\tilde{\mathscr{U}}_B; {\bf qTop}). \ar@<-10pt>[ll]_-{\mathcal{F}(K) \mapsfrom \mathcal{F}}
	}\]
The functor $\mathcal{F} \mapsto \mathcal{F}(K)$ preserves any colimit by the sectionwise calculation.
The functor $X \mapsto X_{!K}$ preserves any small object by Proposition \ref{PropSmallAdjunction}.
For any morphism $X \rightarrow Y$ belonging $I_{\bf qTop}$ and any morphism $X' \rightarrow Y'$ belonging $J_{\bf qTop}$, the objects $X$, $X'$, $Y$ and $Y'$ are small by Example \ref{ExSmallSheaf}.
The objects $X_{!K}$, $X'_{!K}$, $Y_{!K}$ and $Y'_{!K}$ are small for any compact subset $K$ of $B$.
Then, $X_{!L} \coprod_{X_{!K}} Y_{!K}$ and $X'_{!L} \coprod_{X'_{!K}} Y'_{!K}$ are small by Proposition \ref{PropSmallColimit}.
Therefore, $I_{cpt}$ and $J_{cpt}$ permit the small object argument.
\end{proof}
	\subsection{The pointwise projective model structure}
In this subsection, we will prove the following theorem.
\begin{thm}[the pointwise projective model structure] \label{ThmPointwiseProjective}
The category ${\bf PSh}_\ast(\tilde{\mathscr{U}}_B; {\bf qTop})$ has a model structure satisfying the followings:
\begin{itemize}
\item $f$ is a weak equivalence \\
$\Leftrightarrow$ $f$ is a pointwise weak equivalence
(i.e. $f_{\{ x \}}$ is a weak equivalence for any point $x \in B$, where $\{ x \}$ is a singleton).
\item $f$ is a fibration \\
$\Leftrightarrow$ $f$ is a pointwise fibration.
\end{itemize}
Furthermore, this model structure is right proper.
\end{thm}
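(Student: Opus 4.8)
The plan is to apply the Recognition Theorem (Theorem~\ref{ThmRecognition}) for cofibrantly generated model structures, following the same template as the proof of Theorem~\ref{ThmCompactFlexible}, but with the compact pairs $(K,L)$ replaced by single points. For each $x \in B$ the singleton $\{x\}$ lies in $\tilde{\mathscr{U}}_B$ (take $B \cap \{x\}$), and the free functor $(-)_{!\{x\}}$ is left adjoint to the evaluation $\mathcal{F} \mapsto \mathcal{F}(\{x\})$. Writing $i_{!\{x\}} \colon X_{!\{x\}} \to Y_{!\{x\}}$ for the image of a morphism $i \colon X \to Y$ of {\bf qTop} under this functor, I would set
\[
I_{pt} := \{ i_{!\{x\}} \mid i \in I_{\bf qTop},\ x \in B \}, \qquad J_{pt} := \{ i_{!\{x\}} \mid i \in J_{\bf qTop},\ x \in B \},
\]
and let $W_{pt}$ be the class of pointwise weak equivalences. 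The adjunction immediately yields the crucial dictionary: $f$ lies in $\operatorname{rlp}(J_{pt})$ (resp.\ $\operatorname{rlp}(I_{pt})$) if and only if each $f_{\{x\}}$ lies in $\operatorname{rlp}(J_{\bf qTop})$ (resp.\ $\operatorname{rlp}(I_{\bf qTop})$), that is, if and only if $f$ is a pointwise fibration (resp.\ pointwise acyclic fibration). This already forces the fibrations and acyclic fibrations of the sought structure to be the pointwise ones.

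With this dictionary in hand, most of the Recognition Theorem hypotheses are formal. Conditions (2.1)--(2.3), asserting $\operatorname{rlp}(I_{pt}) = \operatorname{rlp}(J_{pt}) \cap W_{pt}$, reduce to the tautology that in {\bf qTop} a map is an acyclic fibration iff it is both a fibration and a weak equivalence, applied at every point. Condition (3) (two-out-of-three for $W_{pt}$) is inherited pointwise from {\bf qTop}. For (1.1), since $J_{\bf qTop} \subset \operatorname{Cof}(I_{\bf qTop})$ and the left adjoint $(-)_{!\{x\}}$ preserves pushouts, transfinite compositions and retracts, each $i_{!\{x\}}$ with $i \in J_{\bf qTop}$ lies in $\operatorname{Cof}(I_{pt})$, whence $\operatorname{Cof}(J_{pt}) \subset \operatorname{Cof}(I_{pt})$. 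For (4), the domains of $I_{pt}$ and $J_{pt}$ are precisely the objects $X_{!\{x\}}$ with $X \in \{\partial\Delta^n, \Delta^n, \Lambda^n_i\}$ (no auxiliary pushout is needed here, unlike the compact case), and these are small by Example~\ref{ExSmallSheaf} together with Proposition~\ref{PropSmallAdjunction}; hence both sets permit the small object argument.

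The one step that genuinely uses the shape of $\tilde{\mathscr{U}}_B$, and which I expect to be the main (though mild) obstacle, is condition (1.2): $\operatorname{Cof}(J_{pt}) \subset W_{pt}$. The key observation is that $X_{!\{x\}}$ is \emph{supported at $x$}, i.e.\ $(X_{!\{x\}})(\{y\}) = X$ when $y = x$ and $=\emptyset$ when $y \neq x$. Since evaluation at a point preserves colimits (colimits in ${\bf PSh}_\ast(\tilde{\mathscr{U}}_B; {\bf qTop})$ are computed sectionwise at nonempty objects), evaluating a pushout of $i_{!\{x\}}$ with $i \in J_{\bf qTop}$ at a point $\{y\}$ produces either a pushout of the acyclic cofibration $i$ (when $y = x$) or the identity of $\emptyset$ (when $y \neq x$); in both cases a pointwise acyclic cofibration results. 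As pointwise acyclic cofibrations are stable under transfinite composition and retract, every element of $\operatorname{Cof}(J_{pt})$ is a pointwise acyclic cofibration, hence lies in $W_{pt}$. Together with (1.1), (2) and (4), the Recognition Theorem then produces the desired model structure, whose weak equivalences are $W_{pt}$ and whose fibrations are the pointwise fibrations.

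Finally, right properness follows by reducing to {\bf qTop}. Given a pullback square in ${\bf PSh}_\ast(\tilde{\mathscr{U}}_B; {\bf qTop})$ whose right-hand map is a pointwise fibration and whose bottom map is a pointwise weak equivalence, I would evaluate at each point $\{x\}$; since limits are computed sectionwise, this yields a pullback square in {\bf qTop} along a fibration with a weak equivalence on the bottom. Right properness of {\bf qTop} (Proposition~\ref{PropQTopModel}) then makes the top map a weak equivalence at every point, so the top map is a pointwise weak equivalence. This establishes right properness of the pointwise projective model structure.
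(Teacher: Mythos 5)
Your proposal is correct, but it follows a genuinely different route from the paper's. Your generating sets are in fact literally the paper's: $i_{!\{x\}}$ is exactly $F_{pt}$ applied to the morphism of $\prod_{x \in B}{\bf qTop}_x$ that is $i$ at $x$ and $\operatorname{id}_\emptyset$ elsewhere. However, the paper never runs the Recognition Theorem argument you carry out. Instead it first equips the product category $\prod_{x \in B}{\bf qTop}_x$ with its cofibrantly generated product model structure and then \emph{transfers} it along the adjunction $F_{pt} \dashv G_{pt}$, $G_{pt}(\mathcal{F}) = (\mathcal{F}(\{x\}))_{x \in B}$, via Corollary~\ref{CorTransfer}; there the verification collapses to two observations ($G_{pt}$ preserves colimits by the sectionwise calculation, and $G_{pt}F_{pt} \cong \operatorname{Id}$), and right properness is obtained from Proposition~\ref{PropFibrantProper}, since every object is pointwise fibrant (Proposition~\ref{PropQTopModel}) and hence fibrant. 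Your route, applying Theorem~\ref{ThmRecognition} directly in the style of the proof of Theorem~\ref{ThmCompactFlexible}, buys self-containedness: no auxiliary product model category and no transfer machinery, with the adjunction dictionary $f \in \operatorname{rlp}(J_{pt}) \Leftrightarrow f$ pointwise fibration rendering conditions (2.1)--(2.3) and (3) tautological, and with (1.2) handled by the correct observation that pushouts and transfinite compositions of augmented presheaves are computed sectionwise on nonempty objects, so every morphism in $\operatorname{Cof}(J_{pt})$ is a pointwise acyclic cofibration. The cost is length, and indeed the pointwise case is precisely the situation where the transfer shortcut is available (unlike the compact flexible structure, where the Recognition Theorem is genuinely needed). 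Two cosmetic remarks: in (1.2), evaluating the pushout at $\{y\}$ with $y \neq x$ yields a pushout of $\operatorname{id}_\emptyset$, i.e.\ an isomorphism of $\mathcal{F}(\{y\})$, not ``the identity of $\emptyset$'' itself; and your final paragraph on right properness, while correct, can be replaced by the paper's one-line argument that all objects are fibrant in this structure, so Proposition~\ref{PropFibrantProper} applies.
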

To prove the above theorem, we apply Corollary \ref{CorTransfer}, the corollary of {\it Transfer Theorem} (cf. Theorem \ref{ThmTransfer}) of a cofibrantly generated model structure. \par
For each point $x \in B$, let ${\bf qTop}_x$ be a copy of the category {\bf qTop}.
There exist two sets $I_x$ and $J_x$ and a class $W_x$ as copies of the sets $I_{\bf qTop}$ and $J_{\bf qTop}$ and the class $W_{\bf qTop}$ respectively.
The category $\prod_x {\bf qTop}_x$ has a model structure.
Furthermore, the model category $\prod_x {\bf qTop}_x$ is cofibrantly generated.
We will explain this cofibrantly model structure.
For each point $x \in B$, we define two sets $\overline{I_x}$ and $\overline{J_x}$ of some morphisms in $\prod_x {\bf qTop}_x$ as the following:
\[
\begin{array}{lcl}
\overline{I_x} & := & \left(\displaystyle\prod_{y \neq x} \{ id_\emptyset \}\right) \times I_x, \\
\overline{J_x} & := & \left(\displaystyle\prod_{y \neq x} \{ id_\emptyset \}\right) \times J_x,
\end{array}
\]
where $\emptyset$ is the initial object in each category ${\bf qTop}_y$.
The model category $\prod_x {\bf qTop}_x$ is cofibrantly generated satisfying the followings:
\begin{itemize}
\item $\coprod_x \overline{I_x}$ generates cofibrations,
\item $\coprod_x \overline{J_x}$ generates acyclic cofibrations, and
\item $\prod_x W_x$ is the class of weak equivalences.
\end{itemize}\par
Next, we construct the following adjunction:
	\[\xymatrix{
		\displaystyle\prod_{x \in B} {\bf qTop} \ar@<-10pt>[rr]_-{F_{pt}} \ar@{}[rr] | -{\top}
		&& {\bf PSh}_\ast(\tilde{\mathscr{U}}_B; {\bf qTop}). \ar@<-10pt>[ll]_-{G_{pt}}
	}\]
We define the above two functors $F_{pt}$ and $G_{pt}$ as the followings:
\begin{itemize}
\item $G_{pt}(\mathcal{F}) := (\mathcal{F}(\{ x \}))_{x \in B}$, and
\item 
\(
[F_{pt}((X^x)_{x \in B})](U) := \left\{
	\begin{array}{ll}
	\emptyset & (|U| \ge 2) \\
	X^x & (U = \{ x \}) \\
	{\bf 1} & (U = \emptyset).
	\end{array}
\right.
\)
\end{itemize}
Then, there exists an adjunction $F_{pt} \dashv G_{pt}$. \par
We define two sets $I_{pt}$ and $J_{pt}$ and a class $W_{pt}$ of some morphisms in ${\bf PSh}_\ast(\tilde{\mathscr{U}}_B; {\bf qTop})$ as the followings:
\[
\begin{array}{lcl}
I_{pt} & := & F_{pt}(\coprod_x \overline{I_x}), \\
J_{pt} & := & F_{pt}(\coprod_x \overline{J_x}), \\
W_{pt} & := & \{ f \, | \, f_{\{ x \}} \mbox{ is a weak equivalence for }^\forall x \in B \}.
\end{array}
\]
Then, a morphism $g$ has the right lifting property with respect to $J_{pt}$ if and only if $G_{pt}(g)$ has the right lifting property with respect to $\coprod_x \overline{J_x}$.
This means that $g$ has the right lifting property with respect to $J_{pt}$ if and only if $g$ is a pointwise fibration.
\begin{proof}[Proof of Theorem \ref{ThmPointwiseProjective}]
In order to apply Corollary \ref{CorTransfer}, we show the followings:
\begin{description}
\item[(1)] $G_{pt}$ preserves any colimit.
\item[(2)] We have the inclusion relations $G_{pt}(F_{pt}(I)) \subset \operatorname{cell}(I_{pt})$ and $G_{pt}(F_{pt}(J_{pt})) \subset \operatorname{cell}(J_{pt})$.
\end{description}
The condition {\bf (1)} holds by the sectionwise calculation.
The condition {\bf (2)} holds by the isomorphism $G_{pt}F_{pt} \cong Id$.
Therefore, ${\bf PSh}_\ast(\tilde{\mathscr{U}}_B; {\bf qTop})$ has the cofibrantly generated model structure we want, by Corollary \ref{CorTransfer}.
This model structure is right proper by Proposition \ref{PropFibrantProper}.
\end{proof}
	\subsection{The pointwise flexible model structure}
In this subsection, we will prove the following theorem.
\begin{thm}[the pointwise flexible model structure] \label{ThmPointwiseFlexible}
The category ${\bf PSh}_\ast(\tilde{\mathscr{U}}_B; {\bf qTop})$ has a model structure satisfying the followings:
\begin{itemize}
\item $f$ is a weak equivalence \\
$\Leftrightarrow$ $f$ is a pointwise weak equivalence.
\item $f$ is a fibration \\
$\Leftrightarrow$ $f$ is a flexible extension.
\end{itemize}
Furthermore, this model structure is right proper.
\end{thm}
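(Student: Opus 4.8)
The plan is to construct $\mathcal{M}$ as the \emph{mixing}, in the sense of Cole, of the two model structures already produced: the compact flexible structure $\mathcal{M}_1$ of Theorem \ref{ThmCompactFlexible} and the pointwise projective structure $\mathcal{M}_2$ of Theorem \ref{ThmPointwiseProjective}. Cole's mixing theorem asserts that if a bicomplete category carries two model structures, a coarse one $q$ and a fine one $h$ with $\mathcal{W}_h \subseteq \mathcal{W}_q$ and $\mathcal{F}_h \subseteq \mathcal{F}_q$, then there is a mixed model structure whose weak equivalences are $\mathcal{W}_q$ and whose fibrations are $\mathcal{F}_h$. I would take $q := \mathcal{M}_2$ and $h := \mathcal{M}_1$, so that the mixed weak equivalences become the pointwise weak equivalences $W_{pt}$ and the mixed fibrations become the flexible extensions, exactly as the theorem demands.

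First I would verify the two inclusion hypotheses. For the weak equivalences, each singleton $\{x\}$ is a compact subset of $B$; hence if $f_K$ is a weak equivalence for every compact $K$, then in particular $f_{\{x\}}$ is a weak equivalence for every $x$, giving $W_{cpt} \subseteq W_{pt}$. For the fibrations, suppose $f$ is a flexible extension, so that $f_{K \subset L}$ is a fibration for all compact $K \subset L$. Taking $K = \emptyset$ and $L = \{x\}$ and invoking Proposition \ref{PropAugmented}, which identifies $f_{\emptyset \subset \{x\}}$ with $f_{\{x\}}$, shows that $f_{\{x\}}$ is a fibration for every $x$; thus every flexible extension is a pointwise fibration. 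Both inclusions therefore hold, and since $\mathcal{M}_1$ and $\mathcal{M}_2$ are cofibrantly generated they carry the functorial factorizations that the mixing construction uses. Applying the mixing theorem then yields a model structure $\mathcal{M}$ on ${\bf PSh}_\ast(\tilde{\mathscr{U}}_B; {\bf qTop})$ whose weak equivalences are precisely the pointwise weak equivalences and whose fibrations are precisely the flexible extensions.

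For right properness I would argue directly rather than re-enter the mixing machinery, since right properness concerns only the behaviour of weak equivalences under pullback along fibrations. In $\mathcal{M}$ the weak equivalences are exactly $W_{pt}$, and every fibration is a flexible extension, hence in particular a pointwise fibration, i.e. an $\mathcal{M}_2$-fibration. Since $\mathcal{M}_2$ is right proper by Theorem \ref{ThmPointwiseProjective}, the pullback of a morphism in $W_{pt}$ along such a fibration again lies in $W_{pt}$; therefore $\mathcal{M}$ is right proper.

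I expect the only delicate point to be the bookkeeping of Cole's theorem: one must assign the fibrations to the structure with the \emph{smaller} class of weak equivalences, which here is $\mathcal{M}_1$, and confirm that Cole's hypotheses reduce exactly to the two inclusions checked above. In particular, the mixed trivial fibrations are the flexible extensions that are simultaneously pointwise weak equivalences, and there is no need to identify these with the $\mathcal{M}_1$-trivial fibrations; the subtle geometric statement of that kind (the analogue of Lemma \ref{Gromov3}) plays no role here. All the substantive work lies in Theorems \ref{ThmCompactFlexible} and \ref{ThmPointwiseProjective}, so the present argument is purely formal.
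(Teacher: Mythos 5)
Your proposal is correct and takes essentially the same route as the paper: both verify the two inclusions $W_{cpt} \subseteq W_{pt}$ (singletons are compact) and ``flexible extensions are pointwise fibrations'' (via Proposition \ref{PropAugmented} applied to $f_{\emptyset \subset \{x\}}$), and then invoke Cole's mixing theorem (Theorem \ref{ThmMix}) with exactly your assignment of the two structures. The only cosmetic difference is in the right properness step, where the paper cites Proposition \ref{PropMixProper} while you unpack its one-line proof directly; the content is identical.
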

\begin{proof}
We construct the model structure we want by mixing the {\it compact flexible model structure} (cf. Theorem \ref{ThmCompactFlexible}) and the {\it pointwise projective model structure} (cf. Theorem \ref{ThmPointwiseProjective}).
In order to apply Theorem \ref{ThmMix}, we show that we have the following inclusion relations:
\begin{description}
\item[(1)] $W_{cpt} \subset W_{pt}$, and
\item[(2)] $\operatorname{rlp}(J_{cpt}) \subset \operatorname{rlp}(J_{pt})$.
\end{description}\par
For any point $x \in B$, the singleton $\{ x \}$ is compact.
Then, we obtain the inclusion relation $W_{cpt} \subset W_{pt}$.
If a morphism $f$ belongs to $\operatorname{rlp}(J_{cpt})$, $f_{\emptyset \subset K}$ is a fibration for any compact subset $K$ of $B$.
Then, $f_K$ is a fibration by Proposition \ref{PropAugmented}.
In particular, by considering the case where $K$ is a singleton $\{ x \}$, $f$ belongs to $\operatorname{rlp}(J_{pt})$.
We obtain the inclusion relation $\operatorname{rlp}(J_{cpt}) \subset \operatorname{rlp}(J_{pt})$.
Therefore, ${\bf PSh}_\ast(\tilde{\mathscr{U}}_B; {\bf qTop})$ has the model structure we want, by Theorem \ref{ThmMix}.
Furthermore, this model structure is right proper because the {\it pointwise projective model structure} is right proper
(cf. Theorem \ref{ThmPointwiseProjective} and Proposition \ref{PropMixProper}).
\end{proof}
	\section{Result 2 : The prefibration structure}
The goal of this section is to prove the following theorem.
\begin{thm} \label{Thm2}
Let $B$ be a strongly locally contractible normal Hausdorff space.
There exists an ABC prefibration structure on ${\bf Sh}(B; {\bf qTop})$ such that the followings hold. \par
Let $f : \mathcal{F} \rightarrow \mathcal{G}$ be a morphism in ${\bf Sh}(B; {\bf qTop})$.
\begin{itemize}
\item $f$ is a weak equivalence \\
$\Leftrightarrow$ $f$ is a stalkwise weak equivalence
(i.e. $f_x : \mathcal{F}_x \rightarrow \mathcal{G}_x$ is a weak equivalence for any $x \in B$).
\item $f$ is a fibration \\
$\Leftrightarrow$ $f$ is a flexible extension
(cf. Definition \ref{DefFlexible}).
\end{itemize}
In particular, for a continuous sheaf $\mathcal{F}$ on $B$, 
\begin{itemize}
\item $\mathcal{F}$ is fibrant \\
$\Leftrightarrow$ $\mathcal{F}$ is flexible
(cf. Definition \ref{DefFlexible}).
\end{itemize}
\end{thm}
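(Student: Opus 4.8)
The plan is to declare a morphism in ${\bf Sh}(B;{\bf qTop})$ to be a weak equivalence precisely when it is a stalkwise weak equivalence, and a fibration precisely when it is a flexible extension (Definition \ref{DefFlexible}), and then to verify the axioms of an ABC prefibration structure of \cite{radulescu2006cofibrations} one at a time. The key device is the fully faithful, left exact embedding $\iota$ of Theorem \ref{Thm1}: by that theorem, $f$ is a stalkwise weak equivalence (resp. a flexible extension) if and only if $\iota(f)$ is a weak equivalence (resp. a fibration) in the model category ${\bf PSh}_\ast(\tilde{\mathscr{U}}_B;{\bf qTop})$. Consequently, every axiom that is purely a closure property can be transported along $\iota$ from the model structure.

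First I would dispatch the routine axioms. Because $\iota$ is a functor, the two-out-of-three (indeed two-out-of-six) property for weak equivalences, the closure of fibrations under composition, and the fact that isomorphisms are acyclic fibrations all follow by applying $\iota$, invoking the corresponding statements in ${\bf PSh}_\ast(\tilde{\mathscr{U}}_B;{\bf qTop})$, and reading the conclusion back through the equivalences of Theorem \ref{Thm1}. For pullback stability I would use that ${\bf Sh}(B;{\bf qTop})$ is complete (sheaves valued in the complete category ${\bf qTop}$), so pullbacks of fibrations exist, and that $\iota$ is left exact (Proposition \ref{PropEmbedding}), so it carries a pullback square in ${\bf Sh}(B;{\bf qTop})$ to a pullback square in ${\bf PSh}_\ast(\tilde{\mathscr{U}}_B;{\bf qTop})$. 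A pullback of a fibration is a fibration and a pullback of an acyclic fibration is an acyclic fibration there, by the model category axioms (with the right properness recorded in Theorem \ref{Thm1} supplying the finer statements about pulling back weak equivalences along fibrations); full faithfulness of $\iota$ then transports these back.

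The remaining and genuinely hard axiom is the \emph{existence of a decomposition}: every $f:\mathcal{F}\to\mathcal{G}$ should factor as a stalkwise weak equivalence followed by a flexible extension. This cannot simply be imported from the model structure, because the intermediate object produced in ${\bf PSh}_\ast(\tilde{\mathscr{U}}_B;{\bf qTop})$ is a presheaf and need not lie in the essential image of $\iota$; the factorization must be built inside ${\bf Sh}(B;{\bf qTop})$. The absolute case $\mathcal{G}={\bf 1}$ is already solved geometrically by the sheaf of formal sections, $\mathcal{F}\xrightarrow{\Delta}\mathcal{F}^\ast\to{\bf 1}$, where $\Delta$ is a stalkwise weak equivalence by Lemma \ref{LemGromov1} and $\mathcal{F}^\ast$ is flexible by Lemma \ref{LemGromov2}, these being exactly the generalizations of Gromov's Lemma \ref{Gromov2} to a strongly locally contractible $B$. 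For a general $f$ I would construct a relative version $\mathcal{H}$ of the sheaf of formal sections, fitting into $\mathcal{F}\xrightarrow{w}\mathcal{H}\xrightarrow{p}\mathcal{G}$, by performing the exponential/formal-sections construction fibrewise over $\mathcal{G}$ (essentially a corrected pullback of the induced map $\mathcal{F}^\ast\to\mathcal{G}^\ast$ along $\Delta:\mathcal{G}\to\mathcal{G}^\ast$, combined with a mapping-track in the ${\bf qTop}$-direction so that the relevant comparison maps become fibrations). One then has to verify the two relative assertions: that $w$ is a stalkwise weak equivalence and that $p$ is a flexible extension.

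The \emph{main obstacle} is showing that the projection $p:\mathcal{H}\to\mathcal{G}$ is a flexible extension, i.e. that $p_{K\subset L}$ is a fibration for all compact $K\subset L$. A naive strict pullback fails here, since $f$ is arbitrary and neither $\mathcal{F}^\ast\to\mathcal{G}^\ast$ nor $\Delta:\mathcal{G}\to\mathcal{G}^\ast$ is a fibration in general, so the strict pullback does not compute the homotopy pullback and neither desired property is automatic. This is exactly where geometry enters: strong local contractibility of $B$ is used (as in Lemma \ref{LemGromov1}) to contract small neighbourhoods and obtain the stalkwise triviality of $w$, while the normal Hausdorff hypothesis is used to manipulate the compact pairs $K\subset L$ — separating and thickening them into well-behaved open neighbourhoods — so that the fibrewise construction yields genuine fibrations $p_{K\subset L}$, in the spirit of the flexibility argument behind Lemma \ref{LemGromov2}. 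Once both relative statements are in hand, the decomposition axiom holds and, together with the closure axioms above, endows ${\bf Sh}(B;{\bf qTop})$ with the desired ABC prefibration structure; the final clause ($\mathcal{F}$ fibrant $\Leftrightarrow$ $\mathcal{F}$ flexible) is then the special case $\mathcal{G}={\bf 1}$.
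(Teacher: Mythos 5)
Your proposal is correct and takes essentially the same route as the paper: the routine ABC axioms are transported through the embedding $\iota$ of Theorem \ref{Thm1}, and the decomposition axiom is verified by exactly the paper's factorization $\mathcal{F} \rightarrow \mathcal{F}^\ast_f \times_{\mathcal{G}^\ast} \mathcal{G} \rightarrow \mathcal{G}$, i.e.\ the pullback of the flexible track $\tilde{f'} : \mathcal{F}^\ast_f \rightarrow \mathcal{G}^\ast$ along $\Delta : \mathcal{G} \rightarrow \mathcal{G}^\ast$, with Lemma \ref{LemGromov1}, right properness, and two-out-of-three giving the weak-equivalence half. The only difference is presentational: what you describe as a compact-pair argument ``in the spirit of'' Lemma \ref{LemGromov2} is literally that lemma, which the paper already states in relative form (the flexible track of \emph{any} morphism is a flexible extension), so that flexibility of $p$ follows from pullback stability of flexible extensions rather than a fresh geometric argument.
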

The proof will be in Section 4.2.
The axioms of an ABC prefibration structure are a generalisation of the axioms of a model category.
Most of them on ${\bf Sh}(B; {\bf qTop})$ follow from Theorem \ref{Thm1}.
However, only the axiom 6. of Definition \ref{DefABC}, of the existence of a decomposition, needs to be proved independently.
To prove this, we define the {\it flexible track} and prove Lemma \ref{LemGromov2}, a generalisation of Lemma \ref{Gromov2}
(cf. Section 4.1).
\vspace{10pt}\\
Review the definition of an {\it ABC prefibration structure}.
\begin{defi}[A. Radulescu-Banu \cite{radulescu2006cofibrations}] \label{DefABC}
Let $\mathscr{C}$ be a category.
An ABC (Anderson-Brown-Cisinski) prefibration structure $(W, \operatorname{Fib})$ on $\mathscr{C}$ is a pair of two classes $W$ and $\operatorname{Fib}$ of some morphisms, called {\it weak equivalences} and {\it fibration} respectively, satisfying the following axioms:
\begin{enumerate}
\item $\mathscr{C}$ has a terminal object {\bf 1}.
(An object $X$ is {\it fibrant} if the unique morphism $X \rightarrow {\bf 1}$ is a fibration.)
\item $W$ and $\operatorname{Fib}$ are closed under composition.
\item Any isomorphism is a weak equivalence.
Furthermore, if the codomain is fibrant, any isomorphism is an acyclic fibration.
(A morphism $f$ is an {\it acyclic fibration} if $f$ is a fibration and a weak equivalence.)
\item If two of the three morphisms $f$, $g$ and $g \circ f$ are weak equivalences, then the third is also a weak equivalence.
\item Let $Y$ and $Y'$ be fibrant objects, and let $f : X \rightarrow Y$ be a fibration.
Then, there exists a pull-back $f' : X' \rightarrow Y'$ of $f$ along any morphism $Y' \rightarrow Y$.
And the morphism $f'$ is a fibration.
Furthermore, if $f$ is acyclic, then $f'$ is also acyclic.
\item Any morphism $f$ has a decomposition $f = p \circ e$ such that the morphism $e$ is a weak equivalence and the morphism $p$ is a fibration.
\end{enumerate}
\end{defi}
``ABC (Anderson-Brown-Cisinski)" comes from names of D. W. Anderson \cite{anderson1978}, K. S. Brown \cite{10.2307/1996573} and D.-C. Cisinski.
	\subsection{The flexible track}
Let $f : \mathcal{F} \rightarrow \mathcal{G}$ be a morphism between continuous sheaves on $B$.
$f$ induces a morphism $\tilde{f} : \mathcal{F}^\ast \rightarrow \mathcal{G}^\ast$ between the sheaves of formal sections.
We denote the mapping track of $\tilde{f}$ by $\tilde{f'} : \mathcal{F}^\ast_f \rightarrow \mathcal{G}^\ast$.
\begin{defi}
We call the above $\tilde{f'}$ the {\it flexible track} of $f$.
\end{defi}
\begin{rem}
If $\mathcal{G} = {\bf 1}$, then $\mathcal{G}^\ast = {\bf 1}$.
Then, we obtain the equality $\mathcal{F}^\ast = \mathcal{F}^\ast_{f}$.
\end{rem}
Recall that the claim {\bf (1)} of Lemma \ref{Gromov2} has been proved
(cf. Lemma \ref{LemGromov1}).
We can prove the claim {\bf (2)} of Lemma \ref{Gromov2}.
Here we prove a more general statement.
\begin{lem} \label{LemGromov2}
Let $B$ be a normal Hausdorff space.
For any morphism $f : \mathcal{F} \rightarrow \mathcal{G}$ between continuous sheaves on $B$, the flexible track $\tilde{f'}$ of $f$ is a flexible extension.
In prticular, for any continuous sheaf $\mathcal{F}$ on $B$, the sheaf $\mathcal{F}^\ast$ of formal sections of $\mathcal{F}$ is flexible.
\end{lem}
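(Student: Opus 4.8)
The plan is to unwind ``flexible extension'' into a lifting property and then isolate the one genuinely geometric step. By Definition \ref{DefFlexible}, $\tilde{f'}$ is a flexible extension exactly when, for every pair of compact sets $K \subset L$, the gap morphism $\tilde{f'}_{K \subset L} \colon \mathcal{F}^\ast_f(L) \to \mathcal{F}^\ast_f(K) \times_{\mathcal{G}^\ast(K)} \mathcal{G}^\ast(L)$ is a fibration in {\bf qTop}; equivalently, using Corollary \ref{CorFiniteEquivalence}, it has the right lifting property against the maps $\Delta^{n-1} \hookrightarrow \Delta^{n-1} \times [0,1]$. Since the mapping track is a sectionwise pull-back, $\mathcal{F}^\ast_f(U) = \mathcal{F}^\ast(U) \times_{\mathcal{G}^\ast(U)} \mathcal{G}^\ast(U)^{[0,1]}$, and a direct inspection shows that such a lifting problem splits into two independent pieces: first extend the $\mathcal{F}^\ast$-component of the homotopy from $L$ using flexibility of $\mathcal{F}^\ast(L) \to \mathcal{F}^\ast(K)$, and then extend the path in $\mathcal{G}^\ast$ using flexibility of $\mathcal{G}^\ast(L) \to \mathcal{G}^\ast(K)$ together with the corner retraction $R_u$ of Lemma \ref{LemRetract}, exactly as in the construction of the mapping track. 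Thus the entire lemma reduces to the absolute statement that the restriction $\mathcal{F}^\ast(L) \to \mathcal{F}^\ast(K)$ is a fibration for every continuous sheaf $\mathcal{F}$, i.e.\ that $\mathcal{F}^\ast$ is flexible; the ``in particular'' clause is then the case $\mathcal{G} = {\bf 1}$ via the Remark preceding the lemma.

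For the absolute statement I would solve the lifting problem consisting of $\alpha \colon \Delta^{n-1} \to \mathcal{F}^\ast(L)$ and a homotopy $h \colon \Delta^{n-1} \times [0,1] \to \mathcal{F}^\ast(K)$ with $h_0 = \alpha|_K$, seeking $\tilde{h}$ over $\Delta^{n-1} \times [0,1]$ restricting to $\alpha$ at time $0$ and to $h$ over $K$. Because $\mathcal{F}^\ast(K) = \varinjlim_{U \supset K} \mathcal{F}^\ast(U)$ is a filtered colimit, Proposition \ref{PropFilterLift} lets me realise $\alpha$ on some open $V \supset L$ and $h$ on some open $W \supset K$, and after shrinking (using that $B$ is Hausdorff, so $K$ is closed) I may assume $W \subset V$ and that the two pieces of initial data agree on $W$. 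Here $\mathcal{F}^\ast$ is the sheafification of $\mathcal{F}^\square(U) = \mathcal{F}(U)^{\operatorname{Sing^q}(U)}$, so these data are, locally over the base, genuine families of formal sections.

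The geometric core is to interpolate between the prescribed homotopy near $K$ and the constant section $\alpha$ away from $K$. Since $B$ is normal Hausdorff and $K$ is compact, I can choose open sets $K \subset W' \subset \overline{W'} \subset W$ and a Urysohn function $\rho \colon B \to [0,1]$ with $\rho \equiv 1$ on a neighbourhood of $K$ and $\rho \equiv 0$ outside $W'$. Damping the homotopy by $\rho$ --- running $h$ only ``up to fraction $\rho$'' and leaving the section equal to $\alpha$ where $\rho$ vanishes --- produces a section over $V$ (or a slightly smaller neighbourhood of $L$) that agrees with $\alpha$ at time $0$, restricts to $h$ over $K$, and glues across the region $\{\rho = 0\}$. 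Passing back to the colimit over neighbourhoods of $L$ yields the desired lift $\tilde{h}$, proving that $\mathcal{F}^\ast(L) \to \mathcal{F}^\ast(K)$ is a fibration.

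I expect the damping step to be the main obstacle, because ``reparametrising a homotopy of formal sections by a function on the base'' must be made precise through the cotensor structure $\mathcal{F}(U)^{\operatorname{Sing^q}(U)}$ and must be shown to respect the sheafification defining $\mathcal{F}^\ast$; this is the point at which Gromov's geometry, rather than formal category theory, is indispensable. Everything else --- the splitting of the gap map, the path-lifting with $R_u$, and the reduction to neighbourhoods --- is formal and rests only on the material already established in Sections 1 and 2.
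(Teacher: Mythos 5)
Your proposal is correct, and it organizes the proof genuinely differently from the paper. The paper never isolates the absolute statement: it attacks the gap morphism $\tilde{f'}_{K \subset L}$ directly, pushing the whole lifting problem to neighbourhoods $U \subset V$ of $K \subset L$ via the filtered-colimit trick, decomposing the data by the track formula $\mathcal{F}^\ast_f(V) = \mathcal{F}^\ast(V) \times_{\mathcal{G}^\ast(V)} \mathcal{G}^\ast(V)^{[0,1]_2}$, and then running the cut-off construction on \emph{both} components at once (four damped morphisms $\gamma^1_1, \gamma^1_2, \gamma^2_1, \gamma^2_2$, glued by the sheaf condition over the cover $\{U, W\}$, $W = V \setminus \operatorname{supp}(\phi)$). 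You instead dispose of the relative statement by pure model-category yoga: first lift the $\mathcal{F}^\ast$-component against the fibration $\mathcal{F}^\ast(L) \rightarrow \mathcal{F}^\ast(K)$, then observe that the remaining path extension is a lifting problem for $\mathcal{G}^\ast(L) \rightarrow \mathcal{G}^\ast(K)$ against $(\Delta^{n-1} \hookrightarrow \Delta^{n-1} \times [0,1]) \triangledown (\partial \Delta^1 \hookrightarrow \Delta^1)$, which is an acyclic cofibration since {\bf qTop} is a Cartesian closed model category (Proposition \ref{PropQTopModel}, Definition \ref{DefModelCartesian}). This reduction is sound, and it buys a real economy: the geometric argument is run only once, for a single sheaf, with no path component to carry along. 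Two debts your route incurs that the paper's avoids: you need the track formula at the level of compact sets, $\mathcal{F}^\ast_f(K) \cong \mathcal{F}^\ast(K) \times_{\mathcal{G}^\ast(K)} \mathcal{G}^\ast(K)^{[0,1]}$, i.e.\ commutation of the filtered colimit over neighbourhoods with the pullback and with the cotensor $(-)^{[0,1]}$ (both follow from Proposition \ref{PropFilter} and the objectwise formula $X^{[0,1]}[S] \cong X[S \times [0,1]]$, but this must be said); and your two lifting steps are sequential rather than ``independent'', since the boundary condition of the path extension uses the output of the first lift.

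On the step you flag as the main obstacle: your worry is legitimate, but the tool is already in the paper, at the end of Section 2.3. For a continuous map $\psi : S \times B \rightarrow T$ that passage constructs a morphism $\psi^\ast : \mathcal{F}^\ast[T] \rightarrow \mathcal{F}^\ast[S]$ of sheaves of sets, defined on $\mathcal{F}^\square$ through the cotensor adjunction ${\bf qTop}(\operatorname{Sing^q}(T), \mathcal{F}(U)^{\operatorname{Sing^q}(U)}) \cong {\bf qTop}(\operatorname{Sing^q}(T \times U), \mathcal{F}(U))$ and descending to the sheafification by Proposition \ref{PropShefification}. ``Running $h$ only up to fraction $\rho$'' is then literally $\psi^\ast(h)$ for $\psi(p, x, s) := (p, r_{1-\rho(x)}(s))$, where $r_u$ is the retraction of $[0,1]$ extracted from Lemma \ref{LemRetract}; this is exactly how the paper's own proof defines its damped sections, and your time-zero, restriction-to-$K$, and gluing checks go through verbatim for it. With that reference your plan closes into a complete proof; nothing in it would fail.
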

\begin{proof}
\setcounter{equation}{0}
For any two compact subsets $K$ and $L$ of $B$ with $K \subset L$, we will show that the morphism $\tilde{f'}_{K \subset L} : \mathcal{F}^\ast_f(L) \rightarrow \mathcal{F}^\ast_f(K) \times_{\mathcal{G}^\ast(K)} \mathcal{G}^\ast(L)$ is a fibration.
i.e. We will show that, for a following commutative square, there exists a lift $\gamma$:
\begin{equation}
\vcenter{\xymatrix{
		\Delta^{n-1} \ar[r]^-{\alpha} \ar[d]
		& \mathcal{F}^\ast_f(L) \ar[d]^-{\tilde{f'}_{K \subset L}}
	\\
		\Delta^{n-1} \times [0, 1]_1 \ar[r]_-{\beta} \ar@{.>}[ur]^-{\gamma}
		& \mathcal{F}^\ast_f(K) \times_{\mathcal{G}^\ast(K)} \mathcal{G}^\ast(L).
}}
\end{equation}
(The above $[0, 1]_1$ is a copy of the closed interval $[0, 1]$.
To distinguish it from the closed interval $[0, 1]_2$ which comes later, we mark it with a subscript.)
By the discussion in Section 1.1.4, there exist small enough open neighbourhoods $U$ and $V$ of $K$ and $L$ respectively with $U \subset V$ such that the following diagram is commutative:
\begin{equation}
\vcenter{\xymatrix{
		\Delta^{n-1} \ar[r]^-{\tilde{\alpha}} \ar[d]
		& \mathcal{F}^\ast_f(V) \ar[d]^-{\tilde{f'}_{U \subset V}}
	\\
		\Delta^{n-1} \times [0, 1]_1 \ar[r]_-{\tilde{\beta}}
		& \mathcal{F}^\ast_f(U) \times_{\mathcal{G}^\ast(U)} \mathcal{G}^\ast(V).
}}
\end{equation}
We have the equality $\mathcal{F}^\ast_f(V) = \mathcal{F}^\ast(V) \times_{\mathcal{G}^\ast(V)} \mathcal{G}^\ast(V)^{[0, 1]_2}$.
(The above $[0, 1]_2$ is a copy of the closed interval $[0, 1]$.
The above morphism $\tilde{\alpha} : \Delta^{n-1} \rightarrow \mathcal{F}^\ast_f(V)$ can be represented as $\tilde{\alpha} = \alpha^1 \times \alpha^2$ using the following $\alpha^1$ and $\alpha^2$:
\[
\begin{array}{l}
\alpha^1 : \Delta^{n-1} \rightarrow \mathcal{F}^\ast(V), \\
\alpha^2 : \Delta^{n-1} \rightarrow \mathcal{G}^\ast(V)^{[0, 1]_2}.
\end{array}
\]
To distinguish it from the closed interval $[0, 1]_1$ which appeared first, we mark it with a subscript.)
Similarly, we decompose the above morphism $\tilde{\beta}$.
$\tilde{\beta}$ can be represented as $\tilde{\beta} = \beta^1 \times \beta^2$ using the following $\beta^1$ and $\beta^2$:
\[
\begin{array}{l}
\beta^1 : \Delta^{n-1} \times [0, 1]_1 \rightarrow \mathcal{F}^\ast_f(U), \\
\beta^2 : \Delta^{n-1} \times [0, 1]_1 \rightarrow \mathcal{G}^\ast(V).
\end{array}
\]
Furthermore, $\beta^1$ can be represented as $\beta^1 = \beta^{11} \times \beta^{12}$ using the following $\beta^{11}$ and $\beta^{12}$:
\[
\begin{array}{l}
\beta^{11} : \Delta^{n-1} \times [0, 1]_1 \rightarrow \mathcal{F}^\ast(U), \\
\beta^{12} : \Delta^{n-1} \times [0, 1]_1 \rightarrow \mathcal{G}^\ast(U)^{[0, 1]_2}.
\end{array}
\]\par
Take a support function $\phi : V \rightarrow [0, 1]$ such that we have the inclusion relations $\operatorname{supp}(\phi) \subset U$ and the equality $\phi \equiv 1$ on an open neighbourhood of $K$.
(This is possible because $B$ is a normal Hausdorff space.)
Let $W := V - \operatorname{supp}(\phi)$. \par
Using these data $\phi$ and $W$ and the morphisms $\alpha^1$, $\alpha^2$, $\beta^{11}$, $\beta^{12}$ and $\beta^2$, we will construct the following morphisms $\gamma^1_1$, $\gamma^1_2$, $\gamma^2_1$ and $\gamma^2_2$:
\[
\begin{array}{l}
\gamma^1_1 : \Delta^{n-1} \times [0, 1]_1 \rightarrow \mathcal{F}^\ast(U), \\
\gamma^1_2 : \Delta^{n-1} \times [0, 1]_1 \rightarrow \mathcal{F}^\ast(W), \\
\gamma^2_1 : \Delta^{n-1} \times [0, 1]_1 \rightarrow \mathcal{G}^\ast(U)^{[0, 1]_2}, \\
\gamma^2_2 : \Delta^{n-1} \times [0, 1]_1 \rightarrow \mathcal{G}^\ast(W)^{[0, 1]_2}.
\end{array}
\]
Take the subset $\Sigma$ of $[0, 1]_1 \times [0, 1]_2$ and the strong deformation retraction $R_u : [0, 1]_1 \times [0, 1]_2 \rightarrow [0, 1]_1 \times [0, 1]_2$ in Lemma \ref{LemRetract}.
A strong deformation retraction $r_u : [0, 1]_1 \rightarrow [0, 1]_1$ of the inclusion $\{ 0 \} \hookrightarrow [0, 1]_1$ is defdined by the equality $R_u(s, 0) = (r_u(s), 0)$.
A continuous map $\psi : \Delta^{n-1} \times V \times [0, 1]_1 \rightarrow \Delta^{n-1} \times [0, 1]_1$ is defined as $\psi(p, x, s) := (p, r_{1-\phi(x)}(s))$.
Identifying $\Delta^{n-1} \times [0, 1]_1$ with $\Delta^n$ by a homeomorphism, we regard $\Delta^{n-1} \times [0, 1]_1$ as an object in ${\bf Sing\Delta}$.
By the discussion in Section 2.3, the continuous map $\psi$ induces a morphism
\[
\psi^\ast : \mathcal{F}^\ast[\Delta^{n-1} \times [0, 1]_1] |_{V} \rightarrow \mathcal{F}^\ast[\Delta^{n-1} \times [0, 1]_1] |_{V}
\]
between sheaves of sets. \par
Let $\gamma^1_1 := \psi^\ast(\beta^{11})$.
Let $\gamma^1_2 := \alpha^1 |_W \circ \operatorname{pr}$, where the morphism $\alpha^1 |_W$ is the composite morphism $\Delta^{n-1} \overset{\alpha^1}{\rightarrow} \mathcal{F}^\ast(V) \rightarrow \mathcal{F}^\ast(W)$ and $\operatorname{pr}$ is the canonical projection $\Delta^{n-1} \times W \times [0, 1]_1 \rightarrow \Delta^{n-1} \times W$. \par
A continuous map $\Psi : \Delta^{n-1} \times V \times [0, 1]_1 \times [0, 1]_2 \rightarrow \Delta^{n-1} \times [0, 1]_1 \times [0, 1]_2$ is defined as $\Psi(p, x, s, t) := (p, R_{1-\phi(x)}(s, t))$.
By the discussion in Section 2.3, the continuous map $\Psi$ induces a morphism
\[
\Psi^\ast : \mathcal{G}^\ast[\Delta^{n-1} \times [0, 1]_1 \times [0, 1]_2] |_{V} \rightarrow \mathcal{G}^\ast[\Delta^{n-1} \times [0, 1]_1 \times [0, 1]_2] |_{V}
\]
between sheaves of sets. \par
Let $\gamma^2_1 := \Psi^\ast(\beta^{12})$.
To define $\gamma^2_2$, we define $\nu_1$ and $\nu_2$ as the following compositions:
\[
\begin{array}{l}
\nu_1 : \Delta^{n-1} \times \{ 0 \} \times [0, 1]_2 \overset{\cong}{\rightarrow} \Delta^{n-1} \times [0, 1]_2 \overset{\alpha^2}{\rightarrow} \mathcal{G}^\ast(W), \\
\nu_2 : \Delta^{n-1} \times [0, 1]_1 \times \{ 1 \} \overset{\cong}{\rightarrow} \Delta^{n-1} \times [0, 1]_1 \overset{\beta^2}{\rightarrow} \mathcal{G}^\ast(W).
\end{array}
\]
We have the equality $\nu_1 = \nu_2$ on $\Delta^{n-1} \times \{ 0 \} \times \{ 1 \}$ because the diagram (2) is commutative.
Then, we obtain a morphism $\nu : \Delta^{n-1} \times W \times \Sigma \rightarrow \mathcal{G}^\ast(W)$ by pasting $\nu_1$ and $\nu_2$.
Let $\gamma^2_2 := \nu \circ (id \times R_1)$. \par
$\gamma^1_1$ and $\gamma^2_1$ induce a morphism $\gamma_1 : \Delta^{n-1} \times [0, 1]_1 \rightarrow \mathcal{F}^\ast_f(U)$.
$\gamma^1_2$ and $\gamma^2_2$ induce a morphism $\gamma_2 : \Delta^{n-1} \times [0, 1]_1 \rightarrow \mathcal{F}^\ast_f(W)$.
$\gamma_1$ and $\gamma_2$ induce a morphism $\tilde{\gamma} : \Delta^{n-1} \times [0, 1]_1 \rightarrow \mathcal{F}^\ast_f(V)$ with the sheaf condition of $\mathcal{F}^\ast_f$ with respect to the open covering $\{ U, W \}$ of $V$.
$\tilde{\gamma}$ induces a lift $\gamma$ of the diagram (1).
Therefore, the morphism $\tilde{f'}_{K \subset L}$ has the right lifting property with respect to $J_{\bf qTop}$.
This means that $\tilde{f'}$ is a flexible extension.
\end{proof}
	\subsection{A prefibration structure for continuous sheaves}
We can prove Theorem \ref{Thm2}.
\begin{proof}[proof of Theorem \ref{Thm2}]
We will check the axioms {\it 1.} to {\it 6.} of Definition \ref{DefABC}.
The axioms {\it 1.} to {\it 5.} is obvious by Theorem \ref{Thm1}.
We will check the axiom {\it 6.}. \par
We consider the following diagram on ${\bf Sh}(B; {\bf qTop})$, embedded in ${\bf PSh}_\ast(\tilde{\mathscr{U}}_B; {\bf qTop})$:
	\[\xymatrix{
		& \mathcal{F}^\ast_f \ar[dr]^-{\tilde{f'}}
	\\
		\mathcal{F}^\ast \ar[ur]^-{\simeq} \ar[rr]
		&& \mathcal{G}^\ast
	\\
		& \mathcal{F}^\ast_f \times_{\mathcal{G}^\ast} \mathcal{G} \ar[uu] | \hole \ar[dr]
	\\
		\mathcal{F} \ar[uu]^-{\simeq} \ar[rr]_-{f} \ar[ur]
		&& \mathcal{G}. \ar[uu]_-{\simeq}
	}\]
The diagonal morphisms $\mathcal{F} \rightarrow \mathcal{F}^\ast$ and $\mathcal{G} \rightarrow \mathcal{G}^\ast$ are stalkwise weak equivalences by Lemma \ref{LemGromov1}.
The flexible track $\tilde{f'}$ of $f$ is a flexible extension by Lemma \ref{LemGromov2}.
The above morphism $\mathcal{F}^\ast \rightarrow \mathcal{F}^\ast_f$ is a sectionwise weak equivalence.
Then, this is a stalkwise weak equivalence by Proposition \ref{PropSectionwiseStalkwise}.
The above morphism $\mathcal{F}^\ast_f \times_{\mathcal{G}^\ast} \mathcal{G} \rightarrow \mathcal{F}^\ast_f$ is a stalkwise weak equivalence because the model category ${\bf PSh}_\ast(\tilde{\mathscr{U}}_B; {\bf qTop})$ is right proper.
Then, the above morphism $\mathcal{F} \rightarrow \mathcal{F}^\ast_f \times_{\mathcal{G}^\ast} \mathcal{G}$ is a stalkwise weak equivalence.
The above morphism $\mathcal{F}^\ast_f \times_{\mathcal{G}^\ast} \mathcal{G} \rightarrow \mathcal{G}$ is a flexible extension because it is the pull-back of a flexible extension.
Then, the decomposition $\mathcal{F} \rightarrow \mathcal{F}^\ast_f \times_{\mathcal{G}^\ast} \mathcal{G} \rightarrow \mathcal{G}$ of $f$ is what we want.
The axiom {\it 6.} is satisfied.
Therefore, the category ${\bf Sh}(B; {\bf qTop})$ of continuous sheaves has an ABC prefibration structure we want.
\end{proof}
\appendix
	\section{Model categories}
A {\it model category} is a category in which Homotopy Theory can be discussed.
In this paper, basic concepts such as a model category, a Quillen adjunction, etc. are assumed to be known.
See \cite{hovey2007model} or \cite{hirschhorn2009model} for more information.
We review some of the additional properties.
\vspace{10pt}\\
Note the following notation used in the model category theory.
\begin{defi} \label{DefNotation}
\mbox{}
\begin{itemize}
\item $f \pitchfork g$ $\Leftrightarrow$ $g$ has the right lifting property with respect to $f$.
\end{itemize}
For a class $I$ of some morphisms, we define the followings.
\begin{itemize}
\item $\operatorname{rlp}(I)$ is the class of morphisms having the right lifting property with respect to $I$.
\item $\operatorname{cell}(I)$ is the class of relative $I$-cell complexes.
\item $\operatorname{Cof}(I)$ is the class of retracts of relative $I$-cell complexes.
\end{itemize}
\end{defi}
	\subsection{Cofibrantly generated model categories}
It is difficult to check the axioms of a model category.
It is easier to check that the necessary and sufficient conditions of a {\it cofibrantly generated} model category.
	\subsubsection{Small object argument}
\begin{defi}[small objects]
Let $\mathscr{C}$ be a cocomplete category, and let $\mathscr{D}$ be a subcategory of $\mathscr{C}$.
\begin{itemize}
\item Let $\kappa$ be a cardinal.
An object $W$ in $\mathcal{C}$ is {\it $\kappa$-small relative to $\mathscr{D}$} if, for any regular cardinal $\lambda (\ge \kappa)$ and any $\lambda$-sequence $X$ in $\mathscr{D}$, the following map is a bijection:
\[
\displaystyle\lim_{\substack{\longrightarrow \\ \beta}} \mathscr{C}(W, X_\beta) \cong \mathscr{C}(W, \displaystyle\lim_{\substack{\longrightarrow \\ \beta}} X_\beta).
\]
(Remark that $\displaystyle\lim_{\longrightarrow} X$ is a colimit in $\mathscr{C}$.)
\item In particular, if $\mathscr{D} = \mathscr{C}$, $W$ is {\it $\kappa$-small}.
\item An object $W$ in $\mathcal{C}$ is {\it small relative to $\mathscr{D}$} if there exists a cardinal $\kappa$ such that the object $W$ is $\kappa$-small relative to $\mathscr{D}$.
\item In particular, if $\mathscr{D} = \mathscr{C}$, $W$ is {\it small}.
\end{itemize}
\end{defi}
\begin{ex} \label{ExSmallPresheaf}
Let $\mathscr{U}$ be a small category.
For any object $u \in Ob(\mathscr{U})$, the presheaf $\mathscr{U}(-, u) \in {\bf PSh}(\mathscr{U}; {\bf Set})$ is $\aleph_0$-small.
In fact, by Yoneda's Lemma, there exists the following bijection:
\[
\begin{array}{rcl}
\displaystyle\lim_{\substack{\longrightarrow \\ \beta}} {\bf PSh}(\mathscr{U}; {\bf Set})(\mathscr{U}(-, u), X_\beta)
& \cong & \displaystyle\lim_{\substack{\longrightarrow \\ \beta}} (X_\beta [u]) \\
& \cong & (\displaystyle\lim_{\substack{\longrightarrow \\ \beta}} X_\beta) [u] \\
& \cong & {\bf PSh}(\mathscr{U}; {\bf Set})(\mathscr{U}(-, u), \displaystyle\lim_{\substack{\longrightarrow \\ \beta}} X_\beta).
\end{array}
\]
\end{ex}
\begin{ex} \label{ExSmallSheaf}
Let $\mathscr{U}$ be a small site such that every covering has a finite refinement.
If a sheaf $\mathcal{F}$ on $\mathscr{U}$ is $\kappa$-small as a presheaf, the sheaf $\mathcal{F}$ is $\kappa$-small as a sheaf.
There exists the following bijection:
\[
\begin{array}{rcl}
\displaystyle\lim_{\substack{\longrightarrow \\ \beta}} {\bf Sh}(\mathscr{U}; {\bf Set})(\mathcal{F}, X_\beta)
& \cong & \displaystyle\lim_{\substack{\longrightarrow \\ \beta}} {\bf PSh}(\mathscr{U}; {\bf Set})(\mathcal{F}, X_\beta) \\
& \cong & {\bf PSh}(\mathscr{U}; {\bf Set})(\mathscr{U}(-, u), \displaystyle\lim_{\substack{\longrightarrow \\ \beta}} X_\beta) \\
& \cong & {\bf Sh}(\mathscr{U}; {\bf Set})(\mathscr{U}(-, u), \displaystyle\lim_{\substack{\longrightarrow \\ \beta}} X_\beta).
\end{array}
\]
(Remark that the colimit $\displaystyle\lim_{\substack{\longrightarrow}} X_\beta$ as a sheaf is the colimit as a presheaf because every covering has a finite refinement.)
\end{ex}
\begin{prop}[\mbox{cf. \cite[Proposition 10.4.8]{hirschhorn2009model}}] \label{PropSmallColimit}
Let $W : \mathscr{P} \rightarrow \mathscr{C}$ be a small diagram
(i.e. $W$ is a functor from a finite category $\mathscr{P}$).
If each $W_p$($p \in \operatorname{Ob}(\mathscr{P})$) is small relative to $\mathscr{D}$, the colimit $\displaystyle\lim_{\substack{\longrightarrow}} W_p$ is also small relative to $\mathscr{D}$.
\end{prop}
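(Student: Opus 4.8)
The plan is to reduce the claim to the fact, already exploited in the proof of Proposition \ref{PropFilter}, that filtered colimits commute with finite limits in $\mathbf{Set}$. First I would fix, for each $p \in \operatorname{Ob}(\mathscr{P})$, a cardinal $\kappa_p$ for which $W_p$ is $\kappa_p$-small relative to $\mathscr{D}$; since $\operatorname{Ob}(\mathscr{P})$ is finite, there is a regular cardinal $\kappa$ with $\kappa \ge \kappa_p$ for all $p$. I claim that $\lim_{\longrightarrow} W_p$ is $\kappa$-small relative to $\mathscr{D}$, which is exactly what must be shown.

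To verify this, let $\lambda \ge \kappa$ be a regular cardinal and let $X$ be a $\lambda$-sequence in $\mathscr{D}$. The universal property of the colimit gives a natural isomorphism
\[
\mathscr{C}\!\left(\lim_{\substack{\longrightarrow \\ p}} W_p,\; Y\right) \cong \lim_{\substack{\longleftarrow \\ p}} \mathscr{C}(W_p, Y),
\]
where the right-hand limit is a finite limit because $\mathscr{P}$ is finite. Applying this with $Y = X_\beta$ and with $Y = \lim_{\longrightarrow} X_\beta$, and using that each $W_p$ is $\kappa_p$-small (so that $\lim_{\longrightarrow} \mathscr{C}(W_p, X_\beta) \cong \mathscr{C}(W_p, \lim_{\longrightarrow} X_\beta)$, since $\lambda \ge \kappa \ge \kappa_p$), the comparison map I must invert is identified with the canonical map
\[
\lim_{\substack{\longrightarrow \\ \beta}} \lim_{\substack{\longleftarrow \\ p}} \mathscr{C}(W_p, X_\beta) \rightarrow \lim_{\substack{\longleftarrow \\ p}} \lim_{\substack{\longrightarrow \\ \beta}} \mathscr{C}(W_p, X_\beta).
\]

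The main point is that this last map is an isomorphism. Since $\lambda$ is a regular cardinal, the poset of ordinals below $\lambda$ is directed, so $\lim_{\substack{\longrightarrow \\ \beta}}$ is a filtered colimit, while $\lim_{\substack{\longleftarrow \\ p}}$ is a finite limit; hence the map is a bijection by the commutation of filtered colimits with finite limits in $\mathbf{Set}$ (cf. \cite[Chapter 9, Section 2]{mac2013categories}). Beyond this there is only bookkeeping: one has to check that all the displayed isomorphisms are natural in $\beta$, so that they assemble into the single canonical comparison map whose bijectivity is precisely the $\kappa$-smallness of $\lim_{\longrightarrow} W_p$. I do not expect any obstacle there, so the whole argument is essentially formal once the finiteness of $\mathscr{P}$ and the regularity of $\lambda$ are used to invoke the commutation theorem.
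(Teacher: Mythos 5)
Your proof is correct: reducing to the natural isomorphism $\mathscr{C}(\varinjlim_p W_p, Y) \cong \varprojlim_p \mathscr{C}(W_p, Y)$ and then commuting the filtered colimit over $\beta < \lambda$ past this finite limit in $\mathbf{Set}$ is exactly the right argument, and the finiteness of $\mathscr{P}$ plus regularity of $\lambda$ are used in precisely the right places. The paper itself gives no proof of this proposition but defers to Hirschhorn's Proposition 10.4.8, whose proof is essentially the same as yours, so your proposal coincides with the intended argument.
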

\begin{prop} \label{PropSmallAdjunction}
Let $F \dashv G : \mathscr{C} \rightarrow \mathscr{C'}$ be an adjunction, and let $\mathscr{D}$ be a subcategory of $\mathscr{C}$.
Suppose that the functor $G$ preserves any filtered colimit.
If an object $c \in Ob(\mathscr{C})$ is $\kappa$-small relative to $\mathscr{D}$, the object $F(c) \in Ob(\mathscr{C})$ is $\kappa$-small relative to $G^{-1}(\mathscr{D})$.
\end{prop}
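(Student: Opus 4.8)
The plan is to prove the claim by a single chain of natural isomorphisms, transporting the assumed $\kappa$-smallness of $c$ in $\mathscr{C}$ across the adjunction $F \dashv G$ to obtain the $\kappa$-smallness of $F(c)$ in $\mathscr{C'}$. First I would fix a regular cardinal $\lambda (\ge \kappa)$ and an arbitrary $\lambda$-sequence $Y$ in the subcategory $G^{-1}(\mathscr{D})$ of $\mathscr{C'}$; the goal is then to show that the canonical map $\displaystyle\lim_{\substack{\longrightarrow \\ \beta}} \mathscr{C'}(F(c), Y_\beta) \rightarrow \mathscr{C'}(F(c), \displaystyle\lim_{\substack{\longrightarrow \\ \beta}} Y_\beta)$ is a bijection.

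The key observation is that applying $G$ to $Y$ produces a $\lambda$-sequence $G(Y)$ that lands in $\mathscr{D}$: by the definition of the preimage subcategory $G^{-1}(\mathscr{D})$, each object $Y_\beta$ satisfies $G(Y_\beta) \in \mathscr{D}$ and each transition morphism is carried by $G$ into $\mathscr{D}$. I would then evaluate $\mathscr{C'}(F(c), \displaystyle\lim_{\substack{\longrightarrow \\ \beta}} Y_\beta)$ in four steps: (i) the adjunction isomorphism rewrites it as $\mathscr{C}(c, G(\displaystyle\lim_{\substack{\longrightarrow \\ \beta}} Y_\beta))$; (ii) since a $\lambda$-sequence over a regular cardinal is a filtered diagram, the hypothesis that $G$ preserves filtered colimits gives the identification $G(\displaystyle\lim_{\substack{\longrightarrow \\ \beta}} Y_\beta) \cong \displaystyle\lim_{\substack{\longrightarrow \\ \beta}} G(Y_\beta)$; (iii) the $\kappa$-smallness of $c$ relative to $\mathscr{D}$, applied to the $\lambda$-sequence $G(Y)$ in $\mathscr{D}$, turns $\mathscr{C}(c, \displaystyle\lim_{\substack{\longrightarrow \\ \beta}} G(Y_\beta))$ into $\displaystyle\lim_{\substack{\longrightarrow \\ \beta}} \mathscr{C}(c, G(Y_\beta))$; and (iv) the adjunction isomorphism once more identifies each term $\mathscr{C}(c, G(Y_\beta))$ with $\mathscr{C'}(F(c), Y_\beta)$. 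Composing these isomorphisms yields the desired bijection.

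The only genuinely delicate point, and the step I would treat most carefully, is the bookkeeping that guarantees the composite of these four isomorphisms is precisely the canonical comparison map rather than merely some abstract bijection of the two sets. This I would settle by appealing to the naturality of the adjunction isomorphism $\mathscr{C'}(F(-), -) \cong \mathscr{C}(-, G(-))$ in both variables, which makes each identification compatible with the structure maps of the colimits; consequently the two comparison maps correspond under the adjunction. Everything else is formal, and the crucial analytic input is exactly the preservation of filtered colimits used in step (ii). I would close by noting that this is the form in which the proposition is invoked in the proof of Theorem \ref{ThmCompactFlexible}, where $G$ is the evaluation functor $\mathcal{F} \mapsto \mathcal{F}(K)$, which preserves all colimits by the sectionwise calculation and in particular preserves filtered ones.
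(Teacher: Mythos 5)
Your proposal is correct and is essentially the paper's own proof: the paper establishes the claim by exactly the same chain of four natural isomorphisms (adjunction, $\kappa$-smallness of $c$ relative to $\mathscr{D}$ applied to the sequence $G(Y_\beta)$, preservation of filtered colimits by $G$, adjunction again), merely written in the opposite order. Your added remarks --- that $G$ carries a $\lambda$-sequence in $G^{-1}(\mathscr{D})$ into $\mathscr{D}$, and that naturality of the adjunction isomorphism ensures the composite is the canonical comparison map --- are points the paper leaves implicit, and they only strengthen the argument.
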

\begin{proof}
\[
\begin{array}{rcl}
\displaystyle\lim_{\substack{\longrightarrow \\ \beta}} \mathscr{C'}(F(c), X_\beta)
& \cong & \displaystyle\lim_{\substack{\longrightarrow \\ \beta}} \mathscr{C}(c, G(X_\beta)) \\
& \cong & \mathscr{C}(c, \displaystyle\lim_{\substack{\longrightarrow \\ \beta}} G(X_\beta)) \\
& \cong & \mathscr{C}(c, G(\displaystyle\lim_{\substack{\longrightarrow \\ \beta}} X_\beta)) \\
& \cong & \mathscr{C'}(F(c), \displaystyle\lim_{\substack{\longrightarrow \\ \beta}} X_\beta).
\end{array}
\]
\end{proof}
\begin{defi}[small object argument] \label{DefSmallArgument}
Let $\mathscr{C}$ be a cocomplete category, and let $I$ be a set of some morphisms in $\mathscr{C}$.
\begin{itemize}
\item Let $\kappa$ be a cardinal.
$I$ {\it permits the $\kappa$-small object argument} if, for any morphism $x \rightarrow y$ belonging $I$, the domain $x$ is $\kappa$-small relative to the subcategory $\operatorname{cell}(I)$.
\item $I$ {\it permits the small object argument} if there exists a cardinal $\kappa$ such that $I$ permits the $\kappa$-small object argument.
\end{itemize}
\end{defi}
	\subsubsection{Cofibrantly generated model categories}
\begin{defi}[cofibrantly generated model categories] \label{DefCofibrantlyGenerated}
A model category $\mathscr{C}$ is {\it cofibrantly generated} if there exist two sets $I$ and $J$ of some morphisms in $\mathscr{C}$ satisfying the following:
\begin{itemize}
\item The class of cofibrations of $\mathscr{C}$ is $\operatorname{Cof}(I)$
(cf. Definition \ref{DefNotation}).
\item The class of acyclic cofibrations of $\mathscr{C}$ is $\operatorname{Cof}(J)$
(cf. Definition \ref{DefNotation}).
\item $I$ and $J$ permit the small object argument
(cf. Definition \ref{DefSmallArgument}).
\end{itemize}
Then, we say that $I$ (resp. $J$) generates cofibrations (resp. acyclic cofibrations).
\end{defi}
\begin{thm}[\mbox{Recognition Theorem; cf. \cite[Theorem 11.3.1]{hirschhorn2009model}}] \label{ThmRecognition}
Let $\mathscr{C}$ be a complete and cocomplete category.
Let $W$ be a class of some morphisms in $\mathscr{C}$, and let $I$ and $J$ be two sets of some morphisms in $\mathscr{C}$.
There exists a model structure on $\mathscr{C}$ such that the class of weak equivalences is $W$, the class of cofibrations is generated by $I$ and the class of acyclic cofibrations is generated by $J$ if and only if the following conditions hold:
\begin{enumerate}
\item We have the inclusion relation $\operatorname{Cof}(J) \subset \operatorname{Cof}(I) \cap W$
(cf. Definition \ref{DefNotation}).
\item We have the inclusion relation $\operatorname{rlp}(I) \subset \operatorname{rlp}(J) \cap W$
(cf. Definition \ref{DefNotation}).
\item The reverse inclusion holds for one of the two inclusions above.
i.e. We have one of the following inclusion relations:
	\begin{enumerate}
	\item $\operatorname{Cof}(J) \supset \operatorname{Cof}(I) \cap W$, or
	\item $\operatorname{rlp}(I) \supset \operatorname{rlp}(J) \cap W$.
	\end{enumerate}
\item If two of the three morphisms $f$, $g$ and $g \circ f$ are contained in $W$, then the third is also contained in $W$.
\item $I$ and $J$ permit the small object argument
(cf. Definition \ref{DefSmallArgument}).
\end{enumerate}
\end{thm}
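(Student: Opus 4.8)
The plan is to prove the equivalence in both directions, with the forward implication being routine and the converse carrying the weight. For \textbf{necessity}, suppose the asserted model structure exists. Then its cofibrations are exactly $\operatorname{Cof}(I)$ and its acyclic cofibrations exactly $\operatorname{Cof}(J)$, so condition 1 holds. By the standard theory of cofibrantly generated model categories (via the small object argument) the fibrations are $\operatorname{rlp}(J)$ and the acyclic fibrations are $\operatorname{rlp}(I)$; hence $\operatorname{rlp}(I) = \operatorname{rlp}(J) \cap W$, which gives condition 2 together with the reverse inclusion 3(b). Condition 4 is the two-out-of-three axiom and condition 5 is part of the definition of cofibrant generation.

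For \textbf{sufficiency} I would define cofibrations $:= \operatorname{Cof}(I)$, fibrations $:= \operatorname{rlp}(J)$, and weak equivalences $:= W$. First I would apply the small object argument (condition 5) to obtain two functorial factorizations of an arbitrary morphism: one into a relative $I$-cell complex followed by a map in $\operatorname{rlp}(I)$, and one into a relative $J$-cell complex followed by a map in $\operatorname{rlp}(J)$. This simultaneously exhibits $(\operatorname{Cof}(I), \operatorname{rlp}(I))$ and $(\operatorname{Cof}(J), \operatorname{rlp}(J))$ as weak factorization systems, so $\operatorname{Cof}(I)$ consists precisely of the maps with the left lifting property against $\operatorname{rlp}(I)$, and likewise for $J$.

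The crux is to identify the acyclic (co)fibrations. Assuming condition 3(b) (the case 3(a) is dual), conditions 2 and 3 give $\operatorname{rlp}(I) = \operatorname{rlp}(J) \cap W$, so the maps with the right lifting property against $I$ are exactly the fibrations lying in $W$ --- the prospective acyclic fibrations. For the dual identity I would run the retract argument: given $f \in \operatorname{Cof}(I) \cap W$, factor $f = p \circ i$ with $i \in \operatorname{cell}(J) \subset \operatorname{Cof}(J)$ and $p \in \operatorname{rlp}(J)$; since $i \in \operatorname{Cof}(J) \subset W$ by condition 1 and $f \in W$, two-out-of-three forces $p \in W$, whence $p \in \operatorname{rlp}(J) \cap W = \operatorname{rlp}(I)$; as $f \in \operatorname{Cof}(I)$ it lifts against $p$, and the lift exhibits $f$ as a retract of $i$, so $f \in \operatorname{Cof}(J)$. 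Together with condition 1 this yields $\operatorname{Cof}(I) \cap W = \operatorname{Cof}(J)$, identifying the acyclic cofibrations.

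With these identities the remaining axioms are formal: the lifting axioms are the two weak factorization systems (cofibrations against acyclic fibrations $= \operatorname{rlp}(I)$, and acyclic cofibrations $= \operatorname{Cof}(J)$ against fibrations), the factorizations are those already produced by the small object argument, and the retract axiom for cofibrations and fibrations holds because $\operatorname{Cof}(I)$ and $\operatorname{rlp}(J)$ are retract-closed by construction. The one remaining delicate point --- and the step I expect to be the main obstacle --- is the retract-closure of $W$ itself. For this I would first record the characterization $W = \{\, p \circ i \mid i \in \operatorname{Cof}(J),\ p \in \operatorname{rlp}(I) \,\}$ (every weak equivalence factors this way by the argument above, and every such composite lies in $W$ by two-out-of-three), and then deduce retract-closure by a further retract argument combining this characterization with two-out-of-three. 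The asymmetry of condition 3, which supplies only one of the two reverse inclusions, is what makes the retract argument unavoidable rather than a direct set-theoretic comparison.
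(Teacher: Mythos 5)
Your proposal cannot be checked against an internal proof because the paper offers none: Theorem \ref{ThmRecognition} is stated as a citation of Kan's Recognition Theorem (Hirschhorn, Theorem 11.3.1) and is used as a black box. Measured against that standard proof, most of what you write is correct and is indeed the standard route: the necessity direction is routine; the small object argument produces the two weak factorization systems $(\operatorname{Cof}(I), \operatorname{rlp}(I))$ and $(\operatorname{Cof}(J), \operatorname{rlp}(J))$; conditions 2 and 3(b) identify $\operatorname{rlp}(I) = \operatorname{rlp}(J) \cap W$ as the acyclic fibrations; and your retract argument correctly yields $\operatorname{Cof}(I) \cap W = \operatorname{Cof}(J)$, after which the lifting and factorization axioms are formal. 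Your characterization $W = \{\, p \circ i \mid i \in \operatorname{Cof}(J),\ p \in \operatorname{rlp}(I) \,\}$ is also correct.

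The genuine gap is exactly the step you flag and then wave at: retract-closure of $W$. Note that Hirschhorn's formulation \emph{assumes} $W$ is closed under retracts, whereas the paper's statement (conditions 1--5) does not, so here the closure really must be derived --- and ``a further retract argument combining this characterization with two-out-of-three'' does not suffice as stated. The direct retract argument only works when the map in question is a fibration: if $g \in \operatorname{rlp}(J)$ is a retract of $w \in W$, factor $w = p \circ i$ with $i \in \operatorname{cell}(J)$, $p \in \operatorname{rlp}(J)$, get $p \in \operatorname{rlp}(I)$ by two-out-of-three and 3(b), lift $i$ against $g$ using the retraction, and conclude that $g$ is a retract of $p$, hence lies in $\operatorname{rlp}(I) \subset W$. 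For a general $f$ (retract of $w \in W$, via $s : X \to A$, $r : A \to X$, $s' : Y \to B$, $r' : B \to Y$) this lifting is unavailable, and one needs the additional idea of Joyal--Tierney: factor $f = p' \circ i'$ with $i' : X \to Z$ in $\operatorname{cell}(J)$ and $p' \in \operatorname{rlp}(J)$; form the pushout $P = Z \cup_X A$ of $i'$ along $s$, so that $A \to P$ lies in $\operatorname{Cof}(J) \subset W$; the maps $w : A \to B$ and $s' p' : Z \to B$ glue to $u : P \to B$, which lies in $W$ by two-out-of-three since $w = u \circ (A \to P)$; finally one checks, using the universal property of $P$ and the retraction data, that $p'$ is a retract of $u$, so the fibration case applies to give $p' \in W$ and hence $f = p' i' \in W$. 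Without this pushout construction (or without restoring Hirschhorn's hypothesis that $W$ be retract-closed, under which your proof is complete), the last step of your proposal does not go through, and it is precisely the step on which the paper's ``if and only if'' formulation depends.
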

\begin{thm}[\mbox{Transfer Theorem; cf. \cite[Theorem 11.3.2]{hirschhorn2009model}}] \label{ThmTransfer}
Let $\mathscr{C}$ be a cofibrantly generated model category such that a set $I$ generates cofibrations and that a set $J$ generates acyclic cofibrations.
Let $W$ be the class of weak equivalences in $\mathscr{C}$.
For a category $\mathscr{D}$ and adjunction $F \dashv G : \mathscr{C} \rightarrow \mathscr{D}$, suppose the followings:
\begin{enumerate}
\item $F(I)$ and $F(J)$ permit the small object argument
(cf. Definition \ref{DefSmallArgument}).
\item We have the inclusion relation $\operatorname{cell}(F(J)) \subset G^{-1}(W)$
(cf. Definition \ref{DefNotation}).
\end{enumerate}
Then, $\mathscr{D}$ has a cofibrantly generated model structure such that the class of weak equivalences is $G^{-1}(W)$, the class of cofibrations is generated by $F(I)$ and the class of acyclic cofibrations is generated by $F(J)$.
Furthermore, the adjunction $F \dashv G$ is a Quillen adjunction.
\end{thm}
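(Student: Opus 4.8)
The plan is to reduce everything to the Recognition Theorem (Theorem \ref{ThmRecognition}), applied with the three data $W_{\mathscr{D}} := G^{-1}(W)$ as the candidate class of weak equivalences, the set $F(I)$ as candidate generating cofibrations, and the set $F(J)$ as candidate generating acyclic cofibrations. The single organizing device throughout is the adjunction translation of lifting properties: for any set $S$ of morphisms of $\mathscr{C}$ and any morphism $g$ of $\mathscr{D}$, one has $g \in \operatorname{rlp}(F(S))$ if and only if $G(g) \in \operatorname{rlp}(S)$. In particular $\operatorname{rlp}(F(I))$ consists of the $g$ with $G(g)$ an acyclic fibration of $\mathscr{C}$, and $\operatorname{rlp}(F(J))$ of the $g$ with $G(g)$ a fibration of $\mathscr{C}$.

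First I would record two formal stability properties of $W_{\mathscr{D}}$. Since $W$ satisfies two-out-of-three in $\mathscr{C}$ and $G$ is a functor, $G^{-1}(W)$ satisfies two-out-of-three, which is condition 4 of Theorem \ref{ThmRecognition}. Likewise, since weak equivalences in a model category are closed under retracts and $G$ preserves retracts, $W_{\mathscr{D}}$ is retract-closed; I will use this to pass from $\operatorname{cell}$ to $\operatorname{Cof}$. Condition 5 of Theorem \ref{ThmRecognition} (the small object argument for $F(I)$ and $F(J)$) is exactly hypothesis 1 of the statement.

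Next I would verify conditions 1 and 2. For condition 2, the adjunction translation gives $\operatorname{rlp}(F(I)) \subset \operatorname{rlp}(F(J))$ because acyclic fibrations are fibrations, and $\operatorname{rlp}(F(I)) \subset W_{\mathscr{D}}$ because acyclic fibrations are weak equivalences. For condition 1, the inclusion $\operatorname{Cof}(F(J)) \subset \operatorname{Cof}(F(I))$ follows from $J \subset \operatorname{Cof}(I)$ together with the fact that the left adjoint $F$ preserves pushouts, transfinite compositions and retracts, whence $F(J) \subset \operatorname{Cof}(F(I))$ and therefore $\operatorname{Cof}(F(J)) \subset \operatorname{Cof}(F(I))$. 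The remaining inclusion $\operatorname{Cof}(F(J)) \subset W_{\mathscr{D}}$ is where hypothesis 2 enters: it gives $\operatorname{cell}(F(J)) \subset W_{\mathscr{D}}$, and retract-closedness of $W_{\mathscr{D}}$ promotes this to $\operatorname{Cof}(F(J)) \subset W_{\mathscr{D}}$. For the reverse inclusion required by condition 3 I would check $\operatorname{rlp}(F(J)) \cap W_{\mathscr{D}} \subset \operatorname{rlp}(F(I))$: if $G(g)$ is a fibration and lies in $W$, then $G(g)$ is an acyclic fibration, so $g \in \operatorname{rlp}(F(I))$. The Recognition Theorem then yields the asserted model structure.

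It remains to see that $F \dashv G$ is a Quillen adjunction, for which I would verify that $G$ is right Quillen. By construction the fibrations of $\mathscr{D}$ are $\operatorname{rlp}(F(J))$ and the acyclic fibrations are $\operatorname{rlp}(F(I))$, and the adjunction translation shows that $g$ is a (acyclic) fibration in $\mathscr{D}$ precisely when $G(g)$ is a (acyclic) fibration in $\mathscr{C}$; in particular $G$ preserves both, so it is right Quillen. The only genuinely non-formal ingredient in the whole argument is hypothesis 2, the assertion that relative $F(J)$-cell complexes are carried into $W$ by $G$; this is exactly the delicate point that the transfer mechanism isolates and leaves to be checked in each concrete application, and everything else is bookkeeping driven by the adjunction. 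The main thing I expect to require care is keeping the direction of the adjunction translation straight and making sure the passage from $\operatorname{cell}$ to $\operatorname{Cof}$ genuinely uses the retract-closedness of $W_{\mathscr{D}}$ rather than tacitly assuming it.
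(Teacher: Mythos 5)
The paper itself contains no proof of this theorem---it is quoted from \cite[Theorem 11.3.2]{hirschhorn2009model}---so there is no internal argument to compare against; your proof is correct and is essentially the standard one, namely the same reduction to the Recognition Theorem (Theorem \ref{ThmRecognition}) used in the cited source, with the adjunction translation $g \in \operatorname{rlp}(F(S)) \Leftrightarrow G(g) \in \operatorname{rlp}(S)$ carrying the verification of every condition. Two steps deserve to be made explicit: the passage from $F(J) \subset \operatorname{Cof}(F(I))$ to $\operatorname{Cof}(F(J)) \subset \operatorname{Cof}(F(I))$ needs $\operatorname{Cof}(F(I))$ to be closed under pushouts, transfinite compositions and retracts, which follows from hypothesis \emph{1.} via the identification $\operatorname{Cof}(F(I)) = \operatorname{llp}(\operatorname{rlp}(F(I)))$ (the retract argument, available because $F(I)$ permits the small object argument); and the Recognition Theorem requires $\mathscr{D}$ to be complete and cocomplete, a hypothesis the statement leaves implicit and which you should state when invoking it.
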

Proofs using Theorem \ref{ThmTransfer} may be simplified if we can use the following corollary.
\begin{cor} \label{CorTransfer}
Let $\mathscr{C}$, $I$, $J$ and $W$ be the same as in Theorem \ref{ThmTransfer}.
For a category $\mathscr{D}$ and adjunction $F \dashv G : \mathscr{C} \rightarrow \mathscr{D}$, suppose the followings:
\begin{enumerate}
\item $G$ preserves any colimit.
\item We have the inclusion relations $G(F(I)) \subset \operatorname{cell}(I)$ and $G(F(J)) \subset \operatorname{cell}(J)$
(cf. Definition \ref{DefNotation}).
\end{enumerate}
Then, $\mathscr{D}$ has a cofibrantly generated model structure such that the class of weak equivalences is $G^{-1}(W)$, the class of cofibrations is generated by $F(I)$ and the class of acyclic cofibrations is generated by $F(J)$.
Furthermore, the adjunction $F \dashv G$ is a Quillen adjunction.
\end{cor}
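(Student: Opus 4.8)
The plan is to derive this as a direct consequence of the \emph{Transfer Theorem} (Theorem \ref{ThmTransfer}): I will verify its two hypotheses, namely that $F(I)$ and $F(J)$ permit the small object argument, and that $\operatorname{cell}(F(J)) \subset G^{-1}(W)$. The whole argument rests on a single auxiliary observation. Since $G$ preserves all colimits by hypothesis \emph{1.}, and a relative $F(S)$-cell complex is (by definition) a transfinite composition of pushouts of coproducts of elements of $F(S)$ for any set $S$ of morphisms in $\mathscr{C}$, applying $G$ to such a cell complex yields again a transfinite composition of pushouts of elements of $G(F(S))$. Hence $G(\operatorname{cell}(F(S))) \subset \operatorname{cell}(G(F(S)))$. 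Taking $S = I$ and $S = J$ and using hypothesis \emph{2.} together with the idempotency $\operatorname{cell}(\operatorname{cell}(-)) = \operatorname{cell}(-)$, I obtain
\[
G(\operatorname{cell}(F(I))) \subset \operatorname{cell}(I), \qquad G(\operatorname{cell}(F(J))) \subset \operatorname{cell}(J),
\]
equivalently $\operatorname{cell}(F(I)) \subset G^{-1}(\operatorname{cell}(I))$ and $\operatorname{cell}(F(J)) \subset G^{-1}(\operatorname{cell}(J))$.

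For the small object argument, I recall that the domains of $F(I)$ are exactly the objects $F(c)$ with $c$ a domain of some element of $I$, since the left adjoint $F$ respects domains. Because $I$ permits the small object argument in $\mathscr{C}$, each such $c$ is small relative to $\operatorname{cell}(I)$; because $G$ preserves colimits it in particular preserves filtered colimits, so Proposition \ref{PropSmallAdjunction} shows $F(c)$ is small relative to $G^{-1}(\operatorname{cell}(I))$. Smallness relative to a subcategory only becomes easier to satisfy as the subcategory shrinks, so the inclusion $\operatorname{cell}(F(I)) \subset G^{-1}(\operatorname{cell}(I))$ established above lets me conclude that $F(c)$ is small relative to $\operatorname{cell}(F(I))$. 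Thus $F(I)$ permits the small object argument, and the identical argument applied to $J$ shows $F(J)$ does as well.

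It remains to check $\operatorname{cell}(F(J)) \subset G^{-1}(W)$. By the auxiliary observation $G(\operatorname{cell}(F(J))) \subset \operatorname{cell}(J)$; and since $J$ generates the acyclic cofibrations of $\mathscr{C}$, every relative $J$-cell complex is an acyclic cofibration, whence $\operatorname{cell}(J) \subset W$. Combining these gives $G(\operatorname{cell}(F(J))) \subset W$, i.e.\ $\operatorname{cell}(F(J)) \subset G^{-1}(W)$, which is the second hypothesis of Theorem \ref{ThmTransfer}. Applying that theorem then produces the desired cofibrantly generated model structure on the target category, with weak equivalences $G^{-1}(W)$, cofibrations generated by $F(I)$ and acyclic cofibrations generated by $F(J)$, together with the assertion that $F \dashv G$ is a Quillen adjunction. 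The only genuinely delicate point—and the step I would check most carefully—is the auxiliary observation that $G$ transports cell complexes to cell complexes; this is precisely where preservation of \emph{all} colimits (rather than merely filtered ones) is used, so that pushouts and transfinite compositions are both respected.
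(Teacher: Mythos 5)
Your proposal is correct and follows essentially the same route as the paper's proof: both arguments hinge on the observation that colimit preservation gives $G(\operatorname{cell}(F(S))) \subset \operatorname{cell}(G(F(S)))$, combine it with hypothesis \emph{2.} and the closure property $\operatorname{cell}(\operatorname{cell}(-)) = \operatorname{cell}(-)$ to obtain $\operatorname{cell}(F(I)) \subset G^{-1}(\operatorname{cell}(I))$ and $\operatorname{cell}(F(J)) \subset G^{-1}(\operatorname{cell}(J))$, then transfer smallness via Proposition \ref{PropSmallAdjunction} and conclude $\operatorname{cell}(F(J)) \subset G^{-1}(W)$ through $\operatorname{cell}(J) \subset \operatorname{Cof}(J) \subset W$ before invoking Theorem \ref{ThmTransfer}. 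The only differences are cosmetic (order of the two verifications and making the auxiliary inclusion an explicitly stated lemma), so there is nothing to correct.
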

\begin{proof}
In order to apply Theorem \ref{ThmTransfer}, we show the followings:
\begin{description}
\item[(1)] $F(I)$ and $F(J)$ permit the small object argument.
\item[(2)] We have the inclusion relation $\operatorname{cell}(F(J)) \subset G^{-1}(W)$.
\end{description}\par
We will show the condition {\bf (1)}.
$I$(resp. $J$) permits the small object argument.
i.e. There exists a cardinal $\kappa$ such that, for any morphism $x \rightarrow y$ belonging $I$ (resp. $J$), the domain $x$ is $\kappa$-small relative to the subcategory $\operatorname{cell}(I)$ (resp. $\operatorname{cell}(J)$).
From assumption {\it 1.} and Proposition \ref{PropSmallAdjunction}, the domain of any morphism belonging to $F(I)$ (resp. $F(J)$) is $\kappa$-small relative to $G^{-1}(\operatorname{cell}(I))$ (resp. $G^{-1}(\operatorname{cell}(J))$).
From assumption {\it 1.}, we obtain the inclusion relation $G(\operatorname{cell}(F(I))) \subset \operatorname{cell}(G(F(I)))$ (resp. $G(\operatorname{cell}(F(J))) \subset \operatorname{cell}(G(F(J)))$).
From assumption {\it 2.}, we obtain the inclusion relation $\operatorname{cell}(G(F(I))) \subset \operatorname{cell}(I)$ (resp. $\operatorname{cell}(G(F(J))) \subset \operatorname{cell}(J)$).
Then, we have the inclusion relation $\operatorname{cell}(F(I)) \subset G^{-1}(\operatorname{cell}(I))$ (resp. $\operatorname{cell}(F(J)) \subset G^{-1}(\operatorname{cell}(J))$).
Therefore, the domain of any morphism belonging to $F(I)$ (resp. $F(J)$) is $\kappa$-small relative to $\operatorname{cell}(F(I))$ (resp. $\operatorname{cell}(F(J))$).
Then, $F(I)$ and $F(J)$ permit the small object argument.\par
We will show the condition {\bf (2)}.
From assumption {\it 1.}, we obtain the inclusion relation $G(\operatorname{cell}(F(J))) \subset \operatorname{cell}(G(F(J)))$.
From assumption {\it 2.}, we obtain the inclusion relation $\operatorname{cell}(G(F(J))) \subset \operatorname{cell}(J)$.
Then, we have the sequence of the inclusion relations $G(\operatorname{cell}(F(J))) \subset \operatorname{cell}(G(F(J))) \subset \operatorname{cell}(J) \subset \operatorname{Cof}(J) \subset W$.
Therefore, we have the inclusion relation $\operatorname{cell}(F(J)) \subset G^{-1}(W)$.
\end{proof}
	\subsection{Cartesian closed model categories}
Prepare some notations.
\begin{defi} \label{DefProductSum}
Let $\mathscr{C}$ be a complete and cocomplete category.
For two morphisms $f : A \rightarrow B$ and $f' : A' \rightarrow B'$ in $\mathscr{C}$, denote the following morphism $(A \times B') \cup_{A \times A'} (B \times A') \rightarrow B \times B'$ by $f \triangledown f'$:
	\[\xymatrix{
		&& B \times B'
	\\
		A \times B' \ar[r] \ar@/^15pt/[urr]^-{f \times B'}
		& (A \times B') \cup_{A \times A'} (B \times A') \ar[ur]^-{f \triangledown f'}
	\\
		A \times A' \ar[r]^-{f \times A'} \ar[u]^-{A \times f'}
		& B \times A'. \ar[u] \ar@/_15pt/[uur]_-{B \times f'}
	}\]
Furthermore, suppose that $\mathscr{C}$ is Cartesian closed.
For two morphisms $f : A \rightarrow B$ and $g : X \rightarrow Y$ in $\mathscr{C}$, denote the following morphism $X^B \rightarrow Y^B \times_{Y^A} X^A$ by $g^{\# f}$:
	\[\xymatrix{
		X^B \ar@/_15pt/[ddr]_-{g^B} \ar@/^15pt/[drr]^-{X^f} \ar[dr]^{g^{\# f}}
	\\
		& Y^B \times_{Y^A} X^A \ar[d] \ar[r]
		& X^A \ar[d]^-{g^A}
	\\
		& Y^B \ar[r]^-{Y^f}
		& Y^A.
	}\]
\end{defi}
We define a Cartesian closed model category.
\begin{defi}[Cartesian closed model categories] \label{DefModelCartesian}
A {\it Cartesian closed model category} is a model category that is Cartesian closed, satisfying any (i.e. all) of the following equivalent conditions:
\begin{itemize}
\item For any two cofibrations $f$ and $f'$, $f \triangledown f'$ is a cofibration.
In addition, if $f$ or $f'$ is acyclic, then $f \triangledown f'$ is also acyclic.
\item For any cofibration $f$ and any fibration $g$, $g^{\# f}$ is a fibration.
In addition, if $f$ or $g$ is acyclic, then $g^{\# f}$ is also acyclic.
\end{itemize}
\end{defi}
\begin{prop} \label{PropCofibrantlyCartesian}
Let $\mathscr{C}$ be a model category which is Cartesian closed.
Suppose that $\mathscr{C}$ is a cofibrantly generated model category such that a set $I$ generates cofibrations and that a set $J$ generates acyclic cofibrations.
The following conditions are equivalent.
\begin{enumerate}
\item $\mathscr{C}$ is a Cartesian closed model category.
\item For any two elements $f, f' \in I$, $f \triangledown f'$ is a cofibration.
For any element $f \in I$ and any element $f' \in J$, $f \triangledown f'$ is an acyclic cofibration.
\end{enumerate}
\end{prop}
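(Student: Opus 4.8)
The implication from the first condition to the second is immediate, and I would dispose of it first: every element of $I$ is a cofibration and every element of $J$ is an acyclic cofibration, so applying the first bullet of Definition \ref{DefModelCartesian} to pairs in $I \times I$ and in $I \times J$ yields condition 2 at once. The bulk of the work is the converse. Since the two bullets of Definition \ref{DefModelCartesian} are equivalent, it suffices to establish the first: for all cofibrations $f, f'$ the morphism $f \triangledown f'$ is a cofibration, acyclic as soon as $f$ or $f'$ is. The tool I would rely on throughout is the two-variable adjunction of a Cartesian closed category, which gives a natural bijection of lifting problems
\[
(f \triangledown f') \pitchfork g \iff f \pitchfork g^{\#f'} \iff f' \pitchfork g^{\#f},
\]
where the last equivalence uses the symmetry $f \triangledown f' \cong f' \triangledown f$ coming from commutativity of the product. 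I would pair this with the standard descriptions available in a cofibrantly generated model category: acyclic fibrations are $\operatorname{rlp}(I)$, fibrations are $\operatorname{rlp}(J)$, cofibrations are exactly the maps that left-lift against every acyclic fibration, and acyclic cofibrations are exactly the maps that left-lift against every fibration.

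For the non-acyclic half I would bootstrap from the generators along a four-rung ladder, each rung trading one lifting problem for another across the adjunction. First, condition 2 gives $i \triangledown i'$ a cofibration for $i, i' \in I$; reading $(i \triangledown i') \pitchfork g$ for an acyclic fibration $g$ as $i' \pitchfork g^{\#i}$ for all $i' \in I$ shows $g^{\#i} \in \operatorname{rlp}(I)$ is an acyclic fibration. Second, since an arbitrary cofibration $f'$ left-lifts against the acyclic fibration $g^{\#i}$, I get $(i \triangledown f') \pitchfork g$ for every acyclic fibration $g$, so $i \triangledown f'$ is a cofibration for each generator $i$. Third, re-reading $(i \triangledown f') \pitchfork g$ as $i \pitchfork g^{\#f'}$ for all $i \in I$ shows $g^{\#f'}$ is again an acyclic fibration. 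Fourth, an arbitrary cofibration $f$ then left-lifts against $g^{\#f'}$, giving $(f \triangledown f') \pitchfork g$ and hence $f \triangledown f'$ a cofibration.

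The acyclic half follows the identical ladder with $J$ and fibrations replacing $I$ and acyclic fibrations; by symmetry I may take $f'$ to be the acyclic cofibration. Starting from $i \triangledown j$ an acyclic cofibration ($i \in I$, $j \in J$), the relation $(j \triangledown i) \pitchfork g$ for every fibration $g$ shows $g^{\#i} \in \operatorname{rlp}(J)$ is a fibration; since an acyclic cofibration $f'$ left-lifts against fibrations, $i \triangledown f'$ is an acyclic cofibration, whence $g^{\#f'}$ is an acyclic fibration, whence $f \triangledown f'$ is an acyclic cofibration for any cofibration $f$. I expect the only real difficulty to be organizational rather than conceptual: the whole argument rests on setting up the adjunction bijection $(f \triangledown f') \pitchfork g \iff f \pitchfork g^{\#f'}$ precisely enough that each rung of each ladder corresponds to the literally same lifting square, and on invoking the correct lifting-theoretic characterization of each of the four distinguished classes. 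Once those are in hand, no genuine computation remains.
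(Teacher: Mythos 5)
Your proposal is correct and follows essentially the same route as the paper's own proof: both dispose of the implication $1 \Rightarrow 2$ as immediate and prove the converse by bootstrapping across the adjunction correspondence $(f \triangledown f') \pitchfork g \Leftrightarrow f \pitchfork g^{\# f'}$, upgrading one variable at a time from generators to arbitrary (acyclic) cofibrations via the characterizations $\operatorname{rlp}(I) = \{\text{acyclic fibrations}\}$ and $\operatorname{rlp}(J) = \{\text{fibrations}\}$. The only difference is organizational: you spell out the four rungs and the symmetry $f \triangledown f' \cong f' \triangledown f$ explicitly, where the paper compresses the second half of the ladder into ``in the same way, $f'$ can be any cofibration.''
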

\begin{proof}
{\it 1.} $\Rightarrow$ {\it 2.} is obvious.
We will show {\it 2.} $\Rightarrow$ {\it 1.}. \par
By {\it 2.}, For any two elements $f, f' \in I$ and any acyclic fibration $g$, we have the relation $f \triangledown f' \pitchfork g$.
This is equivalent to the relation $f \pitchfork g^{\# f'}$.
This means that $g^{\# f'}$ is an acyclic fibration.
Then, for any cofibration $f$, we have the relation $f \pitchfork g^{\# f'}$.
This is equivalent to the relation $f \triangledown f' \pitchfork g$.
In the same way, $f'$ can be any cofibration. 
Therefore, for any two cofibrations $f$ and $f'$, $f \triangledown f'$ is a cofibration. \par
We discuss the same for any $f \in I$ and $f' \in J$, and any fibration $g$.
Then, for any cofibration $f$ and any acyclic cofibration $f'$, $f \triangledown f'$ is an acyclic cofibration.
Therefore, $\mathscr{C}$ is a Cartesian closed model category.
\end{proof}
	\subsection{Proper model categories}
\begin{defi}
A model category $\mathscr{C}$ is {\it right proper} if any pullback of a weak equivalence along a fibration becomes a weak equivalence.
\end{defi}
\begin{prop}[\mbox{cf. \cite[Corollary 13.1.3]{hirschhorn2009model}}] \label{PropFibrantProper}
If all objects in a model category $\mathscr{C}$ are fibrant, then $\mathscr{C}$ is right proper.
\end{prop}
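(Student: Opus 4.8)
The plan is to verify the defining property of right properness directly: given a weak equivalence $w : X \to Z$ and a fibration $p : Y \to Z$, I must show that the base change $w' : X \times_Z Y \to Y$ is again a weak equivalence. The decisive point is that, since every object of $\mathscr{C}$ is fibrant, $w$ is automatically a weak equivalence \emph{between fibrant objects}, and it is precisely this fibrancy that powers the argument.

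First I would reduce to a single case. Using the factorization axiom, write $w = q \circ i$ with $i : X \to X'$ an acyclic cofibration and $q : X' \to Z$ a fibration; by two-out-of-three $q$ is a weak equivalence, hence an acyclic fibration. Its base change along $p$ is the projection $X' \times_Z Y \to Y$, a pullback of an acyclic fibration and therefore itself an acyclic fibration, in particular a weak equivalence. By the pasting law for pullbacks, the remaining map $X \times_Z Y \to X' \times_Z Y$ is the base change of $i$ along the fibration $X' \times_Z Y \to X'$ (itself a pullback of $p$). Since $w'$ factors as $X \times_Z Y \to X' \times_Z Y \to Y$, two-out-of-three reduces everything to the following claim: the base change of an acyclic cofibration between fibrant objects along a fibration is a weak equivalence.

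To attack this claim, let $i : X \to X'$ be an acyclic cofibration with $X, X'$ fibrant, let $\pi : F \to X'$ be a fibration, and form the pullback $k : P = X \times_{X'} F \to F$. Because $X$ is fibrant and $i$ is an acyclic cofibration, lifting $\mathrm{id}_X$ in the square whose left edge is $i$ and whose right edge is $X \to {\bf 1}$ produces a retraction $r : X' \to X$ with $r \circ i = \mathrm{id}_X$; choosing a path object for $X'$ and lifting against $i$ once more yields a right homotopy $H$ from $\mathrm{id}_{X'}$ to $i \circ r$ that is constant on $X$. I would then transport this deformation-retract datum through the pullback: lifting $H \circ \pi$ along the fibration $\pi$ via the covering-homotopy property produces a map $\bar r : F \to P$ together with homotopies exhibiting $k$ and $\bar r$ as mutually inverse, which should force $k$ to be a weak equivalence.

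The main obstacle is exactly this last step. Constructing the retraction and the relative homotopy downstairs is routine, but transporting them along $\pi$ and concluding that $k$ is a genuine weak equivalence---rather than merely a homotopy equivalence, which need not coincide with a weak equivalence for objects that are fibrant yet not cofibrant---requires the careful theory of right homotopies into fibrant objects relative to a fibration. This is precisely the content of Hirschhorn's Proposition 13.1.2 and its Corollary 13.1.3 \cite{hirschhorn2009model}, on which I would rely for the delicate homotopy bookkeeping; all the surrounding steps are formal consequences of the factorization and two-out-of-three axioms together with the stability of fibrations and acyclic fibrations under pullback.
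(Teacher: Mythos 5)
The paper offers no proof of this proposition at all: it appears in the appendix as a recalled fact, with the citation to Hirschhorn \cite[Corollary 13.1.3]{hirschhorn2009model} doing all the work. Your proposal is therefore already more detailed than the paper's treatment, and its structure is correct: the factorization $w = q \circ i$, the stability of acyclic fibrations under pullback, the pasting-law identification of the remaining map as a base change of $i$, and the reduction to the claim that the pullback of an acyclic cofibration between fibrant objects along a fibration is a weak equivalence --- this is exactly Reedy's argument, which is how Hirschhorn proves his Proposition 13.1.2. Two remarks. First, your closing citation should be to Proposition 13.1.2 alone; citing Corollary 13.1.3 for the final step is circular, since that corollary \emph{is} the statement being proved. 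Second, once Proposition 13.1.2 is available, your whole reduction becomes redundant: under the hypothesis that all objects of $\mathscr{C}$ are fibrant, any weak equivalence $w : X \to Z$ is automatically a weak equivalence between fibrant objects, so 13.1.2 applies verbatim to the pullback square and right properness follows in one line --- your factorization and pasting steps re-derive the first half of Hirschhorn's own proof of 13.1.2 rather than input to it. Either packaging is acceptable, and your flag on the genuinely delicate point --- that the deformation-retract data transported through the fibration must be shown to yield a weak equivalence, not merely a homotopy equivalence, which for objects that are fibrant but not cofibrant is not automatic --- is accurate and is precisely why the technical core is best delegated to the cited result rather than improvised.
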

	\subsection{Mixed model categories}
\begin{thm}[M. Cole \cite{cole2006mixing}] \label{ThmMix}
Let $\mathscr{C}$ be a (complete and cocomplete) category.
Let $(W, \operatorname{Fib}, \operatorname{Cof})$ and $(W', \operatorname{Fib'}, \operatorname{Cof'})$ be two model structures on $\mathscr{C}$.
Suppose the followings:
\begin{itemize}
\item $W \subset W'$,
\item $\operatorname{Fib} \subset \operatorname{Fib'}$.
\end{itemize}
Then, there exists a model structure on $\mathscr{C}$ such that the class of weak equivalences is $W'$ and that the class of fibration is $\operatorname{Fib}$.
\end{thm}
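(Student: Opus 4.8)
The plan is to realise the mixed structure as the model structure whose two weak factorisation systems are assembled from the two given ones. I write the data of $(W,\operatorname{Fib},\operatorname{Cof})$ without a prime (the \emph{strong} structure) and those of $(W',\operatorname{Fib'},\operatorname{Cof'})$ with a prime (the \emph{weak} structure), and recall the standard fact that in any model category the trivial cofibrations are exactly the maps with the left lifting property against all fibrations, and the cofibrations exactly those with the left lifting property against all trivial fibrations. I take the candidate mixed weak equivalences to be $W'$, the mixed fibrations to be $\operatorname{Fib}$, and the mixed cofibrations to be $\operatorname{Cof}_m := \{\, f \mid f\pitchfork g \text{ for all } g\in \operatorname{Fib}\cap W'\,\}$, the maps lifting against the intended trivial fibrations $\operatorname{Fib}\cap W'$.

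First I would record two inclusions coming from reversing containments under the lifting operator. Since $\operatorname{Fib}\subseteq\operatorname{Fib'}$ gives $\operatorname{Fib}\cap W'\subseteq\operatorname{Fib'}\cap W'$, any $f\in\operatorname{Cof'}$ (which lifts against $\operatorname{Fib'}\cap W'$) lifts against the smaller class $\operatorname{Fib}\cap W'$; hence $\operatorname{Cof'}\subseteq\operatorname{Cof}_m$. Likewise $\operatorname{Fib}\cap W'\subseteq\operatorname{Fib}$ gives $\operatorname{Cof}\cap W\subseteq\operatorname{Cof}_m$. The central step is the identity $\operatorname{Cof}_m\cap W'=\operatorname{Cof}\cap W$, identifying mixed trivial cofibrations with strong trivial cofibrations. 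The inclusion $\supseteq$ is immediate from $\operatorname{Cof}\cap W\subseteq\operatorname{Cof}_m$ and $W\subseteq W'$. For $\subseteq$, take $f\in\operatorname{Cof}_m\cap W'$ and factor it in the strong structure as $f=g\circ c$ with $c\in\operatorname{Cof}\cap W$ and $g\in\operatorname{Fib}$; two-out-of-three in the weak structure forces $g\in W'$, so $g\in\operatorname{Fib}\cap W'$ and $f\pitchfork g$. Solving the resulting square (with $f$ on the left and $g$ on the right) yields $h$ with $h\circ f=c$ and $g\circ h=\operatorname{id}$, exhibiting $f$ as a retract of $c$; since a class defined by a left lifting property is closed under retracts, $f\in\operatorname{Cof}\cap W$.

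Next I would produce the two factorisations. The trivial-cofibration–fibration factorisation $(\operatorname{Cof}_m\cap W',\operatorname{Fib})$ is, by the key identity, just $(\operatorname{Cof}\cap W,\operatorname{Fib})$, already supplied by the strong structure. The cofibration–trivial-fibration factorisation $(\operatorname{Cof}_m,\operatorname{Fib}\cap W')$ is where both structures are used at once: given $f$, I factor it in the weak structure as $f=p_w\circ j_w$ with $j_w\in\operatorname{Cof'}$ and $p_w\in\operatorname{Fib'}\cap W'$, then factor $p_w$ in the strong structure as $p_w=p_s\circ j_s$ with $j_s\in\operatorname{Cof}\cap W$ and $p_s\in\operatorname{Fib}$. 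Two-out-of-three (using $p_w,j_s\in W'$) gives $p_s\in\operatorname{Fib}\cap W'$, while $j_w\in\operatorname{Cof'}\subseteq\operatorname{Cof}_m$ and $j_s\in\operatorname{Cof}\cap W\subseteq\operatorname{Cof}_m$ compose inside the lifting-defined class $\operatorname{Cof}_m$; thus $f=p_s\circ(j_s\circ j_w)$ is the desired factorisation.

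Finally I would collect the axioms: $\mathscr{C}$ is complete and cocomplete by hypothesis; $W'$ satisfies two-out-of-three and is closed under retracts as the weak equivalences of the weak structure; $\operatorname{Fib}$ and $\operatorname{Cof}_m$ are closed under retracts as lifting classes; the lifting axioms hold because $\operatorname{Cof}_m$ lifts against $\operatorname{Fib}\cap W'$ by definition and $\operatorname{Cof}_m\cap W'=\operatorname{Cof}\cap W$ lifts against $\operatorname{Fib}$ by the key identity; and the two factorisations were just constructed. A final retract argument confirms that the acyclic fibrations are exactly $\operatorname{Fib}\cap W'$ and the acyclic cofibrations exactly $\operatorname{Cof}\cap W$, so $(W',\operatorname{Cof}_m,\operatorname{Fib})$ is a model structure with weak equivalences $W'$ and fibrations $\operatorname{Fib}$. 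I expect the main obstacle to be precisely the key identity $\operatorname{Cof}_m\cap W'=\operatorname{Cof}\cap W$ together with the cofibration–trivial-fibration factorisation: both force one to track carefully which of the two structures supplies each lifting property and then to invoke a retract argument, since there is no a priori reason that a mixed trivial cofibration should be strongly trivial.
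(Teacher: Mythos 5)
Your proposal is correct, and since the paper itself gives no proof of Theorem \ref{ThmMix} (it is quoted directly from Cole \cite{cole2006mixing}), the relevant comparison is with Cole's original argument, which yours reproduces essentially verbatim: mixed cofibrations defined by the left lifting property against $\operatorname{Fib}\cap W'$, the key identity $\operatorname{Cof}_m\cap W' = \operatorname{Cof}\cap W$ established via factorisation in the strong structure plus the retract argument, and the (cofibration, trivial fibration) factorisation obtained by composing the two given factorisations with a two-out-of-three step. There is nothing to correct.
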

\begin{prop}[\mbox{M. Cole \cite[Proposition 4.1]{cole2006mixing}}] \label{PropMixProper}
In the situation of Theorem \ref{ThmMix}, if the model structure $(W', \operatorname{Fib'}, \operatorname{Cof'})$ is right proper, then the mixed model structure is also right proper.
\end{prop}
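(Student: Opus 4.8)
The plan is to unwind the definition of right properness for the mixed model structure and observe that it reduces directly to a special case of the right-properness hypothesis placed on $(W', \operatorname{Fib'}, \operatorname{Cof'})$. First I would recall from Theorem \ref{ThmMix} that the mixed model structure on $\mathscr{C}$ has $W'$ as its class of weak equivalences and $\operatorname{Fib}$ as its class of fibrations. Hence, by the very definition of right properness, the mixed structure is right proper exactly when every pullback of a morphism in $W'$ along a morphism in $\operatorname{Fib}$ again lies in $W'$.

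Next I would invoke the standing hypothesis $\operatorname{Fib} \subset \operatorname{Fib'}$ from Theorem \ref{ThmMix}. Given a weak equivalence $w \in W'$ of the mixed structure and a fibration $p \in \operatorname{Fib}$ of the mixed structure, the inclusion gives $p \in \operatorname{Fib'}$, so any pullback of $w$ along $p$ is simultaneously a pullback of a $W'$-weak equivalence along a $\operatorname{Fib'}$-fibration. Since $(W', \operatorname{Fib'}, \operatorname{Cof'})$ is assumed right proper, this pullback lies in $W'$. As $W'$ is also the class of weak equivalences of the mixed structure, this is precisely the condition required for the mixed structure to be right proper, which completes the argument.

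The proof is essentially immediate once the weak equivalences and fibrations of the mixed structure have been identified, so I do not expect a serious obstacle; the only point demanding attention is the direction of the inclusion $\operatorname{Fib} \subset \operatorname{Fib'}$, which is exactly what permits a mixed-structure fibration to be fed into the right-properness hypothesis for $(W', \operatorname{Fib'}, \operatorname{Cof'})$. I would also remark that the symmetric statement for \emph{left} properness does not follow by this reasoning, since the cofibrations of the mixed structure are not controlled by either $\operatorname{Cof}$ or $\operatorname{Cof'}$ in the same favourable way.
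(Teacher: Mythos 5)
Your proof is correct and is exactly the intended argument: the paper itself gives no proof (it simply cites Cole's Proposition 4.1), and Cole's proof is the same observation that mixed fibrations lie in $\operatorname{Fib} \subset \operatorname{Fib'}$ while mixed weak equivalences are precisely $W'$, so right properness of $(W', \operatorname{Fib'}, \operatorname{Cof'})$ applies verbatim. Your closing remark about the failure of the symmetric argument for left properness is also accurate, since the mixed cofibrations are not contained in $\operatorname{Cof'}$ in general.
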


\bibliography{h-principle, complex, homotopy, geo_and_top, category, homology}
\bibliographystyle{plain}


\end{document}